%% daj-template.tex v0.33     23 Sep 2016   Alex Russell/Laszlo Babai
%%
%% AUTHOR: Fill in fields (or see warnings) below marked with "AUTHOR"
%% ** Add as few macro / package definitions as possible
%% ** Compile with "pdflatex"; make sure that
%%           daj.cls and tocbase.cls are in the same directory.
%%
%% EDITOR: Fill in fields below marked with "EDITOR"
%%    and check that authors proprely filled in field marked with "AUTHOR"

\documentclass{daj}

%%%%%%%%%%%%%%%%%%%%%%%%%%%%%%%%%%%%%%%%%%%%%%%%
%% AUTHOR: Fill in meta-data below:
\dajAUTHORdetails{%
  title = {Biased Halfspaces, Noise Sensitivity, and Local Chernoff Inequalities}, %% please capitalize all significant words
  author = {Nathan Keller and Ohad Klein},
    %% Please use the format for commas as follows:
    %% "A", or "A and B", or "A, B, and C", or "A, B, C, and D", etc.
  plaintextauthor = {Nathan Keller, Ohad Klein},
    %% An author list in plain text: Use the format
    %% "A", or "A, B", or "A, B, C", etc.
    %% NOTE: No LaTeX code in author names.
    %% NOTE: No "and" at the end--simply comma separated,
    %
 %% The remaing lines in this section are optional:
    %
    %% IF YOUR TITLE CONTAINS MATH OR LATEX such as accented characters:
    %% Add a "plain text title";  otherwise comment out the next line:
  %plaintexttitle = {Biased Halfspaces, Noise Sensitivity, and Local Chernoff Inequalities}, %%  title without math or LaTeX
    %
    %% ONLY IF YOUR TITLE IS TOO LONG to fit in the page headers, please
    %% add an abbreviated version of the title, otherwise comment it out:
  %runningtitle = {R\"odl's $n^{\log\log n}$ Bound},
    %
    %% ONLY IF YOUR AUTHOR LIST IS TOO LONG to fit in the page headers,
    %% add an abbreviated version, otherwise comment it out:
  %runningauthor = {Paul Erd\H{o}s, Johan H{\aa}stad, L\'aszl\'o Lov\'asz, and Andrew C-C. Yao},
    %% you can replace first names and/or middle names with initials.
    %
    %% ONLY IF YOUR AUTHOR LIST IS TOO LONG to fit the copyright entry
    %% on the bottom of the front page,
    %% add an abbreviated version, otherwise comment it out:
  %copyrightauthor = {P. Erd\H{o}s, J. H{\aa}stad, L. Lov\'asz, and A. C-C. Yao},
    %% Note that the copyrightauthor  field will seldom be necessary;
    %% for instance, in this example with four authors, it would be
    %% all right to comment it out and have all authors' full names
    %% appear on the Copyright line
   %
   %% Include keywords of your choice: comma separated, lower case;
   %% comment out the "keywords" line if you don't wish to provide them
  %keywords = {keyword, keyword, etc.},
}   %%% END \dajAUTHORdetails

%%%%%%%%%%%%%%%%%%%%%%%%%%%%%%%%%%%%%%%%%%%%%%%%
%%% EDITOR: please fill in the following data:
\dajEDITORdetails{%
   year={2019},
   %volume={XX},
   number={13},
   received={6 November 2017},   % received date: example: 7 January 2017
   %revised={XX Month 20XX},    % Optional revised date (you may comment it out)
   published={23 September 2019},  % published date
   doi={10.19086/da.10234},       % XXX = number of paper, e.g. da006 for paper#6
%                              % or  da0006 (length of string arbitrary)
}   %%% END \dajEDITORdetails

%%%%%%%%%%%%%%%%%%%%%%%%%%%%%%%%%%%%%%%%%%%%%%%%%%%%%%%
%%%%%%%%%%%%%%%%%%%%%%%%%%%%%%%%%%%%%%%%%%%%%%%%%%%%%%%
%%%%%%%%%%%%%%%%%%%%%%%%%%%%%%%%%%%%%%%%%%%%%%%%%%%%%%%
%%%%%%%%%%%%%%%%%%%%%%%%%%%%%%%%%%%%%%%%%%%%%%%%%%%%%%%
%%%%%%%%%%%%%%%%%%%%%%%%% DEFS %%%%%%%%%%%%%%%%%%%%%%%%
%%%%%%%%%%%%%%%%%%%%%%%%%%%%%%%%%%%%%%%%%%%%%%%%%%%%%%%
%%%%%%%%%%%%%%%%%%%%%%%%%%%%%%%%%%%%%%%%%%%%%%%%%%%%%%%
%%%%%%%%%%%%%%%%%%%%%%%%%%%%%%%%%%%%%%%%%%%%%%%%%%%%%%%
%%%%%%%%%%%%%%%%%%%%%%%%%%%%%%%%%%%%%%%%%%%%%%%%%%%%%%%

\usepackage{mathtools}
\usepackage{amsfonts}
\usepackage{amsmath}
\usepackage{amssymb}
\usepackage{amsthm}
\usepackage{bbm}

\newtheorem{theorem}{Theorem}[section]
\newtheorem{proposition}[theorem]{Proposition}
\newtheorem{lemma}[theorem]{Lemma}
\newtheorem{claim}[theorem]{Claim}
\newtheorem{corollary}[theorem]{Corollary}

\newtheorem{definition}[theorem]{Definition}

\theoremstyle{remark}\newtheorem*{remark}{Remark}

\newtheorem*{proposition*}{Proposition}
\newtheorem*{lemma*}{Lemma}
\theoremstyle{definition}\newtheorem{example}[theorem]{Example}

% font
\newcommand{\mrm}[1]{\mathrm{#1}}

% common stuff
\newcommand{\bem}[0]{\begin{eqnarray*}}
\newcommand{\enm}[0]{\end{eqnarray*}}

\newcommand{\pintegers}{\mathbb{N}}

\newcommand{\reals}{\mathbb{R}}
\newcommand{\preals}{\reals_{\geq0}}
\newcommand{\spm}{\left\{ -1,1\right\}  }
\newcommand{\zo}{\left\{  0,1\right\}  }
\newcommand{\nicefrac}[2]{#1\left/#2\right.}

% common math
\newcommand{\sub}[0]{\subseteq}
\newcommand{\sm}[0]{\setminus}
\newcommand{\comp}{\circ}
\newcommand{\es}{\emptyset}
\newcommand{\ddd}{\doteqdot}%{=}

\newcommand{\one}{\mathbbm{1}}

\newcommand{\eps}{\epsilon}
\newcommand{\sgn}{\mrm{sgn}}

% common probability
\newcommand{\pr}{\Pr}

\newcommand{\normts}[1]{\lbn #1 \rbn_2}
\newcommand{\normt}[1]{\normts{#1}^{2}}

% probabilistic operations of Var and E.
\DeclareMathOperator*{\be}{\mathbb{E}}
\DeclareMathOperator*{\var}{\mrm{Var}}
\DeclareMathOperator*{\cov}{\mrm{Cov}}

% logic
\newcommand{\andd}{\wedge}
\newcommand{\Andd}{\bigwedge}
\newcommand{\orr}{\vee}
\newcommand{\Orr}{\bigvee}
\newcommand{\xor}{\oplus}

% Fourier

\newcommand{\lfunc}[1]{#1:\spm^{n}\to\zo}

\newcommand{\maj}{\mrm{Maj}}
\newcommand{\ii}{I}
\newcommand{\tr}{T_{\rho}}
\newcommand{\wk}[1]{W^{#1}}
\newcommand{\wo}{\wk{1}}

\newcommand{\wh}[1]{\widehat{#1}}
\newcommand{\hf}{\widehat{f}}
\newcommand{\vb}[2]{\mrm{VB}_{#2}\lbr #1\rbr}
\newcommand{\sss}{\mathbb{S}_\rho}

% relative Chernoff functions
\newcommand{\pf}[1]{\mrm{PF}_{#1}}

\newcommand{\sff}[1]{\mrm{SF}_{#1}}
\newcommand{\scf}{\mrm{SCF}}
\newcommand{\iscf}{\mrm{ISCF}}

% brackets: round, square, curly, abs, norm
\newcommand{\lbr}{\left(}
\newcommand{\rbr}{\right)}
\newcommand{\lbs}{\left[}
\newcommand{\rbs}{\right]}
\newcommand{\lbc}{\left\{}
\newcommand{\rbc}{\right\}}
\newcommand{\lba}{\left|}
\newcommand{\rba}{\right|}
\newcommand{\lbn}{\left\Vert}
\newcommand{\rbn}{\right\Vert}
\newcommand{\mn}[0]{\medskip\noindent}

% custom

%%%%%%%%%%%%%%%%%%%%%%%%%%%%%%%%%%%%%%%%%%%%%%%%%%%%%%%
%%%%%%%%%%%%%%%%%%%%%%%%%%%%%%%%%%%%%%%%%%%%%%%%%%%%%%%
%%%%%%%%%%%%%%%%%%%%%%%%%%%%%%%%%%%%%%%%%%%%%%%%%%%%%%%
%%%%%%%%%%%%%%%%%%%%%%%%%%%%%%%%%%%%%%%%%%%%%%%%%%%%%%%
%%%%%%%%%%%%%%%%%%%%%%%% PAPER %%%%%%%%%%%%%%%%%%%%%%%%
%%%%%%%%%%%%%%%%%%%%%%%%%%%%%%%%%%%%%%%%%%%%%%%%%%%%%%%
%%%%%%%%%%%%%%%%%%%%%%%%%%%%%%%%%%%%%%%%%%%%%%%%%%%%%%%
%%%%%%%%%%%%%%%%%%%%%%%%%%%%%%%%%%%%%%%%%%%%%%%%%%%%%%%
%%%%%%%%%%%%%%%%%%%%%%%%%%%%%%%%%%%%%%%%%%%%%%%%%%%%%%%

\begin{document}

\begin{frontmatter}[classification=text]
%% EDITOR: this will force the keywords to appear right after the Abstract.
%%   If the abstract is too long and would force the keywords off the
%%   front page, please comment out % [classification=text] above
%%   This way the keywords will be floated on the bottom of the first page
%%   even though the Abstract spills over to the next page.

%%% AUTHOR: Title goes here.  This line is optional.  You must use it
%%   if title has footnote attached or requires nontrivial typesetting,
%%   e.g., inclusion of linebreaks to force nice layout.
%\title{Biased Halfspaces, Noise Sensitivity, and Local Chernoff Inequalities} %% please capitalize all significant words

%%% AUTHOR:
%%% List all authors. If you wish, place grant acknowledgements in \thanks.
%%% In brackets include a short tag for each author.
\author[nk]{Nathan Keller\thanks{Department of Mathematics, Bar Ilan University, Ramat Gan, Israel.
		\texttt{nathan.keller27@gmail.com}. Research supported by the Israel Science Foundation (grants no.
		402/13 and 1612/17) and the Binational US-Israel Science Foundation (grant no. 2014290).}}
\author[okay]{Ohad Klein\thanks{Department of Mathematics, Bar Ilan University, Ramat Gan, Israel.
		\texttt{ohadkel@gmail.com}.}}

%%% AUTHOR: Abstract goes here
\begin{abstract}
	A halfspace is a function $f\colon\{-1,1\}^n \rightarrow \{0,1\}$ of the form $f(x)=\one (a\cdot x>t)$, where $\sum_i a_i^2=1$.
	
	We show that if $f$ is a halfspace with $\mathbb{E}[f]=\eps$ and $a'=\max_i |a_i|$, then the degree-1 Fourier weight of $f$ is
	$W^1(f)=\Theta(\eps^2 \log(1/\eps))$, and the maximal influence of $f$ is $I_{\max}(f)=\Theta(\eps \min(1,a' \sqrt{\log(1/\eps)}))$.
    These results, which determine the exact asymptotic order of $W^1(f)$ and $I_{\max}(f)$,
	provide sharp generalizations of theorems proved by Matulef, O'Donnell, Rubinfeld, and Servedio, and settle a
	conjecture posed by Kalai, Keller and Mossel.
	
	In addition, we present a refinement of the definition of noise sensitivity which takes into consideration the bias of the function,
	and show that (like in the unbiased case) halfspaces are noise resistant, and, in the other direction, any noise resistant function
	is well correlated with a halfspace.
	
	Our main tools are `local' forms of the classical Chernoff inequality, like the following one proved by Devroye and Lugosi (2008):
	Let $\lbc x_i\rbc$ be independent random variables uniformly distributed in $\spm$, and let $a_i\in\preals$ be such that $\sum_i a_{i}^{2}=1$.
	If for some $t\geq 0$ we have $\pr\lbs \sum_{i} a_i x_i > t\rbs=\eps$, then $\pr[\sum_{i} a_i x_i>t+\delta]\leq \frac{\eps}{2}$ holds for $\delta\leq c/\sqrt{\log(1/\eps)}$, where $c$ is a universal constant.
\end{abstract}
\end{frontmatter}

%%% AUTHOR: body of paper starts here
\section{Introduction}

Analysis of Boolean functions (that is, functions of the form $f\colon\{-1,1\}^n \rightarrow \{0,1\}$), was initiated about 30 years ago, and has grown into a prolific research field, with numerous applications and connections to other fields of mathematics, computer science, physics, and economics (see~\cite{O'D14}). Halfspaces (i.e., Boolean functions of the form $f(x)= \one \left(\sum_i a_i x_i > t \right)$) have always been a central object of study in the field; noise sensitivity (which studies the effect of small perturbations of the input on the function output) joined in 1999, bringing thrilling applications to percolation theory. As was shown by Benjamini, Kalai, and Schramm~\cite{BKS99}, halfspaces and noise sensitivity are closely related, and we further explore the relation in this paper.

Usually, the Boolean functions of interest are \emph{unbiased}, meaning that they satisfy the condition $\mathbb{E}[f]=1/2$. As a result, many of the central notions and results in analysis of Boolean functions assume that the function is (roughly) unbiased. Noise sensitivity is a notable example. However, in various applications the effect of the bias is central (e.g., threshold phenomena~\cite{KK06}, correlation inequalities~\cite{Talagrand96}, isoperimetry~\cite{EKL16}, and social choice theory~\cite{Kel12}), and therefore getting rid of the assumption on $\mathbb{E}[f]$ is desirable.

In this paper, we study \emph{biased} Boolean functions, concentrating on halfspaces, noise sensitivity, and the relation between noise resistance and strong correlation with a halfspace. In particular, we determine the exact asymptotic order of the first-degree Fourier weight and the maximal influence of halfspaces, and we show that the relation between being resistant to noise and being well correlated with a halfspace carries over from the unbiased case to biased functions, under appropriate definitions. Our techniques are somewhat non-standard for the types of questions we study: while most previous results on these problems were obtained using discrete Fourier analysis and hypercontractivity, our main tool is a local variant of the Chernoff inequality, which allows one to compare the rates of decay of the probability $\Pr[\sum a_i x_i >t]$ (where $\{x_i\}$ are independent and uniformly distributed in $\spm$), as a function of $t$.

\subsection{First-degree Fourier weight and maximal influence of halfspaces}

A \emph{halfspace}, or a \emph{Linear Threshold Function (LTF)}, is a Boolean function $f(x)= \one \left(\sum_i a_i x_i > t \right)$, where $a \in \mathbb{R}^n$ and $t \in \mathbb{R}$. (The vector $a$ is usually normalized such that $\sum_i a_i^2=1$). In the last half-century, halfspaces have been a central object of study in various areas, such as complexity theory, optimization, machine learning, and social choice theory (see, e.g.,~\cite{GHR92,Hu65,MP68,O'D14,STC00,TZ92,Yao90}).

\subsubsection{First-degree Fourier weight of halfspaces}

A major tool frequently used in the study of halfspaces is their \emph{Fourier expansion} -- namely, their unique representation as a multilinear polynomial: $f = \sum_{S \subseteq \{1,2,\ldots,n\}} \hat f(S) x^S$, where $x^S \ddd  \prod_{i \in S} x_i$. Of special importance here are the \emph{first-degree} (or first-level) \emph{Fourier coefficients} -- the coefficients $\hat f(S)$ which correspond to singletons $S=\{i\}$, as Chow~\cite{Cho61} proved in 1961 that a halfspace is uniquely determined by the set of its first-degree coefficients (together with $\hf(\es)$).

By Parseval's identity, the \emph{total Fourier weight} of a Boolean function, $\sum_S \hat f(S)^2$, is equal to $\mathbb{E}[f]$. It is well known that most of the Fourier weight of \emph{unbiased} halfspaces (i.e., halfspaces of the form $f=\one(\sum_i a_i x_i > 0)$, whose expectation is $1/2$) is concentrated on the first degree. More precisely, Gotsman and Linial~\cite{GL94} proved that for any unbiased halfspace $f$, we have $W^1(f) \ddd \sum_{|S|=1} \hat f(S)^2 \geq 1/8$. (The best currently known bound is $1/8+c$ for some explicit $c>0$~\cite{DDS16}, and it is conjectured that the `correct' bound is $1/2\pi$, which is asymptotically attained by the majority function $f(x)=\one (\sum_i \frac{1}{\sqrt{n}}x_i>0)$.) Hence, the first-degree Fourier weight of unbiased halfspaces is within a constant multiplicative factor of the maximal possible weight.

A question that naturally arises is whether a similar phenomenon holds for \emph{biased} halfspaces. Here, the bounds must depend on the bias of the function, as the \emph{level-1 inequality}~\cite{Chang02,IMR14,Talagrand96} asserts that for any Boolean function $f$, we have
$W^1(f) \leq 2 \mathbb{E}[f]^2 \log(1/\mathbb{E}[f])$. (Note that this improves significantly over the bound $\mathbb{E}[f](1-\mathbb{E}[f])$ that follows from merely applying Parseval's identity). In view of the results for unbiased halfspaces, it makes sense to conjecture that any halfspace $f$ satisfies
\begin{equation}\label{Eq:Half1}
	W^1(f) \geq c \mathbb{E}[f]^2 \log(1/\mathbb{E}[f]),
\end{equation}
where $c$ is a universal constant. Matulef, O'Donnell, Rubinfeld, and Servedio~\cite[Theorem~48]{MORS10} showed that~\eqref{Eq:Half1}, and actually a more precise bound, holds for halfspaces all of whose coefficients $a_i$ are sufficiently small, called \emph{low-influence halfspaces}. This result plays a crucial role in the algorithm of~\cite{MORS10} for testing halfspaces, and in the algorithm of O'Donnell and Servedio~\cite{OS11} for learning halfspaces. Kalai, Keller, and Mossel~\cite[Open Problem~6.2]{KKM16} asked to determine all functions for which~\eqref{Eq:Half1} holds (i.e., all functions for which the level-1 inequality is tight up to a constant factor), and conjectured that~\eqref{Eq:Half1} holds for all halfspaces. We prove this conjecture, using a local Chernoff inequality (to be presented in the sequel).
\begin{theorem}\label{Thm:Main-W^1}
	There exists a universal constant $c>0$ such that for any halfspace $f=\one(\sum_i a_i x_i>t)$ with $\be[f]\leq \frac{1}{2}$, we have
	\[
	c \mathbb{E}[f]^2 \log \frac{1}{\mathbb{E}[f]} \leq W^1(f) \leq 2 \mathbb{E}[f]^2 \log \frac{1}{\mathbb{E}[f]}.
	\]
\end{theorem}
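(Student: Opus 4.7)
The upper bound $W^1(f)\le 2\eps^2\log(1/\eps)$ is the level-$1$ inequality recalled in the introduction, so I would focus entirely on the lower bound. After flipping signs of coordinates (which affects neither $W^1(f)$ nor the halfspace structure), I may assume $a_i\ge 0$, and since $\eps\le 1/2$ and $S:=\sum_i a_i x_i$ has a symmetric distribution, the threshold $t$ is nonnegative.

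The starting point is the identity $\sum_i a_i\wh{f}(\{i\})=\be[Sf]$, which combined with Cauchy--Schwarz and $\sum_i a_i^2=1$ gives $W^1(f)\ge \be[Sf]^2$. So it is enough to show $\be[Sf]\ge c\,\eps\sqrt{\log(1/\eps)}$. The layer-cake decomposition
\[
\be[Sf]\;=\;\be\bigl[S\,\one(S>t)\bigr]\;=\;t\eps+\int_t^\infty \pr[S>s]\,ds
\]
reduces the task (in the small-$\eps$ regime) to establishing $t\ge C\sqrt{\log(1/\eps)}$.

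To prove this bound on $t$, I would iterate the local Chernoff inequality starting from $s_0=0$. A preliminary step is a universal lower bound $g(0):=\pr[S>0]\ge c_0$: the direct computation $\be[S^4]=3-2\sum_i a_i^4\le 3$, combined with the Paley--Zygmund inequality applied to $S^2$, yields $\pr[|S|\ge 1/2]\ge (3/4)^2/3=3/16$, hence $g(0)\ge 3/32$ by symmetry. I then define $s_{j+1}:=s_j+c/\sqrt{\log(1/g(s_j))}$; the local Chernoff inequality guarantees $g(s_{j+1})\ge g(s_j)/2$, so inductively $g(s_k)\ge c_0/2^k$, and consequently each step size is at least $c/\sqrt{j\log 2+\log(1/c_0)}$, so the partial sums satisfy $s_k=\Omega(\sqrt{k})$. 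Choosing $K:=\lfloor\log_2(c_0/\eps)\rfloor$ forces $g(s_K)\ge c_0/2^K\ge\eps$, and monotonicity of $g$ then gives $t\ge s_K=\Omega(\sqrt{\log(1/\eps)})$.

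Plugging this back in, $W^1(f)\ge (t\eps)^2\ge C^2\eps^2\log(1/\eps)$ for all $\eps$ below $c_0$. For $\eps\in[c_0,1/2]$ the iteration gives nothing useful, but a single application of local Chernoff still yields $\be[Sf]\ge\int_t^{t+c/\sqrt{\log(1/\eps)}}g(s)\,ds\ge c\eps/(2\sqrt{\log(1/\eps)})$, and since $\log(1/\eps)=O(1)$ in this regime the resulting bound $W^1(f)\gtrsim\eps^2$ is itself of the desired order $\eps^2\log(1/\eps)$. The main technical obstacle lies in the iteration: the roughly $\log_2(1/\eps)$ step sizes $1/\sqrt{\log(1/g(s_j))}$ must telescope into $\Omega(\sqrt{\log(1/\eps)})$, and this works precisely because local Chernoff supplies the sharp decay rate $1/\sqrt{\log(1/p)}$ and the harmonic-type partial sum $\sum_{j<K}1/\sqrt{j}$ has order $\Theta(\sqrt{K})=\Theta(\sqrt{\log(1/\eps)})$.
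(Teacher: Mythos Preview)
Your argument has a genuine gap at its core. The local Chernoff inequality (Theorem~\ref{thm:intro-strong_chernoff}) says that moving the threshold up by $c/\sqrt{\log(1/\eps)}$ \emph{already} halves the tail, i.e., $g(s_j+c/\sqrt{\log(1/g(s_j))})\le g(s_j)/2$; it does \emph{not} say the tail stays above half. So your iteration yields $g(s_K)\le c_0/2^K$, the wrong direction, and gives no lower bound on $t$. In fact the claim $t\ge C\sqrt{\log(1/\eps)}$ is simply false: take $a_1=\sqrt{1-1/n}$, $a_2=\dots=a_n=1/\sqrt{n(n-1)}$, and $t=a_1+M/\sqrt{n}$ with $M$ large and $n\gg M^2$. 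Then $\eps\approx\tfrac12\Pr[Z>M]$ so $\log(1/\eps)\asymp M^2$, yet $t\approx 1$.

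The same example kills the more basic reduction. You bound $W^1(f)\ge(\be[Sf])^2$ by Cauchy--Schwarz against the full weight vector $a$, but here $\be[Sf]=\be[S\,\one(S>t)]\approx a_1\eps\approx\eps$, while $\eps\sqrt{\log(1/\eps)}\asymp\eps M$ is arbitrarily larger. So the intermediate target $\be[Sf]\ge c\,\eps\sqrt{\log(1/\eps)}$ is itself false in general; the Cauchy--Schwarz step with the full $a$ is too lossy when one coordinate dominates. (The theorem still holds in this example because the small coordinates $i\ge 2$ carry $\sum_{i\ge 2}\hf(\{i\})^2=\Theta(\eps^2\log(1/\eps))$, but their contribution to $\sum_i a_i\hf(\{i\})$ is negligible.)

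The paper's proof is built precisely around this obstruction. It partitions $[n]$ into ``big'' coordinates $B=\{i:a_i>\beta\}$ and ``small'' coordinates $S$, where $\beta$ is the step size after which $F$ drops by a constant factor. If $|B|\ge\tfrac12\log(1/\eps)$, each $i\in B$ has $I_i(f)\ge\tfrac23\eps$ (Lemma~\ref{lem:e_i_big_for_big_coordinates}), already giving $W^1(f)\ge\Omega(\eps^2\log(1/\eps))$. Otherwise the paper applies Cauchy--Schwarz \emph{only over} $S$,
\[
\sqrt{\textstyle\sum_{i\in S}a_i^2}\cdot\sqrt{W^1(f)}\;\ge\;\sum_{i\in S}a_i I_i(f_t),
\]
and uses the refined local Chernoff (Theorem~\ref{thm:intro-strong_chernoff2}) together with an averaged-influence lemma (Lemma~\ref{lem:e_i_lower_bound}) to show the right side is $\Omega\bigl(\eps\sqrt{\log(1/\eps)}\sqrt{\sum_{i\in S}a_i^2}\bigr)$. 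The big/small split is the missing idea.
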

Using the technique of~\cite[Proposition~5.3]{KKM16}, Theorem~\ref{Thm:Main-W^1} provides a large class of tightness examples for a well-known correlation inequality of Talagrand~\cite{Talagrand96} which asserts that for any two monotone Boolean functions $f,g$, we have $\mathrm{Cov}(f,g) \geq c \varphi(\sum_{i} \hat f(\{i\}) \hat g(\{i\}))$, where $\varphi(x)=x/\log(e/x)$, and $c$ is a universal constant.
\begin{corollary}
	Let $f=\one(\sum_i a_i x_i >t)$ be a halfspace, and let $g = \one(\sum_i a_i x_i \geq -t)$ be the dual halfspace. Then the pair $(f,g)$ is a tightness example for Talagrand's inequality, meaning that $\mathrm{Cov}(f,g) = \Theta \left(\varphi(\sum_{i} \hat f(\{i\}) \hat g(\{i\})) \right)$, where
	$\varphi(x)=x/\log(e/x)$.
\end{corollary}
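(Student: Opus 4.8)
The plan is to reduce both sides of the claimed $\Theta$-identity to the single quantity $\mu \ddd \min(\mathbb{E}[f],1-\mathbb{E}[f])$, and then to apply Theorem~\ref{Thm:Main-W^1}. Write $S=\sum_i a_i x_i$. Since $x\mapsto -x$ is a measure-preserving involution of $\spm^n$, one has $g(x)=\one(S(x)\ge -t)=1-\one(S(-x)>t)=1-f(-x)$, so in particular $\mathbb{E}[g]=1-\mathbb{E}[f]$. Comparing multilinear expansions — the polynomial $x\mapsto f(-x)$ has Fourier coefficients $(-1)^{|S|}\hat f(S)$ — gives $\hat g(\{i\})=\hat f(\{i\})$ for every $i$, hence $\sum_i \hat f(\{i\})\hat g(\{i\})=W^1(f)$, so the right-hand side of the corollary is exactly $\varphi(W^1(f))$. (One may also assume $a_i\ge 0$ for all $i$: flipping the sign of $x_i$ on the indices with $a_i<0$ is a measure-preserving bijection that leaves $\mathrm{Cov}(f,g)$ and every product $\hat f(\{i\})\hat g(\{i\})$ unchanged, while making $f$ and $g$ monotone so that Talagrand's inequality genuinely applies to the pair.)

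Next I would compute $\mathrm{Cov}(f,g)$ exactly. The events $\{S>t\}$ and $\{S\ge -t\}$ are nested: if $t\ge 0$ the first is contained in the second, while if $t<0$ the reverse inclusion holds. In either case $\mathbb{E}[fg]=\Pr[\{S>t\}\cap\{S\ge -t\}]=\min(\mathbb{E}[f],\mathbb{E}[g])=\mu$. Therefore $\mathrm{Cov}(f,g)=\mu-\mathbb{E}[f]\mathbb{E}[g]=\mu-\mu(1-\mu)=\mu^2$. (If $\mu=0$ then $f$ is constant and both sides vanish, so assume $\mu\in(0,1/2]$.)

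Finally I would invoke Theorem~\ref{Thm:Main-W^1}, applied to whichever of $f$ and $g$ has expectation at most $1/2$ — both are halfspaces, since $\one(S\ge -t)=\one(S>-t-\eta)$ for small $\eta>0$, and $W^1(f)=W^1(g)$ by the coefficient identity above — to get $W^1(f)=\Theta(\mu^2\log(1/\mu))$. It then remains to check that $\varphi\big(\Theta(\mu^2\log(1/\mu))\big)=\Theta(\mu^2)$ uniformly over $\mu\in(0,1/2]$. This is a direct estimate with $\varphi(x)=x/\log(e/x)$: the function $\varphi$ is increasing on $(0,1]$, and for $x=c\,\mu^2\log(1/\mu)$ with $c$ in a fixed positive range one computes $\log(e/x)=\Theta(1+\log(1/\mu))$ (in particular bounded away from $0$), whence $\varphi(x)=\Theta(\mu^2)$. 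Combining the three steps, $\mathrm{Cov}(f,g)=\mu^2=\Theta(\varphi(W^1(f)))=\Theta\big(\varphi(\sum_i \hat f(\{i\})\hat g(\{i\}))\big)$, which is the assertion.

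There is no serious obstacle here: Theorem~\ref{Thm:Main-W^1} carries all the weight, and the corollary is essentially an unwinding of definitions. The one point requiring care is the last estimate — the asymptotics of $\varphi$ must be controlled over the \emph{entire} range $\mu\in(0,1/2]$, including $\mu$ near $1/2$, where $\mu^2\log(1/\mu)$ is merely a bounded constant and the naive approximation $\log(e/x)\approx 2\log(1/\mu)$ degenerates; keeping the additive ``$+1$'' (i.e. writing $\log(e/x)=\Theta(1+\log(1/\mu))$ and noting it stays bounded below) resolves this, and everything else is bookkeeping.
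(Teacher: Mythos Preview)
Your proposal is correct and follows exactly the approach the paper indicates (the paper does not spell out a proof but simply remarks that the corollary follows from Theorem~\ref{Thm:Main-W^1} via the technique of \cite[Proposition~5.3]{KKM16}, which is precisely the duality computation $g=1-f(-\cdot)$, $\hat g(\{i\})=\hat f(\{i\})$, $\mathrm{Cov}(f,g)=\mu^2$ that you carry out). Your handling of the $\varphi$-asymptotics at the endpoint $\mu\approx 1/2$ is also fine, though the worry is slightly overstated: since $\log(1/\mu)\ge \log 2$ on $(0,1/2]$, one has $\log(e/x)=\Theta(\log(1/\mu))$ uniformly without needing the additive $+1$.
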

We generalize Theorem~\ref{Thm:Main-W^1} to the $k$th-degree Fourier weight of halfspaces, showing that the \emph{level-$k$ inequality} (see~\cite[Chapter~9]{O'D14}) is tight (up to a factor that depends only on $k$) for all low-influence halfpaces. In particular, we prove:
\begin{theorem}\label{Thm:Main-W^k}
	For any $k$, there exist constants $c_1,c_2,c_3,c_{4}$ depending only on $k$, such that for any halfspace $f=\one \lbc \sum_i a_i x_i > t \rbc$ with $\mathbb{E}[f] \leq c_1$ and $\ii_{\max}(f)\leq c_{4} \mathbb{E}[f]$, we have
	\[
	c_2 \mathbb{E}[f]^2 \lbr\log \frac{1}{\mathbb{E}[f]}\rbr^k \leq W^k(f) \leq c_3 \mathbb{E}[f]^2 \lbr\log \frac{1}{\mathbb{E}[f]}\rbr^k.
	\]
\end{theorem}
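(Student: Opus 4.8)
\noindent\emph{Proof plan.} The plan is to get the upper bound essentially for free from the level-$k$ inequality, and to obtain the lower bound by correlating $f$ with a carefully chosen homogeneous degree-$k$ polynomial, using the local Chernoff inequality to control the tail integrals that arise. For the upper bound no structure is needed: by the level-$k$ inequality~\cite[Chapter~9]{O'D14}, any $\lfunc{f}$ with $\be[f]=\eps\le\frac12$ satisfies $W^k(f)\le O_k(1)\cdot\eps^2\lbr\log\tfrac1\eps\rbr^k$ once $\eps\le c_1(k)$ (so that $k\le 2\ln(1/\eps)$), which is the right-hand bound with $c_3=c_3(k)$. The content is the lower bound. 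Set $Y=\sum_i a_ix_i$; relabeling coordinates by sign (a Fourier automorphism preserving $W^k$, $\eps$ and $\ii_{\max}$), assume $a_i\ge 0$, and since $\eps\le c_1<\pr[Y>0]$, assume $t>0$.

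I would take the homogeneous degree-$k$ multilinear polynomial $q(x)=\sum_{|S|=k}\lbr\prod_{i\in S}a_i\rbr x^S$, with $\lbn q\rbn_2^2=e_k(a_1^2,\dots,a_n^2)\le\frac{1}{k!}\lbr\sum_i a_i^2\rbr^k=\frac{1}{k!}$. As $q$ is homogeneous of degree $k$, $\langle q,f\rangle=\sum_{|S|=k}\lbr\prod_{i\in S}a_i\rbr\hf(S)$, so Cauchy--Schwarz gives $W^k(f)\ge\langle q,f\rangle^2/\lbn q\rbn_2^2\ge k!\,\langle q,f\rangle^2$; hence it suffices to prove $\langle q,f\rangle\ge c(k)\cdot\eps\lbr\log\tfrac1\eps\rbr^{k/2}$.

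To evaluate $\langle q,f\rangle=\be\lbs q(x)\one\{Y>t\}\rbs$, I would expand $q$ via the Newton--Girard identities applied to the ``variables'' $a_ix_i$, using the power sums $p_j=\sum_i a_i^jx_i^j$: here $p_1=Y$, $p_2=\sum_i a_i^2=1$, and $|p_j|\le\sum_i|a_i|^j\le (a')^{j-2}$ for $j\ge 3$ (with $a'=\max_i a_i=\max_i|a_i|$). This gives $q(x)=\frac{1}{k!}Y^k+R(x)$, where $R$ is a polynomial in $Y$ of degree $\le k-2$ with $O_k(1)$ coefficients, plus a remainder of sup-norm $O_k(a')(1+|Y|^{k-3})$. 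So $k!\langle q,f\rangle=\be[Y^k\one\{Y>t\}]+\mrm{corr}$ with $|\mrm{corr}|=O_k\lbr\max_{0\le m\le k-2}\be[Y^m\one\{Y>t\}]\rbr$. Iterating the local Chernoff inequality gives $\pr[Y>t+r]\le 2\eps\cdot 2^{-r\sqrt{\log(1/\eps)}/c_0}$, so $\be[Y^m\one\{Y>t\}]=t^m\eps+m\int_t^\infty s^{m-1}\pr[Y>s]\,ds=t^m\eps+O_m\lbr\eps(\log\tfrac1\eps)^{(m-2)/2}\rbr$. Granting the anti-concentration bound $t\ge c'\sqrt{\log(1/\eps)}$ (discussed below), this gives $\be[Y^m\one\{Y>t\}]=t^m\eps(1+o(1))$ and $|\mrm{corr}|=O_k(t^{k-2}\eps)=o(t^k\eps)$, hence $k!\langle q,f\rangle\ge\frac12 t^k\eps\ge\frac12(c')^k\eps(\log\tfrac1\eps)^{k/2}$; together with the Cauchy--Schwarz bound this yields $W^k(f)\ge\frac{(c')^{2k}}{4\,k!}\eps^2(\log\tfrac1\eps)^k$, as required.

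The crux, needed both to control the corrections and to produce the logarithmic factors, is the anti-concentration bound $t\ge c'\sqrt{\log(1/\eps)}$, and this is exactly where the hypothesis $\ii_{\max}(f)\le c_4\eps$ enters: by the stated asymptotics $\ii_{\max}(f)=\Theta(\eps\min(1,a'\sqrt{\log(1/\eps)}))$ it is equivalent, for small enough $c_4$, to $a'\le O(1/\sqrt{\log(1/\eps)})$. Classical Berry--Esseen is hopeless here, since $\eps=e^{-\Theta(\log(1/\eps))}$ lies far below the Berry--Esseen error $O(a')$; what is required is a true tail lower bound $\pr[Y>t]\ge\Omega(1/t)\exp\lbr-\tfrac{t^2}{2}(1+O((a't)^2))\rbr$ in the regime $t=\Theta(\sqrt{\log(1/\eps)})$. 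I would prove it by exponential tilting (the Cram\'er method): pick $\theta$ with $\frac{d}{d\theta}\log\be[e^{\theta Y}]=t$, so that $\theta=(1+O((a't)^2))t$ and $\log\be[e^{\theta Y}]=\sum_i\log\cosh(\theta a_i)=\tfrac{\theta^2}{2}(1+O((a'\theta)^2))$, whence the rate function is $\tfrac{t^2}{2}(1+O((a't)^2))$; a local limit estimate for $Y$ under the tilted measure (where $a'$ small makes $Y$ nearly Gaussian around its mean $t$) supplies the $\Omega(1/t)$ prefactor. Since $(a't)^2=O(c_4^2)$ is small, $\pr[Y>t]=\eps$ forces $\log(1/\eps)\le(1+o(1))\tfrac{t^2}{2}$, i.e. $t\ge(1-o(1))\sqrt{2\log(1/\eps)}$. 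This tilting-plus-local-limit step is precisely the sort of statement the local Chernoff inequalities of this paper are built to deliver, so I expect it to carry the real difficulty, the Newton--Girard bookkeeping and the moment estimates being routine once it is in hand. (For $k=1$ the low-influence hypothesis can be removed, as in Theorem~\ref{Thm:Main-W^1}, at the price of an extra dichotomy---conditioning on a large coordinate and inducting---which we do not need here.)
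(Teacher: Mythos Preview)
Your proposal is correct and takes a genuinely different route from the paper's proof, though the two share a common skeleton: both lower-bound $W^k(f)$ by Cauchy--Schwarz against the test polynomial $q(x)=\sum_{|S|=k}a^Sx^S$, and both need some anti-concentration for the threshold $t$.

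The paper does \emph{not} expand $q$ in powers of $Y$. Instead it smooths the Fourier coefficients, setting $e_S^{\delta}=\be_{s\sim\delta T_k}[\widehat{f_{t+s}}(S)]$ with $T_k$ the Irwin--Hall distribution, and lower-bounds $M=\sum_S a^S e_S^{\delta}$ via a $k$-th derivative computation (Proposition~\ref{prop:e_s_lower_bound}); the factor $(\log(1/\eps))^k$ arises from $\delta^{-2k}$ with $\delta=O_k(1/\sqrt{\log(1/\eps)})$ by the local Chernoff inequality. To pass from the smoothed $M$ back to $\langle q,f\rangle$ they must show $q(x)\ge 0$ whenever $f(x)=1$ (Claim~\ref{claim:wk_coefs_decrease}), and they need a \emph{lower} bound $\sum_{|S|=k}(a^S)^2\ge 2^{1-k}/k!$ (Claim~\ref{claim:wk_large_em}); both are obtained via Newton--Girard, but applied to the variables $a_ix_i/Y$ and $a_i^2$ respectively, and both use only the weak anti-concentration $t\ge 4\sqrt{k}$ (Claim~\ref{claim:wk_big_t}).

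Your route is more direct: Newton--Girard on $a_ix_i$ gives $k!\,q(x)=He_k(Y)+O_k(a')|Y|^{k-3}$, so on $\{Y>t\}$ with $t$ large one has $k!\,q\ge \tfrac12 Y^k$ pointwise, whence $\langle q,f\rangle\ge \tfrac{1}{2k!}\be[Y^k\one\{Y>t\}]\ge\tfrac{1}{2k!}t^k\eps$. You then need only the trivial \emph{upper} bound $\|q\|_2^2\le 1/k!$, and the $(\log(1/\eps))^k$ now comes from $t^{2k}$, so the whole argument hinges on the stronger anti-concentration $t\ge c'\sqrt{\log(1/\eps)}$. This is true and is where the low-influence hypothesis enters (via $a'\le O(c_4)/\sqrt{\log(1/\eps)}$); your Cram\'er tilting sketch works, but note that the elementary Paley--Zygmund partitioning of Claim~\ref{claim:wk_big_t} already yields it with less effort---just take $\Theta(\log(1/\eps))$ groups instead of $\Theta(k)$ groups, which the bound on $a'$ permits. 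Once you have this, the iterated local Chernoff you invoke for the moment estimates is not even strictly necessary: since $Y>t$ on the event, $\be[Y^m\one\{Y>t\}]\le t^{m-k}\be[Y^k\one\{Y>t\}]$ for $m\le k$ already controls all correction terms by $O_k(1/t^2)$ times the main one.

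In short: the paper trades a stronger anti-concentration requirement for an Irwin--Hall smoothing machinery; you trade the smoothing for the stronger (but still easy) anti-concentration. Your approach is arguably cleaner and more conceptual for fixed $k$; the paper's has the advantage that the positivity $q\ge 0$ on $\{f=1\}$ is isolated as a standalone claim, which may be of independent use.
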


\subsubsection{The maximal influence of halfspaces}

The influence of the $k$th coordinate on a Boolean function $f$ is defined as
\[
I_k(f) \ddd \Pr_{x\sim \spm^{n}}[f(x) \neq f(x \oplus e_k)],
\]
where $x \oplus e_k$ is obtained from $x$ by flipping the $k$th coordinate. The \emph{total influence} of $f$ is $I(f) \ddd \sum_k I_k(f)$.

Influences have been studied very extensively in the last decades, and their applications span a wide variety of fields, including percolation theory~\cite{BKS99}, social choice theory~\cite{Kalai02,MOO10}, hardness of approximation~\cite{DS05,Hastad01}, correlation inequalities~\cite{KKM16,Talagrand96}, etc. (see the survey~\cite{KS06}).

At first sight, it may seem that the $k$th influence of a halfspace $\one(\sum_i a_i x_i >t)$ is `proportional' to the weight $a_k$. However, this is not the case; for example, the halfspace $f=\one(\frac{4}{5}x_1+ \frac{3}{5}x_2>0)$ is equal to the dictator function $f(x)=\one(x_1>0)$, and the influence of the second coordinate on it is zero. Hence, it is desirable to find a relation between the influences and the weights, to the extent that such a relation exists.

In~\cite[Theorem~36]{MORS10}, Matulef et al. proved a lower bound on the maximal influence of halfspaces:
\[
\max_k \{I_k(f)\} \geq \max_i \{a_i\} \cdot \mathbb{E}[f]^6 \log(1/\mathbb{E}[f]),
\]
and used it as another central component in their algorithm for testing halfspaces. The authors of~\cite{MORS10} conjectured that the lower bound can be improved to $\Omega(\max_i \{a_i\} \mathbb{E}[f])$. This conjecture was later proved by Dzindzalieta and G\"{o}tze~\cite{DG13}.

We determine the exact asymptotic order of the largest influence of a halfspace:
\begin{theorem}\label{Thm:Main-influence}
	There exist universal constants $c_1,c_2$ such that for any halfspace $f= \one(\sum_i a_i x_i > t)$ with $\be[f]\leq \frac{1}{2}$ and $a_1 \geq a_2 \geq \ldots \geq a_n \geq 0$, we have
	\[
	c_1 \mathbb{E}[f] \min \{1,a_1 \sqrt{\log(1/\mathbb{E}[f])}\} \leq \max_{i} \ii_i(f) \leq c_2 \mathbb{E}[f] \min \{1,a_1 \sqrt{\log(1/\mathbb{E}[f])}\}.
	\]
\end{theorem}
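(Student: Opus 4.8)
The plan is to reduce the statement to a one‑dimensional question about how the tail probability of $S:=\sum_i a_i x_i$ behaves in a small window around its $\eps$‑quantile $t$, and then to feed that into local Chernoff inequalities of the type quoted above. First I would normalize: replacing $x_i$ by $-x_i$ whenever $a_i<0$ changes neither $\be[f]=:\eps$ nor any influence, so we may assume $a_1\ge\cdots\ge a_n\ge 0$ and $f$ is monotone. Conditioning on $x_k$ then gives the exact identity $I_k(f)=\Pr[S_k\in(t-a_k,t+a_k]]$ with $S_k:=\sum_{j\ne k}a_j x_j$, together with $\Pr[S_k>t-a_k]=\eps+\tfrac12 I_k(f)$ and $\Pr[S_k>t+a_k]=\eps-\tfrac12 I_k(f)$; in particular $I_k(f)\le 2\eps$ for every $k$. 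A short manipulation of the tails of $S$ and $S_k$ (write $\Pr[S>s]=\tfrac12\Pr[S_k>s-a_k]+\tfrac12\Pr[S_k>s+a_k]$ and telescope) yields $I_k(f)\le 2\Pr[S\in(t-a_k,t+a_k]]\le 2\Pr[S\in(t-a_1,t+a_1]]$ for every $k$. So the theorem reduces to: an upper bound on the window probability $\Pr[S\in(t-a_1,t+a_1]]$, and a lower bound on $\max_i I_i(f)$, where $\Pr[S>t]=\eps\le\tfrac12$ and $a_1=\max_i a_i$.

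I would then split into two regimes. If $a_1\sqrt{\log(1/\eps)}\ge c_0$ for a large constant $c_0$, then $\min\{1,a_1\sqrt{\log(1/\eps)}\}\asymp 1$, so the upper bound is just $I_k(f)\le 2\eps$, and for the lower bound I apply the Devroye--Lugosi inequality to the (rescaled) tail of $S_1$: since $\Pr[S_1>t-a_1]\in[\eps,2\eps]$, this tail at least halves over a step of length $\asymp 1/\sqrt{\log(1/\eps)}$, which is $\le 2a_1$ once $c_0$ is large; comparing the values at $t-a_1$ and $t+a_1$ then forces $I_1(f)\ge\tfrac23\eps$. In the complementary regime $a_1\sqrt{\log(1/\eps)}<c_0$ I need the sharper estimate $\max_i I_i(f)=\Theta\big(\eps\,a_1\sqrt{\log(1/\eps)}\big)$. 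For the upper bound this is exactly $\Pr[S\in(t-a_1,t+a_1]]\le C\eps\,a_1\sqrt{\log(1/\eps)}$. For the lower bound I use Theorem~\ref{Thm:Main-W^1}: from $\tfrac14\sum_i I_i(f)^2=W^1(f)\ge c\,\eps^2\log(1/\eps)$ and $\sum_i a_i^2=1$, Cauchy--Schwarz gives $\max_i I_i(f)/a_i\ge 2\sqrt{W^1(f)}\ge 2\sqrt{c}\,\eps\sqrt{\log(1/\eps)}$; it then suffices to show that, when all $a_i$ are this small, the Chow ratios $\hat f(\{i\})/a_i=I_i(f)/(2a_i)$ are all within a bounded factor of one another, so that in particular $I_1(f)/a_1\gtrsim\eps\sqrt{\log(1/\eps)}$. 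This last point amounts to a local central limit estimate: removing one coordinate of weight $\le c_0/\sqrt{\log(1/\eps)}$ from $S$ changes the ``density'' of the partial sum at $t$ by at most a bounded factor.

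The real work, and the main obstacle, is proving the local Chernoff inequalities that feed the steps above, beyond the quoted Devroye--Lugosi statement. That statement is one‑sided: iterating it (the step only shrinks as the tail shrinks) gives $\Pr[S>t-\delta]\le\eps\cdot 2^{\delta\sqrt{\log(1/\eps)}/c}$, i.e.\ the tail cannot grow too fast as one moves left from $t$. What is still needed, in the small‑coefficient regime, is the opposite direction: (i) the tail cannot drop too fast either, $\Pr[S>t+\delta]\ge\eps\big(1-C\delta\sqrt{\log(1/\eps)}\big)$ for $\delta\lesssim 1/\sqrt{\log(1/\eps)}$; and (ii) $S$ has no large atom near $t$, say $\Pr[S=t]\le C\eps\,a_1\sqrt{\log(1/\eps)}$ --- a Littlewood--Offord‑type anti‑concentration estimate localized at the $\eps$‑quantile rather than at the median. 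Statement (ii) is genuinely indispensable: since $I_k(f)\ge\Pr[S_k=t]$, a large tail atom would break the theorem, which is precisely why the bounds cannot follow from the large‑deviation rate of $S$ alone and the hypothesis $\max_i a_i$ small must be used. Granting (i), (ii) and the one‑sided bound, the window estimate $\Pr[S\in(t-a_1,t+a_1]]\le C\eps\,a_1\sqrt{\log(1/\eps)}$ and the density comparison both fall out by combining them; what remains is purely bookkeeping of the universal constants, in particular choosing $c_0$ compatibly in the two regimes.
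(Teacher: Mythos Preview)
Your high-level plan—reduce to window estimates for $S=\sum a_ix_i$ near its $\eps$-quantile, split by the size of $a_1\sqrt{\log(1/\eps)}$, and invoke Theorem~\ref{Thm:Main-W^1}—is sound, and your large-regime argument works. The gaps are in the small-coefficient regime, and they are genuine rather than bookkeeping. For the upper bound, your (i) in the linear form $\Pr[S>t+\delta]\ge\eps(1-C\delta\sqrt{\log(1/\eps)})$ \emph{is} the window estimate $\Pr[S\in(t,t+\delta]]\le C\eps\,\delta\sqrt{\log(1/\eps)}$ you are trying to prove; the weaker claim that the tail needs $\Omega(1/\sqrt{\log(1/\eps)})$ to drop by a constant factor is provable (from Theorem~\ref{Thm:Main-W^1} plus the level-1 inequality) but does not control a window of width $2a_1\ll 1/\sqrt{\log(1/\eps)}$. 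Your (ii), $\Pr[S=t]\le C\eps\,a_1\sqrt{\log(1/\eps)}$, is far stronger than Littlewood--Offord (which gives only $O(a_1)$) and again reduces to the theorem. For the lower bound, Cauchy--Schwarz gives $\max_i I_i(f)/a_i\gtrsim\eps\sqrt{\log(1/\eps)}$ at \emph{some} index $j$, possibly with $a_j\ll a_1$, so you only conclude $\max_i I_i(f)\gtrsim a_j\,\eps\sqrt{\log(1/\eps)}$; upgrading $a_j$ to $a_1$ requires the Chow-ratio comparison, and ``removing one small coordinate changes the density by a bounded factor'' does not address the simultaneous change of window width from $2a_i$ to $2a_j$.

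The paper breaks both circularities with two combinatorial lemmas, each proved by an explicit measure-preserving injection on $\spm^n$: a relaxed log-concavity $F(d)F(b)\le F(c)F(b+d-c-2a_1)$ for the tail $F$ of $S$ (Lemma~\ref{lem:intro-log_concavity}), and an interval-monotonicity $\Pr[S\in(t-m,t+m]]\le 5\Pr[S\in(s-m,s+m]]$ for $0\le s\le t$ (Lemma~\ref{lem:decay_of_intervals}). From log-concavity one deduces that the ratio $I_1(f_s)/\mu(f_s)$ is, up to a constant, increasing in $s\ge0$ (Lemma~\ref{lem:relative_inf_increase}); the upper bound then follows by averaging $I_1(f_{t+s})$ over $s\in(0,\gamma)$, upper-bounding the average trivially by $O(a_1\eps/\gamma)$ via Fubini, and lower-bounding $\gamma$ as above. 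For the lower bound, a direct Fubini argument gives $\be_{s\sim U(0,\delta)}[I_1(f_{t+s})]\ge(a_1/\delta)\Pr[S\in[t+a_1,t+\delta-a_1]]$ (Lemma~\ref{lem:e_i_lower_bound}); Devroye--Lugosi bounds $\delta$, and interval-monotonicity (Corollary~\ref{cor:decay_of_influence}: $5I_1(f_t)\ge I_1(f_{t+s})$ for $s\ge0$) transfers the averaged bound back to $I_1(f_t)$—no Chow-ratio comparison needed.
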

In view of the aforementioned example, even the fact that there at all exists a fixed relation between the maximal influence of a halfspace and its largest weight is perhaps somewhat surprising.

\subsubsection{The vertex boundary of halfspaces}

A halfspace $f$ naturally corresponds to the set $\one_f = \{x:f(x)=1\}$, which may be viewed as a subset of the discrete cube graph. A natural isoperimetric question one may ask is: what is the relation between the size of this set (which is, of course, $2^n \cdot \mathbb{E}[f]$), and the size of its \emph{boundary}?
In finite graphs, there are two classical types of boundary of a set $S$: the \emph{edge boundary}, which consists of the edges that connect a vertex in $S$ with a vertex in the complement of $S$, and the \emph{vertex boundary}, which consists of the vertices in $S$ that have a neighbor outside $S$ (or, vice versa, of the vertices outside $S$ that have a neighbor in $S$).

It is easy to see that the size of the edge boundary of the set $\one_f$ is equal (up to normalization) to the \emph{total influence} $I(f) \ddd \sum_k I_k(f)$, and thus is usually easier to deal with. We show that for halfspaces, the asymptotic size of the vertex boundary $\partial(\one_f)$ admits a nice  expression in terms of $\mathbb{E}[f]$ and the maximal weight $|a_1|$.
\begin{theorem}\label{Thm:Main-boundary}
	There exist universal constants $c'_1,c'_2$ such that for any halfspace $f= \one(\sum_i a_i x_i >t)$ with $\be[f]\leq \frac{1}{2}$ and $a_1 \geq a_2 \geq \ldots \geq a_n \geq 0$, we have
	\[
	c'_1 \mathbb{E}[f] \min \{1,a_1 \sqrt{\log(1/\mathbb{E}[f])}\} \leq |\partial(\one_f)|/2^n \leq c'_2 \mathbb{E}[f] \min \{1,a_1 \sqrt{\log(1/\mathbb{E}[f])}\}.
	\]
\end{theorem}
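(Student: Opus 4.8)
The plan is to sandwich $|\partial(\one_f)|/2^n$ between two quantities we have already pinned down, invoking Theorem~\ref{Thm:Main-influence} at both ends. Write $\eps=\be[f]=\pr[\sum_i a_i x_i>t]$, and — after flipping signs of coordinates, which changes nothing — assume $a_1\ge\dots\ge a_n\ge 0$, so that $f$ is monotone nondecreasing. We read $\partial(\one_f)$ as the inner vertex boundary $\lbc x\in\one_f:x\oplus e_k\notin\one_f\text{ for some }k\rbc$; this is the pertinent notion for a monotone halfspace (for instance the outer vertex boundary of the $\AND$ function is larger by a factor of $n$), and the statement for the outer boundary follows by applying the result to the complementary halfspace.

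For the lower bound I would note that for each $k$ the set $\lbc x:x_k=1,\ f(x)=1,\ f(x\oplus e_k)=0\rbc$ lies in $\partial(\one_f)$ and has measure $\tfrac12 I_k(f)$, so $|\partial(\one_f)|/2^n\ge\tfrac12\max_k I_k(f)$; Theorem~\ref{Thm:Main-influence} then gives the lower bound with $c'_1=c_1/2$.

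For the upper bound I would use a restriction decomposition. Monotonicity gives $x\in\partial(\one_f)\iff \sum_i a_i x_i\in(t,\,t+2a_{k(x)}]$ with $k(x)=\min\lbc k:x_k=1\rbc$; conditioning on $k(x)=j$ (probability $2^{-j}$), the conditional probability of this event equals $I_j(g_j)$, the influence of coordinate $j$ on the restriction $g_j$ of $f$ to $\lbc x_1=\dots=x_{j-1}=-1\rbc$, so $|\partial(\one_f)|/2^n=\sum_j 2^{-j} I_j(g_j)$. Since $f$ is monotone, restricting coordinates to $-1$ only decreases the mean, so $\be[g_j]\le\eps\le\tfrac12$ and Theorem~\ref{Thm:Main-influence} applies to $g_j$, whose largest weight after normalization is $a_j/\sigma_j$ with $\sigma_j^2=\sum_{i\ge j}a_i^2$: $I_j(g_j)\le\max_i I_i(g_j)\le c_2\,\be[g_j]\min\lbc 1,(a_j/\sigma_j)\sqrt{\log(1/\be[g_j])}\rbc\le c_2\,\eps\min\lbc1,(a_j/\sigma_j)\sqrt{\log(1/\eps)}\rbc$, where the last step uses that $p\mapsto p\sqrt{\log(1/p)}$ is nondecreasing on $(0,\tfrac12]$. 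Summing over $j$ reduces the upper bound to the elementary inequality $\sum_{j\ge1}2^{-j}\min\lbc1,(a_j/\sigma_j)\lambda\rbc\le 3\min\lbc1,a_1\lambda\rbc$ for all $\lambda\ge0$, which I would prove by a case split: if $a_1\lambda\ge1$ the left side is below $1$; if $a_1\lambda<1$, bound it by $\lambda\sum_j 2^{-j}a_j/\sigma_j$ and split that sum at $j_0\approx 1/(2a_1^2)$ — for $j\le j_0$ one has $\sigma_j^2\ge1-(j-1)a_1^2\ge\tfrac12$, hence $a_j/\sigma_j\le\sqrt2\,a_1$, while for $j>j_0$ one uses $a_j/\sigma_j\le1$ and $\sum_{j>j_0}2^{-j}=2^{-j_0}\le 2^{-1/(2a_1^2)}\le a_1$.

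The heavy lifting is done by Theorem~\ref{Thm:Main-influence}, so the genuine work here is the bookkeeping: arranging the decomposition so that the $\sqrt{\log(1/\eps)}$ factors coming from the many restrictions do not accumulate (this is what the geometric weights $2^{-j}$ and the behaviour of the partial norms $\sigma_j$ buy us), and — the one point where the hypothesis $\be[f]\le\tfrac12$ is essential — verifying that each $g_j$ still has mean at most $\tfrac12$, which is exactly why we restrict to $-1$'s and why the statement is about the inner boundary. I expect this last elementary inequality, and getting the conditional‑probability identity $|\partial(\one_f)|/2^n=\sum_j 2^{-j}I_j(g_j)$ exactly right, to be the only mildly delicate steps. (One could alternatively bound $|\partial(\one_f)|/2^n\le\pr[t<\sum_i a_i x_i\le t+2a_1]$ and estimate this slab probability directly via the local Chernoff inequality of the abstract together with its lower‑bound companion, but the route above is cleaner.)
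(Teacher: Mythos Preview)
Your argument is correct, and the lower bound is exactly the paper's. For the upper bound you and the paper start from the same decomposition
\[
\vb{f}{1}=\sum_{j\ge 1} c_j,\qquad c_j=\Pr\bigl[x_1=\dots=x_{j-1}=-1,\ x_j=1,\ f(x)\ne f(x\oplus e_j)\bigr]=2^{-j}\,\ii_j(g_j),
\]
but then diverge. The paper does not invoke Theorem~\ref{Thm:Main-influence} term by term; instead it compares $c_j$ directly to $\ii_j(f)$ by showing, via Corollary~\ref{cor:decay_of_influence}, that among the $2^{j-1}$ conditional probabilities $b_j(\lambda)=\Pr[f(x)\ne f(x\oplus e_j)\mid x_{<j}=\lambda]$ the value at $\lambda=(-1,\dots,-1)$ is within a factor $5$ of the smallest, hence $c_j\le O(2^{-j})\,\ii_j(f)\le O(2^{-j})\,\ii_1(f)$ and summing gives the clean intermediate statement $\vb{f}{1}\le \tfrac{7}{4}\,\ii_1(f)$ (Proposition~\ref{lem:upper_boundary}), after which a single application of Theorem~\ref{Thm:Main-influence} finishes. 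Your route—applying Theorem~\ref{Thm:Main-influence} to every restriction $g_j$ and then summing $\sum_j 2^{-j}\min\{1,(a_j/\sigma_j)\lambda\}$—works, but it needs the extra elementary lemma you sketch and loses the sharper intermediate conclusion that the inner vertex boundary of a halfspace is within an absolute constant of its maximal influence. In short: both bound the same $c_j$, but the paper pulls the comparison down to $\ii_1(f)$ \emph{before} summing, whereas you carry the restriction parameters through the sum.
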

The theorem is proved by showing that for halfspaces, the vertex boundary is approximately equal to the largest influence, and then applying Theorem~\ref{Thm:Main-influence}. We note that other relations between the measure of the vertex boundary and influences were obtained by Talagrand~\cite{Tal97}.

\subsection{Noise sensitivity of biased functions and correlation with halfspaces}

A Boolean function is called \emph{noise sensitive} if flipping each of its input bits with a small probability affects its output `significantly'. Otherwise, it is called \emph{noise resistant}. Formally, the \emph{noise stability} of a function $f$ at noise rate $1-\rho$ is defined as
\begin{equation}\label{Eq:Noise0}
\mathbb{S}_{\rho}(f) = \mathrm{Cov}(f(x),f(y)),
\end{equation}
where $y$ is obtained from $x$ by independently keeping each coordinate of $x$ unchanged with probability $\rho$,
and replacing it by a random value with probability $1-\rho$. A sequence of functions $\{f_m:\{-1,1\}^{n_m} \rightarrow \{0,1\}\}$ is called \emph{asymptotically noise sensitive} if for any constant $\rho\in(0,1)$, we have $\lim_{m \rightarrow \infty} \mathbb{S}_{\rho}(f_m)=0$. For the sake of simplicity, we consider a single function $f$ and say that it is noise sensitive if $\mathbb{S}_{\rho}(f)=o_n(1)$, and is noise resistant otherwise.

Noise sensitivity is a fundamental property of Boolean functions that has been studied extensively over the last two decades. Its applications span several areas, including machine learning (e.g.,~\cite{DRST14,KOS08}), hardness of approximation (e.g.,~\cite{KKMO07,MOO10}), percolation theory (e.g.,~\cite{GPS10,SS10}), and social choice theory (e.g.,~\cite{Kalai02,MOO10}).

A main result of the seminal work of Benjamini, Kalai and Schramm~\cite{BKS99} that initiated the study of noise sensitivity, is that noise resistance is closely related to strong correlation with a halfspace, and to a property of the Fourier expansion. Specifically, they showed the following result.
\begin{theorem}[\cite{BKS99}]\label{Thm:BKS-aux}
	{\rm (a)} A monotone Boolean function $f$ is noise resistant if and only if $W^1(f)=\Omega(1)$.
	
	\medskip \noindent {\rm (b)} Any unbiased halfspace is noise resistant (and actually, satisfies a stronger property called `noise stability').
	
	\medskip \noindent {\rm (c)} For any noise resistant monotone Boolean function $f$, there exists an unbiased halfspace $g$ such that $\mathrm{Cov}(f,g) = \Omega(1)$.
\end{theorem}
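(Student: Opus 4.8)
The plan is to handle the three parts separately, with a short self‑contained argument for each of (b) and (c) and the hypercontractivity argument of Benjamini--Kalai--Schramm for the forward implication of (a).

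\textbf{Easy direction of (a), and part (b).} For any Boolean $f$ one has $\sss(f)=\sum_{S\neq\es}\rho^{|S|}\hf(S)^2\geq\rho\,\wo(f)$, so $\wo(f)=\Omega(1)$ forces $\sss(f)=\Omega_\rho(1)$; this uses no monotonicity and gives the ``only if'' direction of (a). For (b) it then suffices to prove $\wo(f)=\Omega(1)$ for every unbiased halfspace $f=\one(\sum_i a_ix_i>0)$ (tie‑broken if the hyperplane carries positive mass). Writing $Z=\sum_i a_ix_i$, which is symmetric, I would use $\sum_i a_i\hf(\{i\})=\be[fZ]=\be[Z^+]=\tfrac12\be|Z|$, then $\be|Z|\geq(\be[Z^2])^{3/2}/(\be[Z^4])^{1/2}\geq 1/\sqrt3$ by H\"older (as $\be[Z^2]=1$ and $\be[Z^4]\leq 3$), and Cauchy--Schwarz to conclude $\wo(f)\geq\big(\sum_i a_i\hf(\{i\})\big)^2\geq\tfrac1{12}$, a weak form of the Gotsman--Linial bound recalled above; hence $\sss(f)\geq\rho/12$. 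The stronger ``noise stability'' assertion is $\var(f)-\sss(f)=\tfrac12\pr[f(x)\neq f(y)]=O(\sqrt{1-\rho})$ uniformly over halfspaces, which is Peres's theorem on weighted majorities.

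\textbf{Part (c) from part (a).} Given a monotone noise‑resistant $f$, part (a) yields $\wo(f)\geq\delta>0$. Set $b_i=\hf(\{i\})\geq0$ (nonnegativity is monotonicity) and let $g=\one\big(\sum_i b_ix_i+\eps x_{n+1}>0\big)$ with $\eps>0$ small enough: $g$ is a monotone halfspace, exactly unbiased (the extra bit removes the atom of $\sum_i b_ix_i$ at $0$), and for $i\leq n$ the coefficient $\hat g(\{i\})$ is unchanged by the perturbation since the correction term is antisymmetric in $x$. Then, as in (b),
\[
\sum_i\hf(\{i\})\hat g(\{i\})=\sum_i b_i\hat g(\{i\})=\tfrac12\be\Big|\sum_i b_ix_i\Big|\geq\tfrac1{2\sqrt3}\Big(\sum_i b_i^2\Big)^{1/2}=\tfrac1{2\sqrt3}\sqrt{\wo(f)}\geq\tfrac1{2\sqrt3}\sqrt\delta.
\]
Since $f$ and $g$ are both monotone, Talagrand's correlation inequality $\cov(f,g)\geq c\,\varphi\big(\sum_i\hf(\{i\})\hat g(\{i\})\big)$, with $\varphi(x)=x/\log(e/x)$ increasing on $(0,1)$, gives $\cov(f,g)\geq c\,\varphi\big(\tfrac1{2\sqrt3}\sqrt\delta\big)=\Omega(1)$ (the argument lies in $(0,1)$ because $\wo(f)\leq\var(f)\leq\tfrac14$).

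\textbf{Hard direction of (a), and the main obstacle.} I would first reduce noise sensitivity to the Fourier weights: from $\sss(f)\leq\sum_{0<|S|\leq K}\hf(S)^2+\rho^K\var(f)$ and $\sss(f)\geq\rho^K\sum_{0<|S|\leq K}\hf(S)^2$ one sees that a function is noise sensitive exactly when $\wk{k}(f)\to0$ for every fixed $k$. Hence it suffices to prove, for monotone $f$ and each fixed $k$, an estimate $\wk{k}(f)\leq\psi_k(\wo(f))$ with $\psi_k(t)\to0$ as $t\to0$. The tool is hypercontractivity applied to the pivotality indicators: $D_if:=\one(i\text{ is pivotal for }f\text{ at }x)$ is $\{0,1\}$‑valued with $\be[D_if]=\ii_i(f)$, and for monotone $f$ its Fourier coefficients are $2\hf(S\cup\{i\})$ for $i\notin S$, so $\sum_i\wk{k-1}(D_if)=4k\,\wk{k}(f)$ and $\sum_i\ii_i(f)^2=4\wo(f)$. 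Applying the level‑$(k-1)$ inequality to each $D_if$ bounds $\wk{k-1}(D_if)$ by $C_k\big(\log(1/\ii_i(f))\big)^{k-1}\ii_i(f)^2$ once $\ii_i(f)$ is below a $k$‑dependent threshold, the $O(\wo(f))$ coordinates of larger influence contributing negligibly. \emph{The step I expect to be the main obstacle is controlling $\sum_i(\log(1/\ii_i(f)))^{k-1}\ii_i(f)^2$}: this is not bounded by $\wo(f)$ alone when the influence is spread thinly over very many coordinates, so the crude estimate fails and one must partition the coordinates into influence scales and iterate the derivative identity --- essentially the original BKS analysis. Finally, monotonicity is indispensable here: $\one(x_1x_2>0)$ has $\wo=0$ yet is noise resistant.
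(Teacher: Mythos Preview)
This theorem is quoted from \cite{BKS99} and is not proved in the present paper, so there is no proof here to compare your attempt against. Your outline is a correct and standard route to the result, with two small slips: you have swapped the labels ``if'' and ``only if'' in part~(a) (the inequality $\sss(f)\geq\rho\,\wo(f)$ gives the \emph{if} direction, $\wo(f)=\Omega(1)\Rightarrow$ noise resistant), and in (c) your tie-breaking halfspace $g$ lives on $\spm^{n+1}$ rather than $\spm^n$, which is harmless once $f$ is extended trivially but should be said.

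That said, the paper does prove a closely related statement, Proposition~\ref{prop:correlation_with_unbiased_ltf}: any \emph{Fourier} noise resistant $f$ (with no monotonicity assumed) correlates $\Omega(\mu(f))$ with an unbiased halfspace. The paper's method there is genuinely different from yours. Rather than pairing $f$ with $g_0=\sgn(f^{=1})$ and invoking Talagrand's correlation inequality, the paper computes $\cov(f,T_\rho g_0)$ for $\rho=\Theta(1/\sqrt{\log(1/\eps)})$, lower-bounds the degree-$1$ contribution via Khintchine--Kahane, and kills the higher-degree terms using the level-$k$ inequality; since $T_\rho g_0$ is a convex combination of unbiased halfspaces, one of them must achieve the bound. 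Your Talagrand-based argument is shorter and perfectly adequate for the monotone setting of Theorem~\ref{Thm:BKS-aux}(c), but it genuinely needs both $f$ and $g$ monotone; the paper's noise-operator argument dispenses with monotonicity of $f$ altogether, which is exactly what is required for the biased, possibly non-monotone setting the paper is after.
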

We note that in the non-monotone case the situation is more complex. Indeed, as was shown recently by Mossel and Neeman~\cite{MN16}, even the stronger assumption that $f$ is noise stable is not sufficient for guaranteeing the existence of a halfspace $g$ such that $\mathrm{Cov}(f,g)=\Omega(1)$.

\medskip

The definition of noise sensitivity is `not interesting' for highly biased functions (i.e., when $\mathbb{E}[f]$ is close to $0$ or to $1$), as any such function is clearly noise sensitive. Hence, it is natural to ask what should be the `right' definition of noise sensitivity for highly biased functions. Inspired by Theorem~\ref{Thm:BKS-aux}, we propose a `Fourier-theoretic' definition.

Note that Theorem~\ref{Thm:BKS-aux}(a) asserts that an unbiased monotone function is noise resistant if and only if its first-degree Fourier weight is, up to a constant factor, the maximum possible. For general functions, the aforementioned `level-1 inequality' asserts that $W^1(f) = O(\mathbb{E}[f]^2 \log(1/\mathbb{E}[f]))$. Based on this, we say that $f$ is noise resistant if $W^1(f)$ is within a constant factor of the maximum possible. Formally:
\begin{definition}
	A function $f\colon\{-1,1\}^n \rightarrow \{0,1\}$ is called \emph{Fourier noise resistant} if
\[
W^1(f) \geq c \mathbb{E}[f]^2 \log(1/\mathbb{E}[f]),
\]
for some universal constant $c$.
\end{definition}

Theorem~\ref{Thm:Main-W^1} allows us to claim that with respect to this definition, the close relation between noise resistance and strong correlation with a halfspace holds also for biased functions. Indeed, one direction (i.e., that any halfspace is Fourier noise resistant) is exactly the assertion of Theorem~\ref{Thm:Main-W^1}. In the converse direction, Mossel and Neeman~\cite[Proposition~3.2]{MN16} showed that for any Boolean function $f$, there exists a halfspace $g$ such that $\mathrm{Cov}(f,g) \geq \Omega(\wo(f) / \sqrt{\mathbb{E}[f](1-\mathbb{E}[f])})$. We show the following sharp bound, which is always stronger than the bound of~\cite{MN16} by the level-1 inequality.
\begin{theorem}\label{Thm:Main-Cor}
	For any Boolean function $f$, there exists a halfspace $g$ such that
	\begin{equation}\label{Eq:Intro-Cor1}
	\mathrm{Cov}(f,g) \geq c\sqrt{\wo(f) / \log(e/\wo(f))},
	\end{equation}
	where $c$ is an absolute constant. In particular, if $f$ is Fourier noise resistant and $\mathbb{E}[f] \leq 1/2$ then there exists a halfspace $g$ such that $\mathrm{Cov}(f,g) = \Omega(\mathbb{E}[f])$.
\end{theorem}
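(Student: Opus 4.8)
The plan is to take $g$ to be a threshold of the degree-$1$ part of $f$. Write $L(x)\ddd\sum_i\hf(\lbc i\rbc)x_i$ and $\sigma^2\ddd\wo(f)=\sum_i\hf(\lbc i\rbc)^2$; by Parseval $\sigma^2\le\be[f]\lbr1-\be[f]\rbr\le\tfrac14$, and since $\be[L]=0$ one has $\cov(f,L)=\be[fL]=\sigma^2$. If $\sigma=0$ (equivalently $\wo(f)=0$) the bound is trivial, so assume $\sigma>0$. For each $t\in\reals$, both $\one(L>t)$ and its complement $1-\one(L>t)=\one(-L>s)$ (for a suitable $s$, legitimate since $L$ has finite range on $\spm^n$) are halfspaces after normalizing the coefficient vector. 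Hence it suffices to prove that
\[
M\ddd\sup_{t\in\reals}\lba\cov\lbr f,\one(L>t)\rbr\rba
\]
satisfies $M\ge c\sqrt{\wo(f)/\log(e/\wo(f))}$: the supremum is a maximum (only finitely many distinct functions $\one(L>t)$ occur), and taking for $g$ the maximizing halfspace, complemented if the covariance is negative, gives $\cov(f,g)=M$.

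First I would plug the layer-cake identity $y=\int_0^\infty\lbr\one(y>t)-\one(y<-t)\rbr\,dt$ into $y=L(x)$ and take covariance against $f$:
\[
\sigma^2=\cov(f,L)=\int_0^\infty\lbr\cov\lbr f,\one(L>t)\rbr-\cov\lbr f,\one(L<-t)\rbr\rbr\,dt.
\]
Each of the two integrand terms is at most $M$ in absolute value by the definition of $M$ (using the finite range of $L$ for the second one), and is also at most $\pr[L>t]$, resp.\ at most $\pr[L<-t]$ (because for any event $E$ both $\be[f\one_E]$ and $\be[f]\pr[E]$ lie in $[0,\pr[E]]$). The decisive step is to control these probabilities by Hoeffding's inequality for $L=\sum_i\hf(\lbc i\rbc)x_i$ with $\sum_i\hf(\lbc i\rbc)^2=\sigma^2$, which gives the sub-Gaussian tail $\pr[L>t],\,\pr[L<-t]\le e^{-t^2/2\sigma^2}$. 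Hence the integrand is $\le2\min\lbr M,e^{-t^2/2\sigma^2}\rbr$; substituting $t=\sigma s$ and splitting $\int_0^\infty\min\lbr M,e^{-s^2/2}\rbr\,ds$ at $s_0\ddd\sqrt{2\log(1/M)}\ge1$ (as $M\le\tfrac14$), the part on $[0,s_0]$ contributes $Ms_0$ while the Gaussian tail on $[s_0,\infty)$ is at most $s_0^{-1}e^{-s_0^2/2}=M/s_0\le Ms_0$. This yields $\sigma^2\le4\sigma Ms_0$, that is,
\[
M\sqrt{\log(1/M)}\ge c_1\sigma\qquad\text{for an absolute constant }c_1>0.
\]

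It remains to invert this into a lower bound on $M$ of the required shape. The function $u\mapsto u\sqrt{\log(1/u)}$ is increasing on $(0,e^{-1/2})$ and $M\le\tfrac14<e^{-1/2}$, so I would argue by contradiction: if $M<u^*\ddd\sigma\big/\lbr C\sqrt{\log(e/\sigma^2)}\rbr$ (note $u^*\le\sigma/C<e^{-1/2}$), then $M\sqrt{\log(1/M)}<u^*\sqrt{\log(1/u^*)}$, while $\log(1/u^*)=\log C+\tfrac12\log\log(e/\sigma^2)+\log(1/\sigma)\le(\log C+1)\log(e/\sigma^2)$ (using $\log(1/\sigma)\le\tfrac12\log(e/\sigma^2)$, $\log\log(e/\sigma^2)\le\log(e/\sigma^2)$, and $\log(e/\sigma^2)\ge1$), so $u^*\sqrt{\log(1/u^*)}\le\sigma\sqrt{\log C+1}/C<c_1\sigma$ once $C$ is a large enough absolute constant, contradicting the previous display. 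Therefore $M\ge u^*=\tfrac1C\sqrt{\wo(f)/\log(e/\wo(f))}$, which is \eqref{Eq:Intro-Cor1} with $c=1/C$. For the last assertion, suppose $f$ is Fourier noise resistant, so $\wo(f)\ge c_0\eps^2\log(1/\eps)$ with $\eps\ddd\be[f]\le\tfrac12$; since $w\mapsto\sqrt{w/\log(e/w)}$ is increasing and $\log(1/\eps)\ge\log2$, substituting $w=c_0\eps^2\log(1/\eps)$ and estimating $\log(e/w)\le A+2\log(1/\eps)$ for an absolute constant $A$ gives $w/\log(e/w)=\Omega(\eps^2)$, hence $\cov(f,g)=\Omega(\eps)$.

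The conceptual heart is the pairing of the layer-cake identity with the sub-Gaussian tail of $L$, and this is exactly where the logarithmic gain over the naive bound comes from: replacing Hoeffding's inequality by Chebyshev's ($\pr[L>t]\le\sigma^2/t^2$) in the integral estimate would only yield $\cov(f,g)\gtrsim\wo(f)$. I expect the main technical nuisance to be the inversion step and the bookkeeping of constants near $\be[f]=\tfrac12$, but both are routine, and no hypercontractivity or Fourier machinery beyond Parseval's identity seems to be needed.
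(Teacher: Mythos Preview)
Your proposal is correct and follows essentially the same approach as the paper: both express $\wo(f)=\cov(f,L)$ via the layer-cake identity as an integral of $\cov(f,\one(L>t))$, bound the tails by Hoeffding's inequality, and extract a good threshold $t$ by averaging. Your presentation is a slight repackaging (working directly with $M=\sup_t|\cov(f,\one(L>t))|$ and the pointwise bound $\min(M,e^{-t^2/2\sigma^2})$, then inverting $M\sqrt{\log(1/M)}\gtrsim\sigma$), whereas the paper truncates the integral at $\pm ra$ with $r=\sqrt{6\log(2/a)}$ and pigeonholes on the remaining interval, but the underlying argument is the same.
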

Note that the correlation asserted in the theorem is clearly within a constant factor of the maximum possible, as $\mathrm{Cov}(f,g) \leq \mathbb{E}[f]$ for any $f,g$. An interesting feature of Theorem~\ref{Thm:Main-Cor} is that unlike the classical result of~\cite{BKS99}, the strong correlation with a halfspace is guaranteed even if the function $f$ is not monotone. This is somewhat surprising, as most known correlation bounds (such as FKG-type inequalities~\cite{FKG71}) hold only for monotone functions.

\medskip

Finally, we show that for monotone functions, strong correlation with a halfspace is implied also by a `probabilistic' notion of noise resistance. Here, the rate of noise we consider is $1-c/\log(1/\mathbb{E}[f])$, for a fixed `small' constant $c$ (i.e., $\rho=c/\log(1/\mathbb{E}[f])$). It is easy to show (see Section~\ref{sec:noise}) that for this noise rate, any function $f$ satisfies $\mathbb{S}_\rho(f) = O(\mathbb{E}[f]^2)$. Recalling that the classical definition of noise resistance is $\mathbb{S}_\rho(f) = \Omega(1)$, which is within a constant factor of the maximal possible value, a natural definition of noise resistance in our setting is the requirement $\mathbb{S}_\rho(f) = \Omega(\mathbb{E}[f]^2)$.
\begin{proposition}\label{Prop:Prob-noise}
	There exists a universal constant $c > 0$ such that for any monotone function $f\colon \spm^n \rightarrow \zo$ with $\mathbb{E}[f] \leq \frac{1}{2}$, if $\mathbb{S}_{c/\log(1/\mathbb{E}[f])}(f) = \Omega(\mathbb{E}[f]^2)$, then $W^1(f) =\Omega(\mathbb{E}[f]^2 \log(1/\mathbb{E}[f]))$, and consequently, there exists a halfspace $g$ such that $\mathrm{Cov}(f,g) = \Omega(\mathbb{E}[f])$.
\end{proposition}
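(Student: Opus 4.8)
\emph{Proof idea.} Write $\alpha=\be[f]$ and $\rho=c/\log(1/\alpha)$. The plan is to expand the noise stability in the Fourier basis and show that at this particular noise rate the first-degree weight must account for a constant fraction of it. The starting point is the identity
\[
\sss(f)=\stab{\rho}(f)-\alpha^2=\sum_{k\ge1}\rho^k\,\wk{k}(f),
\]
so that $\wo(f)$ enters with coefficient $\rho$. The hypothesis says $\sss(f)\ge c_0\alpha^2$, where $c_0$ is the constant hidden in the $\Omega(\cdot)$, so it suffices to show that the levels $k\ge2$ contribute at most $\tfrac12 c_0\alpha^2$; this forces $\rho\wo(f)\ge\tfrac12 c_0\alpha^2$, i.e.\ $\wo(f)\ge\tfrac{c_0}{2c}\,\alpha^2\log(1/\alpha)=\Omega(\alpha^2\log(1/\alpha))$, which is precisely Fourier noise resistance. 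Given this, the last clause of the proposition is immediate from Theorem~\ref{Thm:Main-Cor}: it supplies a halfspace $g$ with $\cov(f,g)\ge c'\sqrt{\wo(f)/\log(e/\wo(f))}$, and since $t\mapsto\sqrt{t/\log(e/t)}$ is non-decreasing on $(0,1)$ while $\log(e/\wo(f))=O(\log(1/\alpha))$ when $\wo(f)=\Omega(\alpha^2\log(1/\alpha))$ and $\alpha\le\tfrac12$, we conclude $\cov(f,g)=\Omega(\alpha)$.

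It remains to bound $\sum_{k\ge2}\rho^k\wk{k}(f)$, and I would split the sum at $k_0\asymp\log(1/\alpha)$, namely at the largest $k$ for which the level-$k$ inequality applies. For $2\le k\le k_0$, the level-$k$ inequality (\cite[Chapter~9]{O'D14}) gives $\wk{k}(f)\le(O(\log(1/\alpha))/k)^k\alpha^2$, whence
\[
\rho^k\wk{k}(f)\le\left(\frac{O(1)\cdot c}{k}\right)^k\alpha^2\le(O(c))^k\,\alpha^2\qquad(k\ge2),
\]
and summing this (essentially geometric) series yields $\sum_{2\le k\le k_0}\rho^k\wk{k}(f)=O(c^2)\,\alpha^2$. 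For $k>k_0$ I would use only the trivial bound $\wk{k}(f)\le\be[f^2]=\alpha$ together with $\rho<1$, so that $\sum_{k>k_0}\rho^k\wk{k}(f)\le\alpha\,\rho^{k_0}/(1-\rho)$; since $\rho^{k_0}=\alpha^{\Theta(\log(1/\rho))}$ and $\log(1/\rho)\to\infty$ as $c\to0$, this term is at most $\epsilon(c)\,\alpha^2$ with $\epsilon(c)\to0$. Altogether $\sum_{k\ge2}\rho^k\wk{k}(f)\le\epsilon'(c)\,\alpha^2$ with $\epsilon'(c)\to0$ as $c\to0$, which drops below $\tfrac12 c_0\alpha^2$ as soon as the universal constant $c$ is fixed small enough relative to $c_0$. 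This completes the argument.

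I do not expect a real obstacle: modulo the earlier theorems the proposition is in essence a Fourier-weight bookkeeping exercise. The one point that genuinely has to work --- and the reason the noise rate is taken to be $\rho=\Theta(1/\log(1/\alpha))$ rather than a fixed constant --- is the cancellation in the second display: the factor $\rho^k\approx(c/\log(1/\alpha))^k$ exactly absorbs the $(\log(1/\alpha))^k$ that the level-$k$ inequality costs, leaving $\sum_k(O(c)/k)^k$, which is convergent and arbitrarily small for small $c$; at a constant noise rate this cancellation disappears and the level-$1$ term need not dominate. Beyond this, only routine care is required: when $\alpha$ is bounded away from $0$ the quantity $\log(1/\alpha)$ is itself a constant, but $\rho\le c/\log2$ is still small, so every estimate above persists; and one should pin down the exact constant and range of validity of the version of the level-$k$ inequality being quoted. (We note in passing that monotonicity of $f$ is not used anywhere in this argument, since both the Fourier identity above and Theorem~\ref{Thm:Main-Cor} hold for an arbitrary Boolean function.)
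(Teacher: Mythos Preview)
Your argument is correct and takes a genuinely different route from the paper. The paper's proof does \emph{not} use the level-$k$ inequality to bound the higher levels; instead it invokes the quantitative BKS theorem of Keller--Kindler~\cite{KK10}, which controls $W^k(f)$ in terms of $W^1(f)$ for \emph{monotone} $f$:
\[
W^k(f)\le\frac{5e}{k}\,W^1(f)\Bigl(\frac{2e\log(k/W^1(f))}{k-1}\Bigr)^{k-1}.
\]
Writing $W^1(f)=\eps^\alpha$, the paper sums these bounds and shows that unless $\alpha\le O(1)$ one reaches a contradiction with $\sss(f)=\Omega(\eps^2)$. By contrast, you bound each $W^k(f)$ directly via the level-$k$ inequality in terms of $\eps=\be[f]$, and observe that the factor $\rho^k$ with $\rho=c/\log(1/\eps)$ cancels the $(\log(1/\eps))^k$ exactly, leaving a convergent series $\sum_{k\ge2}(O(c))^k$. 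This is the same cancellation the paper exploits in the paragraph \emph{preceding} the proposition (``Determining the `right' rate of noise''), but the paper then switches tools for the proof itself.

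Your approach is both simpler and, as you point out, strictly more general: the level-$k$ inequality and Theorem~\ref{Thm:Main-Cor} hold for arbitrary Boolean functions, so monotonicity is never used, whereas the paper's route through~\cite{KK10} genuinely requires it. One small caveat worth tightening: in your final sentence you choose $c$ ``small enough relative to $c_0$'', which reads as if $c$ depends on the hypothesis constant. To match the stated quantifier order (``there exists a universal constant $c$''), fix $c$ once so that $\epsilon'(c)$ is an absolute constant, and let the conclusion constant absorb the dependence on $c_0$; the paper's proof has the same informality here, so this is cosmetic rather than a gap.
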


\subsection{Local Chernoff Inequalities}

Tail estimates for weighted sums of independent random variables are among the most frequently used probabilistic tools in combinatorics and
theoretical computer science. A standard example is Hoeffding's inequality which asserts that if $\{x_i\}_{i=1}^n$ are independent mean-zero random
variables with $\forall i\colon |x_i| \leq 1$ and $\{a_i\}_{i=1}^n$ are real numbers that satisfy $\sum_i a_i^2 \leq 1$, then for any $t>0$,
\[
\Pr \left[\sum_i a_i x_i > t \right] \leq \exp(-t^2/2).
\]
In the commonly-studied case where each $x_i$ is uniformly distributed in $\{-1,1\}$ (also called Rademacher random variables), stronger bounds
can be obtained, which essentially state that $\sum a_i x_i$ is distributed `like' a Gaussian random variable. In particular, there exists
a constant $c$ such that for any $t>0$,
\begin{equation}\label{Eq:Intro-tail1}
\Pr \left[\sum_i a_i x_i > t \right] \leq c \Pr[Z>t],
\end{equation}
where $Z \sim N(0,1)$. (This is a result of Eaton~\cite{Eaton74};
the `correct' value of $c$ was recently determined by Bentkus and Dzindzalieta~\cite{BD15} to be $\approx 3.178$.) This phenomenon is also demonstrated by the Central Limit Theorem, or its more quantitative form, the Berry-Esseen Theorem (see, e.g.,~\cite{Fel68}), which implies that for any interval $I$,
\begin{equation}\label{Eq:Intro-tail2}
\left|\Pr \left[\sum_i a_i x_i \in I \right] - \Pr [Z \in I] \right| \leq c' \sum_{i} |a_{i}|^{3} \leq c' \max_i \{|a_i|\},
\end{equation}
where $c'$ is an absolute constant. (The claim holds, e.g., for $c'=1$; the best currently known bound on $c'$ was obtained by Shevtsova~\cite{She13}).
The `local Chernoff inequalities' we consider in this paper assert that the \emph{rate of decay} of $\pr[\sum a_i x_i > t]$ as a function of $t$ is also essentially equal to that of a Gaussian random variable $Z \sim N(0,1)$.

\subsubsection{A local Chernoff inequality of Devroye and Lugosi, via a general method of Benjamini, Kalai, and Schramm}

In a remark ending their seminal paper on the variance of first passage percolation~\cite{BKS03}, Benjamini et al. suggested a general method for deriving `local' tail estimates for random variables from hypercontractive inequalities. Essentially, in order to obtain a local tail estimate for $f$, one considers the function $g_t(x)=\max(f(x),t)$, where $t$ is chosen such that $\pr[f>t]=\eps$. Then one uses a theorem of Talagrand~\cite{Tal94} (Theorem~\ref{Thm:Tal} below, whose proof relies on hypercontractivity) to show that $\var(g_t)$ is `small' (as a function of $\eps$), and deduces an upper bound on the minimal $\delta$ such that $\pr(f>t+\delta)\leq \eps/2$ using Chebyshev's inequality.

In~\cite{DL08}, Devroye and Lugosi developed the method of~\cite{BKS03} and used it to obtain various local tail bounds. In particular, applying the method to the function $f=\sum a_i x_i$, they proved the following tail estimate:
\begin{theorem}[Devroye and Lugosi, 2008]\label{thm:intro-strong_chernoff}
	Let $\lbc x_i\rbc$ be independent random variables uniformly distributed in $\spm$, and let $a_i\in\preals$ be such that $\sum_i a_{i}^{2}=1$. There exists a universal constant $c>0$ such that if $t\geq 0$ and $\eps=\pr\lbs \sum_{i} a_i x_i > t\rbs$, then $\pr[\sum_{i} a_i x_i>t+\delta]\leq \frac{\eps}{2}$, for $\delta\leq \frac{c}{\sqrt{\log(1/\eps)}}$.
\end{theorem}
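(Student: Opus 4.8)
The plan is to follow the method of Benjamini, Kalai, and Schramm \cite{BKS03} as developed by Devroye and Lugosi \cite{DL08}, built around Talagrand's variance inequality for truncated functions. Write $X = \sum_i a_i x_i$, fix $t \geq 0$ with $\pr[X > t] = \eps$, and consider the truncated random variable $g = g_t \ddd \max(X, t)$, equivalently $g = t + (X-t)^+$. The key analytic input is Talagrand's theorem (Theorem~\ref{Thm:Tal} in the paper, which rests on hypercontractivity): it bounds $\var(g)$ in terms of a sum of squared ``influences'' $\sum_i \| \partial_i g \|_2^2 / \log(e / \| \partial_i g \|_2^2)$ — here $\partial_i g$ is the discrete derivative in direction $i$. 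First I would estimate each discrete derivative: flipping $x_i$ changes $X$ by $2a_i$, so $\partial_i g$ is supported on the event that this flip moves $X$ across the threshold $t$ (or within the region $X > t$), whence $\| \partial_i g \|_2^2 = O(a_i^2 \cdot \pr[X \in (t - 2a_i, t + 2a_i)])$, and crucially $\| \partial_i g \|_2^2 = O(a_i^2 \eps)$ after also using monotonicity in the region above $t$. Feeding this into Talagrand's inequality, and using $\sum_i a_i^2 = 1$ together with the concavity of $x \mapsto x / \log(e/x)$, I expect to arrive at
\[
\var(g_t) = O\!\left( \frac{\eps}{\log(1/\eps)} \right).
\]

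Given this variance bound, the conclusion is a Chebyshev argument. Since $g_t = t$ on the event $\{X \leq t\}$, which has probability $1 - \eps$, we have $\be[g_t] - t = \be[(X-t)^+] \leq \sqrt{\var(g_t)}$ roughly, and more to the point, if $\pr[X > t + \delta] > \eps/2$ then $g_t$ exceeds $\be[g_t]$ by at least a constant multiple of $\delta$ on an event of probability $\gtrsim \eps$ (one has to be slightly careful: compare $g_t$ to the value $t$, noting $\pr[g_t > t + \delta] = \pr[X > t+\delta]$ and $\be[g_t - t] \leq O(\sqrt{\var(g_t)} \cdot \text{something})$ — the cleanest route is to note $\var(g_t) \geq \pr[X > t+\delta] \cdot \delta^2 \cdot \pr[X \leq t] \gtrsim \eps \delta^2$ once $\pr[X>t+\delta] \geq \eps/2$, since $g_t$ takes the value $t$ with probability $1-\eps \geq 1/2$ and the value $> t+\delta$ with probability $\geq \eps/2$, forcing a spread). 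Combining $\eps \delta^2 = O(\var(g_t)) = O(\eps / \log(1/\eps))$ yields $\delta^2 = O(1/\log(1/\eps))$, i.e. $\delta \leq c / \sqrt{\log(1/\eps)}$, which is exactly the claim (and one iterates or contraposes to get the statement in the form ``$\pr[X > t+\delta] \leq \eps/2$ for all $\delta \leq c/\sqrt{\log(1/\eps)}$'').

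The main obstacle I anticipate is the derivative estimate and its insertion into Talagrand's inequality: one needs $\| \partial_i g_t \|_2^2$ bounded both by $O(a_i^2 \eps)$ (to exploit the bias) and the function $x/\log(e/x)$ is concave but one must control the sum $\sum_i \| \partial_i g_t\|_2^2$, which is $O(\eps \sum_i a_i^2) = O(\eps)$, so that after applying Jensen/concavity across the $n$ coordinates the logarithmic denominator becomes $\log(e n / \eps)$-ish rather than the desired $\log(1/\eps)$ — handling the dependence on $n$ requires an additional idea, e.g. first reducing to the case where all $a_i$ are comparable, or grouping coordinates dyadically by the size of $a_i$ and using that only $O(\log(1/\eps))$ scales contribute meaningfully (coordinates with $a_i^2 \ll \eps / \log(1/\eps)$ can be absorbed). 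This dyadic bookkeeping, together with verifying that the hypotheses of Talagrand's theorem are met for the truncated $g_t$, is where the real work lies; the Chebyshev endgame is routine.
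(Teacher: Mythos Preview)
Your overall strategy is exactly the paper's: truncate to $g_t = \max(X,t)$, bound $\var(g_t)$ via Talagrand's inequality, and finish with Chebyshev. Your Chebyshev endgame is fine. But the ``main obstacle'' you anticipate is an artifact of misstating Talagrand's theorem, and the dyadic bookkeeping you propose is unnecessary.

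Talagrand's inequality (Theorem~\ref{Thm:Tal}) does \emph{not} have $\log(e/\|L_i g\|_2^2)$ in the denominator; it has $1 + \log\bigl(\|L_i g\|_2 / \|L_i g\|_1\bigr)$. This is the crucial point. Since $L_i g_t$ is supported on an event of probability at most $2\eps$ (namely, $\{g_t(x) > t\} \cup \{g_t(x \oplus e_i) > t\}$), Cauchy--Schwarz gives
\[
\|L_i g_t\|_1 \leq \sqrt{\pr[L_i g_t \neq 0]}\,\|L_i g_t\|_2 \leq \sqrt{2\eps}\,\|L_i g_t\|_2,
\]
so the denominator is at least $\log(1/\sqrt{2\eps}) = \Omega(\log(1/\eps))$ \emph{uniformly in $i$}, with no $n$-dependence whatsoever. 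Combined with your correct numerator bound $\|L_i g_t\|_2^2 \leq 2\eps a_i^2$, each term is $O(\eps a_i^2/\log(1/\eps))$, and summing over $i$ using $\sum_i a_i^2 = 1$ gives $\var(g_t) = O(\eps/\log(1/\eps))$ immediately. No Jensen, no concavity across coordinates, no grouping by scale.

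In short: once you write down the correct form of Talagrand's inequality, the variance bound is a two-line computation, and the proof is complete exactly as the paper does it.
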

This shows that the `relative' decay of the tail probability $\pr\lbs \sum_{i} a_i x_i > t\rbs$ is essentially equal to that of a Gaussian random variable $Z$. Indeed, an easy computation yields that if for some $t \geq 0$ we have $\Pr[Z>t] = \eps$, then the minimal $\delta$ such that $\Pr[Z>t+\delta] \leq \epsilon/2$ is of order $\Theta\lbr 1/\sqrt{\log(1/\eps)}\rbr$.

Theorem~\ref{thm:intro-strong_chernoff} implies that if for some $t>0$, the probability $\Pr[\sum_i a_i x_i >t]$ is much smaller than the Gaussian-like bound provided by~\eqref{Eq:Intro-tail1}, then for any $t'>t$, the probability $\Pr[\sum_i a_i x_i >t']$ will `remain' much smaller than that of a Gaussian random variable. The theorem is tight up to a constant factor, e.g., for $X=\sum_{i=1}^n \frac{1}{\sqrt{n}}x_i$ where $n$ is sufficiently large; this follows immediately from~\eqref{Eq:Intro-tail2}, using the exact rate of decay of the Gaussian distribution.

Following the notation of~\cite{DL08} where such estimates were called `local tail bounds', we refer to Theorem~\ref{thm:intro-strong_chernoff} and its variants as \emph{local Chernoff inequalities}.\footnote{We note that possibly, the name `Hoeffding' should be used here instead of `Chernoff'. However, as it is quite common to call all results of this type `Chernoff-type inequalities', we prefer to use this name.}

\subsubsection{Refined variants, via log-concavity}

For our applications, we will need a refined inequality, which takes into consideration the weights $a_i$:
\begin{theorem}\label{thm:intro-strong_chernoff2}
	Let $\lbc x_i\rbc$ be independent random variables uniformly distributed in $\spm$, let $a_i\in\preals$ be such that $\sum_i a_{i}^{2}=1$, and let $t\geq 0$. Denote $\eps=\pr\lbs \sum_{i} a_i x_i > t\rbs$, and let $\delta$ be minimal such that $\pr[\sum_{i} a_i x_i>t+\delta]\leq \frac{\eps}{2}$.
	
	If $B,S$ is any partition of $\{1,2,\ldots,n\}$ (which corresponds to `big' and `small' values of the $a_i$'s), then one of the following holds:
	\begin{itemize}
		\item $\lba B\rba\geq \frac{1}{2}\log(1/\eps)$, or
		\item $\delta\leq c\sqrt{\frac{\sum_{i\in S} a_i^2}{\log(1/\eps)}}$,
	\end{itemize}
where $c$ is a universal constant.
\end{theorem}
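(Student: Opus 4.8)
The plan is to derive the statement from the unrefined local Chernoff inequality (Theorem~\ref{thm:intro-strong_chernoff}) by conditioning on the ``big'' coordinates. Write $X=\sum_i a_ix_i=X_B+X_S$ with $X_B=\sum_{i\in B}a_ix_i$, $X_S=\sum_{i\in S}a_ix_i$ and $\sigma_S^2:=\sum_{i\in S}a_i^2$, and assume $\eps>0$ (else the claim is vacuous). If $|B|\geq\tfrac12\log(1/\eps)$ the first alternative holds, so assume $|B|<\tfrac12\log(1/\eps)$; and if $B=\es$ (so $\sigma_S^2=1$) the asserted bound is precisely Theorem~\ref{thm:intro-strong_chernoff}, so assume $B\neq\es$, hence $1\leq|B|<\tfrac12\log(1/\eps)$ and $\eps<e^{-2}$. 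This also forces $\sigma_S>0$: otherwise $X=X_B$ takes at most $2^{|B|}$ values, each of probability $\geq 2^{-|B|}$, so $\eps\geq 2^{-|B|}$, giving $\log(1/\eps)\leq|B|\log 2<|B|$, which contradicts $|B|<\tfrac12\log(1/\eps)$.

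For $\tau\in\spm^{B}$ put $c_\tau=\sum_{i\in B}a_i\tau_i$, $u_\tau=t-c_\tau$ and $\eps_\tau=\Pr[X_S>u_\tau]$. Since the $x_i$ are independent, conditioning on $x_B=\tau$ gives, for every $s$,
\[
\Pr[X>t+s]=\mathbb{E}_\tau\big[\Pr[X_S>u_\tau+s]\big],\qquad\text{in particular}\qquad \eps=\mathbb{E}_\tau[\eps_\tau]=2^{-|B|}\sum_{\tau\in\spm^{B}}\eps_\tau .
\]
The one place where the hypothesis $|B|<\tfrac12\log(1/\eps)$ is used is the observation that this last identity forces \emph{every} conditional tail probability to be polynomially small in $\eps$: since $\eps\geq 2^{-|B|}\eps_\tau$ for each $\tau$,
\[
\eps_\tau\ \leq\ 2^{|B|}\,\eps\ <\ \eps^{\,c_0},\qquad\text{so}\qquad \log(1/\eps_\tau)\ >\ c_0\log(1/\eps),
\]
where $c_0:=1-\tfrac{\log 2}{2}\in(\tfrac12,1)$ and we used $2^{|B|}<2^{\frac12\log(1/\eps)}=\eps^{-(\log 2)/2}$. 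In particular $\eps_\tau<\eps^{c_0}<\tfrac12$ (as $\log(1/\eps)>2|B|\geq 2>\tfrac{\log 2}{c_0}$), so, $X_S$ being symmetric, $u_\tau\geq 0$ whenever $\eps_\tau>0$.

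Now put $\delta_*:=\dfrac{c_{\mathrm{DL}}}{\sqrt{c_0}}\,\sqrt{\dfrac{\sum_{i\in S}a_i^2}{\log(1/\eps)}}$, with $c_{\mathrm{DL}}$ the constant of Theorem~\ref{thm:intro-strong_chernoff}. Fix $\tau$ with $\eps_\tau>0$ and apply Theorem~\ref{thm:intro-strong_chernoff} to the normalized sum $X_S/\sigma_S=\sum_{i\in S}(a_i/\sigma_S)x_i$ (nonnegative coefficients, squares summing to $1$) at the threshold $u_\tau/\sigma_S\geq 0$: this yields $\Pr\big[X_S/\sigma_S>u_\tau/\sigma_S+c_{\mathrm{DL}}/\sqrt{\log(1/\eps_\tau)}\big]\leq\eps_\tau/2$. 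Since $\log(1/\eps_\tau)>c_0\log(1/\eps)$ we have $c_{\mathrm{DL}}/\sqrt{\log(1/\eps_\tau)}<c_{\mathrm{DL}}/\sqrt{c_0\log(1/\eps)}=\delta_*/\sigma_S$, and $s\mapsto\Pr[X_S>s]$ is non-increasing, so $\Pr[X_S>u_\tau+\delta_*]\leq\eps_\tau/2$; this also holds trivially when $\eps_\tau=0$. Averaging over $\tau$ and using the display above,
\[
\Pr[X>t+\delta_*]=\mathbb{E}_\tau\big[\Pr[X_S>u_\tau+\delta_*]\big]\leq\mathbb{E}_\tau[\eps_\tau/2]=\eps/2 ,
\]
so the minimal $\delta$ with $\Pr[X>t+\delta]\leq\eps/2$ satisfies $\delta\leq\delta_*$, which is the second alternative with $c=c_{\mathrm{DL}}/\sqrt{c_0}$.

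The only genuinely substantive step is the middle one: recognizing that, because $|B|$ is small, conditioning on the (at most $2^{|B|}<\eps^{-O(1)}$) configurations of the big coordinates automatically places every conditional threshold $u_\tau$ deep enough in the tail of $X_S$ that $\log(1/\eps_\tau)$ is still $\Omega(\log(1/\eps))$ --- which is exactly the input Theorem~\ref{thm:intro-strong_chernoff} needs, applied uniformly over $\tau$. Everything else is bookkeeping: fixing the convention for ``$\log$'' (it only affects the constant $c_0$), disposing of the degenerate cases $B=\es$ and $\sigma_S=0$, and checking that the rescaled threshold $u_\tau/\sigma_S$ is nonnegative so that the unrefined inequality applies.
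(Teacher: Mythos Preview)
Your proof is correct and follows essentially the same approach as the paper: condition on the configurations $\tau$ of the $B$-coordinates, use $|B|<\tfrac12\log(1/\eps)$ to deduce that every conditional tail $\eps_\tau$ is at most $\eps^{\Omega(1)}$, and then apply the unrefined local Chernoff inequality (Theorem~\ref{thm:intro-strong_chernoff}) to the normalized sum $X_S/\sigma_S$ at each conditional threshold. The only cosmetic differences are that the paper uses the looser bound $\eps_\tau\le\eps^{1/2}$ and concludes via $\delta\le\max_\tau\delta_\tau$, whereas you track the exact exponent $c_0=1-\tfrac{\log 2}{2}$ and average directly; your treatment of the degenerate cases $B=\es$ and $\sigma_S=0$ is also slightly more careful.
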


\noindent We also prove the following inequality, which applies in the slightly more general case of bounded symmetric random variables:
\begin{theorem}\label{thm:intro-weak_chernoff}
	Let $X=\sum x_{i}$, where $\{x_i\}$ are independent symmetric (around $0$) random variables with $\lba x_i\rba \leq a_i$ almost surely, and let $F(t)=\pr\lbs X>t\rbs$. Set $m=2\max_{i}\lbc a_{i}\rbc $, and let $c\in(0,1)$. If $\epsilon=F(t)$ for some $t\geq0$, and $\delta\geq0$ is minimal such that $F(t+\delta)\leq c\cdot F(t)$, then we have
	\[
	\delta\leq O\lbr m+\log\lbr 2/c\rbr \sqrt{\frac{\sum_{i}a_{i}^{2}}{\log(1/\epsilon)}}\rbr.
	\]
\end{theorem}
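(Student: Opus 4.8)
The plan is to prove that the tail $F(t)=\pr[X>t]$ decays, from the level $\epsilon$ onward, at essentially the Gaussian rate $\sqrt{L}/\sigma$, where $\sigma^{2}=\sum_{i}a_{i}^{2}$ (so $m\le 2\sigma$) and $L=\log(1/\epsilon)$; the additive $m$ will be the price paid for the granularity of the distribution. A soft form of the estimate is free from exponential tilting. Let $\psi(\lambda)=\log\be[e^{\lambda X}]$; since $X$ is bounded and each $x_{i}$ is symmetric, $\psi$ is finite, smooth and convex with $\psi(0)=\psi'(0)=0$, and -- the point where the hypothesis $|x_{i}|\le a_{i}$ enters -- Popoviciu's inequality gives $\psi''(\lambda)=\sum_{i}\var_{Q_{\lambda}}(x_{i})\le\sum_{i}a_{i}^{2}=\sigma^{2}$, where $Q_{\lambda}$ is the $\lambda$-tilt of $X$; hence $\psi'(\lambda)\le\sigma^{2}\lambda$. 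Combining the Chernoff bound $\pr[X>t+\delta]\le e^{-\lambda(t+\delta)+\psi(\lambda)}$ with the Cram\'er lower bound obtained by choosing $\psi'(\lambda)\asymp t+\sigma$ and localizing the tilted mass of $X$ to an interval of length $\asymp\sigma$ via Chebyshev (which forces $\lambda\gtrsim\sigma^{-1}$), one gets $\delta\le O(\sigma\log(2/c))$ -- already the asserted bound whenever $\sigma\asymp m$ or $L=O(1)$. It remains to improve $\sigma$ to $\sigma/\sqrt{L}+m$ when $L$ is large.

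The mechanism for the sharp rate is transparent when $X$ has a log-concave density -- which already contains the whole idea. Then $F(t)=\int_{t}^{\infty}f_{X}$ is log-concave, being a one-sided integral of a log-concave function, so the hazard rate $r(t)=-F'(t)/F(t)$ is non-decreasing. Since $X$ is symmetric and $f_{X}$ is continuous, $F(0)=\tfrac12$, so $L-\log 2=-\log F(t)-(-\log F(0))=\int_{0}^{t}r(s)\,ds\le t\,r(t)$; combined with Hoeffding's inequality $F(t)\le e^{-t^{2}/(2\sigma^{2})}$, i.e.\ $t\le\sigma\sqrt{2L}$, this yields $r(t)\gtrsim\sqrt{L}/\sigma$ once $L$ exceeds a constant (the case $L=O(1)$ being covered by the soft bound above). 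Monotonicity of $r$ then gives $F(t+\delta)/F(t)=\exp\!\big(-\int_{t}^{t+\delta}r\big)\le e^{-\delta\, r(t)}\le c$ as soon as $\delta\gtrsim\sigma\log(2/c)/\sqrt{L}$; there is no $m$-term because there is no granularity.

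For a general sum of independent symmetric bounded variables $F$ need not be log-concave and $X$ may be atomic, but the sole source of non-log-concavity is the summands, each supported in an interval of length at most $2a_{i}\le m$. The goal is therefore a `log-concavity of $F$ up to scale $m$', after which the hazard-rate argument reruns with an $O(m)$ slack -- which is exactly the additive $m$, since a single coordinate spread over its whole range $[-a_{i},a_{i}]$ can create a genuinely flat stretch of length $\asymp a_{i}$ in the tail. I would establish this by three dovetailing arguments: the continuous part is handled by Pr\'ekopa's theorem; after discretizing the mass to the appropriate scale, the lattice part is handled using that convolution preserves log-concave sequences (with Theorem~\ref{thm:intro-strong_chernoff2} supplying control of the Rademacher building blocks), so that $\pr[X>t]$ is within a constant factor of the geometric series with ratio $e^{-\Theta(\lambda_{t}m)}$ for the Chernoff parameter $\lambda_{t}\asymp\sqrt{L}/\sigma$; and the residual regime in which one or a few coordinates carry most of the $\ell^{2}$-mass (so $m\asymp\sigma$, and the normal approximation is useless) is handled by peeling those coordinates off and inducting on $n$, conditioning on their values to reduce to the same estimate for the smaller sum -- which has strictly smaller $\sigma$, smaller $m$, and, for the conditionings that carry the bulk of $F(t)$, a tail that is at least as deep -- the peeled range contributing at most $m$ to $\delta$.

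The real obstacle is this last, general part: making `log-concavity of $F$ at scale $m$' precise and, above all, uniform over the whole range of $t$ and of all the parameters, i.e.\ gluing the Berry--Esseen estimate~\eqref{Eq:Intro-tail2} (which controls the moderate tail), the tilting-plus-lattice-log-concavity estimate (the deep tail), and the peeling induction (a few dominant coordinates) so that every loss remains constant -- not exponential, and in particular not logarithmic in $\sigma/m$ -- and the granularity stays strictly additive. The `Gaussian' skeleton -- the hazard-rate computation, the tilting bounds, Popoviciu's input, and the final arithmetic -- is routine; controlling the deviation from log-concavity caused by atoms and by a few large coefficients is where the work is.
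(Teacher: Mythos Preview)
Your conceptual skeleton matches the paper's: both arguments reduce to a ``log-concavity of $F$ up to scale $m$'' plus Hoeffding. Your hazard-rate computation in the genuinely log-concave case is essentially the differential form of the paper's Lemma~\ref{lem:log_concave_exp}. The difference is that the paper \emph{has} the approximate log-concavity statement you are looking for, and proves it in two lines: Lemma~\ref{lem:intro-log_concavity} asserts that for any sum of independent variables, $F(d)F(b)\le F(c)F(b+d-c-m)$ whenever $b\le c\le d$, where $m$ is exactly your $m$. This is established by an explicit measure-preserving injection (a ``suffix flip'' \`a la Andr\'e's reflection) on pairs of samples, with no appeal to Pr\'ekopa, lattice convolution, or peeling. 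Iterating this inequality at steps $b=r, r+\delta, \ldots$ gives $F(t+\delta+m)^{l}\le 2F(t)^{l+1}$ with $l=1+\lfloor t/\delta\rfloor$; combined with $t\le\sigma\sqrt{2L}$ from Hoeffding this yields $F(t+\delta+m)\le 2\epsilon\cdot\epsilon^{\delta/\sqrt{8L}\,/\sigma}$, and the bound on $\delta$ drops out.

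So the gap in your proposal is precisely the part you yourself flag as the ``real obstacle'': you never actually establish the scale-$m$ log-concavity, only sketch a three-pronged attack (Pr\'ekopa for the continuous part, lattice log-concavity for the discrete part, peeling for a few large coordinates). That program is substantially harder than what is needed, and one piece of it is circular in this paper's development --- you invoke Theorem~\ref{thm:intro-strong_chernoff2} to control the Rademacher blocks, but in the paper Theorem~\ref{thm:intro-strong_chernoff2} is \emph{derived from} Theorem~\ref{thm:intro-weak_chernoff}. The fix is simply to prove Lemma~\ref{lem:intro-log_concavity} directly: take two independent copies $(X_1,X_2)$ of $X$, and on the event $\{X_1>d,\ X_2\in(b,c]\}$ swap the suffix of coordinates past the first index where the running difference $\sum_{i\le k}(x_i^{(1)}-x_i^{(2)})$ exceeds $d-c-m$; this sends you injectively and measure-preservingly into $\{X_1>c,\ X_2\in(b+d-c-m,d]\}$. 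Once you have that one inequality, your hazard-rate argument (in its integrated form) goes through verbatim and the entire Pr\'ekopa/peeling apparatus is unnecessary.
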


The main tool in the proof of Theorem~\ref{thm:intro-weak_chernoff}, which we also use to present an alternative proof of Theorem~\ref{thm:intro-strong_chernoff}, is a `relaxed log-concavity' lemma:
\begin{lemma}\label{lem:intro-log_concavity}
	Let $X= \sum_i x_i$ be a sum of independent real random variables, and denote $F(t) = \pr[X>t]$. Set $m = \max_{i\in[n]}\{\sup {x_i}-\inf {x_i}\}$. For any $b\leq c\leq d$, we have
	\[
	F(d)F(b)\leq F(c)F(b+d-c-m).
	\]
\end{lemma}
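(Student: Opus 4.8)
The plan is to condition on all the randomness and reduce the statement to a deterministic inequality about a single decreasing step function, then exploit a clean rearrangement identity. Write $X = \sum_i x_i$ and, for a target level $s$, split the index set into an arbitrary pair $(T, T^c)$; the natural choice will be to isolate one coordinate, say $x_1$, whose range has length at most $m$, and let $Y = \sum_{i\ge 2} x_i$ be independent of $x_1$. Then $F(s) = \pr[x_1 + Y > s] = \be_{x_1}\big[ G(s - x_1) \big]$, where $G(u) = \pr[Y > u]$ is a nonincreasing function. So the four quantities $F(d), F(b), F(c), F(b+d-c-m)$ are all averages of shifts of the same monotone function $G$, and the claim $F(d)F(b) \le F(c) F(b+d-c-m)$ becomes an inequality about $\be[G(d-x_1)]\,\be[G(b-x_1)] \le \be[G(c-x_1)]\,\be[G(b+d-c-m-x_1)]$.

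Next I would observe that since $d - x_1$ ranges over an interval of length $\le m$ sitting above (or at) $c - x_1$, and similarly $c - x_1$ lies below $d - x_1$ pointwise while $b+d-c-m-x_1 \le b - x_1$ pointwise (using $c \le d$ and $m \ge 0$), the shifts are ordered in a way that lets one apply the FKG/Chebyshev correlation inequality for monotone functions of a single real variable, \emph{provided} the $m$-shift is enough to dominate the spread of $x_1$. Concretely: because $x_1$ varies by at most $m$, for every realization $x_1 = \xi$ we have $c - \xi \le d - x_1'$ is not automatic, so instead I would pass to the deterministic pointwise bound $G(d - \xi) \le G(c - \xi')$ whenever $\xi' - \xi \le d - c$... — more cleanly, I would fix $x_1$ and a second independent copy $x_1'$, and compare $G(d - x_1) G(b - x_1')$ with $G(c - x_1) G(b + d - c - m - x_1')$ on the event $\{x_1 \le x_1'\}$ versus $\{x_1 > x_1'\}$, using that $d - c \le (x_1' - x_1) + m$ or $d - c \ge (x_1' - x_1) - m$ to push the larger argument of $G$ down and the smaller one up. Averaging over the symmetric pair $(x_1, x_1')$ then yields the result after collecting terms.

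The cleanest route, and the one I would actually write, is the following telescoping argument: it suffices to prove the statement when $m$ equals the full range of a \emph{single} variable $x_1$ and $Y$ is arbitrary, since the general $m$ dominates each coordinate's range; and for a single distinguished variable the inequality $F(d)F(b) \le F(c)F(b+d-c-m)$ is exactly the statement that $t \mapsto \log F(t)$ is ``$m$-approximately concave,'' which follows from the one-dimensional fact that for the nonincreasing function $G$, the function $\Phi(a) = \be[G(a - x_1)]$ satisfies $\Phi(d)\Phi(b) \le \Phi(c)\Phi(b+d-c-m)$ whenever $b \le c \le d$ — and this last inequality reduces, by writing $d = c + \alpha$, $b = c - \beta$ with $\alpha \le \beta + m$ after relabelling, to a direct pointwise comparison of the integrands on the product space, using monotonicity of $G$ and the bound $|x_1 - x_1'| + (\text{shift}) \ge 0$ on each half of the coupling.

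The main obstacle I anticipate is \emph{not} any analytic subtlety but getting the bookkeeping of the four shifted arguments exactly right so that the $-m$ correction is spent precisely where the range of the single extracted variable would otherwise spoil the monotone coupling; in particular one must be careful that the reduction to one variable is legitimate (the other variables' ranges never enter because they are absorbed into the arbitrary $G$), and that the final pointwise inequality on $\{x_1 \le x_1'\}\cup\{x_1 > x_1'\}$ uses $c \le d$ and $m \ge \sup x_1 - \inf x_1$ in the right places. Once the coupling is set up correctly, the remaining estimates are a short case check with no hypercontractivity or concentration input at all.
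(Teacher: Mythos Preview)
Your proposal contains a genuine gap. The reduction you call ``the cleanest route''---isolating a single variable $x_1$, writing $F(s)=\be_{x_1}[G(s-x_1)]$ with $G(u)=\pr[Y>u]$, and then arguing that $\Phi(d)\Phi(b)\le\Phi(c)\Phi(b+d-c-m)$ holds for \emph{any} nonincreasing $G$ once the range of $x_1$ is at most $m$---is false. Here is a concrete counterexample: let $x_1$ be uniform on $\{0,m\}$ and let $G(u)=1$ for $u<0$, $G(u)=1/4$ for $0\le u<L$, $G(u)=0$ for $u\ge L$, with $L\ge 2m$. Then $\Phi(s)=\tfrac12[G(s)+G(s-m)]$ takes the values $1,\ \tfrac58,\ \tfrac14,\ \tfrac18,\ 0$ on the successive intervals. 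With $b=m/2$, $c=m$, $d=L+m/2$ one gets $\Phi(d)\Phi(b)=\tfrac18\cdot\tfrac58=\tfrac{5}{64}$ while $\Phi(c)\Phi(b+d-c-m)=\tfrac14\cdot\tfrac14=\tfrac{4}{64}$, and the inequality fails. So the structure of $Y$ as a sum of independent variables with ranges $\le m$ is \emph{not} absorbed harmlessly into an arbitrary monotone $G$; it is essential. Your FKG/Chebyshev and $(x_1,x_1')$-coupling sketches all rest on this same reduction and collapse with it.

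The paper's proof does not peel off one variable; it works on the full product space. It takes two independent copies $(x_i^{(1)})_i$, $(x_i^{(2)})_i$ of the whole vector and, after subtracting $F(c)F(d)$ from both sides, builds an explicit measure-preserving injection from $\{X_1>d\}\times\{X_2\in(b,c]\}$ into $\{X_1>c\}\times\{X_2\in(b+d-c-m,d]\}$ by a \emph{suffix flip}: find the first index $t$ at which the running sum $\sum_{i\le t}(x_i^{(1)}-x_i^{(2)})$ exceeds $r=d-c-m$, then swap the two copies in all coordinates after $t$. The point is that this stopping-time construction redistributes mass between the two copies by an amount in $[r,r+m)$, and the $m$ slack comes from the \emph{single coordinate} at which the threshold is crossed---but the argument needs access to all coordinates simultaneously to find that crossing. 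That global combinatorial structure is exactly what your one-variable reduction throws away.
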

We prove the lemma by constructing an explicit measure-preserving injection from the event $\{X_1 > d\}\times\{X_2 > b\}$ to the event $\{X_1 > c\}\times\{X_2 > b+d-c-m\}$, where $X_1,X_2$ are two identical, independent, copies of $X$. The idea is to swap an appropriate fraction of $X_1$ and $X_2$, in a way that increases $X_2$, at the expense of decreasing $X_1$. Theorem~\ref{thm:intro-strong_chernoff2} follows from Lemma~\ref{lem:intro-log_concavity} and Hoeffding's inequality via some technical computations.

We believe that these `local' tail estimates and their variants, as well as the log-concavity lemma, will be useful in other contexts as well.

\subsection{Organization of the paper}

This paper is organized as follows. In Section~\ref{sec:conventions} we present notation and conventions to be used throughout the paper. In Section~\ref{sec:concentration} we prove Lemma~\ref{lem:intro-log_concavity} and another concentration lemma, and in Section~\ref{sec:Chernoff} we use these lemmas to prove the local Chernoff inequalities (namely, Theorems~\ref{thm:intro-strong_chernoff},~\ref{thm:intro-strong_chernoff2} and~\ref{thm:intro-weak_chernoff}). Our results on the first-degree Fourier weight (Theorem~\ref{Thm:Main-W^1}), the maximal influence (Theorem~\ref{Thm:Main-influence}), the vertex boundary size (Theorem~\ref{Thm:Main-boundary}), and the $k$th-degree Fourier weight (Theorem~\ref{Thm:Main-W^k}) of halfspaces are presented in Sections~\ref{sec:First-Degree},~\ref{sec:lower_bound_I1},~\ref{sec:boundary}, and~\ref{sec:W^k}, respectively. Finally, we study noise sensitivity of biased functions and prove Theorem~\ref{Thm:Main-Cor} and Proposition~\ref{Prop:Prob-noise} in Section~\ref{sec:noise}.

%%%%%%%%%%%%%%%%%%%%%%%%%%%%%%%%%%%%%%%%%%%%%%%%%%%%%%%
%%%%%%%%%%%%%%%%%%%%%%%%%%%%%%%%%%%%%%%%%%%%%%%%%%%%%%%
%%%%%%%%%%%%%%%%%%%%%%%%%%%%%%%%%%%%%%%%%%%%%%%%%%%%%%%
%%%%%%%%%%%%%%%%%%%%%%%%%%%%%%%%%%%%%%%%%%%%%%%%%%%%%%%
%%%%%%%%%%%%%%%%%% CONVENTIONS SECTION %%%%%%%%%%%%%%%%
%%%%%%%%%%%%%%%%%%%%%%%%%%%%%%%%%%%%%%%%%%%%%%%%%%%%%%%
%%%%%%%%%%%%%%%%%%%%%%%%%%%%%%%%%%%%%%%%%%%%%%%%%%%%%%%
%%%%%%%%%%%%%%%%%%%%%%%%%%%%%%%%%%%%%%%%%%%%%%%%%%%%%%%
%%%%%%%%%%%%%%%%%%%%%%%%%%%%%%%%%%%%%%%%%%%%%%%%%%%%%%%

\section{Conventions}\label{sec:conventions}

In this section we present notation and conventions that we will use throughout the paper.

\mn\textbf{1.} For a Boolean function $f$, we denote $\mu(f) \ddd \pr[f=1] \ddd \be_{x\sim \spm^{n}}[f]$. We say that $f$ is \emph{almost unbiased} if $c \leq \mathbb{E}[f] \leq 1-c$ for a universal constant $c$ (whose exact value does not matter). On the other hand, $f$ is said to be \emph{strongly biased} if $\mathbb{E}[f]=o(1)$ or $\mathbb{E}[f]=1-o(1)$. Notice these notions formally make sense only for families of Boolean functions. However, as we use them only for enhancing intuition, we skip over the accurate formulation when the meaning is clear from the context.

\mn\textbf{2.} A halfspace is a Boolean function of the form ${f=\one\{a\cdot x > t\}}$ with $a\in \reals^{n}$ and $t\in \reals$.
We are always going to assume that $a_{i} \in \preals$ and that $a_{1} \geq \ldots \geq a_{n} > 0$. Furthermore, we frequently assume $\mu(f)\leq \frac{1}{2}$; this mostly does not affect generality since we can alternatively investigate the dual function $g(x)=1-f(-x)$ which shares many properties with $f$ (note that $g$ is also a halfspace).

\mn\textbf{3.} We sometimes identify the halfspace $f$ with $2f-1=\sgn(a\cdot x - t)$, where $\sgn$ is the sign function, and we choose $\sgn(0)=-1$. Furthermore, since there are only finitely many ($2^{n}$) values for $a\cdot x$, we may increase $t$ a little without changing $f$. Moreover, notice that as long as we are interested in a particular halfspace $f=\one\{a\cdot x > t\}$, we may assume that $a\cdot x$ does not assume any finite set of values since slightly altering $a$ does not change $f$.

\mn\textbf{4.} For an $a\in \preals^{n}$ as above and an $s\in \reals$ we write $f_{s}(x)=\one\{a\cdot x> s\}$. (Notice that the notation $f_{s}(x)$ is used only for halfspaces, where $s$ always denotes the threshold). Additionally, we use the notation $F(t)=\pr_{x}[ a\cdot x > t ]$, and so, $F(t)=\mu(f_t)$. Furthermore, we regularly write $\eps \ddd \mu(f_t) \ddd F(t)$.

\mn\textbf{5.} The letters $\beta, \gamma, \delta$ are usually used to describe a significant decay of $F$; e.g., in several places $\beta$ is chosen to be the minimal positive real value satisfying $F(t+\beta) \leq \frac{1}{3} F(t)$.

\mn\textbf{6.} When we write $\lg(x)$ we always mean $\log_{2}(x)$; $\log(x)$ always denotes $\ln(x)=\log_{e}(x)$.

\mn\textbf{7.} $U(0, t)$ is the uniform real distribution over the interval $(0, t)$.

%%%%%%%%%%%%%%%%%%%%%%%%%%%%%%%%%%%%%%%%%%%%%%%%%%%%%%%
%%%%%%%%%%%%%%%%%%%%%%%%%%%%%%%%%%%%%%%%%%%%%%%%%%%%%%%
%%%%%%%%%%%%%%%%%%%%%%%%%%%%%%%%%%%%%%%%%%%%%%%%%%%%%%%
%%%%%%%%%%%%%%%%%%%%%%%%%%%%%%%%%%%%%%%%%%%%%%%%%%%%%%%
%%%%%%%%%%%%%%%%% LOG CONCAVITY LEMMAS %%%%%%%%%%%%%%%%
%%%%%%%%%%%%%%%%%%%%%%%%%%%%%%%%%%%%%%%%%%%%%%%%%%%%%%%
%%%%%%%%%%%%%%%%%%%%%%%%%%%%%%%%%%%%%%%%%%%%%%%%%%%%%%%
%%%%%%%%%%%%%%%%%%%%%%%%%%%%%%%%%%%%%%%%%%%%%%%%%%%%%%%
%%%%%%%%%%%%%%%%%%%%%%%%%%%%%%%%%%%%%%%%%%%%%%%%%%%%%%%

\section{Two Concentration Lemmas}
\label{sec:concentration}

In this section we prove two concentration results concerning sums of independent random variables. These lemmas are central
tools in the proof of the local Chernoff inequalities in Section~\ref{sec:Chernoff}, and are also used in other proofs in the
sequel.

The first is Lemma~\ref{lem:intro-log_concavity}, which asserts that if $X=\sum_{i\in[n]}x_i$, where $\lbc x_i \rbc_{i\in[n]}$ are independent real random variables, and we denote $F(t) = \pr[X>t]$ and $m = \max_{i\in[n]}\{\sup {x_i}-\inf {x_i}\}$, then for any $b\leq c\leq d$, we have
	\begin{equation}\label{eq:log_concavity_statement}
	F(d)F(b)\leq F(c)F(b+d-c-m).
	\end{equation}

Notice the $(-m)$ in Equation~\eqref{eq:log_concavity_statement} can not in general be omitted. This is because $b,c,d$ might not be `aligned' to values achievable by $X$. To compare, in the context of the usual notion of discrete log-concave distribution, the underlying random variable assumes only integer values. This does not capture the behavior of the variables $X$ discussed in Lemma~\ref{lem:intro-log_concavity}.

The second result is a concentration lemma which assumes (in addition) that the random variables are symmetric.
\begin{lemma}\label{lem:decay_of_intervals}
	Let $x_i$ be independent symmetric (around $0$) random variables with $\lba x_i\rba\leq a_i$, almost surely. For any $m\geq\max_i\{a_i\}$ and
any $s,t$ such that $0\leq s\leq t$, we have
	\begin{equation}\label{eq:decay_of_intervals}
	\pr \lbs \sum_{i=1}^{n} x_i \in (t-m, t+m]\rbs \leq 5\pr \lbs \sum_{i=1}^{n} x_i \in (s-m, s+m]\rbs.
	\end{equation}
\end{lemma}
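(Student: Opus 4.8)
The inequality is a form of \emph{quasi-unimodality}: among the translates $(u-m,u+m]$ with $u\ge 0$, the mass can only decrease (up to the factor $5$) as $u$ moves away from the origin. My plan is to derive it from the relaxed log-concavity of the tail function $F(u):=\pr[X>u]$, $X:=\sum_{i=1}^n x_i$, supplied by Lemma~\ref{lem:intro-log_concavity}, combined with the symmetry of $X$. I would first reduce to the non-atomic case (perturb each $x_i$ by an independent uniform of vanishing width, which keeps it symmetric; or simply keep track of the interval endpoints throughout). Write $q(u):=\pr[X\in(u-m,u+m]]=F(u-m)-F(u+m)$, so the goal is $q(t)\le 5\,q(s)$ for $0\le s\le t$. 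Since $|x_i|\le a_i\le m$, the constant $m'=\max_i\{\sup x_i-\inf x_i\}$ of Lemma~\ref{lem:intro-log_concavity} satisfies $m'\le 2m$; using that $F$ is non-increasing to replace $b+d-c-m'$ by $b+d-c-2m$, Lemma~\ref{lem:intro-log_concavity} gives, for all $b\le c\le d$,
\[
F(b)\,F(d)\ \le\ F(c)\,F(b+d-c-2m),
\]
which I will call $(\star)$. Symmetry of $X$ gives $F(-u)=1-F(u)$; in particular $F(0)=\tfrac12$ and $F(-m)=\pr[X<m]\ge\tfrac12$.

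The easy case is when $F$ loses at least half of its mass over the window at $s$, i.e.\ $F(s+m)\le\tfrac12 F(s-m)$. Then $q(s)\ge\tfrac12 F(s-m)$, while $q(t)\le F(t-m)\le F(s-m)$ because $F$ is non-increasing and $t\ge s$; hence $q(t)\le 2\,q(s)$. So it suffices to treat the complementary ``flat'' regime $F(s+m)>\tfrac12 F(s-m)$, in which $q(s)$ may be far smaller than $F(s-m)$.

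In the flat regime the danger is a distribution that is nearly constant over the window at $s$ but drops sharply over the window at $t\ge s$; there are two complementary reasons this cannot happen, and the proof should combine them. The qualitative one, which already disposes of the extreme case $q(s)=0$: if $F$ is flat over $(s-m,s+m]$ while $X$ carries mass both at $\le s-m$ and at $>s+m$, then the support of $X$ has a gap of length $\ge 2m$ with mass on both sides; but the support of $X=\sum x_i$ is the Minkowski sum of the supports of the $x_i$, each of which lies in an interval of length $\le 2a_i\le 2m$, and a short induction on $n$ (peel off one summand at a time, noting that a set of diameter $\le 2m$ cannot contain such a gap except in a degenerate borderline configuration that symmetry then rules out for $s\ge 0$) shows this is impossible — so $q(s)=0$ forces $F(s+m)=0$, hence $q(t)=0$ for all $t\ge s$. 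This is precisely where the hypothesis $m\ge\max_i a_i$ is used beyond its role in $(\star)$, and symmetry enters to guarantee that mass really does sit on the left ($\pr[X\le s-m]=1-F(s-m)>0$ once $s\ge m$). For the quantitative statement ($0<q(s)$ but possibly tiny), I would feed the flat hypothesis into $(\star)$ with a triple $b\le c\le d$ that \emph{straddles both windows} — $c$ at an endpoint of the window at $s$, $d$ a bounded multiple of $m$ to the right of $t$, and $b$ far enough left that $F(b)\ge\tfrac12$ — chosen so that $b+d-c-2m$ lands at the right endpoint $t+m$; rearranging yields a lower bound on $F(t+m)$ in terms of $F(s\pm m)$, $F(-m)$, and $F$ far to the right of $t$, which together with $F(t-m)\le F(s-m)$ bounds $q(t)$ by a bounded multiple of $q(s)$. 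It is cleanest to organize this around $\beta>0$, the least number with $F(s-m+\beta)\le\tfrac12 F(s-m)$ (so ``easy'' is $\beta\le 2m$, ``flat'' is $\beta>2m$), splitting according to whether $t\le s+\beta$ or $t>s+\beta$, and whether $s\le m$ or $s>m$.

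The routine parts are the setup, the easy case, and the Minkowski induction. The step I expect to be the main obstacle is extracting the \emph{precise} constant $5$: this requires optimizing the straddling application of $(\star)$ and the placement of the sub-case boundaries above, and using symmetry (``at least half of the mass of $X$ lies below $m$'') at exactly the right spot to prevent $F$ from decaying too abruptly on the positive axis. A fully symmetry-driven alternative worth trying is a transportation argument — since each $x_i$ is symmetric, flipping signs of coordinates is measure-preserving, so one may attempt to inject $\{\sum x_i\in(t-m,t+m]\}$ into $\{\sum x_i\in(s-m,s+m]\}$ by greedily flipping positive coordinates (each flip moves the sum by at most $2m$, the width of the window, so the sum cannot overshoot the target interval) and then bound the map's multiplicity by $5$; there the delicate point is the multiplicity bound when the image sum lands near an endpoint of $(s-m,s+m]$.
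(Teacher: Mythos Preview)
Your ``easy case'' is fine, but the main line of attack through the relaxed log-concavity inequality $(\star)$ has a structural problem you do not address: the slack $2m$ in $(\star)$ is \emph{exactly} the width of the windows you are comparing. Every natural choice of $(b,c,d)$ aimed at relating $q(t)=F(t-m)-F(t+m)$ to $q(s)$ collapses to a triviality for this reason (try $b=s-m$, $c=s+m$, $d=t+m$: you get $F(t+m)/F(t-3m)\le F(s+m)/F(s-m)$, which says nothing about $q$). Your ``straddling'' choice, with $b$ far left and $b+d-c-2m=t+m$, forces $d=t+s+O(m)$; the resulting lower bound $F(t+m)\ge F(b)F(d)/F(c)$ then involves $F$ at a point well to the right of $t$, which can be zero even when $q(t)>0$ and $q(s)$ is tiny. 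So in the genuinely flat regime the inequality you extract is vacuous, and the Minkowski argument only handles the degenerate case $q(s)=0$. More conceptually, $(\star)$ does not use the symmetry of the individual $x_i$ at all --- only their independence --- and the global symmetry $F(-u)=1-F(u)$ does not recover what is lost.

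The paper's proof is precisely your discarded ``transportation alternative,'' but carried out with a specific structure you do not anticipate. It first uses a prefix flip (flip the signs of an initial block of coordinates so that the partial sum first exceeds $(t-s)/2-m$) to inject $\{X\in(t-m,t+m]\}$ into $\{X\in(s-m,s+3m]\}$; this is injective and measure-preserving because each $x_i$ is individually symmetric. The remaining task is $\Pr[X\in(s+m,s+3m]]\le 4\,\Pr[X\in(s-m,s+m]]$, and here the paper splits coordinates into $S=\{i:|x_i|\le m/2\}$ and $B=\{i:|x_i|>m/2\}$. When $\sum_{i\in S}x_i$ is large one applies a prefix flip on the $S$-coordinates (two maps, one for each half of $(s+m,s+3m]$); when it is not, $\sum_{i\in B}x_i>0$ and one flips a single carefully chosen $B$-coordinate (which moves $X$ by an amount in $(m,2m]$), possibly twice. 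Four maps cover the overshoot, giving the constant $1+4=5$. The small/big split is exactly what controls the ``multiplicity near an endpoint'' issue you flagged.
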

Note that while it may seem that the lemma `should' hold with the constant in the right hand side of~\eqref{eq:decay_of_intervals} equal to $1$, we show below
by an explicit example that this constant must be at least $2$.

\medskip

We prove both lemmas constructively using \emph{injective measure preserving maps} from the set of events that represent the l.h.s. into
the set of events that represent the r.h.s. We introduce several injective transformations which we will use to construct the maps in Section~\ref{sec:sub:maps} and present the proof of the lemmas in Section~\ref{sec:sub:lemmas-proof}.

\subsection{Auxiliary injective transformations}
\label{sec:sub:maps}

\begin{definition}[Prefix/suffix flip]\label{def:prefix_flip}
    Let $r$ be a real number, and let $u,v \in \reals^n$  be two vectors whose `partial sums of differences' $S_k(u,v)  \ddd  \sum_{i=1}^{k} \lbr u_i-v_i \rbr$
    satisfy $\max_{k\in[n]}\{S_k(u,v)\}\geq r$. We define $\sff{r}(u,v)\in \reals^n\times \reals^n$ as follows.
	Take $t=\min \lbc i\in\lbc 0,\ldots,n\rbc \mid S_{i}(u,v)\geq r \rbc$ (where $S_0(u,v) \ddd 0$), and set
	\[
	\forall i\in [n]\colon \sff{r}(u,v)_{i}=\begin{cases}
	(u_i,v_{i}), & i\leq t\\
	(v_i,u_{i}), & i>t
	\end{cases}.
	\]
	That is, we choose the first index $t$ in which the partial sum $S_t(u,v)$ exceeds $r$, and interchange the coordinates of the vector $(u,v)$
    in all indices later than $t$. (This is called a `suffix flip').

	For $r \in \reals$ and a single vector $u \in \reals^n$ such that $\max_{k\in[n]}\{S_k(u,-u)\}\geq r$, we define $\pf{r}(u)$ to be the unique $v\in \reals^n$ that satisfies $(-v,v) = \sff{r}(u,-u)$.

    That is, we choose the first index $t$ in which the partial sum $\sum_{i \leq t} u_i$ exceeds $r/2$ and flip all coordinates of $u$ with indices
    no later than $t$. (This is called a `prefix flip').
\end{definition}
Notice that the suffix flip is defined (in particular) for any $(u,v)$ such that $\sum u_i \geq \sum v_i + r$, and the prefix flip is defined (in
particular) for any $u$ such that $\sum u_i \geq r/2$. Also, notice that $\sff{r}(u,v)$ is an involution (meaning that $\sff{r}\lbr \sff{r}(u,v)\rbr = (u,v)$), and hence, is injective. Moreover, as $\pf{r}$ is a composition of $\sff{r}$ (restricted to inputs of the form $(u,-u)$) with the map $x \mapsto (-x)$, it is injective as well.

\begin{remark}
We note that prefix/suffix flips are similar to the classical \emph{Andr\'{e}'s reflection method} (\cite{Andre1887}; see also~\cite{Renault08}), extensively used in enumerative combinatorics and other fields.
\end{remark}

\begin{definition}[Single coordinate flip]
	Let $x\in \spm^n$ be a vector whose `partial sums' $S_k(x) \ddd  \sum_{i=1}^{k} x_i$ satisfy $\max_k S_k(x)>0$. We define $\scf(x)\in \spm^n$ as follows.
    Take $t=\min\{i\in [n]\mid \forall j\colon S_i(x)\geq S_j(x)\}$ and set
	\[
	\scf(x)_{i}=\begin{cases}
	-x_{i}, & i=t\\
	x_{i}, & i \neq t
	\end{cases}.
	\]
	That is, we flip only a single coordinate of $x$ -- the first among the indices $k$ for which the partial sum $S_k(x)$ is maximal.
\end{definition}
The map $\scf$ is invertible, and thus, injective. To see this, note that if in the map $x \mapsto \scf(x)$, the $t$th coordinate was flipped, then the latest index in which the maximum of $\lbc S_k(\scf(x)) \rbc$ is attained is $t-1$. Indeed, by the definition of $\scf$, we have $S_{t-1}(\scf(x))=S_t(x)-1$, while for any $t'<t$ we have $S_{t'}(\scf(x)) \leq S_t(x)-1$ and for any $t'' \geq t$ we have $S_{t''}(\scf(x)) \leq S_t(x)-2$.

In particular, we can define an inverse mapping $\iscf$ as follows. Take $t=\max\{i\in\{0,\ldots,n-1\}\mid \forall j\colon S_i(x) \geq S_j(x)\}$ (where $S_0(x) \ddd 0$) and set
	\[
	\iscf(y)_{i}=\begin{cases}
	-y_{i}, & i= t+1\\
	y_{i}, & i \neq t+1
	\end{cases}.
	\]
We would like to use the map $\scf$ not only for $x$'s such that $\max_k S_k(\scf(x))>0$, but also for $x$'s for which we only know that $\sum a_i x_i >0$
for some non-negative weights $a_1,\ldots,a_n$. For this, we define the following variant which reorders the coordinates of $x$ according to the sizes of the $a_i$'s, applies $\scf$, and then reorders the coordinates back. 	
\begin{definition}\label{def:single_flip}
	For any $a\in\preals^{n}$ we define the partial map $\scf_a \colon \spm^n \to \spm^n$ as follows. Canonically choose a permutation $P_{a}\in S_n$ which satisfies $a_{P_a(1)}\geq\ldots\geq a_{P_a(n)}$ (e.g., using the lexicographic ordering on $S_{n}$). Then, define $\scf_a(x)=P_{a}^{-1}(\scf(P_a(x)))$.

\medskip
	
	Analogously, we define $\iscf_a \colon \spm^n\to\spm^n$ as $\iscf_a(y)=P_{a}^{-1}(\iscf(P_a(y)))$.
\end{definition}

Note that for any $a \in \mathbb{R}^n_{\geq 0}$, we have $\iscf_a\comp\scf_a=\mrm{id}_{\mrm{dom}(\scf_a)}$, and hence, $\scf_a$ is injective. We claim that the function $\scf_a(x)$ is defined
(in particular) for all $x$'s such that $\sum a_i x_i >0$. To see this, note that after the re-ordering of the coordinates of $x$, the function $\scf$ is applied on a vector $x'$ that satisfies $\sum a'_i x'_i>0$, where $a'_1\geq\ldots\geq a'_n$ (of course, the $a'_i$'s are a reordering of the $a_i$'s). The only reason $\scf(x')$ might not be defined, is if $\forall k:S_k(x') = \sum_{i=1}^{k} x'_i \leq 0$. But this cannot happen, from the following Abel's-summation argument:
	\[
	0 \underbrace{<}_{\text{assumption}} \sum_{i=1}^{n} a'_i x'_i = a'_n S_n(x') + \sum_{k=1}^{n-1} \lbr (a'_{k}-a'_{k+1}) S_k(x') \rbr \leq 0,
	\]
where the last inequality follows from the, apparently wrong, assumption $\forall k\colon S_k(x')\leq 0$.

\subsection{Proof of Lemmas~\ref{lem:intro-log_concavity} and~\ref{lem:decay_of_intervals}}
\label{sec:sub:lemmas-proof}

Now we are ready to present the proofs of the lemmas.

\medskip \noindent \textbf{Lemma~\ref{lem:intro-log_concavity}.} Let $X= \sum_i x_i$ be a sum of independent real random variables, and denote $F(t) = \pr[X>t]$. Set $m = \max_{i\in[n]}\{\sup {x_i}-\inf {x_i}\}$. For any $b\leq c\leq d$, we have
	\[
	F(d)F(b)\leq F(c)F(b+d-c-m).
	\]

\begin{proof}
	After subtracting $F(c)F(d)$ from both sides of Inequality~\eqref{eq:log_concavity_statement}, it is left to prove that
	\begin{equation}\label{Eq:Aux-Con1}
	\pr[X>d]\pr[X\in(b,c]]\leq\pr[X>c]\pr[X\in(b+d-c-m,d]].
	\end{equation}
	Let $\Omega$ be the underlying probability space over which $\{x_i\}_{i}$ and $X$ are defined. Without loss of generality, $\omega \mapsto \lbr x_i(\omega)\rbr_{i=1}^{n}$ is an injective map. We define an \emph{injective measure-preserving} map which takes as input
	a pair $\lbr \omega_{1},\omega_{2}\rbr \in\Omega^{2}$ for which
	$X\lbr \omega_{1}\rbr>d$ and $X\lbr \omega_{2}\rbr \in(b,c]$
	and outputs $\lbr \delta_{1},\delta_{2}\rbr \in\Omega^{2}$ for
	which $X\lbr \delta_{1}\rbr>c$ and $X\lbr \delta_{2}\rbr \in(b+d-c-m,d]$. This will  clearly conclude the proof.

Set $r=(d-c)-m$ (note that if $r \leq 0$ then the assertion holds trivially), and consider
	\[
	(v,u) = \sff{r}\lbr \lbr x_i(\omega_1) \rbr_{i=1}^{n}, \lbr x_i(\omega_2) \rbr_{i=1}^{n} \rbr.
	\]
Define $\delta_1, \delta_2$ by $\forall i:x_i(\delta_1)=u_i$ and $\forall i:x_i(\delta_2)=v_i$. We claim that $\psi\colon (\omega_1,\omega_2)\mapsto(\delta_1,\delta_2)$ is the desired map.
	
First, $\psi$ is well-defined: Since $X(\omega_1)>d$ and $X(\omega_2)\leq c$ we have $X(\omega_1)\geq X(\omega_2)+r$, so according to Definition~\ref{def:prefix_flip}, the suffix flip $\sff{r}$ is defined on the vectors $\lbr x_i(\omega_1) \rbr_{i=1}^{n}, \lbr x_i(\omega_2) \rbr_{i=1}^{n}$. Hence, $v,u$ are well-defined. As we assumed that $\omega \mapsto \lbr x_i(\omega)\rbr_{i=1}^{n}$ is injective, and since $v,u$ are obtained from $\lbr x_i(\omega_1) \rbr_{i=1}^{n}, \lbr x_i(\omega_2) \rbr_{i=1}^{n}$ by swapping some pairs of elements, $\delta_1,\delta_2$ are well-defined as well.

Second, $\psi$ is measure-preserving, as the variables $x_i$ are independent, and $\sff{r}$ just swaps pairs of identically distributed variables in its input. Furthermore, $\psi$ is injective, since $\sff{r}$ is invertible from the left (as noted after Definition~\ref{def:prefix_flip}).
	
Hence, it remains to show that the range of $\psi$ is included in the space represented by the r.h.s. of~\eqref{Eq:Aux-Con1}, i.e., that $X(\delta_1)>c$ and $X(\delta_2)\in(b+d-c-m,d]$. Observe that (unless $r<0$, in which case the assertion of the lemma is trivial), we have
	\[
	\sum_{i=1}^{n} x_i(\omega_1) - \sum_{i=1}^{n} u_i\in [r, r+m),
	\]
	according to how $\sff{r}$ is defined -- swapping the suffix just after the first index $t_0$ for which
\[
S_{t_0}(\lbr \lbr x_i(\omega_1) \rbr_{i=1}^{n}, \lbr x_i(\omega_2) \rbr_{i=1}^{n} \rbr) \geq r.
\]
(Here we also use the fact that each difference $x_i(\omega_1)-x_i(\omega_2)$ is bounded by $m$, and thus we have $S_{t_0}(\lbr \lbr x_i(\omega_1) \rbr_{i=1}^{n}, \lbr x_i(\omega_2) \rbr_{i=1}^{n} \rbr) \in [r,r+m)$.) Hence, $X(\delta_1)>X(\omega_1)-(r+m)>c$. Moreover,
	\[
	X(\delta_2)=X(\omega_1)+X(\omega_2)-X(\delta_1)\in \lbs r + X(\omega_2), r+m+X(\omega_2) \rbr \sub \lbr b+d-c-m, d \rbr,
	\]
	as required.
\end{proof}

\medskip \noindent \textbf{Lemma~\ref{lem:decay_of_intervals}.} Let $x_i$ be independent symmetric (around $0$) random variables with $\lba x_i\rba\leq a_i$, almost surely. For any $m\geq\max_i\{a_i\}$ and any $s,t$ such that $0\leq s\leq t$, we have
	\[
	\pr \lbs \sum_{i=1}^{n} x_i \in (t-m, t+m]\rbs \leq 5\pr \lbs \sum_{i=1}^{n} x_i \in (s-m, s+m]\rbs.
	\]

\begin{proof}
	Let $0\leq s \leq t$. We first show
\begin{equation}\label{Eq:Aux-Con1.5}
\pr_x \lbs \sum_{i=1}^{n} x_i \in (t-m, t+m] \rbs \leq \pr_x\lbs \sum_{i=1}^{n} x_i \in (s-m, s+3m] \rbs.
\end{equation}
We will do this with an injective measure-preserving map. Let $r=t-s-2m$ (note that the assertion holds trivially if $r \leq 0$). Denote $u_i=x_i$, and define $v = \pf{r}(u)$. By the definition of $\pf{r}$, for any $u$ we have
\begin{equation}\label{Eq:Aux-Con2}
\sum_{i=1}^{n} u_i - \sum_{i=1}^{n} \pf{r}(u)_i \in [r,r+ 2\max_i \{a_i\}) \subseteq [r,r+2m).
\end{equation}
Hence, in our case, if $\sum_{i=1}^{n} u_i\in (t-m,t+m]$ then $\sum_{i=1}^{n} v_i \in (s-m, s+3m]$. The map $\lbr x_i\rbr_{i=1}^{n} \to \lbr v_i\rbr_{i=1}^{n}$ is injective by Definition~\ref{def:prefix_flip}, and is measure-preserving as it only negates some $x_i$'s, which we assumed are \emph{symmetric} random variables. Therefore,~\eqref{Eq:Aux-Con1.5} holds, and thus, it is sufficient to show that
\begin{equation}\label{Eq:Aux-Con3}
\pr_x\lbs \sum_{i=1}^{n} x_i \in (s+m, s+3m]\rbs \leq 4\pr_x\lbs \sum_{i=1}^{n} x_i \in (s-m, s+m]\rbs.
\end{equation}
	\begin{sloppypar}
	For this, we construct four injective, measure-preserving maps from sub-events of $\lbc \sum_{i=1}^{n} x_i\in\lbr s+m,s+3m\rbs \rbc$ to the event
	\[
	\lbc \sum_{i=1}^{n} x_i\in\lbr s-m,s+m\rbs \rbc,
	\]
	whose union of domains covers $\lbc \omega \mid \sum_{i=1}^{n} x_i(\omega)\in\lbr s+m,s+3m\rbs \rbc $.
	\end{sloppypar}

\medskip
	
	Let $S=\lbc i\in[n]\mid \lba x_i\rba\leq\frac{m}{2}\rbc $ and $B=\lbc i\in[n]\mid \lba x_i\rba > \frac{m}{2}\rbc $. The maps we are going to construct will only negate some of the $x_i$'s, so $B,S$ are reconstructible from the output of any of the maps. Thus, it is sufficient to show that the maps are injective given the partition $B,S$. Moreover, these maps will be measure-preserving as they only negate input variables.
%, in a measurable fashion\footnote{We consider only the discrete case where $n$ is finite. However the method can be applied equally well for infinite sums.}.
	\begin{enumerate}
		\item The first map $\psi_1$ is defined on inputs $\lbr x_i\rbr_{i=1}^{n}$ with $\sum_{i\in S} x_i \geq s+m$ and $\sum_{i\in[n]} x_i \in \lbr s+m, s+2m \rbs$. We set $r=m$, apply $\pf{r}\lbr \lbr x_i\rbr_{i\in S} \rbr$ on the $S$-coordinates, and leave the $B$-coordinates unchanged. The map is injective as $\pf{r}$ is. Also, the output $y=\psi_1(x)$ satisfies $\sum_{i\in[n]} y_i\in (s-m, s+m]$ by~\eqref{Eq:Aux-Con2} and the definition of $S$.
		\item The second map $\psi_2$ is defined on inputs $\lbr x_i\rbr_{i=1}^{n}$ with $\sum_{i\in S} x_i \geq s+m$ and $\sum_{i\in[n]} x_i \in \lbr s+2m, s+3m \rbs$. We set $r=2m$, apply $\pf{r}\lbr \lbr x_i\rbr_{i\in S} \rbr$ on the $S$-coordinates, and leave the $B$-coordinates unchanged. The map is injective and the output $y=\psi_2(x)$ satisfies $\sum_{i\in[n]} y_i\in (s-m, s+ m]$ exactly like in the previous case.
		\item The third map $\psi_3$ is defined on inputs $\lbr x_i\rbr_{i=1}^{n}$ with $\sum_{i\in S} x_i<s+m$. Notice that for such inputs, we have $\sum_{i\in B} x_i > 0$. First, for every $i\in B$, we extract $s_i=\sgn(x_i)$ and $b_i=\lba x_i\rba$. Then, we treat $\sigma \ddd \{s_i\}_{i \in B}$ as a $\{-1,1\}$-valued vector and $\{b_i\}_{i \in B}$ as a vector of weights. Since
\[
\sum_{i \in B} b_i s_i = \sum_{i \in B} |x_i| \sgn(x_i) = \sum_{i \in B} x_i > 0,
\]
we can apply the map $\scf_b$ to the vector $\sigma$ (as noted right after Definition~\ref{def:single_flip}), to obtain $\sigma'=\scf_b(\sigma)$. Finally, we define the output $y=\psi_3(x)$ by $\forall i\in B:y_i=\sigma'_i b_i$ and $\forall i\in S\colon y_i=x_i$. That is, we apply a single coordinate flip to the vector $(x_i)_{i \in B}$, and leave the $S$-coordinates unchanged. The map is injective since $\scf_b$ is. By the definitions of $\scf$ and of $B$, we have
\begin{equation}\label{Eq:Aux-Con4}
\sum_i x_i - \sum_i y_i \in (m,2m]
\end{equation}
(as $\scf$ flips a single coordinate whose value is between $m/2$ and $m$, being taken from $B$). Therefore, the output might not satisfy
$\sum_{i\in[n]} y_i \in (s-m,s+m]$. If it does not, we apply the next map $\psi_4$ on our intermediate ``output'' $y$.

\item The output of the previous map $y= \psi_{3}(x)$ has $\sum_{i\in[n]} y_i \in (s-m, s+2m)$. The fourth map $\psi_4$ is defined on inputs $\lbr x_i\rbr_{i=1}^{n}$ with $\sum_{i\in S} x_i<s+m$, for which $y=\psi_{3}(x)$ satisfies $\sum_{i\in[n]} y_i\not\in(s-m,s+m]$. In this case we set $\psi_4(x) \ddd \psi_3(y)$. Note that $y$ satisfies the conditions under which $\psi_3$ is defined. Indeed, we have $\sum_{i\in S} y_i < s+m$ since $\psi_3$ does not alter $S$-coordinates, and $\sum_{i\in[n]}y_i > s+m$, as by~\eqref{Eq:Aux-Con4} we have
\[
s-m < \sum_{i \in [n]} x_i - 2m \leq \sum_{i \in [n]} y_i,
\]
and we assumed that $\sum_{i\in[n]} y_i\not\in(s-m,s+m]$. Finally, we claim that $z=\psi_4(y)$ satisfies $\sum_{i\in[n]} z_i\in(s-m,s+m]$. Indeed, applying again~\eqref{Eq:Aux-Con4} we see that on the one hand,
\[
\sum_{i \in [n]} z_i \geq \sum_{i \in [n]} y_i - 2m > s-m,
\]
and on the other hand,
\[
\sum_{i \in [n]} z_i < \sum_{i \in [n]} y_i - m < \sum_{i \in [n]} x_i - 2m \leq s+m.
\]
\end{enumerate}
	This completes the proof of the lemma.
\end{proof}

Two remarks are due.

\medskip \noindent 1.~ One may wonder whether the constant $5$ in Inequality~\eqref{eq:decay_of_intervals} can generally be improved. We believe the correct value is $2$; it surely cannot be less. Indeed, consider $X=\sum_{i\in[n]} x_i$ where $x_i\sim\spm$ independently from each other. Trivially, $X$ assumes only values equal to $n\pmod{2}$. Hence, taking $n$ to be a large odd integer and $(m=1.5,s=1,t=2)$, we have
	\[
	\pr[X \in (t-m,t+m] = \pr[X \in (0.5, 3.5]] \approx 2\pr[X \in (-0.5, 2.5]] = 2\pr[X \in (s-m,s+m]].
	\]

\noindent 2.~ One may also wonder whether Inequality~\eqref{eq:decay_of_intervals} can be strengthened in the case where $m$ is large, so that the constant $5$ is replaced by $1+O(\max{a_i} / m)$. This can indeed be done, by slightly modifying the proof of Lemma~\ref{lem:decay_of_intervals}. Specifically, in the beginning of the proof we may define $r=t-s-2\max_i \{a_i\}$, and then due to~\eqref{Eq:Aux-Con2}, instead of~\eqref{Eq:Aux-Con1.5} we get
\begin{equation}\label{Eq:Aux-Con5}
\pr_x \lbs \sum_{i=1}^{n} x_i \in (t-m, t+m] \rbs \leq \pr_x\lbs \sum_{i=1}^{n} x_i \in (s-m, s+m+2\max_i \{a_i\}] \rbs.
\end{equation}
Hence, it is sufficient to prove that
\[
\pr_x \lbs \sum_{i=1}^{n} x_i \in (s+m, s+m+2\max_i \{a_i\}]\rbs \leq O \left(\frac{\max{a_i}}{m} \right) \pr_{x} \lbs \sum_{i=1}^{n} x_i \in (s-m, s+m]\rbs,
\]
and this indeed follows immediately by invoking Lemma~\ref{lem:decay_of_intervals} itself as a black-box.

\medskip

The following corollary of Lemma~\ref{lem:decay_of_intervals} will be used several times in the sequel, so for the sake of convenience we state it explicitly.
\begin{corollary}\label{cor:decay_of_influence}
	Let $f_{s}=\one\{a\cdot x>s\}$ be a family of halfspaces and suppose $a_1 \geq a_2 \geq \ldots a_n \geq 0$. Then for any $s,t \in \mathbb{R}$ such that $|s| \leq t$, we have $5\ii_1(f_s)\geq \ii_1(f_t)$.
\end{corollary}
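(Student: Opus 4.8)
The plan is to reduce Corollary~\ref{cor:decay_of_influence} to Lemma~\ref{lem:decay_of_intervals} by expressing the influence of the first coordinate as the probability that $a \cdot x$ lies in a short interval. Recall that for a halfspace $f_s = \one\{a \cdot x > s\}$, flipping the first coordinate changes the value of $f_s$ precisely when $a \cdot x$ crosses the threshold $s$ under the $\pm 2a_1$ shift caused by negating $x_1$. Concretely, writing $X' = \sum_{i \geq 2} a_i x_i$ (so $a \cdot x = X' + a_1 x_1$), the coordinate $x_1$ is pivotal exactly when $X' \in (s - a_1, s + a_1]$ (one has to be slightly careful with the boundary convention $\sgn(0) = -1$, but up to the endpoint conventions this is the right interval). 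Hence $\ii_1(f_s) = \pr[X' \in (s - a_1, s + a_1]]$, and similarly $\ii_1(f_t) = \pr[X' \in (t - a_1, t + a_1]]$.

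Now the variables $x_i' \ddd a_i x_i$ for $i \geq 2$ are independent, symmetric around $0$, and satisfy $|x_i'| = a_i \leq a_1$ almost surely, so $X' = \sum_{i \geq 2} x_i'$ is exactly of the form covered by Lemma~\ref{lem:decay_of_intervals} with $m = a_1 \geq \max_{i \geq 2} a_i$. Thus, first I would handle the case $0 \leq s \leq t$: applying Lemma~\ref{lem:decay_of_intervals} with this $m$, $s$, and $t$ gives directly
\[
\ii_1(f_t) = \pr[X' \in (t - a_1, t + a_1]] \leq 5\, \pr[X' \in (s - a_1, s + a_1]] = 5\, \ii_1(f_s),
\]
which is the desired bound for nonnegative $s$.

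Next I would dispatch the case $s < 0$ (with $|s| \leq t$, i.e. $0 < -s \leq t$) by a symmetry argument: since $X'$ is symmetric around $0$, $\pr[X' \in (s - a_1, s + a_1]]$ and $\pr[X' \in (-s - a_1, -s + a_1]]$ differ only by the endpoint conventions of the half-open interval, and in particular $\ii_1(f_s)$ and $\ii_1(f_{-s})$ agree up to the measure of the finitely many atoms on the interval boundary. By convention~3 in Section~\ref{sec:conventions} we may assume $a \cdot x$ (equivalently $X'$) avoids any prescribed finite set of values, so in fact $\ii_1(f_s) = \ii_1(f_{-s})$; then $0 \leq -s \leq t$ reduces this case to the one already proved. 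I do not expect any genuine obstacle here — the only subtlety is bookkeeping the half-open interval endpoints and the $\sgn(0) = -1$ convention when identifying $\ii_1(f_s)$ with the interval probability, and this is exactly the kind of thing handled by the standing assumption that $a \cdot x$ takes no value in a given finite set. So the proof is essentially: write the influence as an interval probability for the sum of the remaining independent symmetric summands, and quote Lemma~\ref{lem:decay_of_intervals} with $m = a_1$.
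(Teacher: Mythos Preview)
Your reduction to Lemma~\ref{lem:decay_of_intervals} via $\ii_1(f_r)=\pr[X'\in(r-a_1,r+a_1]]$ with $X'=\sum_{i\ge 2}a_ix_i$ and $m=a_1$ is exactly the paper's approach, and the case $0\le s\le t$ is identical to the paper's.

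For $s<0$ your symmetry idea is sound, but the appeal to Convention~3 does not quite close the gap. That convention lets you perturb $a$ while keeping \emph{one} halfspace fixed; here you must keep both Boolean functions $f_s$ and $f_t$ fixed, and this can force the boundary atoms to persist. Concretely, for $a=(1,1)$, $s=-2$, $t=2$, any $a'$ with $\one\{a'\cdot x>-2\}=f_{-2}$ and $\one\{a'\cdot x>2\}=f_2$ must have $a_1'+a_2'=2$, whence $X'=a_2'x_2$ still hits the endpoint $s+a_1'=-a_2'$; and indeed $\ii_1(f_{-2})=\tfrac12\neq 0=\ii_1(f_2)$, so the equality $\ii_1(f_s)=\ii_1(f_{-s})$ you assert can genuinely fail. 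The paper handles this instead by writing $\ii_1(f_s)=\pr[X'\in[s'-a_1,s'+a_1)]$ with $s'=-s$ \emph{exactly} and re-proving Lemma~\ref{lem:decay_of_intervals} with the right-hand interval of type $[\,\cdot\,,\,\cdot\,)$, the only change being the stopping rule $S_i>r$ in place of $S_i\ge r$ in the map $\sff{r}$. Your route is easily salvaged by perturbing the \emph{threshold} rather than $a$: for every small enough $\epsilon>0$ one has $f_{s+\epsilon}=f_s$ (this is the first half of Convention~3), and for all but finitely many such $\epsilon$ the points $\pm(s+\epsilon)\pm a_1$ miss the finite support of $X'$; then $\ii_1(f_s)=\ii_1(f_{s+\epsilon})=\ii_1(f_{-(s+\epsilon)})$ honestly, and since $0\le -(s+\epsilon)\le t$ the already-proven case finishes.
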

\begin{proof}
	Notice that $\ii_1(f_r)=\pr_{x\sim\spm^n} [a\cdot x - a_1 x_1 \in (r-a_1, r+a_1]]$. Hence, for $0 \leq s <t$, the corollary follows immediately from Lemma~\ref{lem:decay_of_intervals}. To prove the assertion for $s<0<t$, note that if $s'=-s$, we have
\[
\ii_1(f_s) = \ii_1(\one\{a\cdot x \geq s'\}) = \pr_{x\sim\spm^n} [a\cdot x - a_1 x_1 \in [s'-a_1, s'+a_1)].
\]
Hence, it is sufficient to prove a variant of Lemma~\ref{lem:decay_of_intervals} in which the assertion is replaced by
\[
\pr \lbs \sum_{i=1}^{n} x_i \in (t-m, t+m]\rbs \leq 5\pr \lbs \sum_{i=1}^{n} x_i \in [s'-m, s'+m)\rbs
\]
(i.e., only the types of open-closed intervals are changed).
This variant can be proved by essentially repeating the proof of Lemma~\ref{lem:decay_of_intervals}; the only change required is slightly modifying the definition of the map $\sff{r}$, such that instead of using $t=\min \lbc i\in\lbc 0,\ldots,n\rbc \mid S_{i}\geq r \rbc$, one lets
$t=\min \lbc i\in\lbc 0,\ldots,n\rbc \mid S_{i}> r \rbc$. The rest of the argument (including the definition of the map $\scf_b$) works without change.
\end{proof}

%%%%%%%%%%%%%%%%%%%%%%%%%%%%%%%%%%%%%%%%%%%%%%%%%%%%%%%
%%%%%%%%%%%%%%%%%%%%%%%%%%%%%%%%%%%%%%%%%%%%%%%%%%%%%%%
%%%%%%%%%%%%%%%%%%%%%%%%%%%%%%%%%%%%%%%%%%%%%%%%%%%%%%%
%%%%%%%%%%%%%%%%%%%%%%%%%%%%%%%%%%%%%%%%%%%%%%%%%%%%%%%
%%%%%%%%%%%% RELATIVE CHERNOFF INEQUALITIES %%%%%%%%%%%
%%%%%%%%%%%%%%%%%%%%%%%%%%%%%%%%%%%%%%%%%%%%%%%%%%%%%%%
%%%%%%%%%%%%%%%%%%%%%%%%%%%%%%%%%%%%%%%%%%%%%%%%%%%%%%%
%%%%%%%%%%%%%%%%%%%%%%%%%%%%%%%%%%%%%%%%%%%%%%%%%%%%%%%
%%%%%%%%%%%%%%%%%%%%%%%%%%%%%%%%%%%%%%%%%%%%%%%%%%%%%%%

\section{Local Chernoff Inequalities}
\label{sec:Chernoff}

In this section we prove our local Chernoff inequalities, namely, Theorems~\ref{thm:intro-strong_chernoff},~\ref{thm:intro-strong_chernoff2}, and~\ref{thm:intro-weak_chernoff}. First, for the sake of completeness we present the proof of Theorem~\ref{thm:intro-strong_chernoff} using the general method of Benjamini et al.~\cite{BKS03}, due to Devroye and Lugosi~\cite{DL08}, and then we present a proof of all three theorems, via Lemma~\ref{lem:intro-log_concavity}.

\subsection{Proof of Theorem~\ref{thm:intro-strong_chernoff}, using the Benjamini-Kalai-Schramm method}

%Recall that Theorem~\ref{thm:intro-strong_chernoff} asserts that if $\lbc x_i\rbc$ are independent random variables uniformly distributed in $\spm$, and $a_i\in\preals$ satisfy $\sum_i a_{i}^{2}=1$, then there exists a universal constant $c>0$ such that if $t\geq 0$ and $\eps=\pr\lbs \sum_{i} a_i x_i > t\rbs$, then $\pr[\sum_{i} a_i x_i>t+\delta]\leq \frac{\eps}{2}$, for $\delta\leq \frac{c}{\sqrt{\log(1/\eps)}}$.

Let us recall the formulation of Theorem~\ref{thm:intro-strong_chernoff}.

\medskip \noindent \textbf{Theorem~\ref{thm:intro-strong_chernoff}.} Let $\lbc x_i\rbc$ be independent random variables uniformly distributed in $\spm$, and let $a_i\in\preals$ be such that $\sum_i a_{i}^{2}=1$. There exists a universal constant $c>0$ such that if $t\geq 0$ and $\eps=\pr\lbs \sum_{i} a_i x_i > t\rbs$, then $\pr[\sum_{i} a_i x_i>t+\delta]\leq \frac{\eps}{2}$, for $\delta\leq \frac{c}{\sqrt{\log(1/\eps)}}$.

\mn To prove the theorem, one needs the following result of Talagrand~\cite{Tal94}, whose proof relies on the hypercontractive inequality~\cite{Bonami70}.
\begin{theorem}~\cite[Theorem~1.5]{Tal94}
\label{Thm:Tal}
Let $g$ be a real-valued function on the discrete cube, i.e., $g\colon\{-1,1\}^n \rightarrow \mathbb{R}$, and let $L_{i}g=(g(x)-g(x\xor e_{i}))/2$ for $i=1,2,\ldots,n$. Then
\begin{equation}\label{Eq:Tal-Aux}
	\var(g) \leq O\lbr \sum_{i} \frac{\normt{L_{i}g}}{1+\log(\normts{L_{i}g}/\lbn L_{i}g\rbn_{1})}\rbr.
\end{equation}
\end{theorem}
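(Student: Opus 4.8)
The plan is to prove Theorem~\ref{Thm:Tal} along the classical route via hypercontractivity, after reducing~\eqref{Eq:Tal-Aux} to a one-coordinate Fourier estimate. Expand $g=\sum_S\wh g(S)x^S$ in the Walsh basis, so that $\var(g)=\sum_{S\neq\es}\wh g(S)^2$ by Parseval. Since $g(x\xor e_i)=\sum_S\wh g(S)(-1)^{\one[i\in S]}x^S$, we have $L_{i}g=\sum_{S\ni i}\wh g(S)x^S$, i.e.\ $\wh{L_{i}g}(S)=\wh g(S)\,\one[i\in S]$. Setting $W_i^{k}\ddd\sum_{|S|=k,\,i\in S}\wh g(S)^2$ (the squared $L_2$-norm of the degree-$k$ part of $L_{i}g$) and double counting the pairs $(S,i)$ with $i\in S$ yields
\[
\var(g)=\sum_{k\ge1}\sum_{|S|=k}\wh g(S)^2=\sum_{k\ge1}\frac1k\sum_{i}W_i^{k}=\sum_{i}\sum_{k\ge1}\frac{W_i^{k}}{k}.
\]
Hence it suffices to establish the following per-coordinate claim, and then sum over $i$, discarding coordinates with $L_{i}g\equiv0$ (which contribute $0$ on both sides): there is a universal constant $C$ such that for every $h\colon\spm^n\to\reals$ with $\wh h(\es)=0$, writing $W^{k}\ddd\sum_{|S|=k}\wh h(S)^2$, $\sigma^2\ddd\normt{h}=\sum_{k\ge1}W^{k}$, and $L\ddd\log\lbr\normts{h}/\lbn h\rbn_{1}\rbr\ge0$, one has $\sum_{k\ge1}W^{k}/k\le C\,\sigma^2/(1+L)$. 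Applied to $h=L_{i}g$, the number $L$ is exactly $\log(\normts{L_{i}g}/\lbn L_{i}g\rbn_{1})$, which is the denominator appearing in~\eqref{Eq:Tal-Aux}.

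For the per-coordinate claim I would start from the identity
\[
\sum_{k\ge1}\frac{W^{k}}{k}=\int_0^1\frac1\rho\,\normt{T_{\sqrt{\rho}}h}\,d\rho,
\]
which holds because $\normt{T_{\sqrt{\rho}}h}=\sum_{S}\rho^{|S|}\wh h(S)^2=\sum_{k\ge1}\rho^{k}W^{k}$ (the $k=0$ term vanishes as $\wh h(\es)=0$) and $\int_0^1\rho^{k-1}\,d\rho=1/k$. I then bound the integrand in two complementary ways. First, $\normt{T_{\sqrt{\rho}}h}=\sum_{k\ge1}\rho^{k}W^{k}\le\rho\sum_{k\ge1}W^{k}=\rho\,\sigma^2$ (as $\rho^{k}\le\rho$ for $k\ge1$), which is efficient for $\rho$ near $0$. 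Second, by the $(2,1+\rho)$-hypercontractive inequality $\normts{T_{\sqrt{\rho}}h}\le\lbn h\rbn_{1+\rho}$ (Bonami~\cite{Bonami70}) together with the log-convexity of $L^p$-norms $\lbn h\rbn_{1+\rho}\le\lbn h\rbn_{1}^{(1-\rho)/(1+\rho)}\,\normts{h}^{\,2\rho/(1+\rho)}$, we get $\normt{T_{\sqrt{\rho}}h}\le\sigma^2\exp\lbr-2\tfrac{1-\rho}{1+\rho}L\rbr\le\sigma^2 e^{-(1-\rho)L}$, which is efficient once $\rho$ is bounded away from $1$.

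Assuming $L\ge1$ — the case $L<1$ follows at once from $\sum_{k\ge1}W^{k}/k\le\sigma^2\le2\sigma^2/(1+L)$ — I would split $[0,1]$ at $e^{-2L}$ and at $\tfrac12$. On $[0,e^{-2L}]$, the first bound gives a contribution $\le\int_0^{e^{-2L}}\sigma^2\,d\rho=\sigma^2 e^{-2L}$. On $[e^{-2L},\tfrac12]$, where $\tfrac{1-\rho}{1+\rho}\ge\tfrac13$, hypercontractivity gives $\le\sigma^2 e^{-2L/3}\int_{e^{-2L}}^{1/2}\rho^{-1}\,d\rho\le2L\,\sigma^2 e^{-2L/3}$. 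On $[\tfrac12,1]$, using $\rho^{-1}\le2$ and $\int_{1/2}^{1}e^{-(1-\rho)L}\,d\rho\le1/L$, the contribution is $\le2\sigma^2/L$. Adding these, $\sum_{k\ge1}W^{k}/k\le\sigma^2\lbr e^{-2L}+2L e^{-2L/3}+2/L\rbr=O(\sigma^2/L)=O(\sigma^2/(1+L))$, since $x\mapsto x e^{-cx}$ is bounded on $[0,\infty)$ for $c>0$. Plugging the per-coordinate claim into the displayed formula for $\var(g)$ and summing over $i$ proves the theorem.

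The step I expect to be the main obstacle is exactly this last balancing. A single split of $[0,1]$ only produces the weaker denominator $1+\sqrt{L}$: the factor $\rho^{-1}$ inside the integral competes with the decay, so one is forced to use the spectral-gap bound exactly where $\int\rho^{-1}\,d\rho$ diverges (near $\rho=0$), to reserve hypercontractivity for $\rho$ bounded away from both endpoints (where it delivers genuine exponential decay in $L$), and to exploit that $\rho^{-1}$ is harmless near $\rho=1$; only with all three regimes is each of the three pieces separately $O(\sigma^2/L)$. A minor point needing care is the bookkeeping for degenerate coordinates ($L_{i}g\equiv0$, for which $L$ is undefined) and the boundary case $L<1$, both dispatched by the trivial Parseval bound $\sum_{k\ge1}W^{k}/k\le\sigma^2$.
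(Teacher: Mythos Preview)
The paper does not supply its own proof of Theorem~\ref{Thm:Tal}: it is quoted verbatim from Talagrand~\cite[Theorem~1.5]{Tal94} and used as a black box in the Devroye--Lugosi proof of Theorem~\ref{thm:intro-strong_chernoff}. So there is nothing in the paper to compare your argument against.

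That said, your argument is correct. The reduction $\var(g)=\sum_i\sum_{k\ge1}W_i^{k}/k$ via double counting, the integral representation $\sum_{k\ge1}W^{k}/k=\int_0^1\rho^{-1}\normt{T_{\sqrt{\rho}}h}\,d\rho$, and the two competing bounds (the spectral-gap estimate $\normt{T_{\sqrt{\rho}}h}\le\rho\sigma^2$ and the hypercontractive estimate $\normt{T_{\sqrt{\rho}}h}\le\sigma^2 e^{-2L(1-\rho)/(1+\rho)}$ via $(2,1+\rho)$-hypercontractivity and log-convexity of norms) are all valid. Your three-region splitting at $e^{-2L}$ and $1/2$ correctly yields each piece $O(\sigma^2/L)$ for $L\ge1$; in particular the middle piece $2L\,e^{-2L/3}$ is $O(1/L)$ because $L^2 e^{-2L/3}$ is bounded on $[1,\infty)$. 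This is essentially the route Talagrand himself takes in~\cite{Tal94}, so your proof is both correct and in line with the original source the paper cites.
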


\begin{proof}[Proof of Theorem~\ref{thm:intro-strong_chernoff}, due to Devroye and Lugosi~\cite{DL08}]
We let $g=\max(t, a\cdot x)$ and $\eps=\pr[g>t]$, and apply Theorem~\ref{Thm:Tal} to $g$. Now, we bound the terms that appear in~\eqref{Eq:Tal-Aux}.

Firstly, we have
\[
\normt{L_{i}g} \leq \pr[L_{i}g \neq 0] \lbn L_{i}g \rbn_{\infty}^{2}.
\]
Clearly, $\forall i\colon L_i(g) \leq a_i$. Since $L_i g=0$ holds unless either $g(x)>t$ or $g(x \xor e_i)>t$, a union bound yields $\pr[L_{i}g \neq 0] \leq 2\eps$. Hence,
\[
\normt{L_{i}g} \leq \pr[L_{i}g \neq 0] \lbn L_{i}g \rbn_{\infty}^{2} \leq 2\eps \cdot a_{i}^{2}.
\]
Secondly, by the Cauchy-Schwarz inequality,
\[
\lbn L_{i}g \rbn_{1} \leq \sqrt{\pr[L_{i}g \neq 0]} \cdot \sqrt{\be[L_{i}g^{2}]} \leq \sqrt{2\eps} \normts{L_{i}g},
\]
and thus,
\[
1+\log(\normts{L_{i}g}/\lbn L_{i}g\rbn_{1}) \geq \log(1/\sqrt{2\epsilon}) = \Omega(\log(1/\eps)).
\]
Substituting into~\eqref{Eq:Tal-Aux} and using the assumption $\sum_i a_i^2=1$, we obtain
\begin{equation}\label{Eq:Tal-Aux2}
\var(g) \leq O\lbr \sum_{i} \frac{\normt{L_{i}g}}{1+\log(\normts{L_{i}g}/\lbn L_{i}g\rbn_{1})}\rbr \leq O\lbr \sum_i \frac{2\eps \cdot a_i^2}{\log(1/\eps)} \rbr \leq O(\eps/\log(1/\eps)).
\end{equation}
On the other hand, by Chebyshev's inequality, we have
\[
\pr[|g-\be[g]| > \sqrt{2\var(g)/\eps}] < \eps/2.
\]
Since for $t \geq 0$ we have $\pr[g = t] \geq 1/2 \geq \eps/2$, we must have $\be[g]\leq t + \sqrt{2\var(g)/\eps}$, and so we deduce
\[
\pr[g>t+\sqrt{8\var(g)/\eps}]\leq \eps / 2.
\]
Substituting the bound of~\eqref{Eq:Tal-Aux2} on $\var(g)$, we obtain $\pr[g>t+\delta]\leq \eps / 2$ for $\delta = O(1/\sqrt{\log(1/\eps)})$, as asserted.
\end{proof}

\subsection{Proof of Theorems~\ref{thm:intro-strong_chernoff},~\ref{thm:intro-strong_chernoff2}, and~\ref{thm:intro-weak_chernoff}, using Lemma~\ref{lem:intro-log_concavity}}

In the proof, we shall use Lemma~\ref{lem:intro-log_concavity} via the following auxiliary lemma.
\begin{lemma}\label{lem:log_concave_exp}
	Let $X=\sum x_{i}$ where $\{x_i\}$ are independent symmetric (around $0$) random variables with $\lba x_i\rba \leq a_i$ almost surely, and let $F(t)=\pr\lbs X>t\rbs$. Set $m=2\max_{i}\lbc a_{i}\rbc $. Then, for every $t\geq 0$ and $\delta > 0$, we have $F(t+\delta+m)^{l}\leq 2F(t)^{l+1}$, where $l=1+\left\lfloor t/\delta\right\rfloor$.
\end{lemma}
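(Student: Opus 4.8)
The plan is to apply Lemma~\ref{lem:intro-log_concavity} repeatedly, chaining together a telescoping-style estimate. Recall that Lemma~\ref{lem:intro-log_concavity} gives, for $b \leq c \leq d$, the inequality $F(d)F(b) \leq F(c)F(b+d-c-m')$, where $m' = \max_i\{\sup x_i - \inf x_i\}$; since the $x_i$ are symmetric with $|x_i| \leq a_i$, we have $m' \leq 2\max_i\{a_i\} = m$, and because $F$ is non-increasing we may freely use $m$ in place of $m'$ in the conclusion. The idea is to run this inequality $l = 1 + \lfloor t/\delta \rfloor$ times, each time trading a factor of $F$ evaluated near $t+\delta+m$ for a factor of $F$ evaluated at a point roughly $\delta$ smaller, until the argument drops below $0$, at which point we use $F(s) \geq 1/2$ for $s < 0$ (which holds by symmetry of $X$, since $\pr[X > s] \geq \pr[X \geq 0] \geq 1/2$ when $s<0$; more carefully, $\pr[X>s] + \pr[X < -s] \geq \pr[X \neq 0] $ combined with symmetry gives $\pr[X>s]\geq \pr[X\geq -s]\ge 1/2$ for $s\le 0$, which is what we need up to the usual slack).

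Concretely, I would define $a_j = t + \delta + m - j\delta$ for $j = 0, 1, \ldots, l$, so that $a_0 = t+\delta+m$ and $a_l = t + \delta + m - l\delta \leq m$ (using $l \geq t/\delta$, hence $l\delta \geq t$, so $a_l \leq \delta + m - (\text{something} \le 0)$... more precisely $a_l = t + \delta + m - l\delta$ and $l\delta > t$ gives $a_l < \delta + m$; a cleaner bookkeeping shifts the grid so the final argument is $\le 0$). Applying Lemma~\ref{lem:intro-log_concavity} with the triple $(b,c,d) = (a_{j+1}-\delta,\ a_{j+1},\ a_j+\ (\text{correction}))$ — the point is to choose the three arguments so that $d = a_j$, $c$ is the target for the next step, and $b + d - c - m$ is controlled — one obtains a recursion of the shape $F(a_j) F(\text{small}) \leq F(a_{j+1}) F(\text{smaller})$. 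Iterating and collecting, the "small"/"smaller" arguments should all be taken to be negative (or $\le t$), so that each such factor is bounded below by $1/2$ on one side and above by $F(t)$ or $1$ on the other; multiplying the $l$ inequalities telescopes the $F(a_j)$'s and leaves $F(t+\delta+m)^l \leq 2 F(t)^{l+1}$ after absorbing the constant.

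The main obstacle — and the step requiring genuine care — is the exact choice of the triples $(b,c,d)$ at each stage so that (i) the constraint $b \leq c \leq d$ holds, (ii) the shifted argument $b+d-c-m$ equals exactly $a_{j+1}-m$ or stays $\leq t$, and (iii) the grid points line up so the product telescopes cleanly with precisely $l$ steps and precisely one extra power of $F(t)$. In particular one has to be careful that the $-m$ loss in each application does not accumulate over $l$ steps; this is handled by the single extra $+m$ in the argument $t+\delta+m$ on the left-hand side, together with the fact that we only ever "spend" the $-m$ against slack that is already present (the gap $\delta$ per step versus the total budget $t$). I would organize the induction by proving, for $k = 0,1,\ldots,l$, the statement $F(t+\delta+m)^{l} \leq 2^{k/?}\, F(t)^{?}\, F(t - (\text{stuff}))^{l-k}$ and then read off the case $k = l$; the bookkeeping constants are routine once the grid is fixed, so I would state the grid explicitly and then verify conditions (i)–(iii) in one short paragraph rather than belabor each application.
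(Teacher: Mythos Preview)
Your overall plan---iterate Lemma~\ref{lem:intro-log_concavity} $l$ times and invoke symmetry to get a factor $\ge 1/2$---is exactly the paper's. But the grid $a_j=t+\delta+m-j\delta$ you set up, and your worry about the $-m$ accumulating, show you have not found the right choice of $(b,c,d)$; if you walk the large argument down as you suggest (taking $d=a_j$, $c=a_{j+1}$), each step costs a fresh $-m$ in the small argument and the loss really does pile up over $l$ steps, so the bookkeeping will not close.

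The paper does \emph{not} move the large argument. It keeps $c=t$ and $d=t+\delta+m$ fixed in all $l$ applications, so that $d-c-m=\delta$ exactly, and only walks $b$ upward by $\delta$ each time. Set $r=t-l\delta<0$ (whence $F(r)\ge 1/2$ by symmetry) and apply the lemma with $b=r,\,r+\delta,\,\ldots,\,r+(l-1)\delta$ in turn; each application converts one factor $F(b)\,F(t+\delta+m)$ into $F(t)\,F(b+\delta)$. The chain reads
\[
\tfrac12\,F(t+\delta+m)^l \le F(r)\,F(t+\delta+m)^l \le F(r+\delta)\,F(t)\,F(t+\delta+m)^{l-1} \le \cdots \le F(r+l\delta)\,F(t)^l = F(t)^{l+1}.
\]
There is no accumulation because the single $+m$ baked into $d$ cancels the $-m$ in $b+d-c-m$ at \emph{every} step, not just once---your ``single extra $+m$ handles it'' intuition was pointing at the wrong mechanism. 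With this choice the only constraint to check is $b\le c$, i.e.\ $r+(j-1)\delta\le t$ for $j\le l$, which is immediate; your conditions (i)--(iii) become trivial and no induction scaffolding is needed.
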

\begin{proof}
	Set $r = t-l\delta < 0$. Since $x_i$ are symmetric, we have $F(r)\geq \frac{1}{2}$. Applying repeatedly Lemma~\ref{lem:intro-log_concavity}, with $d=t+\delta+m$, $c=t$, and $b$ taken from the sequence of values $b=r, r+\delta, r+2\delta,\ldots, r+(l-1)\delta$, we get a series of inequalities:
	\bem
		\frac{1}{2}F(t+\delta+m)^{l}
		& \leq & F(r)F(t+\delta+m)^{l}\\
		& \underbrace{\leq}_{b=r} & F(r+\delta)F(t)F(t+\delta+m)^{l-1}\\
		& \underbrace{\leq}_{b=r+\delta} & F(r+2\delta)F(t)^{2}F(t+\delta+m)^{l-2}\\
		& \leq & \ldots\\
		& \underbrace{\leq}_{b=r+(l-1)\delta} & F(r+l\delta)F(t)^{l} = F(t)^{l+1}.
	\enm
This completes the proof.
\end{proof}

Now we are ready to present the proofs of the theorems. We note that although we already presented a proof of Theorem~\ref{thm:intro-strong_chernoff} above, we present an alternative proof as well, since it is more constructive and may be applicable in settings where Talagrand's result does not apply.

We begin with a proof of Theorem~\ref{thm:intro-weak_chernoff}.

\mn \textbf{Theorem~\ref{thm:intro-weak_chernoff}.} Let $X=\sum x_{i}$ where $\{x_i\}$ are independent symmetric (around $0$) random variables with $\lba x_i\rba \leq a_i$ almost surely, and let $F(t)=\pr\lbs X>t\rbs$. Set $m=2\max_{i}\lbc a_{i}\rbc $, and let $c\in(0,1)$. If $\epsilon=F(t)$ for some $t\geq0$, and $\delta\geq0$ is minimal such that $F(t+\delta)\leq c\cdot F(t)$, then we have
	\[
	\delta\leq m+O\lbr \log\lbr 2/c\rbr \sqrt{\frac{\sum_{i}a_{i}^{2}}{\log(1/\epsilon)}}\rbr.
    \]
\begin{proof}
Without loss of generality, assume $\sum a_i^2=1$. Denote $\eps=F(t)$. We split into two cases.

\medskip \noindent \textbf{Case 1: $t<\delta-m$}.
Let $\delta'\in(t+m,\delta)$. As $\delta'' \ddd \delta'-m>t$, we can apply Lemma~\ref{lem:log_concave_exp} with $\delta''$ and $l=1+\lfloor t/\delta'' \rfloor =1$ to obtain $F(t+\delta') = F(t+\delta''+m)\leq 2F(t)^{2}$.
By minimality of $\delta$ with respect to $F(t+\delta)\leq cF(t)$, we must have
\begin{equation}\label{Eq:Aux-Chernoff1}
2F(t)>c,
\end{equation}
as otherwise $F(t+\delta')\leq (2F(t)) \cdot F(t)\leq cF(t)$ with $\delta'<\delta$. Using again the minimality of $\delta$, we deduce that every $d<t+\delta$ satisfies $F(d)>cF(t)>c^{2}/2$. On the other hand, Hoeffding's inequality implies $F(d)\leq\exp\lbr -d^{2}/2\rbr $. Hence, $d<\sqrt{2\log\lbr 2/c^{2}\rbr }$. Since this holds for all $d<t+\delta$, we have $\delta\leq t+\delta\leq\sqrt{2\log\lbr 2/c^{2}\rbr }$.

Recall that by~\eqref{Eq:Aux-Chernoff1}, $\eps=F(t)>c/2$, and so, $\log(1/\eps)\leq\log(2/c)$. All in all,	we get the required inequality,
		\[
		\delta\leq\sqrt{2\log\lbr 2/c^{2}\rbr }<2\frac{\log\lbr 2/c\rbr }{\sqrt{\log\lbr 2/c\rbr }}\leq2\frac{\log\lbr 2/c\rbr }{\sqrt{\log\lbr 1/\eps\rbr }}.
		\]

\medskip \noindent \textbf{Case 2: $t\geq\delta-m$.}
If $\delta \leq m$, we are done. Otherwise, let $\delta' \in (0, \delta-m)$. Applying Lemma~\ref{lem:log_concave_exp} with $\delta'$ and $l=1+\lfloor t/\delta' \rfloor$, we get
\begin{equation}\label{Eq:Aux-Chernoff2}
F(t+\delta'+m)^{l}\leq 2F(t)^{l+1}.
\end{equation}
As $t\geq\delta'$, we have $l=1+\left\lfloor t/\delta'\right\rfloor \leq2t/\delta'$. Thus,~\eqref{Eq:Aux-Chernoff2} implies:
\begin{equation}\label{Eq:Aux-Chernoff3}
F(t+\delta'+m)\leq2^{1/l}F(t)^{1+1/l}\leq2F(t)\cdot F(t)^{\delta'/2t}.
\end{equation}
Since $F(t)=\eps$ by assumption, Hoeffding's inequality yields $t\leq\sqrt{2\log(1/\eps)}$. Substituting into~\eqref{Eq:Aux-Chernoff3}, we get
		\[
		F(t+\delta'+m)\leq 2F(t)\cdot F(t)^{\delta'/2t} \leq 2\eps\cdot\eps^{\delta'/\sqrt{8\log(1/\eps)}}.
		\]
On the other hand, by minimality of $\delta$ and since $\delta'+m<\delta$, we have
		$F(t+\delta'+m)>c\cdot\eps$, and so
\[
c/2< \eps^{\delta'/\sqrt{8\log(1/\eps)}} = \exp\lbr \log(\eps)\delta'/\sqrt{8\log(1/\eps)}\rbr.
\]
		Therefore, $\log\lbr 2/c\rbr >\delta'\sqrt{\log(1/\eps)/8}$, or more
		nicely, $\delta'<\sqrt{8}\frac{\log(2/c)}{\sqrt{\log(1/\eps)}}$. Since
		this inequality holds for every $\delta'\in(0,\delta-m)$, we have
		\[
		\delta\leq m+\sqrt{8}\frac{\log(2/c)}{\sqrt{\log(1/\eps)}}.
		\]
This completes the proof.
\end{proof}

We now prove Theorems~\ref{thm:intro-strong_chernoff} and~\ref{thm:intro-strong_chernoff2} together.

\mn \textbf{Theorems~\ref{thm:intro-strong_chernoff} and~\ref{thm:intro-strong_chernoff2}.} Let $\lbc x_i\rbc$ be independent random variables uniformly distributed in $\spm$, and let $a_i\in\preals$ be such that $\sum_i a_{i}^{2}=1$. There exists a universal constant $c>0$ such that if $t\geq 0$ and $\eps=\pr\lbs \sum_{i} a_i x_i > t\rbs$, then $\pr[\sum_{i} a_i x_i>t+\delta]\leq \frac{\eps}{2}$, for $\delta\leq \frac{c}{\sqrt{\log(1/\eps)}}$.
	
Furthermore, if $B,S$ is any partition of $[n]$ (which corresponds to `big' and `small' values of the $a_i$'s), then either $\lba B\rba\geq \frac{1}{2}\log(1/\eps)$, or  $\delta\leq c\sqrt{\frac{\sum_{i\in S} a_i^2}{\log(1/\eps)}}$.

\medskip

\begin{proof}
Let $X=\sum_{i\in[n]} a_i x_i$ and $\eps=\pr[X>t]$ for $t\geq 0$. Take $\delta\geq 0$ to be minimal such that $\pr_x[X > t+\delta] \leq \frac{\eps}{2}$.

\medskip \noindent First, we prove $\delta \leq O \lbr 1/\sqrt{\log(1/\eps)} \rbr$. Let
		\[
		\beta = 2/\sqrt{\log(1/\eps}), \qquad S=\{i\in[n]\mid a_i\leq \beta\}, \qquad \mbox{ and } \qquad B=[n]\sm S.
		\]
		We have $\lba B\rba \beta^2 \leq \sum_{i\in[n]} a_i^2 \leq 1$, and so $|B|\leq \frac{1}{4}\log(1/\eps)$. By the law of total probability,
\begin{equation}\label{Eq:Aux-Chernoff3.5}
\Pr_{x} \lbs a\cdot x>t \rbs =2^{-\lba B\rba}\sum_{y\in\spm^{B}}\Pr_{z\sim\spm^{S}} \lbs \sum_{i\in S}a_{i}z_{i}>t-\sum_{i\in B}a_{i}y_{i}\rbs.
\end{equation}
		Notice that $2^{-\lba B\rba}\geq \eps^{1/4}$, and so each of the probabilities on the right hand side is $\leq \eps^{3/4}$. For $y\in\spm^B$, let $\delta_y$ be minimal such that
		\[
		\Pr_{z} \lbs \sum_{i\in S}a_{i}z_{i}>t+\delta_y-\sum_{i\in B}a_{i}y_{i}\rbs \leq \frac{1}{2}\Pr_{z} \lbs \sum_{i\in S}a_{i}z_{i}>t-\sum_{i\in B}a_{i}y_{i}\rbs.
		\]
		Clearly, $\delta \leq \max\{\delta_y\mid y\in\spm^B\}$. Applying Theorem~\ref{thm:intro-weak_chernoff} to the random variable $Z=\sum_{i \in S} a_i z_i$, with $t-\sum_{i\in B}a_{i}y_{i}$ in place of $t$ and $\Pr_{z\sim\spm^{S}} \lbs \sum_{i\in S}a_{i}z_{i}>t-\sum_{i\in B}a_{i}y_{i}\rbs \leq \eps^{3/4}$ in place of $\eps$, we deduce
\begin{equation}\label{Eq:Aux-Chernoff4}
  \forall y\colon \delta_y \leq O \lbr 2\beta + \sqrt{\frac{\sum_{i\in S} a_i^2}{\log(1/\eps^{3/4})}} \rbr = O \lbr \frac{1}{\sqrt{\log(1/\eps)}} \rbr,
\end{equation}
as asserted.

\medskip \noindent Now, given a partition $B,S$ of $[n]$, we show that either $|B|\geq \frac{1}{2}\log(1/\eps)$, or $\delta \leq \sqrt{\frac{\sum_{i\in S} a_i^2}{\log(1/\eps)}}$. Assume that $|B|<\frac{1}{2}\log(1/\eps)$, and note that by~\eqref{Eq:Aux-Chernoff3.5}, we have
\begin{equation}\label{Eq:Aux-Chernoff5}
\forall y\colon \Pr_{z\sim\spm^{S}} \lbs \sum_{i\in S}a_{i}z_{i}>t-\sum_{i\in B}a_{i}y_{i}\rbs \leq \frac{\epsilon}{2^{-|B|}} \leq \eps^{1/2}.
\end{equation}
Let $\delta_y$ be as in the above proof, and consider the random variable $Z' = \alpha \sum_{i \in S} a_i z_i$, for
$\alpha=(\sum_{i \in S} a_i^2)^{-1/2}$ (which is needed for rescaling the weights to have sum-of-squares equal 1). Applying~\eqref{Eq:Aux-Chernoff4} to $Z'$, with $t'=\alpha(t-\sum_{i\in B}a_{i}y_{i})$ in place of $t$ and using $\Pr[Z'<t'] \leq \eps^{1/2}$ which follows from~\eqref{Eq:Aux-Chernoff5}, we get
\[
	\forall y:\delta_y \leq O \lbr \sqrt{\frac{\sum_{i\in S} a_i^2}{\log(1/\eps^{1/2})}} \rbr,
\]
and we conclude with $\delta \leq O \lbr \sqrt{\frac{\sum_{i\in S} a_i^2}{\log(1/\eps)}} \rbr$ since $\delta \leq \max_y \delta_y$, as above.
\end{proof}

%%%%%%%%%%%%%%%%%%%%%%%%%%%%%%%%%%%%%%%%%%%%%%%%%%%%%%%
%%%%%%%%%%%%%%%%%%%%%%%%%%%%%%%%%%%%%%%%%%%%%%%%%%%%%%%
%%%%%%%%%%%%%%%%%%%%%%%%%%%%%%%%%%%%%%%%%%%%%%%%%%%%%%%
%%%%%%%%%%%%%%%%%%%%%%%%%%%%%%%%%%%%%%%%%%%%%%%%%%%%%%%
%%%%%%%%%%%%%%%%%%%%%%% LARGE W1 %%%%%%%%%%%%%%%%%%%%%%
%%%%%%%%%%%%%%%%%%%%%%%%%%%%%%%%%%%%%%%%%%%%%%%%%%%%%%%
%%%%%%%%%%%%%%%%%%%%%%%%%%%%%%%%%%%%%%%%%%%%%%%%%%%%%%%
%%%%%%%%%%%%%%%%%%%%%%%%%%%%%%%%%%%%%%%%%%%%%%%%%%%%%%%
%%%%%%%%%%%%%%%%%%%%%%%%%%%%%%%%%%%%%%%%%%%%%%%%%%%%%%%

\section{First-Degree Fourier Weight of Halfspaces}
\label{sec:First-Degree}

In this section we prove Theorem~\ref{Thm:Main-W^1} stating that any halfspace $f_{t}= \one\{a \cdot x > t\}$ satisfies $\wo(f_{t}) = \Omega(\mu(f_{t})^2 \log(1/\mu(f_{t})))$ (which is the maximal possible value up to a constant factor, by the aforementioned \emph{level-1 inequality}).

\medskip

We start with an easy lemma describing how `large coordinates' (i.e., coordinates $i$ for which $a_i$ is `large') influence a halfspace.
\begin{lemma}\label{lem:e_i_big_for_big_coordinates}
	Let $f_{t}(x)=\one\{a\cdot x > t\}$, where $a\in \preals^n$ and $t\geq 0$. Denote $\eps \ddd \mu(f_{t})=\pr_{x\sim\spm^n}[a\cdot x > t]$, and let $\beta \geq 0$ be minimal such that $\pr[a\cdot x > t+\beta]\leq \eps/3$. If $a_i > \beta/2$ for some $i\in[n]$, then $\ii_i(f_{t})\geq \frac{2}{3} \eps$.
\end{lemma}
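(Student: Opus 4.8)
The plan is to reduce the statement to a one-line estimate about the tail of the \emph{leave-one-out} sum $y \ddd \sum_{j \ne i} a_j x_j$, which is independent of $x_i$ and symmetric around $0$. Write $G(s) \ddd \pr_y[y > s]$ (a non-increasing function). Conditioning on $x_i \in \spm$ (each value with probability $\tfrac12$) and using $a\cdot x = y + a_i x_i$ yields two identities. First, $\eps = \pr[a\cdot x > t] = \tfrac12 G(t - a_i) + \tfrac12 G(t + a_i)$, i.e.\ $G(t - a_i) + G(t + a_i) = 2\eps$. Second, $f_t$ is pivotal at coordinate $i$ exactly when $y + a_i > t$ and $y - a_i \le t$, that is on the event $\{t - a_i < y \le t + a_i\}$, so $\ii_i(f_t) = G(t - a_i) - G(t + a_i)$. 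Using the first identity to eliminate $G(t - a_i)$ from the second gives $\ii_i(f_t) = 2\eps - 2\,G(t + a_i)$, so it suffices to prove $G(t + a_i) \le \tfrac23 \eps$.

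For this I would use the defining property of $\beta$. Conditioning on $x_i$ exactly as above, $\tfrac{\eps}{3} \ge \pr[a\cdot x > t + \beta] = \tfrac12 G(t + \beta - a_i) + \tfrac12 G(t + \beta + a_i) \ge \tfrac12 G(t + \beta - a_i)$, hence $G(t + \beta - a_i) \le \tfrac23 \eps$. Now the hypothesis $a_i > \beta/2$ is precisely the statement that $t + a_i > t + \beta - a_i$, so monotonicity of $G$ gives $G(t + a_i) \le G(t + \beta - a_i) \le \tfrac23 \eps$. Plugging back, $\ii_i(f_t) = 2\eps - 2\,G(t + a_i) \ge 2\eps - \tfrac43 \eps = \tfrac23 \eps$, as claimed.

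I do not expect a genuine obstacle here; the only points needing a little care are bookkeeping. One is the exact form of the pivotality event: since $f_t$ is defined with a \emph{strict} inequality (equivalently, $\sgn$ with $\sgn(0)=-1$), one should verify that $f_t(x) \ne f_t(x \oplus e_i)$ holds on precisely the half-open interval $\{t - a_i < y \le t + a_i\}$, so that it matches $G(t-a_i) - G(t+a_i)$ exactly. The other is that $\beta$ — the minimal shift with $F(t+\beta) \le \eps/3$ — is actually attained, which follows from right-continuity of the step function $F(s)=\pr[a\cdot x>s]$ (and, by the conventions of Section~\ref{sec:conventions}, we may in any case assume $a\cdot x$ avoids the finitely many relevant values). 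Everything else is a direct computation.
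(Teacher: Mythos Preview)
Your proof is correct and follows essentially the same approach as the paper: both introduce the leave-one-out tail $G(s)=\pr[\sum_{j\ne i}a_jx_j>s]$, use the relation $F(s)=\tfrac12 G(s-a_i)+\tfrac12 G(s+a_i)$ together with $\ii_i(f_t)=G(t-a_i)-G(t+a_i)$, and conclude via the bound $G(t+a_i)\le 2F(t+2a_i)\le \tfrac{2}{3}\eps$ (equivalently, your $G(t+a_i)\le G(t+\beta-a_i)\le 2F(t+\beta)\le \tfrac{2}{3}\eps$), which is exactly where the hypothesis $a_i>\beta/2$ enters. Your write-up is in fact slightly more streamlined than the paper's, since you use the identity $G(t-a_i)+G(t+a_i)=2\eps$ directly rather than first deriving the two-sided estimates $2F(s+a_i)-2F(s+3a_i)\le G(s)\le 2F(s+a_i)$.
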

\begin{proof}
	Define $F(s)=\pr[a\cdot x > s]$ and $G(s)=\pr[a\cdot x - a_i x_i > s]$. (Note that $G$ does not depend on the value of the coordinate $x_i$.) We have
	\[
		\ii_i(f_{t})=\pr[a\cdot x - a_i x_i \in (t-a_i, t+a_i]]=G(t-a_i)-G(t+a_i).
	\]
	By the definition of $G$,
\begin{align*}
F(s)&= \pr[a\cdot x > s] = \frac{1}{2} \left(\pr[a\cdot x > s | x_i=1] + \pr[a\cdot x > s | x_i=-1] \right) \\
&= \frac12 \left(G(s-a_i)+G(s+a_i) \right),
\end{align*}
and therefore, $G(s)=2F(s+a_i)-G(s+2a_i)$. Hence, $G(s)\leq 2F(s+a_i)$, and on the other hand, $G(s)\geq 2F(s+a_i) - 2F(s+3a_i)$.
Thus,
\[
	\ii_i(f_{t}) =
	G(t-a_i)-G(t+a_i) \geq
	2F(t)-4F(t+2a_i)\geq
	\frac{2}{3} \eps,
\]
where the last inequality holds since $a_i>\beta/2$.
\end{proof}

We proceed with a lemma which states that in some sense, the influence of a coordinate on a halfspace is `proportional' to its weight.
(Recall that there exist halfspaces for which the weight of some coordinate is positive and nevertheless, it has zero influence; the lemma shows that this
`anomaly' can be fixed by slightly modifying the function.)
\begin{lemma}\label{lem:e_i_lower_bound}
    For $\delta>0$ define
	$
	e_{i}^{\delta} \ddd \be_{s\sim U(0,\delta)} \lbs \ii_i(f_{t+s})\rbs,
	$
	where $U(0,\delta)$ is the uniform distribution over the interval $(0, \delta)$.
	We have
	\begin{equation}\label{eq:e_i_lower_bound}
		e_{i}^{\delta}\geq \frac {a_i}{\delta} \Pr \lbs a\cdot x\in [t+a_i,t+\delta-a_i]\rbs.
	\end{equation}
\end{lemma}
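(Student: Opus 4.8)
The plan is to unfold the definition of $e_i^\delta$, interchange the average over $x$ with the integral over the threshold shift $s$, and then bound the resulting one‑dimensional integral pointwise in $x$ by an elementary interval‑length estimate.

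\textbf{Setting up.} Just as in the proof of Lemma~\ref{lem:e_i_big_for_big_coordinates}, flipping the $i$‑th coordinate changes the value of $f_{s}=\one\{a\cdot x>s\}$ exactly when the quantity $Y\ddd a\cdot x-a_ix_i$ (which does not depend on $x_i$) lies in $(s-a_i,s+a_i]$; hence $\ii_i(f_{t+s})=\pr_x[Y\in(t+s-a_i,t+s+a_i]]=\be_x[\one\{Y\in(t+s-a_i,t+s+a_i]\}]$. Since $e_i^\delta=\frac1\delta\int_0^\delta\ii_i(f_{t+s})\,ds$ and the integrand is nonnegative, interchanging the integral with the (finite) average over $x$ gives
\[
e_i^\delta=\be_x\!\left[\frac1\delta\int_0^\delta\one\{Y\in(t+s-a_i,t+s+a_i]\}\,ds\right].
\]

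\textbf{The main step.} Fix $x$ and let $J\ddd\{s:Y\in(t+s-a_i,t+s+a_i]\}=[\,Y-a_i-t,\ Y+a_i-t\,)$, an interval of length $2a_i$ whose center is $Y-t$. I will show that whenever $a\cdot x\in[t+a_i,t+\delta-a_i]$ we have $|J\cap(0,\delta)|\geq a_i$. If $\delta<2a_i$ the target interval $[t+a_i,t+\delta-a_i]$ is empty and the inequality of the lemma is trivial, so assume $\delta\geq2a_i$. Writing $a\cdot x=Y+a_ix_i$ and treating the cases $x_i=1$ and $x_i=-1$ separately, one checks that the hypothesis $a\cdot x\in[t+a_i,t+\delta-a_i]$ forces the center $Y-t$ to lie in $[0,\delta]$. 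It then suffices to observe that an interval of length $2a_i$ whose center $\mu$ lies in $[0,\delta]$ meets $(0,\delta)$ in a set of measure at least $a_i$: by the reflection $s\mapsto\delta-s$ one may assume $\mu\le\delta/2$, whence $\mu+a_i\le\delta$ and $|J\cap(0,\delta)|=(\mu+a_i)-\max(\mu-a_i,0)\ge a_i$.

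\textbf{Conclusion and main obstacle.} Combining the two facts above, $\frac1\delta\int_0^\delta\one\{Y\in(t+s-a_i,t+s+a_i]\}\,ds\ge\frac{a_i}{\delta}\,\one\{a\cdot x\in[t+a_i,t+\delta-a_i]\}$ pointwise in $x$, and taking $\be_x$ of both sides yields $e_i^\delta\ge\frac{a_i}{\delta}\pr[a\cdot x\in[t+a_i,t+\delta-a_i]]$, which is~\eqref{eq:e_i_lower_bound}. The only point requiring any care is the interval‑overlap bound in the main step — namely, verifying that clipping $J$ to $(0,\delta)$ never removes more than half of its length once the center is known to lie in $[0,\delta]$, and checking this via the two cases $x_i=\pm1$ — but this is a routine one‑variable estimate rather than a genuine obstacle.
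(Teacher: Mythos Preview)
Your argument is correct and follows essentially the same route as the paper: write $\ii_i(f_{t+s})$ as the probability that $Y=a\cdot x-a_ix_i$ falls in $(t+s-a_i,t+s+a_i]$, swap the $x$-average with the $s$-integral via Fubini, and then use the elementary fact that an interval of length $2a_i$ centred at a point of $[0,\delta]$ overlaps $(0,\delta)$ in measure at least $a_i$ (assuming $\delta\ge 2a_i$, else the claim is vacuous).

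The only organisational difference is that the paper records the intermediate inequality
\[
e_i^\delta\ \ge\ \frac{a_i}{\delta}\,\be_x\bigl[\one\{a\cdot x-a_ix_i\in[t,t+\delta]\}\bigr]
\]
(its equation~\eqref{eq:e_i_trivial_lower_bound}) before passing from $a\cdot x-a_ix_i$ to $a\cdot x$ via the trivial inclusion $\{a\cdot x\in[t+a_i,t+\delta-a_i]\}\subset\{a\cdot x-a_ix_i\in[t,t+\delta]\}$, whereas you fold these two steps together via the $x_i=\pm1$ case split. Both are equally valid; the paper's intermediate form is worth isolating only because it is reused later (in the proof of Claim~\ref{claim:wk_small_A}).
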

\begin{proof}
	By the definition of $f_{t+s}$, we have $\ii_{i}\lbr f_{t+s}\rbr =\Pr_{x}\lbs a\cdot x-a_{i}x_{i}\in(t+s-a_{i},t+s+a_{i}]\rbs $.
	Changing the order of integration, we can express $e_i^{\delta}$ in a different way:
	\bem
		e_{i}^\delta & = & \be_{s\sim U(0,\delta)}\be_{x}\lbs \one\lbc a\cdot x-a_{i}x_{i}\in(t+s-a_{i},t+s+a_{i})\rbc\rbs \\
		& = & \be_{x}\be_{s\sim U(0,\delta)}\lbs \one\lbc a\cdot x-a_{i}x_{i}\in(t+s-a_{i},t+s+a_{i})\rbc\rbs \\
		& = & \be_{x}\lbs \frac{1}{\delta}\int_{0}^{\delta}\one\lbc a\cdot x-a_{i}x_{i}\in(t+s-a_{i},t+s+a_{i})\rbc\mrm{ds}\rbs .
	\enm
	(Note that there is no difference here between closed-open segments and open segments, as there is no change in the integral involved.)
    We may assume $\delta\geq a_{i}$, for otherwise the statement of the lemma is trivial. An easy computation confirms that for any $t,r\in\reals$ we have
	\[
	\int_{0}^{\delta}\one\lbc r\in(t+s-a_{i},t+s+a_{i})\rbc\mrm{ds}\geq a_{i}\cdot \one\lbc r\in\lbs t,t+\delta\rbs \rbc.
	\]
	So we overall obtain
	\begin{equation}\label{eq:e_i_trivial_lower_bound}
		e_{i}^{\delta}\geq\frac{a_{i}}{\delta}\be_{x}\lbs \one\lbc a\cdot x-a_{i}x_{i}\in\lbs t,t+\delta\rbs \rbc\rbs .
	\end{equation}
	However, $\one\lbc a\cdot x-a_{i}x_{i}\in\lbs t,t+\delta\rbs \rbc\geq \one\lbc a\cdot x\in\lbs t+a_{i},t+\delta-a_{i}\rbs \rbc$,
	and so we deduce
	\[
	e_{i}^{\delta} \geq \frac{a_{i}}{\delta}\be_{x}\lbs \one \lbc a\cdot x\in\lbs t+a_{i},t+\delta-a_{i}\rbs \rbc\rbs \geq \frac{a_{i}}{\delta}\pr_x\lbs a\cdot x \in [t+a_i, t+\delta-a_i] \rbs.
	\]
This completes the proof.
\end{proof}

Now we are ready to prove that any halfspace has a `large' Fourier weight on the first degree.

\medskip \noindent \textbf{Theorem~\ref{Thm:Main-W^1}.} There exists a universal constant $c>0$ such that for any halfspace $f=\one(\sum_i a_i x_i>t)$ with $\be[f]\leq \frac{1}{2}$, we have
	\[
	c \mathbb{E}[f]^2 \log \frac{1}{\mathbb{E}[f]} \leq W^1(f) \leq 2 \mathbb{E}[f]^2 \log \frac{1}{\mathbb{E}[f]}.
	\]

\medskip

\begin{proof}
	Let $a\in\preals^{n}$ with $\sum a_i^2 = 1$, let $f_{t}(x)=\one\{a\cdot x > t\}$, and let $\eps=\be[f_{t}]$. We prove $\wo(f_{t}) \geq \Omega(\eps^2 \log(1/\eps))$.
	
The main idea of the proof is as follows. We divide the coordinates into a set $B$ of coordinates $i$ whose weight $a_i$ is `large', and a set $S$ of coordinates whose weight is `small' (the exact definition is given below). We show that either $|B|$ is `large', and then by Lemma~\ref{lem:e_i_big_for_big_coordinates}, the contribution of the coordinates in $B$ is already sufficient to guarantee $\wo(f_{t}) \geq \Omega(\eps^2 \log(1/\eps))$, or else, the contribution of the coordinates in $S$ will guarantee $\wo(f_{t}) \geq \Omega(\eps^2 \log(1/\eps))$. To show the latter (which is the more complex case), we note that by the Cauchy-Schwarz inequality, we have
	\begin{equation}\label{Eq:Aux-W1-1.5}
	\sqrt{\sum_{i\in S} a_i^2} \sqrt{\wo(f_{t})} \geq \sqrt{\sum_{i\in S} a_i^2} \sqrt{\sum_{i \in S} \widehat{f_{t}}(\{i\})^2} \geq \sum_{i\in S} a_i \ii_i(f_{t}).
	\end{equation}
Hence, it is sufficient to show that
	\begin{equation}\label{Eq:Aux-W1-2}
	\sum_{i\in S} a_i \ii_i(f_t)\geq \Omega\lbr \eps\sqrt{\log(1/\eps)}\cdot\sqrt{\sum_{i\in S}a_{i}^{2}}\rbr .
	\end{equation}
We will do so, using the local Chernoff inequality presented in Section~\ref{sec:Chernoff}. (Notice that the situation $\sum_{i \in S} a_i^2 = 0$ is contained in the `$|B|$ is large' case.)

\medskip

Let $\beta$ be minimal such that $F(t+\beta)\leq\eps/3$, and let $\gamma$ be minimal such that $F(t+\gamma)\leq\eps/6$. Denote $\delta=\beta+\gamma$, let $S=\left\{ i\in[n]\mid a_{i}\leq\beta\right\}$ and $B=[n]\sm S$.
	
    As in Lemma~\ref{lem:e_i_lower_bound},
    we denote $e_{i}^{\delta} \ddd \be_{s\sim U(0,\delta)} \lbs \ii_i(f_{t+s})\rbs$. For every $i\in S$, we apply Lemma~\ref{lem:e_i_lower_bound} to obtain
    \[
    	e_{i}^{\delta}\geq\frac{a_{i}}{\delta}\Pr\lbs a\cdot x\in\lbs t+a_{i},t+\delta-a_{i}\rbs \rbs.
    \]
	Since $a_{i}\leq\beta$, the definition of $\beta$ and $\gamma$ implies
	\begin{equation}\label{Eq:Aux-W1-1}
	e_{i}^{\delta}\geq\frac{a_{i}}{\delta}\Pr\lbs a\cdot x\in\lbs t+\beta,t+\gamma\rbs \rbs \geq\frac{a_{i}}{\delta}\cdot\frac{\eps}{6}.
	\end{equation}
	
	Applying the `strong local Chernoff inequality' (Theorem~\ref{thm:intro-strong_chernoff2}) to the function $f_{t}$, with $S,B$ as defined above, we obtain that either $\lba B\rba \geq\frac{1}{2}\lg\lbr 1/\eps\rbr $ or $\gamma\leq O\lbr \sqrt{\sum_{i\in S}a_{i}^{2}/\log(1/\eps)}\rbr $. (Formally, the theorem is applied three times, where $\Pr[a\cdot x>s]$ drops from $\eps$ to $\eps/2, \eps/4$, and then $\eps/6$ as $s$ increases.) In the latter case, we have $\delta \leq 2\gamma \leq O\lbr \sqrt{\sum_{i\in S}a_{i}^{2}/\log(1/\eps)}\rbr $.
	
	We consider three cases:
	
	\mn {\it Case 1:} $|B|$ is large -- specifically, $\lba B\rba \geq\frac{1}{2}\lg\lbr 1/\eps\rbr $. Note that by Lemma~\ref{lem:e_i_big_for_big_coordinates}, every $i\in B$ has $\ii_i(f_{t})\geq 2\eps/3$. Hence, in this case we have $\wo(f_{t})=\sum_{i} \ii_i(f_{t})^{2}\geq \frac{2}{9}\eps^2 \log(1/\eps)$, as asserted.
	
	\mn {\it Case 2:} $\eps\geq\frac{1}{4}$.

In this case, we use the aforementioned theorem of~\cite{GL94} which asserts that any halfspace $g_{r}:\{-1,1\}^n \rightarrow \{-1,1\}$ satisfies $W^{\leq 1}(g_{r}) \geq 1/2$. This yields $\wo(2f_{t}-1)+W^{0}(2f_{t}-1)\geq\frac{1}{2}$, which in turn implies
\[
	\wo(f_{t})\geq (1/2-(1-2\eps)^2)/4 \geq\frac{1}{16} \geq \frac{1}{3} \eps^2 \log(1/\eps),
\]
as asserted.
	
	\medskip \noindent {\it Case 3:} $\lba B\rba <\frac{1}{2}\lg\lbr 1/\eps\rbr $, $\delta\leq O\lbr \sqrt{\sum_{i\in S}a_{i}^{2}/\log(1/\eps)}\rbr $, and $\eps<\frac{1}{4}$. Using~\eqref{Eq:Aux-W1-1}, we have
	\begin{equation}\label{eq:large_w1_rough_inequality}
	\sum_{i\in S}a_{i}e_{i}^{\delta}\geq\frac{\eps}{6\delta}\sum_{i\in S}a_{i}^{2}.
	\end{equation}
	Since $\delta\leq O\lbr \sqrt{\nicefrac{\sum_{i\in S}a_{i}^{2}}{\log(1/\eps)}}\rbr $,
	we deduce
	\begin{equation}\label{Eq:Aux-W1-3}
	\sum_{i\in S}a_{i}e_{i}^{\delta}\geq\frac{\eps}{6\delta}\sum_{i\in S}a_{i}^{2}\geq\Omega\lbr \eps\sqrt{\log(1/\eps)}\cdot\sqrt{\sum_{i\in S}a_{i}^{2}}\rbr .
	\end{equation}
    Recall that by definition, $e_{i}^{\delta} = \be_{s\sim U(0,\delta)} \lbs \ii_i(f_{t+s})\rbs$, and thus, by linearity of expectation, we have
	\[
		\be_{s\sim U(0,\delta)} \lbs \sum_{i \in S} a_i \ii_i(f_{t+s})\rbs = \sum_{i \in S} a_i e_{i}^{\delta}.
	\]
    Hence,~\eqref{Eq:Aux-W1-3} implies that there exists $s\in(0,\delta)$ with
\begin{equation}\label{Eq:Aux-W1-4}
\sum_{i\in S} a_i \ii_i(f_{t+s})\geq \Omega\lbr \eps\sqrt{\log(1/\eps)}\cdot\sqrt{\sum_{i\in S}a_{i}^{2}}\rbr.
\end{equation}
To show that~\eqref{Eq:Aux-W1-2} holds, and thus complete the proof of the theorem, it is sufficient to show that the inequality~\eqref{Eq:Aux-W1-4} holds also for
$s=0$. This is achieved in the following proposition.
	\begin{proposition*}
		In the former settings, where $\lba B\rba \leq\frac{1}{2}\lg(1/\eps)$
		and $\eps<\frac{1}{4}$, for any $s>0$ we have $\sum_{i\in S}a_{i}\ii_{i}(f_{t})\geq\sum_{i\in S}a_{i}\ii_{i}\lbr f_{t+s}\rbr $.
	\end{proposition*}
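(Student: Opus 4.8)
The plan is to prove the stronger statement that $r \mapsto \sum_{i \in S} a_i \ii_i(f_r)$ is non-increasing on the interval $[t, \infty)$; the proposition is then the special case $r = t$ versus $r = t + s$. The first ingredient is an identity for the influences. Since every $a_i > 0$, each $f_r$ is monotone non-decreasing, so $|f_r(x) - f_r(x \xor e_i)| = x_i\bigl(f_r(x) - f_r(x \xor e_i)\bigr)$ pointwise; substituting $x \mapsto x \xor e_i$ in one of the two resulting terms gives
\[
\ii_i(f_r) = \be_x\bigl[\lvert f_r(x) - f_r(x \xor e_i)\rvert\bigr] = 2\,\be_x\bigl[x_i\,\one\{a \cdot x > r\}\bigr].
\]
Subtracting this for $r$ and $r'$ with $r \le r'$, multiplying by $a_i$, and summing over $i \in S$, I get
\[
\sum_{i \in S} a_i\bigl(\ii_i(f_r) - \ii_i(f_{r'})\bigr) = 2\,\be_x\Bigl[\Bigl(\sum_{i \in S} a_i x_i\Bigr)\one\{r < a \cdot x \le r'\}\Bigr].
\]
So it suffices to show that $\sum_{i \in S} a_i x_i \ge 0$ on the event $\{a \cdot x > t\}$; then the integrand above is non-negative for every $r \ge t$, giving the desired monotonicity.

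Write $Z_B = \sum_{i \in B} a_i x_i$ and $Z_S = \sum_{i \in S} a_i x_i$, so that $a \cdot x = Z_B + Z_S$ with $Z_B$ and $Z_S$ independent, and set $w = \sum_{i \in B} a_i$, the maximal value of $Z_B$. If I can show $t \ge w$ then I am done, because on $\{a \cdot x > t\}$ I then have $Z_S = (a \cdot x) - Z_B \ge (a \cdot x) - w > t - w \ge 0$.

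To prove $t \ge w$ is where the hypotheses are used. Because $|B| \le \frac{1}{2}\lg(1/\eps)$, we have $\pr[Z_B = w] = 2^{-|B|} \ge \sqrt{\eps}$, so keeping only this term in the law of total probability,
\[
\eps = \pr_x[a \cdot x > t] \ \ge\ \pr[Z_B = w] \cdot \pr[Z_S > t - w] \ \ge\ \sqrt{\eps}\cdot\pr[Z_S > t - w],
\]
hence $\pr[Z_S > t - w] \le \sqrt{\eps} < \frac{1}{2}$, using $\eps < \frac{1}{4}$. But $Z_S$ is symmetric about $0$, so $\pr[Z_S \ge 0] = \pr[Z_S \le 0] \ge \frac{1}{2}$, and therefore $\pr[Z_S > u] \ge \frac{1}{2}$ for all $u < 0$. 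Comparing the two bounds forces $t - w \ge 0$.

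The only genuinely non-obvious point is this last one: the combination ``few large coordinates'' and ``$\eps < 1/4$'' forces the threshold $t$ to exceed the total weight $w = \sum_{i \in B} a_i$ of the large coordinates, which is precisely what makes the contribution of the small coordinates to the weighted influence monotone in the threshold. Everything else is routine bookkeeping, and the degenerate cases cause no trouble: if $B = \es$ then $w = 0$ and the same argument gives $t \ge 0$, and if $S = \es$ both sides of the claimed inequality are $0$.
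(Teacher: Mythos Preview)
Your proof is correct and follows essentially the same approach as the paper: both reduce to showing $\sum_{i\in S} a_i x_i \ge 0$ on $\{a\cdot x > t\}$, which in turn reduces to $t \ge \sum_{i\in B} a_i$, and both derive the latter from the same probabilistic estimate using $|B|\le \tfrac12\lg(1/\eps)$ and $\eps<1/4$. The only differences are cosmetic---you phrase the threshold inequality as a direct argument rather than by contradiction, and you write out the difference $\sum_{i\in S}a_i(\ii_i(f_r)-\ii_i(f_{r'}))$ explicitly---but the substance is identical.
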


\noindent {\it Proof of the Proposition.}~ We start by showing that $\sum_{i\in B}a_{i}\leq t$. Assume the
		contrary. We then have
		\[
		\eps = \Pr\lbs a\cdot x > t\rbs \geq\Pr\lbs \sum_{i\in B}a_{i}x_{i}>t\andd\sum_{i\in S}a_{i}x_{i}\geq0\rbs \geq\frac{1}{2}\cdot2^{-\lba B\rba }\geq\frac{1}{2}\sqrt{\eps}.
		\]			
		Hence, $\eps\ge1/4$, which contradicts the assumption.

\mn From the monotonicity of halfspaces, we have $\ii_{i}(f_{t+s})=\be_{x}\lbs x_{i} f_{t+s}(x) \rbs$, and so
\[
	\forall s:\sum_{i\in S}a_{i}\ii_{i}(f_{t+s})=\be_{x}\lbs \lbr \sum_{i\in S}a_{i}x_{i}\rbr \cdot f_{t+s}(x)\rbs .
\]
		Thus, as $\forall x\colon 0\leq f_{t+s}(x)\leq f_{t}(x)$, it is sufficient to show that $\sum_{i\in S}a_{i}x_{i}\geq0$
		whenever $f_{t}>0$. This indeed holds, as
\[
	(f_{t}(x)>0) \Rightarrow (a \cdot x>t) \Rightarrow \left(\sum_{i \in S} a_i x_i > t- \sum_{i \in B} a_i \geq 0 \right),
\]
where the ultimate inequality holds since $\sum_{i \in B} a_i \leq t$. This completes the proof of the proposition, and thus also the proof of the theorem.
\end{proof}

%%%%%%%%%%%%%%%%%%%%%%%%%%%%%%%%%%%%%%%%%%%%%%%%%%%%%%%
%%%%%%%%%%%%%%%%%%%%%%%%%%%%%%%%%%%%%%%%%%%%%%%%%%%%%%%
%%%%%%%%%%%%%%%%%%%%%%%%%%%%%%%%%%%%%%%%%%%%%%%%%%%%%%%
%%%%%%%%%%%%%%%%%%%%%%%%%%%%%%%%%%%%%%%%%%%%%%%%%%%%%%%
%%%%%%%%%%%%%%%%%%%% LARGE INFLUENCE %%%%%%%%%%%%%%%%%%
%%%%%%%%%%%%%%%%%%%%%%%%%%%%%%%%%%%%%%%%%%%%%%%%%%%%%%%
%%%%%%%%%%%%%%%%%%%%%%%%%%%%%%%%%%%%%%%%%%%%%%%%%%%%%%%
%%%%%%%%%%%%%%%%%%%%%%%%%%%%%%%%%%%%%%%%%%%%%%%%%%%%%%%
%%%%%%%%%%%%%%%%%%%%%%%%%%%%%%%%%%%%%%%%%%%%%%%%%%%%%%%

\section{The Maximal Influence of Halfspaces}\label{sec:lower_bound_I1}

In this section we prove Theorem~\ref{Thm:Main-influence}.

\medskip \noindent \textbf{Theorem~\ref{Thm:Main-influence}.} There exist universal constants $c_1,c_2$ such that for any halfspace $f= \one(\sum_i a_i x_i > t)$ with $\be[f]\leq \frac{1}{2}$ and $a_1 \geq a_2 \geq \ldots \geq a_n \geq 0$, we have
	\[
	c_1 \mathbb{E}[f] \min \{1,a_1 \sqrt{\log(1/\mathbb{E}[f])}\} \leq \max_{i} \ii_i(f) \leq c_2 \mathbb{E}[f] \min \{1,a_1 \sqrt{\log(1/\mathbb{E}[f])}\}.
	\]

% which asserts that if $f_{t}= \one\{a \cdot x > t\}$ is a halfspace, where $\lbn a\rbn_2=1$ and $a_1 \geq a_2 \geq \ldots \geq a_n$, then its largest influence $\ii_{1}(f_{t})$ is of order $\Theta\lbr \mu(f_{t}) \min\lbr1, a_1 \sqrt{\log(1/\mu(f_{t}))}\rbr \rbr$.

\subsection{Proof of the Lower Bound}

We start with the lower bound, which follows directly from the tools developed in the previous sections.
\begin{proposition}\label{prop:large_i1-lower}
	Let $f_{t}= \one\{a \cdot x > t\}$ be a halfspace, where $\lbn a\rbn_2=1$ and $a_1 \geq a_2 \geq \ldots \geq a_n$, and assume $\mu(f_{t}) \leq 1/2$.
	We have
	\begin{equation}\label{Eq:Aux-Inf-1}
	\ii_{1}(f_{t}) \geq \Omega\lbr \mu(f_{t}) \min\lbr1, a_1 \sqrt{\log(1/\mu(f_{t}))}\rbr \rbr.
	\end{equation}
\end{proposition}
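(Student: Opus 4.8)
The plan is to split into two regimes according to whether the quantity $a_1\sqrt{\log(1/\mu(f_t))}$ is at least a small absolute constant $c_0$ or not; in both regimes we must produce a coordinate (the first coordinate will work, by Corollary~\ref{cor:decay_of_influence} once we control influence at threshold $\max(|t|,0)$) with the claimed influence. First I would reduce to the case $t\ge 0$: if $t<0$ then by the standard duality (Convention~2) we may pass to the dual halfspace, whose threshold is $-t>0$ but whose measure is $1-\mu(f_t)\geq 1/2$; actually cleaner is to note that $\ii_1$ only depends on $|t|$ up to the constant-factor slack of Corollary~\ref{cor:decay_of_influence}, so it suffices to lower bound $\ii_1(f_{|t|})$. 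So assume $t\ge 0$ and write $\eps=\mu(f_t)$.

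\emph{Regime A (large first weight):} suppose $a_1\ge c_0/\sqrt{\log(1/\eps)}$, so the target bound is $\ii_1(f_t)=\Omega(\eps)$. Let $\beta\ge0$ be minimal with $F(t+\beta)\le\eps/3$, exactly as in Lemma~\ref{lem:e_i_big_for_big_coordinates}. By Theorem~\ref{thm:intro-strong_chernoff} (the Devroye--Lugosi local Chernoff inequality), $\beta\le c_1/\sqrt{\log(1/\eps)}$ for a universal constant $c_1$. Hence if we choose $c_0\ge 2c_1$ we get $a_1\ge c_0/\sqrt{\log(1/\eps)}\ge 2c_1/\sqrt{\log(1/\eps)}\ge 2\beta$, i.e. $a_1>\beta/2$, so Lemma~\ref{lem:e_i_big_for_big_coordinates} applies and gives $\ii_1(f_t)\ge \tfrac{2}{3}\eps$, which is the desired $\Omega(\eps)$.

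\emph{Regime B (small first weight):} suppose $a_1< c_0/\sqrt{\log(1/\eps)}$, so the target is $\ii_1(f_t)=\Omega(a_1\eps\sqrt{\log(1/\eps)})$. The idea is to average the first influence over a short window of thresholds and invoke Lemma~\ref{lem:e_i_lower_bound}. Let $\beta$ be minimal with $F(t+\beta)\le\eps/3$ and $\gamma$ minimal with $F(t+\gamma)\le\eps/6$, and put $\delta=\beta+\gamma$. Since $a_1<c_0/\sqrt{\log(1/\eps)}$, taking $c_0$ small guarantees $a_1\le\beta$, so all coordinates lie in the ``small'' set $S$ of the proof of Theorem~\ref{Thm:Main-W^1}, and Theorem~\ref{thm:intro-strong_chernoff2} with $S=[n]$, $B=\emptyset$ forces the second alternative: $\gamma=O(1/\sqrt{\log(1/\eps)})$, hence $\delta=O(1/\sqrt{\log(1/\eps)})$ as well. (If $\eps\ge 1/4$ the statement is essentially trivial since $\min\{1,a_1\sqrt{\log(1/\eps)}\}$ is bounded and one reduces to Regime~A-type estimates or to the Gotsman--Linial bound; I would dispose of that case separately.) Now Lemma~\ref{lem:e_i_lower_bound} applied to coordinate $1$ gives
\[
e_1^\delta \ge \frac{a_1}{\delta}\,\Pr\!\left[a\cdot x\in[t+a_1,\,t+\delta-a_1]\right]\ge \frac{a_1}{\delta}\,\Pr\!\left[a\cdot x\in[t+\beta,\,t+\gamma]\right]\ge \frac{a_1}{\delta}\cdot\frac{\eps}{6},
\]
where the middle inequality uses $a_1\le\beta$ and $\delta-a_1\ge\gamma$, and the last uses the definitions of $\beta,\gamma$ and $F(t+\beta)-F(t+\gamma)\ge \eps/3-\eps/6=\eps/6$. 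Since $\delta=O(1/\sqrt{\log(1/\eps)})$, this yields $e_1^\delta=\Omega(a_1\eps\sqrt{\log(1/\eps)})$. By definition $e_1^\delta=\be_{s\sim U(0,\delta)}[\ii_1(f_{t+s})]$, so there is some $s\in(0,\delta)$ with $\ii_1(f_{t+s})=\Omega(a_1\eps\sqrt{\log(1/\eps)})$, and then Corollary~\ref{cor:decay_of_influence} (using $0\le s\le t+s$, noting $t+s\ge |{-s}|$... more precisely applying the corollary with the pair of thresholds $t$ and $t+s$, both nonnegative) gives $5\,\ii_1(f_t)\ge \ii_1(f_{t+s})$, hence $\ii_1(f_t)=\Omega(a_1\eps\sqrt{\log(1/\eps)})$ as required.

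The main obstacle is getting the window length $\delta$ under control in Regime~B: this is exactly where the local Chernoff inequality of Theorem~\ref{thm:intro-strong_chernoff2} is essential, since a naive bound on $\gamma$ via Hoeffding would lose the factor $\sqrt{\log(1/\eps)}$ and only give $\gamma=O(1)$, which is too weak. A secondary technical point is making the constants $c_0$ consistent between the two regimes (the threshold separating ``large'' and ``small'' $a_1$ must simultaneously satisfy $c_0\ge 2c_1$ for Regime~A and be small enough for $a_1\le\beta$ in Regime~B); one checks these are compatible because $\beta\ge$ (something like) a constant times $1/\sqrt{\log(1/\eps)}$ cannot be assumed, so in the boundary case where $\beta$ is very small one actually lands in Regime~A anyway (small $\beta$ combined with $a_1$ of order $1/\sqrt{\log(1/\eps)}$ means $a_1>\beta/2$). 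I would organize the case split directly on whether $a_1>\beta/2$ rather than on an artificial constant $c_0$, which makes the dichotomy clean: if $a_1>\beta/2$ use Lemma~\ref{lem:e_i_big_for_big_coordinates} and $\beta=O(1/\sqrt{\log(1/\eps)})$ to conclude $\ii_1=\Omega(\eps)=\Omega(\eps\cdot a_1\sqrt{\log(1/\eps)})$ (since $a_1\sqrt{\log(1/\eps)}=O(1)$ in that sub-case would need checking — if it's $\Omega(1)$ we're in Regime~A's clean form anyway), and if $a_1\le\beta/2$ run the averaging argument above.
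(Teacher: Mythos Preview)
Your proposal is correct and takes essentially the same approach as the paper: the paper splits directly on whether $a_1 > \beta$ (your own final suggestion, modulo $\beta$ versus $\beta/2$, both of which work), invokes Lemma~\ref{lem:e_i_big_for_big_coordinates} in the first case, and in the second case runs exactly your averaging argument via Lemma~\ref{lem:e_i_lower_bound}, bounds $\delta$ by Theorem~\ref{thm:intro-strong_chernoff}, and transfers from $f_{t+s}$ back to $f_t$ via Corollary~\ref{cor:decay_of_influence}. Your detour through an artificial constant $c_0$, your separate handling of $\eps\ge 1/4$, and your worry about whether $a_1\sqrt{\log(1/\eps)}=O(1)$ in the large-$a_1$ case are all unnecessary: once $\ii_1(f_t)\ge\tfrac{2}{3}\eps$ the target bound follows immediately since $\min(1,\cdot)\le 1$, and Theorem~\ref{thm:intro-strong_chernoff} applies uniformly for all $t\ge 0$.
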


\begin{proof}
    Let $f_{t}$ satisfy the assumptions of the proposition. As in the proof of Theorem~\ref{Thm:Main-W^1},
    we let $\beta$ be minimal such that $F(t+\beta)\leq F(t)/3$ and $\gamma$ be minimal such that $F(t+\gamma)\leq F(t)/6$, and denote $\delta=\beta+\gamma$. We also
    denote $e_{i}^{\delta} \ddd \be_{s\sim U(0,\delta)} \lbs \ii_i(f_{t+s})\rbs$.

    If $a_1> \beta$, then by Lemma~\ref{lem:e_i_big_for_big_coordinates}, we have $\ii_1(f_{t})\geq \frac{2}{3}\mu(f_{t})$, as required in~\eqref{Eq:Aux-Inf-1}. Hence, we may assume $a_1\leq \beta$. By Lemma~\ref{lem:e_i_lower_bound}, we have
	\[
	e_1^{\delta} \underbrace{\geq}_{\text{Lemma}~\ref{lem:e_i_lower_bound}}\frac{a_1}{\delta}\Pr\lbs a\cdot x\in\lbs t +a_1,t+\delta-a_1\rbs \rbs \underbrace{\geq}_{a_1\leq\beta} \frac{a_1}{\delta}\Pr\lbs a\cdot x\in\lbs t +\beta,t+\gamma\rbs \rbs \underbrace{\geq}_{\text{Def. of } \beta,\gamma } \frac{a_1}{\delta} \frac{\mu(f_{t})}{6}.
	\]
	From Theorem~\ref{thm:intro-strong_chernoff} we get $\delta \leq O(1/\sqrt{\log(1/\mu(f_{t}))})$, and thus,
    \[
    e_1^{\delta} \geq \frac{a_1}{\delta} \frac{\mu(f_{t})}{6} \geq \Omega\lbr a_1\mu(f_{t})\sqrt{\log\lbr 1/\mu(f_{t})\rbr }\rbr.
    \]
    Since $e_1^{\delta}$ is defined as $\be_{s\sim U(0,\delta)}[\ii_1(f_{t+s})]$, this implies that there exists $s\geq 0$ with
    \[
    \ii_1(f_{t+s}) \geq \Omega\lbr a_1\mu(f_{t})\sqrt{\log\lbr 1/\mu(f_{t})\rbr }\rbr.
    \]
    Finally, by Corollary~\ref{cor:decay_of_influence}, for any $s>0$ we have $5\ii_1(f_{t})\geq\ii_1(f_{t+s})$. (Note that since $\mu(f_{t})\leq \frac{1}{2}$, we may assume $t\geq 0$ and so, Corollary~\ref{cor:decay_of_influence} can indeed be applied.) Hence,
    \[
    	\ii_1(f_{t}) \geq \frac15 \ii_1(f_{t+s}) \geq \Omega\lbr a_1\mu(f_{t})\sqrt{\log\lbr 1/\mu(f_{t})\rbr }\rbr,
	\]
    as asserted.
\end{proof}

%The lower bound ($\geq$) is done with a simple application of the relative Chernoff inequality.

\subsection{Proof of the Upper Bound}

To prove the upper bound, we use the following `reverse' version of Corollary~\ref{cor:decay_of_influence} which asserts that while $I_1(f_s)$ is a decreasing function of $s$ up to a constant factor, the `normalized' influence $I_1(f_s)/\be[f_s]$ is increasing up to a constant factor.
\begin{lemma}\label{lem:relative_inf_increase}
	Let $a\in\preals^{n}$ satisfy $a_1\geq \ldots \geq a_n$.
%For any $r \in \mathbb{R}$, define $f_{r}(x)=\one\{a\cdot x>r\}$.
    For every $t\geq s\geq 0$ with $\be\lbs f_{t}\rbs >0$, we have
	\begin{equation}\label{Eq:Aux-Inf-2}
	\frac{\ii_{1}\lbr f_{s}\rbr }{\be\lbs f_{s}\rbs }\leq O\lbr \frac{\ii_{1}\lbr f_{t}\rbr }{\be\lbs f_{t}\rbs }\rbr .
	\end{equation}
\end{lemma}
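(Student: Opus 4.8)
The plan is to reduce the two-sided comparison to a one-variable monotonicity statement and then feed in the log-concavity Lemma~\ref{lem:intro-log_concavity}. Write $Y=\sum_{i=2}^{n}a_{i}x_{i}$ and $G(r)=\pr_{x}[Y>r]$, so that $\be[f_{r}]=F(r)=\tfrac12\bigl(G(r-a_{1})+G(r+a_{1})\bigr)$ and $\ii_{1}(f_{r})=G(r-a_{1})-G(r+a_{1})$ (coordinate $1$ is pivotal for $f_{r}$ exactly when $Y\in(r-a_{1},r+a_{1}]$). With $R(r):=G(r-a_{1})/G(r+a_{1})\ge 1$, a one-line computation gives
\[
\frac{\ii_{1}(f_{r})}{\be[f_{r}]}=\frac{2\bigl(R(r)-1\bigr)}{R(r)+1}=2\,\be\bigl[\,x_{1}\mid a\cdot x>r\,\bigr],
\]
the conditional bias of the first bit on the upper tail $\{a\cdot x>r\}$; morally this is increasing in $r$, and the lemma is exactly the assertion that it is increasing up to a constant. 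Since $\phi(u):=2(u-1)/(u+1)$ is increasing on $[1,\infty)$, it suffices to show $\phi(R(s))\le C\,\phi(R(t))$ for $t\ge s\ge 0$.

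First I would dispose of two easy regimes. If $R(t)\ge 2$ then $\phi(R(t))\ge\tfrac23$ while $\phi(R(s))<2$, so $\ii_{1}(f_{s})/\be[f_{s}]<3\,\ii_{1}(f_{t})/\be[f_{t}]$. And if $a_{1}>\beta/2$, where $\beta$ is minimal with $F(t+\beta)\le F(t)/3$, then Lemma~\ref{lem:e_i_big_for_big_coordinates} gives $\ii_{1}(f_{t})\ge\tfrac23\be[f_{t}]$, and again $\ii_{1}(f_{s})/\be[f_{s}]\le 2\le 3\,\ii_{1}(f_{t})/\be[f_{t}]$. So we may assume $R(t)<2$; since then $R(t)+1<3$ and $\phi(u)\le u-1$ on $[1,\infty)$, it is now enough to prove $R(s)-1=O(R(t)-1)$.

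For this I would use Lemma~\ref{lem:intro-log_concavity} for $G$, the survival function of a sum of independent bounded symmetric variables with summand ranges at most $m:=2a_{2}\le 2a_{1}$. If $G$ were exactly log-concave then $\log R$ would be nondecreasing in $r$ (its derivative is a difference of hazard rates of $G$), giving $R(s)\le R(t)$; Lemma~\ref{lem:intro-log_concavity}, namely $G(b)G(d)\le G(c)G(b+d-c-m)$ for $b\le c\le d$, is precisely log-concavity with an error at the scale $m$. Applied with $b=s-a_{1}$, $c=s+a_{1}$, $d=t+a_{1}$ it gives $R(s)\le R(t)\cdot\bigl(G(t-a_{1}-m)/G(t-a_{1})\bigr)$, so it remains to bound the ``overshoot'' $G(t-a_{1}-m)/G(t-a_{1})$ by $O(1)$ in the regime $R(t)<2$. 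When $m\le a_{1}$, a second application of Lemma~\ref{lem:intro-log_concavity} (with $b=t-a_{1}-m$, $c=t-a_{1}$, $d=t-a_{1}+2m$) gives $G(t-a_{1}-m)/G(t-a_{1})\le G(t-a_{1})/G(t-a_{1}+2m)\le G(t-a_{1})/G(t+a_{1})=R(t)<2$, and chaining back through the reductions finishes the proof.

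The main obstacle is the remaining case $m=2a_{2}>a_{1}$ (a few weights of comparable size), where the correction scale $m$ exceeds the window $2a_{1}$ defining $R$, so the overshoot step above no longer closes and a naive use of Lemma~\ref{lem:intro-log_concavity} there degenerates to plain monotonicity of $G$. Here one needs a more quantitative input: either a Berry--Esseen / anti-concentration estimate (in this regime $Y$ is already ``sufficiently Gaussian'' that $R$ stays close to the monotone Gaussian ratio, except possibly deep in the tail where $R(t)\ge 2$ and the easy case applies), or a more careful bookkeeping that exploits $a_{2}\le a_{1}$ to retain genuine slack in the log-concavity bound and tracks how the overshoot factor telescopes when $t-s$ is large compared with $a_{1}$. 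I expect this to be the bulk of the technical work.
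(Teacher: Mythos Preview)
Your reduction to comparing $R(r)=G(r-a_1)/G(r+a_1)$ and the split into the trivial regime $R(t)\ge 2$ is fine, and in the sub-case $2a_2\le a_1$ your two applications of Lemma~\ref{lem:intro-log_concavity} do yield $R(s)\le R(t)^2$ and hence $R(s)-1\le 3(R(t)-1)$. But the case you flag as ``the bulk of the technical work'' is a genuine gap, not a detail: with your choice $d=t+a_1$, the log-concavity correction lands at $t-a_1-m$ (to the \emph{left} of $t-a_1$), and the decay tools available (Lemma~\ref{lem:decay_of_intervals}, Corollary~\ref{cor:decay_of_influence}) only control intervals that move \emph{right}, into the tail. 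So neither a second log-concavity step nor Corollary~\ref{cor:decay_of_influence} closes the overshoot when $a_1<2a_2$, and invoking Berry--Esseen here would be both heavy and awkward (you would need it precisely in the regime where a few weights are comparable, which is exactly where CLT error terms are worst relative to the tail).

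The paper's proof shows that no case split is needed; the fix is a different choice of parameters in Lemma~\ref{lem:intro-log_concavity}. Apply it to $G$ with $m=2a_1$ (valid since $a_1\ge a_2$) and take $(b,c,d)=(s-a_1,\,s+a_1,\,t+3a_1)$, so that $b+d-c-m=t-a_1$ exactly and there is no overshoot at all. Subtracting $G(c)G(d)$ from both sides gives
\[
G(t+3a_1)\,G(s-a_1,s+a_1]\ \le\ G(s+a_1)\,G(t-a_1,t+3a_1],
\]
and after adding $1$ to each side of the resulting ratio and taking reciprocals one obtains directly
\[
\frac{G(s-a_1,s+a_1]}{G(s-a_1)}\ \le\ \frac{G(t-a_1,t+3a_1]}{G(t-a_1)}.
\]
The only price is that the interval on the right is $(t-a_1,t+3a_1]$ rather than $(t-a_1,t+a_1]$; but this extra piece $(t+a_1,t+3a_1]$ lies \emph{further into the tail}, so Corollary~\ref{cor:decay_of_influence} bounds it by $5\,G(t-a_1,t+a_1]$, giving a uniform constant $6$. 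In short: overshoot deliberately in $d$ so the correction lands on the nose, and pay for the overshoot with Corollary~\ref{cor:decay_of_influence} on the tail side where it applies.
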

\begin{proof}
    For any $r_1,r_2 \in \mathbb{R}$, let $G(r_1)=\Pr_{x\sim\spm^{n}}\lbs a\cdot x-a_{1}x_{1}>r_1\rbs $ and
    $G(r_1,r_2]=G(r_{1})-G(r_{2})$.
    Notice that for any $r$, we have
    \bem
		\be\lbs f_{r}\rbs & = & \frac{G(r-a_1)+G(r+a_1)}{2}\in G(r-a_1)\cdot\lbs \frac{1}{2},1\rbs,
		\\
		\ii_{1}\lbr f_{r}\rbr & = & G\lbr r-a_{1}\rbr -G\lbr r+a_{1}\rbr =G(r-a_1,r+a_1].
    \enm
    Hence, in order to prove~\eqref{Eq:Aux-Inf-2}, it is sufficient to show that for every $t\geq s\geq 0$ with $\be\lbs f_{t}\rbs >0$,
    \begin{equation}\label{Eq:Aux-Inf-3}
	\frac{G(s-a_1,s+a_1]}{G(s-a_1)}\leq O \left(\frac{G(t-a_1,t+a_1]}{G(t-a_1)} \right).
	\end{equation}
    Let $t\geq s\geq 0$. Lemma~\ref{lem:intro-log_concavity} (with $m=2a_1$)
	asserts that $G(b)G(d)\leq G(c)G\lbr b+d-c-2a_1\rbr $, for any $b\leq c\leq d$.
	Substitute $(b,c,d)=\lbr s-a_1,s+a_1,t+3a_1\rbr $ and subtract $G(c)G(d)$
	from both sides to deduce
	\[
	G(t+3a_1)G(s-a_1,s+a_1]\leq G(s+a_1)G(t-a_1,t+3a_1].
	\]
	It follows that
	\[
	\frac{G(t+3a_1)}{G(t-a_1,t+3a_1]}\leq\frac{G(s+a_1)}{G(s-a_1,s+a_1]}.
	\]
	Adding $1$ to both sides we obtain
	\[
	\frac{G(t-a_1)}{G(t-a_1,t+3a_1]}\leq\frac{G(s-a_1)}{G(s-a_1,s+a_1]}.
	\]
	Taking reciprocal and using Corollary~\ref{cor:decay_of_influence}, we get
	\[
	\frac{G(s-a_1,s+a_1]}{G(s-a_1)}\leq\frac{G(t-a_1,t+3a_1]}{G(t-a_1)}\underbrace{\leq}_{\text{Cor. }~\ref{cor:decay_of_influence}}\frac{6G(t-a_1,t+a_1]}{G(t-a_1)},
	\]
    and thus,~\eqref{Eq:Aux-Inf-3} holds, as asserted.
\end{proof}

We are now ready to prove the upper bound of Theorem~\ref{Thm:Main-influence}.

\begin{proposition}\label{prop:large_i1-upper}
	Let $f_{t}= \one\{a \cdot x > t\}$ be a halfspace, where $\lbn a\rbn_2=1$ and $a_1 \geq a_2 \geq \ldots \geq a_n$, and assume $\mu(f_{t}) \leq 1/2$.
	We have
	\begin{equation}\label{Eq:Aux-Inf-4}
	\ii_{1}(f_{t}) \leq O\lbr \mu(f_{t}) \min\lbr1, a_1 \sqrt{\log(1/\mu(f_{t}))}\rbr \rbr.
	\end{equation}
\end{proposition}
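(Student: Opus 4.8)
The plan is to rephrase the bound as a \emph{multiplicative} anti-concentration statement for the linear form $Y\ddd a\cdot x-a_1x_1$ and then argue according to the size of $\mu(f_t)$. Write $G(r)\ddd\Pr_x[Y>r]$, so that $\ii_1(f_t)=G(t-a_1)-G(t+a_1)$ and $\mu(f_t)=\tfrac12\bigl(G(t-a_1)+G(t+a_1)\bigr)$; in particular $G(t-a_1)$ and $\mu(f_t)$ agree up to a factor of $2$. Using $\mu(f_t)\le\tfrac12$ together with the genericity convention of Section~\ref{sec:conventions} we may assume $t\ge0$ (replace $t$ by a larger value that leaves $f_t$ unchanged). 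The inequality $\ii_1(f_t)=G(t-a_1)-G(t+a_1)\le G(t-a_1)+G(t+a_1)=2\mu(f_t)$ disposes of the case $a_1\sqrt{\log(1/\mu(f_t))}\ge c_0$ for any fixed small constant $c_0$, so I would assume $a_1\sqrt{\log(1/\mu(f_t))}\le c_0$; in particular all $a_i$ are small and $\sum_{i\ge2}a_i^2=1-a_1^2\ge\tfrac34$. The target then becomes
\[
G(t+a_1)\ \ge\ \Bigl(1-O\bigl(a_1\sqrt{\log(1/\mu(f_t))}\bigr)\Bigr)\,G(t-a_1).
\]

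If $\mu(f_t)\ge\mu_0$ for an absolute constant $\mu_0$, this is immediate from the Berry--Esseen estimate~\eqref{Eq:Intro-tail2}: since $\var(Y)\in[\tfrac34,1]$ and $\max_{i\ge2}a_i\le a_1$, we get $G(t-a_1)-G(t+a_1)=\Pr[Y\in(t-a_1,t+a_1]]\le\frac{2a_1}{\sqrt{2\pi\cdot 3/4}}+O(a_1)=O(a_1)$, which is $O\bigl(\mu(f_t)\,a_1\sqrt{\log(1/\mu(f_t))}\bigr)$ because $\mu(f_t)$ is then bounded below. The real content is therefore the regime of small $\mu(f_t)$, where Berry--Esseen is powerless: its additive error $O(a_1)$ is vastly larger than the target $O\bigl(\mu(f_t)\,a_1\sqrt{\log(1/\mu(f_t))}\bigr)$, so a genuinely multiplicative comparison between $G(t-a_1)$ and $G(t+a_1)$ is needed.

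For small $\mu(f_t)$ I would combine three ingredients. First, Hoeffding's inequality gives $\mu(f_t)\le\exp\bigl(-(t-a_1)^2/2\bigr)$, hence $t\le\sqrt{2\log(1/\mu(f_t))}+O(a_1)$, which places the window $[t-a_1,t+a_1]$ at ``height'' $t=O\bigl(\sqrt{\log(1/\mu(f_t))}\bigr)$. Second, Lemma~\ref{lem:intro-log_concavity} applied to $Y$ (for which $m=2a_2\le 2a_1$) makes $\log G$ concave up to an additive loss of order $a_1$; on a window of length $\asymp 1/\sqrt{\log(1/\mu(f_t))}$ this loss is negligible, $\log G$ is essentially concave there, and the elementary fact that a concave function drops over a sub-window in proportion to its length bounds the drop of $\log G$ over $[t-a_1,t+a_1]$ by $\tfrac{2a_1}{H}$ times its drop over the longer window $[t-a_1,t-a_1+H]$, with $H\asymp 1/\sqrt{\log(1/\mu(f_t))}$. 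Third, the behaviour of $G$ on that coarse window --- namely, that its drop there is only $O(1)$, i.e.\ the local logarithmic slope of $G$ near $t$ is of the Gaussian order $O\bigl(\sqrt{\log(1/\mu(f_t))}\bigr)$ --- is certified by Theorem~\ref{thm:intro-strong_chernoff} (equivalently, Lemma~\ref{lem:relative_inf_increase} together with Corollary~\ref{cor:decay_of_influence}). Multiplying back yields $\log\bigl(G(t-a_1)/G(t+a_1)\bigr)=O\bigl(a_1\sqrt{\log(1/\mu(f_t))}\bigr)$, and since $1-e^{-x}\le x$ gives $\ii_1(f_t)=G(t-a_1)-G(t+a_1)\le G(t-a_1)\log\frac{G(t-a_1)}{G(t+a_1)}\le 2\mu(f_t)\log\frac{G(t-a_1)}{G(t+a_1)}$, the proposition follows.

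The main obstacle is the last step: Lemma~\ref{lem:intro-log_concavity} is \emph{vacuous} at the scale $a_1$ of the window we actually care about, because its loss $m\asymp a_1$ swamps the gain there, so the ``relaxed log-concavity'' has to be exploited only after passing to a coarser scale $\asymp 1/\sqrt{\log(1/\mu(f_t))}\gg a_1$ on which it becomes an honest convexity statement, and then transferred back down; keeping the constants under control through this change of scale, and fitting it together with the Hoeffding bound and with the decay estimates for $G$ on the coarse window (Lemma~\ref{lem:relative_inf_increase}, Corollary~\ref{cor:decay_of_influence}, Theorem~\ref{thm:intro-strong_chernoff}), is the technical core. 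A minor nuisance, as elsewhere in the paper, is the book-keeping with half-open intervals forced by $\sgn(0)=-1$, and the borderline cases $\mu(f_t)\approx\tfrac12$ and $t\approx0$.
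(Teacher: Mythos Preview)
Your overall plan --- reduce to a multiplicative comparison of $G(t-a_1)$ and $G(t+a_1)$, dispose of large $a_1\sqrt{\log(1/\eps)}$ trivially, and for small $a_1$ compare the fine window $[t-a_1,t+a_1]$ to a coarse window of length $H$ of order $1/\sqrt{\log(1/\eps)}$ --- is sound and close in spirit to the paper's. But your third ingredient is mis-cited, and this is a genuine gap.

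You assert that Theorem~\ref{thm:intro-strong_chernoff} certifies that the drop of $\log G$ over the coarse window is $O(1)$, i.e.\ that $G(t-a_1+H)\ge cG(t-a_1)$ for $H$ of order $1/\sqrt{\log(1/\eps)}$. It does not: Theorem~\ref{thm:intro-strong_chernoff} gives an \emph{upper} bound on the minimal $\delta$ with $G(t+\delta)\le G(t)/2$ --- it guarantees that $G$ \emph{does} drop by a constant factor over a window of that size, not that it drops by \emph{at most} a constant factor. What you need is the reverse inequality, a lower bound on that minimal $\delta$, and neither Lemma~\ref{lem:relative_inf_increase} nor Corollary~\ref{cor:decay_of_influence} supplies it either. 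In the paper this ``reverse local Chernoff'' comes from a completely different source: one shows (reusing inequalities~\eqref{Eq:Aux-W1-1.5} and~\eqref{eq:large_w1_rough_inequality} from the proof of Theorem~\ref{Thm:Main-W^1}) that $W^1(f_{t+s})\ge\Omega\bigl((\eps/\delta)^2\bigr)$ for some $s\ge0$, and then the level-$1$ inequality $W^1\le O(\eps^2\log(1/\eps))$ forces $\delta\ge\Omega(1/\sqrt{\log(1/\eps)})$. Without this, your proportional-drop estimate reads $\ii_1(f_t)\le O(a_1/H)\cdot\eps$ with no lower bound on $H$, and the argument stalls.

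A secondary issue is step two: the ``elementary fact'' that a concave function drops proportionally over a sub-window is a concavity statement at the scale of the \emph{small} window $[t-a_1,t+a_1]$, and the loss $m$ in Lemma~\ref{lem:intro-log_concavity} is of exactly that scale, so the relaxed version does not obviously yield the inequality you write. The paper avoids this by averaging: it works with $e_1^{\gamma}=\be_{s\sim U(0,\gamma)}[\ii_1(f_{t+s})]$, uses Lemma~\ref{lem:relative_inf_increase} to get $e_1^{\gamma}\ge\Omega(\ii_1(f_t))$ and Fubini to get $e_1^{\gamma}\le O(a_1\eps/\gamma)$, and then plugs in the level-$1$ lower bound on $\gamma$. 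Your route can likely be repaired along these lines, but the missing reverse-Chernoff input via the level-$1$ inequality is the essential omission.
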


\begin{proof}
	Let $f_{t}$ satisfy the assumptions. Define $\beta,\gamma,$ and $\delta$ like in the proof of Proposition~\ref{prop:large_i1-lower}, and denote $\eps \ddd \mu(f_{t})$. Let $S=\left\{ i\in[n]\mid a_{i}\leq\beta\right\}$ and $B=[n]\sm S$. Note that we have
	\[
		\ii_1(f_{t})=2\be_{x}[x_1 f_{t}(x)]\leq 2\be[f_{t}(x)]\max_{x} |x_1| = 2\eps.
	\]
Therefore, if $a_{1}\sqrt{\log(1/\eps)}=\Omega(1)$ then we are done, since $\ii_{1}(f_{t})\leq O\lbr \eps\cdot\min \lbr 1,a_{1}\sqrt{\log(1/\eps)} \rbr \rbr $,	as claimed. Hence, for any required small universal constant $c_{1}>0$, we may assume
\begin{equation}\label{Eq:Aux-Inf-5}
a_{1} \leq \frac{c_{1}}{\sqrt{\log(1/\eps)}}.
\end{equation}

	By Lemma~\ref{lem:e_i_big_for_big_coordinates}, we have $\forall i\in B:\ii_{i}(f_{t})\geq\Omega(\eps)$, and in particular, $\wo(f_{t})\geq\Omega(\lba B\rba \eps^{2})$. Since we have $\wo(f_{t})\leq O(\eps^2 \log(1/\eps))$ by the aforementioned level-1 Inequality~\cite{Chang02,Talagrand96}, this implies $\lba B\rba \leq O(\log(1/\eps))$. Thus,
	\[
	\sum_{i\in B}a_{i}^{2}\leq\lba B\rba a_{1}^{2}\leq O(\log(1/\eps)a_{1}^{2}) \underbrace{\leq}_{\eqref{Eq:Aux-Inf-5}} \frac{1}{2},
	\]
	and hence,
    \begin{equation}\label{Eq:Aux-Inf-6}
    \sum_{i\in S}a_{i}^{2}\geq1/2.
    \end{equation}
    Following the proof of Theorem~\ref{Thm:Main-W^1}, and specifically Inequalities~\eqref{Eq:Aux-W1-1.5} and~\eqref{eq:large_w1_rough_inequality} (notice these inequalities hold regardless of the cases we had there), we deduce that there exists an $s\geq0$ with
	\[
	\wo(f_{t+s})\geq\Omega\lbr (\eps/\delta)^{2}\sum_{i\in S}a_{i}^{2}\rbr \underbrace{\geq}_{\eqref{Eq:Aux-Inf-6}} \Omega\lbr (\eps/\delta)^{2} \rbr.
	\]
	Using again the level-1 Inequality, we get $\delta=\Omega(1/\sqrt{\log(1/\eps)})$, as otherwise $\wo(f_{t+s})=\omega(\eps^2 \log (1/\eps))$, while $\be[f_{t+s}]\leq\eps$. In particular,
    \begin{equation}\label{Eq:Aux-Inf-6.5}
    \gamma\geq\delta/2\geq\Omega(1/\sqrt{\log(1/\eps)}).
    \end{equation}
    Hence, by~\eqref{Eq:Aux-Inf-5}, we may assume
    \begin{equation}\label{Eq:Aux-Inf-7}
    a_{1}<\gamma/4.
    \end{equation}
	Consider $e_1^{\gamma}=\be_{s\sim U(0,\gamma)}\lbs \ii_{1}(f_{t+s})\rbs $.
	Using Lemma~\ref{lem:relative_inf_increase}, we deduce
	\[
	e_1^{\gamma}=\be_{s\sim U(0,\gamma)}\lbs \frac{\ii_{1}\lbr f_{t+s}\rbr }{\be\lbs f_{t+s}\rbs }\be\lbs f_{t+s}\rbs \rbs \underbrace{\geq}_{\text{Lem. }\ref{lem:relative_inf_increase}}\Omega\lbr \frac{\ii_{1}(f_{t})}{\eps}\rbr \be_{s\sim U(0,\gamma)}\lbs \be\lbs f_{t+s}\rbs \rbs \underbrace{=}_{\gamma\text{ def.}}\Omega(\ii_{1}(f_{t})).
	\]
	On the other hand,
	\bem
		e_1^{\gamma}
		& = &
		\be_{s\sim U(0,\gamma)}\be_{x}\lbs \one\left\{ a\cdot x-a_{1}x_{1}\in(t+s-a_{1},t+s+a_{1}]\right\} \rbs
		\\
		& \underbrace{\leq}_{\text{Fubini}} &
		\frac{2a_{1}}{\gamma}\be_{x}\lbs \one\left\{ a\cdot x-a_{1}x_{1} > t-a_{1}\right\} \rbs
		\\
		& \leq &
		\frac{2a_{1}}{\gamma}\Pr_{x}\lbs a\cdot x>t-2 a_{1}\rbs
		\leq
		12\frac{\eps}{\gamma}a_{1},
	\enm
	where the last inequality will be justified below. Hence, $\ii_{1}(f_{t})\leq O\lbr \frac{\eps}{\gamma}a_{1} \rbr$, and thus, by~\eqref{Eq:Aux-Inf-6.5}, we have
    $\ii_{1}(f_{t})\leq O(a_1\eps\sqrt{\log(1/\eps)})$, as asserted.

    It only remains to justify why $\Pr_{x}\lbs a\cdot x>t-2a_{1}\rbs \leq6\eps$.
	This follows from Lemma~\ref{lem:intro-log_concavity}. Indeed,
%denoting $F(r)=\pr_x[a\cdot x > r]$ and
    applying the lemma with $(b,c,d)=(t-2a_1,t,t+4a_1)$, we obtain
	\[
		F(t-2a_{1})F(t+4a_{1})\leq F(t)^{2}=\eps^{2}.
	\]
    Since $4a_1< \gamma$ by~\eqref{Eq:Aux-Inf-7}, the definition of $\gamma$ implies $F(t+4a_{1}) \geq \eps/6$. Hence,
    $\pr_x[a\cdot x > t-2a_1] = F(t-2a_{1})\leq6\eps$. This completes the proof of Proposition~\ref{prop:large_i1-upper}, and thus also the proof of Theorem~\ref{Thm:Main-influence}.
\end{proof}

\subsection{A Corollary of Theorem~\ref{Thm:Main-influence}}

We conclude this section with a corollary of Theorem~\ref{Thm:Main-influence} which essentially describes the probability that a linear form $l(x)=\sum a_i x_i$ (where $x_i\sim \spm$ uniformly and independently) lies in some interval $(a,b]$, by means of the tail probability $\pr[l(x) > a]$ and the interval length $|I|=b-a$. This corollary generalizes~\cite[Theorem 4]{Ser04}, up to the multiplicative constants.
%Consider a linear form $l(x)=\sum a_i x_i$ where $x_i\sim \spm$ uniformly and independently.
%The following corollary is an obvious application of Theorem~\ref{Thm:Main-influence}, which essentially describes the probability that $l(x)$ lies in some interval $(a,b]$, by means of the tail probability $\pr[l(x) > a]$ and the interval length $|I|=b-a$.
%This, for example, generalizes~\cite[Theorem 4]{Ser04}, up to the multiplicative constants.
We note that one could also prove this result directly, by an argument similar to that of the proof of Theorem~\ref{Thm:Main-influence}.
\begin{theorem}
	Let $l(x) = \sum a_i x_i$ be a linear form with $\lbn a\rbn_{2}=1$. If $m \geq \max_{i} |a_i|$, $t \geq -m$ and $\eps = \min(1/2,\pr_{x\sim\spm^n} \lbs l(x)>t \rbs)$, then
	\begin{equation}\label{eq:prob_linear_in_segment}
	\pr_{x\sim\spm^n} \lbs l(x) \in (t, t+2m] \rbs = \Theta \lbr \eps \min \lbr 1, m \sqrt{\log(1/\eps)} \rbr \rbr.
	\end{equation}
\end{theorem}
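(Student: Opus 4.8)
The plan is to reduce the statement to Theorem~\ref{Thm:Main-influence} by recognizing the left-hand side as (a constant multiple of) a maximal influence. First I would reduce to the case $m = \max_i |a_i| = a_1$; the general case $m \geq a_1$ then follows by appending a dummy coordinate with weight $m$ (or, more carefully, by comparing to the $m=a_1$ case using Lemma~\ref{lem:decay_of_intervals} to control the change in interval length, exactly as in the second remark after that lemma). Once $m = a_1$, recall that for any threshold $r$ we have the identity
\[
\ii_1(f_r) = \pr_{x\sim\spm^n}\lbs a\cdot x - a_1 x_1 \in (r - a_1, r+a_1]\rbs,
\]
but the quantity we want is $\pr[a\cdot x \in (t, t+2m]] = \pr[a\cdot x \in (t, t+2a_1]]$, where the full sum $a\cdot x$ (not the sum omitting $x_1$) appears. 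So the first real step is to relate the probability that $a\cdot x$ lies in a length-$2a_1$ window to the probability that $a\cdot x - a_1 x_1$ lies in such a window, i.e. to an influence. One direction is easy by a union bound over $x_1 \in \spm$; the other uses that conditioning on $x_1$ merely shifts the law of $a\cdot x - a_1 x_1$ by $\pm a_1$, so a window of length $2a_1$ for $a\cdot x$ is covered by windows of length $4a_1$ for the reduced sum, which by Lemma~\ref{lem:decay_of_intervals} (with $m = a_1$) costs only a constant factor.

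With that reduction in hand, I would choose the threshold. Let $t' \geq 0$ be such that $\pr[a\cdot x > t'] = \eps$ (using convention 3 of Section~\ref{sec:conventions}, we may assume $a\cdot x$ avoids any finite set of values, so $t'$ exists; and $\eps = \min(1/2, \pr[l(x)>t])$ together with $t \geq -m$ lets us take $t' \geq 0$ after possibly passing to the dual, or directly since $\pr[l>0] \geq 1/2 \geq \eps$ forces the relevant threshold nonnegative). The point is that $\pr[a\cdot x \in (t, t+2m]]$ and $\pr[a\cdot x \in (t', t'+2m]]$ are comparable up to a constant: both are sandwiched, via Corollary~\ref{cor:decay_of_influence} / Lemma~\ref{lem:decay_of_intervals}, by a bounded number of length-$2m$ window probabilities between thresholds with comparable tail mass. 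Then
\[
\pr_x[a\cdot x \in (t', t'+2a_1]] = \Theta(\ii_1(f_{t'})) = \Theta\!\lbr \mu(f_{t'}) \min\{1, a_1\sqrt{\log(1/\mu(f_{t'}))}\}\rbr = \Theta\!\lbr \eps\min\{1, m\sqrt{\log(1/\eps)}\}\rbr,
\]
where the middle equality is exactly Theorem~\ref{Thm:Main-influence} applied to the halfspace $f_{t'}$, and $\mu(f_{t'}) = \eps$ by construction.

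The main obstacle I expect is the bookkeeping in the two reductions — handling the sign/open-closed issues when $t$ may be negative (needing the dual function and the open-closed variant of Corollary~\ref{cor:decay_of_influence}), and making the window-length and threshold comparisons tight enough that only universal constants are lost. None of these steps is deep: each is an application of Lemma~\ref{lem:decay_of_intervals} or Corollary~\ref{cor:decay_of_influence}, which already absorb constant factors by design. The only genuinely nontrivial input is Theorem~\ref{Thm:Main-influence} itself, which supplies both the upper and lower bounds simultaneously; everything else is translating between ``probability $l(x)$ lands in an interval'' and ``influence of the top coordinate.'' A direct proof mimicking Section~\ref{sec:lower_bound_I1} (using Lemma~\ref{lem:e_i_big_for_big_coordinates}, Lemma~\ref{lem:e_i_lower_bound}, Theorem~\ref{thm:intro-strong_chernoff}, and Lemma~\ref{lem:intro-log_concavity} for the reverse bound) is also available and would avoid the reduction entirely, at the cost of repeating the case analysis; I would mention this as the alternative but carry out the reduction to Theorem~\ref{Thm:Main-influence} as the primary argument.
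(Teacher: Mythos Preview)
Your proposal is correct and shares the paper's core idea. In fact, your parenthetical option---``appending a dummy coordinate with weight $m$''---\emph{is} the paper's argument, used more directly than you realize. Rather than first establishing the case $m=a_1$ and then extending, the paper adds a fresh coordinate $x_0$ with weight $m$, forms the normalized halfspace
\[
g(x_0,x_1,\dots,x_n)=\one\Bigl\{\tfrac{1}{\sqrt{1+m^2}}\bigl(mx_0+l(x)\bigr)>\tfrac{t+m}{\sqrt{1+m^2}}\Bigr\},
\]
and observes that $\ii_0(g)=\pr[l(x)\in(t,t+2m]]$ \emph{exactly}, while $\mu(g)\in[\eps/2,\eps]$. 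Theorem~\ref{Thm:Main-influence} applied to $g$ (with $x_0$ now the top coordinate) then gives the result for $t\ge 0$ in one stroke, with no window-to-influence conversion, no threshold adjustment to a separate $t'$, and no appeal to Corollary~\ref{cor:decay_of_influence}. The normalization factor $m/\sqrt{1+m^2}$ is harmless since it is $\Theta(m)$ unless $m\ge 1$, in which case the $\min$ is $\Theta(1)$ anyway. The range $-m\le t<0$ is then dispatched exactly as you anticipate: reduce to $t=0$ via symmetry and Lemma~\ref{lem:decay_of_intervals}.

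So your longer route---reduce to $m=a_1$, convert the full-sum window to a reduced-sum window via conditioning on $x_1$, invoke Lemma~\ref{lem:decay_of_intervals} to align thresholds---would work, but all of that bookkeeping is subsumed by the single observation that the dummy-coordinate trick turns the window probability into an influence on the nose.
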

\begin{proof}
    Assume first $t \geq 0$. Consider the linear form $l'(x) = \frac{1}{\sqrt{1+m^2}} \left(m x_0 + l(x) \right)$ where $x_0\sim \spm$ is independent of the other variables. (Note that the normalization is intended to keep the sum-of-squares of the coefficients equal 1). Consider
    \[
    	g(x_0,x_1,\ldots,x_n)=\one \lbc l'(x)>\frac{1}{\sqrt{1+m^2}}(t+m) \rbc.
    \]
    We have $\ii_0(g)=\pr \lbs l(x) \in (t, t+2m] \rbs$, while
    \[
    	\mu(g) = (\pr[l(x)>t]+\pr[l(x)>t+2m]) / 2 \in [\eps/2, \eps].
    \]
	By Theorem~\ref{Thm:Main-influence}, we have
	\[
	\ii_{0}(g) = \Theta\lbr \mu(g) \min\lbr1, \frac{m}{\sqrt{1+m^2}} \sqrt{\log(1/\mu(g))}\rbr \rbr.
	\]
    Combining all these implies
	\begin{equation}\label{eq:prob_linear_in_segment_aux}
	\pr \lbs l(x) \in (t, t+2m] \rbs = \ii_{0}(g)= \Theta\lbr \eps \min\lbr1, \frac{m}{\sqrt{1+m^2}} \sqrt{\log(1/\eps)}\rbr \rbr.
	\end{equation}
	This concludes the proof, because $m=\Theta\lbr\nicefrac{m}{\sqrt{1+m^2}}\rbr$ unless $m \geq 1$, in which case the value of the minimum in Equation~\eqref{eq:prob_linear_in_segment_aux} is anyways $\Theta(1)$.

    It remains to consider the case $-m \leq t <0$. We claim that the assertion in this case follows from the assertion for $t=0$. On the one hand, let $l''(x)$ be a small enough perturbation of $l(x)$, such that $\pr[l''(x)=0]=0$ and the functions $\one \{l(x) > t\}$ and $\one \{l(x) > t+2m \}$ coincide with the functions $\one\{l''(x) > t\}$ and $\one \{l''(x) > t+2m \}$, respectively. (Such a perturbation exists, as explained in Section~\ref{sec:conventions}). Then,
    \[
    \pr[l(x)\in (t,t+2m]] \leq \pr[|l''(x)| < 2m] \leq 2\pr[l''(x)\in (0,2m)]\leq O \lbr \frac{1}{2}\min \lbr 1, m \sqrt{\log 2} \rbr \rbr,
    \]
    using the symmetry of $l''(x)$ and the case $t=0$ of~\eqref{eq:prob_linear_in_segment}.
    On the other hand, by Lemma~\ref{lem:decay_of_intervals} we have
    \[
    \pr \lbs l(x) \in (t, t+2m] \rbs \geq \frac{1}{5}\pr \lbs l(x) \in (0, 2m] \rbs \geq \Omega \lbr \frac{1}{2}\min \lbr 1, m \sqrt{\log 2} \rbr \rbr,
    \]
    where the ultimate inequality follows (again) from the case $t=0$. This completes the proof.
\end{proof}

%%%%%%%%%%%%%%%%%%%%%%%%%%%%%%%%%%%%%%%%%%%%%%%%%%%%%%%
%%%%%%%%%%%%%%%%%%%%%%%%%%%%%%%%%%%%%%%%%%%%%%%%%%%%%%%
%%%%%%%%%%%%%%%%%%%%%%%%%%%%%%%%%%%%%%%%%%%%%%%%%%%%%%%
%%%%%%%%%%%%%%%%%%%%%%%%%%%%%%%%%%%%%%%%%%%%%%%%%%%%%%%
%%%%%%%%%%%%%%%%%%% BOUNDARY OF LTFS %%%%%%%%%%%%%%%%%%
%%%%%%%%%%%%%%%%%%%%%%%%%%%%%%%%%%%%%%%%%%%%%%%%%%%%%%%
%%%%%%%%%%%%%%%%%%%%%%%%%%%%%%%%%%%%%%%%%%%%%%%%%%%%%%%
%%%%%%%%%%%%%%%%%%%%%%%%%%%%%%%%%%%%%%%%%%%%%%%%%%%%%%%
%%%%%%%%%%%%%%%%%%%%%%%%%%%%%%%%%%%%%%%%%%%%%%%%%%%%%%%

\section{The Vertex Boundary of Halfspaces}\label{sec:boundary}

\begin{definition}
	Let $\lfunc{f}$ be a monotone Boolean function. For $\lambda\in\zo$, define the $\lambda$-vertex-boundary by
	\[
	\vb{f}{\lambda} = \pr_{x~\spm^n} \lbs (f(x)=\lambda) \andd (\exists i\in [n]\colon f(x\xor e_i)\neq \lambda) \rbs.
	\]
\end{definition}

In this section we prove Theorem~\ref{Thm:Main-boundary}.

\medskip \noindent \textbf{Theorem~\ref{Thm:Main-boundary}}. There exist universal constants $c'_1,c'_2$ such that for any halfspace $f= \one(\sum_i a_i x_i >t)$ with $\be[f]\leq \frac{1}{2}$ and $a_1 \geq a_2 \geq \ldots \geq a_n \geq 0$, we have
	\[
	c'_1 \mathbb{E}[f] \min \{1,a_1 \sqrt{\log(1/\mathbb{E}[f])}\} \leq |\partial(\one_f)|/2^n \leq c'_2 \mathbb{E}[f] \min \{1,a_1 \sqrt{\log(1/\mathbb{E}[f])}\}.
	\]
% which asserts that for any halfspace $f_{t}= \one\{a \cdot x > t\}$, where $\lbn a\rbn_2=1$, $a_1 \geq a_2 \geq \ldots \geq a_n > 0$, and $\mu(f_{t})\leq \frac{1}{2}$, we have $\vb{f_{t}}{1} = \Theta\lbr \mu(f_{t}) \min\lbr1, a_1 \sqrt{\log(1/\mu(f_{t}))}\rbr \rbr$.
In addition, we show that for halfspaces, $\vb{f_{t}}{1}$ and $\vb{f_{t}}{0}$ cannot be too far from each other, while for general Boolean functions they can be `very' far.

\begin{remark}
All the results in this section apply also to halfspaces $f_{t}$ having $\mu(f_{t})\geq \frac{1}{2}$. The difference is that $\mu(f_{t})$ should be replaced by $1-\mu(f_{t})$ and $\vb{f_{t}}{0}$ exchanges roles with $\vb{f_{t}}{1}$.
\end{remark}

\subsection{Proof of Theorem~\ref{Thm:Main-boundary}}

We start with a proposition, which, together with Theorem~\ref{Thm:Main-influence}, implies Theorem~\ref{Thm:Main-boundary}.
\begin{proposition}\label{lem:upper_boundary}
	Let $f_{t}= \one\{a \cdot x > t\}$ be a halfspace with $a_1 \geq a_2 \geq \ldots \geq a_n > 0$. Assume that $\mathbb{E}[f_{t}] \leq \frac{1}{2}$. Then,
	\begin{equation}\label{eq:upper_boundary_ineq}
	\frac{1}{2}\ii_1(f_{t}) \leq \vb{f_{t}}{1}\leq \frac{7}{4} \ii_1(f_{t}).
	\end{equation}
\end{proposition}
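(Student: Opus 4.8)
The plan is to prove the two inequalities separately. The left one is essentially immediate, while the right one rests on a structural description of the $1$-vertex-boundary of a halfspace, followed by a comparison of window probabilities for which the interval estimates of Section~\ref{sec:concentration} are tailor-made.

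\textbf{Lower bound.} As always (see Section~\ref{sec:conventions}) we may assume $t\ge 0$. Since $f_t$ is monotone and $a_1>0$, whenever $f_t(x)\ne f_t(x\xor e_1)$ the endpoint of the pair $\{x,x\xor e_1\}$ with first coordinate $+1$ satisfies $f_t=1$ while its $e_1$-neighbour has $f_t=0$; hence that endpoint lies in the set defining $\vb{f_t}{1}$. Therefore $\vb{f_t}{1}\ge \pr_x[x_1=1,\ f_t(x)\ne f_t(x\xor e_1)]=\frac{1}{2}\ii_1(f_t)$.

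\textbf{Upper bound: a description of the boundary.} Using $a_1\ge\dots\ge a_n>0$, for $x$ with $a\cdot x>t$ put $k(x)=\min\{i:x_i=1\}$ (well defined: $a\cdot x>t\ge 0$ rules out $x\equiv-1$), so that $a_{k(x)}=\max\{a_i:x_i=1\}$. I would first record that such an $x$ lies in the set defining $\vb{f_t}{1}$ if and only if $a\cdot x\in(t,\,t+2a_{k(x)}]$: flipping coordinate $k(x)$ is the cheapest way to leave the halfspace, and once $a\cdot x>t+2a_1$ no single flip can bring the linear form below $t$. Splitting according to $x_1$: on $\{x_1=1\}$ we have $k(x)=1$, so that part of the boundary is $\{x_1=1,\ a\cdot x\in(t,t+2a_1]\}$, whose probability is $\frac12\ii_1(f_t)$ by the standard window identity $\ii_1(f_t)=\pr_x[\sum_{i\ge2}a_ix_i\in(t-a_1,t+a_1]]$.

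\textbf{Upper bound: the recursive step.} On $\{x_1=-1\}$, membership in $\vb{f_t}{1}$ happens necessarily through a coordinate $i\ge 2$, and one checks it is equivalent to $(x_2,\dots,x_n)$ lying in the set defining $\vb{g}{1}$ for the restricted halfspace $g=\one\{\sum_{i\ge2}a_ix_i>t+a_1\}$ on $\spm^{n-1}$; since $\sum_{i\ge2}a_ix_i$ is symmetric and $t+a_1\ge0$, we have $\mu(g)\le\frac12$, and the probability of this part is $\frac12\,\vb{g}{1}$. Thus $\vb{f_t}{1}=\frac12\ii_1(f_t)+\frac12\,\vb{g}{1}$, and it remains to bound $\vb{g}{1}$. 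Applying the boundary description to $g$, whose leading weight is $a_2\le a_1$, gives $\vb{g}{1}\le \pr_x[\sum_{i\ge2}a_ix_i\in(t+a_1,\,t+a_1+2a_2]]$, a window of the \emph{same} symmetric variable $\sum_{i\ge2}a_ix_i$ whose `reference' window for $\ii_1(f_t)$ is $(t-a_1,t+a_1]$; these two windows are exactly what Lemma~\ref{lem:decay_of_intervals}, Corollary~\ref{cor:decay_of_influence}, and the log-concavity Lemma~\ref{lem:intro-log_concavity} compare. Alternatively one closes by induction on $n$, combining $\vb{g}{1}\le\frac74\ii_1(g)$ with a bound on $\ii_1(g)$ in terms of $\ii_1(f_t)$ obtained by writing $\sum_{i\ge2}a_ix_i=a_2x_2+\sum_{i\ge3}a_ix_i$ and observing that the window defining $\ii_1(g)$ sits inside one of the two windows whose average is $\ii_1(f_t)$.

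\textbf{Main obstacle.} The structural steps above are routine; the delicate point is the precise constant $\frac74$. A blunt use of Lemma~\ref{lem:decay_of_intervals}, which loses a factor of $5$, only yields $\vb{f_t}{1}\le c\,\ii_1(f_t)$ with $c$ of size roughly $3$. To reach $\frac74$ I expect one must split into the regime where $a_1$ is much larger than $a_2$ — there the window $(t+a_1,t+a_1+2a_2]$ is short and one can afford the constant-free log-concavity Lemma~\ref{lem:intro-log_concavity} — and the regime $a_1\approx a_2$, where one exploits the symmetry of $\sum_{i\ge2}a_ix_i$ (or the recursion), and then carry out the bookkeeping carefully. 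It is this case analysis, rather than any single inequality, that is the crux of the proof.
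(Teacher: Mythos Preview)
Your lower bound is exactly the paper's, and your structural description of the $1$-boundary is correct; unfolding your recursion $\vb{f_t}{1}=\frac12\ii_1(f_t)+\frac12\vb{g}{1}$ produces precisely the decomposition $\vb{f_t}{1}=\sum_{k\ge1}c_k$ that the paper uses, with
\[
c_k=\Pr_x\lbs x_k=1,\ f_t(x)\neq f_t(x\oplus e_k),\ \forall i<k\colon x_i=-1\rbs.
\]

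The gap is in obtaining the constant $7/4$. Your direct window comparison via Lemma~\ref{lem:decay_of_intervals} gives, as you note, only a constant around $3$. Your inductive alternative does not close at all: the observation ``the window defining $\ii_1(g)$ sits inside one of the two windows whose average is $\ii_1(f_t)$'' yields only $\ii_1(g)\le 2\,\ii_1(f_t)$, and feeding this into $\vb{f_t}{1}\le\frac12\ii_1(f_t)+\frac{C}{2}\ii_1(g)$ requires $C\ge\frac12+C$, which is impossible for any finite $C$. The suggested case analysis on the size of $a_1/a_2$ is speculative and is not what the paper does.

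The paper's device is different and avoids any case split. For each $k$ it considers all $2^{k-1}$ conditioned $k$th influences
\[
b_k(\lambda)=\Pr_x\lbs f_t(x)\neq f_t(x\oplus e_k),\ \forall i<k\colon x_i=\lambda_i\rbs,\qquad \lambda\in\spm^{k-1},
\]
so that $c_k=\frac12 b_k(-1,\dots,-1)$ and $\sum_\lambda b_k(\lambda)=\ii_k(f_t)$. Each $b_k(\lambda)$ is a window probability for $\sum_{i>k}a_ix_i$, and Corollary~\ref{cor:decay_of_influence} (applied to that restricted halfspace, which is legitimate since $t>0$ forces every threshold $t-\sum_{i<k}a_i\lambda_i\ge -(t+\sum_{i<k}a_i)$... actually just since $b_k(-1,\dots,-1)$ has the largest threshold) gives $b_k(\lambda)\ge\frac15 b_k(-1,\dots,-1)=\frac25 c_k$ for every $\lambda$. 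Summing over the $2^{k-1}$ values of $\lambda$, and accounting for the fact that the term $\lambda=(-1,\dots,-1)$ contributes exactly $2c_k$ rather than $\frac25 c_k$, yields $\ii_k(f_t)\ge\frac{2^k+8}{5}c_k$, whence
\[
c_k\le\frac{5}{2^k+8}\,\ii_k(f_t)\le\frac{5}{2^k+8}\,\ii_1(f_t),
\]
and $\vb{f_t}{1}=\sum_k c_k\le\big(\sum_{k\ge1}\frac{5}{2^k+8}\big)\ii_1(f_t)\le 1.71\,\ii_1(f_t)$. The point you are missing is the \emph{exponential} gain in $k$: each $c_k$ competes not against a single neighbouring window but against $2^{k-1}$ comparable windows that together make up $\ii_k(f_t)$.
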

\begin{proof}
%	Assume without loss of generality that $a_{1}\geq\ldots\geq a_{n} \geq 0$ and $t > 0$.
Since $\mu(f_{t})\leq \frac{1}{2}$, we may assume w.l.o.g. $t>0$ as noted in Section~\ref{sec:conventions}. We observe that, since $a_{i}\geq a_{k}$ for all $k>i$, if for some $x$ we have $f_{t}(x)\neq f_{t}(x\xor e_k)$ and $i<k$ satisfies $x_i=x_k$, then $f_{t}(x)\neq f_{t}(x\xor e_i)$. Hence, setting
	\[
	c_{k}=\Pr_{x}\lbs \lbr x_{k}=1\rbr \andd \lbr f_{t}(x)\neq f_{t}(x\xor e_{k})\rbr \andd \Andd_{i<k}\lbr x_{i}=-1\rbr \rbs ,
	\]
	we have $\vb{f_{t}}{1}=\sum_{k=1}^{n}c_{k}$. Define auxiliary variables
	\[
	b_{k}\lbr \lambda_{1},\ldots,\lambda_{k-1}\rbr =\Pr_{x}\lbs \lbr f_{t}(x)\neq f_{t}(x\xor e_{k})\rbr \andd\bigwedge_{i<k}\lbr x_{i}=\lambda_{i}\rbr \rbs ,
	\]
	so that $c_{k}=\frac{1}{2}b_k(-1,\ldots,-1)$. Note that by the law of total probability,
    \[
    \ii_{k}(f_{t})=\sum_{\lambda\sim\spm^{k-1}}b_{k}\lbr \lambda\rbr.
    \]
    We claim that for any $\lambda\in\spm^{k-1}$, we have $b_{k}\lbr \lambda\rbr \ge\Omega\lbr c_{k}\rbr$. Indeed, we have
	\[
	b_k(\lambda)=2^{-(k-1)} \cdot \pr_{x\sim\spm^{[n]\sm[k]}} \lbs \sum_{i>k} a_i x_i \in \lbr t - \sum_{i < k} a_i \lambda_i \rbr + \lbr -a_k, a_k \rbs  \rbs.
	\]
    Hence, Corollary~\ref{cor:decay_of_influence} (applied to the family of halfspaces $\{\one\{\sum_{i=k+1}^n a_i x_i > s\}\}$, using the assumption $t>0$) implies:
    \[
    \forall \lambda \colon b_k(\lambda)\geq \frac{1}{5} b_k(-1,\ldots,-1)=\frac{2}{5} c_k.
    \]
	Since there are $2^{k-1}$ different $\lambda$'s, we have $\ii_{k}(f_{t}) = \sum_{\lambda\sim\spm^{k-1}}b_{k}\lbr \lambda\rbr \geq \frac{2^{k}}{5}c_k$, or more tightly, $\ii_{k}(f_{t})\geq \frac{2^{k}+8}{5}c_k$ (as the summand $b_k(-1,-1,\ldots,-1)$ contributes $2c_k$ instead of $\frac{2}{5} c_k$). Thus, $c_{k}\leq \frac{5}{2^k+8} \ii_{k}(f_{t})\leq \frac{5}{2^k+8}\ii_{1}(f_{t})$. Overall, we have
	\begin{equation}\label{Eq:Aux-Boundary1}
	\frac{1}{2}\ii_{1}(f_{t})=c_{1}\leq\vb{f_{t}}{1}=\sum_{k=1}^{n}c_{k} \leq
	\sum_{k=1}^{\infty} \frac{5}{2^k+8} \ii_1(f_{t})\leq 1.71 \ii_1(f_{t}),
	\end{equation}
	which completes the proof.
\end{proof}

\noindent Theorem~\ref{Thm:Main-boundary} follows immediately by combining Proposition~\ref{lem:upper_boundary} with Theorem~\ref{Thm:Main-influence}.

\begin{remark}
	The lower bound of~\eqref{eq:upper_boundary_ineq} is tight, e.g., for the dictatorship $\one(x_1>0)$. As is apparent from~\eqref{Eq:Aux-Boundary1}, the constant $7/4$ in the upper bound of~\eqref{eq:upper_boundary_ineq} is not tight. The majority function achieves $\vb{\maj_{n}}{1} \lessapprox 1 \cdot \ii_{1}(\maj_{n})$. Interestingly, there are examples of halfspaces $f_{t}$ with $\vb{f_{t}}{1} > \ii_{1}(f_{t})$. For instance, for an odd $n$, $f_{1}=\one\{5\cdot \sum_{i=1}^{4} x_i + 4\cdot\sum_{i=5}^{n}x_{i} > 1\}$ satisfies $\vb{f_{1}}{1}/\ii_{1}(f_{1}) \to 10/9$ when $n\to\infty$. Moreover, if Lemma~\ref{lem:decay_of_intervals} and Corollary~\ref{cor:decay_of_influence} are true with the conjectured constant of $2$ instead of $5$, then~\eqref{Eq:Aux-Boundary1} reads as
	\[
		\vb{f_{t}}{1} \leq \sum_{k=1}^{\infty} \frac{1}{2^{k-1}+1} \ii_{1}(f_{t}) \leq 1.27\ii_{1}(f_{t}).
	\]
	 We have no conjecture for what the correct upper bound for $\vb{f_{t}}{1}$ is.
\end{remark}

\subsection{A Relation Between Upper Boundary and Lower Boundary of Halfspaces}

\medskip We present now an argument similar to Proposition~\ref{lem:upper_boundary}, which establishes a sharp relation between $\vb{f}{0}$ and $\vb{f}{1}$ for halfspaces.
\begin{proposition}\label{thm:lower_boundary}
	 For any halfspace $f_{t}$ with $\mu(f_{t}) \leq \frac{1}{2}$, we have
	\begin{equation}\label{eq:lower_boundary}
	\Omega(\vb{f_{t}}{1}) \leq \vb{f_{t}}{0} \leq O(\log(1/\mu(f_{t}))) \vb{f_{t}}{1}
	\end{equation}
\end{proposition}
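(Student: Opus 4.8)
The plan is to mirror the argument of Proposition~\ref{lem:upper_boundary}. First, exactly as there, monotonicity of $f_{t}$ together with the ordering $a_1\geq\cdots\geq a_n$ lets us write $\vb{f_{t}}{0}=\sum_{k=1}^{n}d_k$, where
\[
d_k=\pr_x\lbs (x_k=-1)\andd\lbr f_{t}(x)\neq f_{t}(x\xor e_k)\rbr\andd\Andd_{i<k}(x_i=1)\rbs,
\]
since if $f_{t}(x)\neq f_{t}(x\xor e_k)$ and some $i<k$ has $x_i=x_k$ then also $f_{t}(x)\neq f_{t}(x\xor e_i)$. The lower bound in~\eqref{eq:lower_boundary} is then immediate: $\vb{f_{t}}{0}\geq d_1=\frac12\ii_1(f_{t})$, and Proposition~\ref{lem:upper_boundary} gives $\ii_1(f_{t})\geq\frac47\vb{f_{t}}{1}$, so $\vb{f_{t}}{0}=\Omega(\vb{f_{t}}{1})$.

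For the upper bound I would establish two pointwise estimates on $d_k$. Dropping the prefix constraint, $d_k\leq\pr[x_k=-1\andd f_{t}(x)\neq f_{t}(x\xor e_k)]=\frac12\ii_k(f_{t})$; and since the transposition $(x_k,x_{k+1})\mapsto(x_{k+1},x_k)$ never increases $a\cdot x$ (as $a_k\geq a_{k+1}$), a short measure-preserving argument shows the coefficients $\wh{f_{t}}(\{k\})=\frac12\ii_k(f_{t})$ are nonincreasing in $k$, so $d_k\leq\frac12\ii_1(f_{t})$. Also, writing $A_{k-1}=\sum_{i<k}a_i$, a direct computation with the definition gives $d_k=2^{-k}\,\pr\lbs\textstyle\sum_{i>k}a_ix_i\in\lbr t-A_{k-1}-a_k,\ t-A_{k-1}+a_k\rbs\rbs\leq 2^{-k}$. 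Then, setting $K=\lceil\lg(1/\mu(f_{t}))\rceil=O\lbr\log(1/\mu(f_{t}))\rbr$, summing the first estimate over $k\leq K$ gives $\sum_{k\leq K}d_k\leq\frac K2\ii_1(f_{t})=O\lbr\log(1/\mu(f_{t}))\rbr\ii_1(f_{t})$, which by Proposition~\ref{lem:upper_boundary} is $O\lbr\log(1/\mu(f_{t}))\rbr\vb{f_{t}}{1}$.

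The remaining task is to show $\sum_{k>K}d_k=O(\vb{f_{t}}{1})$. Since the events $\{j(x)=k\}$, with $j(x)=\min\{i:x_i=-1\}$, are disjoint and $\{j(x)>K\}=\{x_1=\cdots=x_K=1\}$, I would bound
\[
\sum_{k>K}d_k\ \leq\ \pr\lbs x_1=\cdots=x_K=1\andd\textstyle\sum_{i>K}a_ix_i\in\lbr t-2a_{K+1}-A_K,\ t-A_K\rbs\rbs\ =\ 2^{-K}\,\pr[U\in I],
\]
where $U=\sum_{i>K}a_ix_i$ is symmetric with $\var(U)=\sigma^2:=1-\sum_{i\leq K}a_i^2$, and $I$ is an interval of length $2a_{K+1}\leq 2a_1$. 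By Lemma~\ref{lem:decay_of_intervals} and the symmetry of $U$, $\pr[U\in I]=O\lbr\pr[U\in(-a_1,a_1]]\rbr$, and by Berry--Esseen this is $O(a_1/\sigma)$. Hence if $\sigma^2\geq\frac12$ we get $\sum_{k>K}d_k\leq O(2^{-K}a_1)\leq O\lbr\mu(f_{t})\,a_1\rbr$; and if $\sigma^2<\frac12$, then $\sum_{i\leq K}a_i^2>\frac12$, so $Ka_1^2>\frac12$, which (since $\mu(f_{t})\leq\frac12$ makes $K=\Theta(\log(1/\mu(f_{t}))))$ forces $a_1\sqrt{\log(1/\mu(f_{t}))}=\Omega(1)$, and then $\pr[U\in I]\leq 1$ already gives $\sum_{k>K}d_k\leq 2^{-K}\leq\mu(f_{t})$. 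In either case Theorem~\ref{Thm:Main-influence}, which states $\ii_1(f_{t})=\Theta\lbr\mu(f_{t})\min(1,a_1\sqrt{\log(1/\mu(f_{t}))})\rbr$, turns this into $\sum_{k>K}d_k=O(\ii_1(f_{t}))=O(\vb{f_{t}}{1})$, completing the proof.

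The main obstacle will be exactly this last tail estimate: the regimes $a_1\sqrt{\log(1/\mu(f_{t}))}\gtrsim 1$ and $a_1\sqrt{\log(1/\mu(f_{t}))}\lesssim 1$ behave quite differently. In the first, $\ii_1(f_{t})=\Theta(\mu(f_{t}))$ and the crude bound $\pr[U\in I]\leq 1$ suffices; in the second, $\ii_1(f_{t})$ can be far smaller than $\mu(f_{t})$, and one genuinely needs the anticoncentration bound $\pr[U\in(-a_1,a_1]]=O(a_1)$, which is valid only when $\var(U)$ is bounded away from $0$ — hence the case split above. Everything else (the decomposition, the monotonicity of the $\wh{f_{t}}(\{k\})$, and the head of the sum) is routine and follows the template of Proposition~\ref{lem:upper_boundary}.
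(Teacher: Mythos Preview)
Your proof is correct, and the decomposition $\vb{f_{t}}{0}=\sum_k d_k$, the lower bound via $d_1=\tfrac12\ii_1(f_t)$, and the head estimate $\sum_{k\le K}d_k\le \tfrac{K}{2}\ii_1(f_t)$ coincide with the paper's argument. The tail, however, is handled differently. The paper obtains a \emph{per-$k$} bound $d_k\le O\bigl(2^{-k}\,\ii_1(f_t)/\mu(f_t)\bigr)$ by applying Lemma~\ref{lem:relative_inf_increase} (the ``relative influence is increasing'' lemma) to the restricted halfspaces $f^{\lambda}=f_t(\lambda,\cdot)$, and then simply sums $\min\{\ii_1(f_t),\,O(2^{-k}\ii_1(f_t)/\mu(f_t))\}$. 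You instead collapse the whole tail into a single interval event for $U=\sum_{i>K}a_ix_i$, invoke Lemma~\ref{lem:decay_of_intervals} and Berry--Esseen to get $\pr[U\in I]=O(a_1/\sigma)$, and close with a case split using only the \emph{lower} bound of Theorem~\ref{Thm:Main-influence}. This is a genuine alternative: it bypasses Lemma~\ref{lem:relative_inf_increase} entirely, at the price of importing Berry--Esseen and doing a regime split on $a_1\sqrt{\log(1/\mu(f_t))}$. The paper's route is more self-contained and yields the sharper pointwise information $d_k=O(2^{-k}\ii_1/\mu)$; your route is perfectly valid and arguably more elementary in spirit. One small technical point to tidy: when you apply Lemma~\ref{lem:decay_of_intervals} to $U$, the centre $t-a_{K+1}-A_K$ of $I$ may be negative, so you need the symmetry of $U$ (as in Corollary~\ref{cor:decay_of_influence}) to reduce to the $s\ge 0$ case --- you allude to this but should state it explicitly.
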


\begin{remark}
Note the left inequality in~\eqref{eq:lower_boundary} is tight for `Hamming balls'
\[
	f_t = \one \lbc \sum_{i=1}^n \frac{x_{i}}{\sqrt{n}} > t \rbc,
\]
with $n$ sufficiently large with respect to $t$. The right inequality in~\eqref{eq:lower_boundary} is tight for `subcubes'
\[
	f = \one \lbc \sum_{i=1}^{k} x_{i} > k-1/2 \rbc,
\]
as $\vb{f}{0}=k/2^k$, $\vb{f}{1}=1/2^k$ and $\mu(f)=1/2^k$.
\end{remark}

\begin{proof}[Proof of Proposition~\ref{thm:lower_boundary}]
Let $f_{t}= \one\{a \cdot x > t\}$ be a halfspace with $\pr[f_{t}=1] \leq \frac{1}{2}$, and assume without loss of generality $a_1 \geq a_2 \geq \ldots \geq a_n > 0$ and $t > 0$.

\medskip

We clearly have $\Omega(\vb{f_{t}}{1}) \leq \vb{f_{t}}{0}$, since $\vb{f_{t}}{0} \geq \frac{1}{2}\ii_{1}(f_{t})$ and $\ii_{1}(f_{t})=\Theta(\vb{f_{t}}{1})$ by Proposition~\ref{lem:upper_boundary}.

\medskip

For the proof of $\vb{f_{t}}{0} \leq O(\log(1/\mu(f_{t}))) \vb{f_{t}}{1}$, we let $b_k(\lambda):\spm^{k-1}\to [0,1]$ be the auxiliary variables from the proof of Proposition~\ref{lem:upper_boundary}, and set
	\[
		b'_k\lbr \lambda_{1},\ldots,\lambda_{k-1}\rbr \ddd \Pr_{x}\lbs \lbr f_{t}(x)=1\rbr \andd\bigwedge_{i<k}\lbr x_{i}=\lambda_{i}\rbr \rbs .
	\]
Recall that by the law of total probability we have $\ii_{k}(f_{t})=\sum_{\lambda \in\spm^{[k-1]}}b_{k}\lbr \lambda\rbr $, so in particular, $\forall \lambda \colon b_k(\lambda)\leq \ii_k(f_{t})\leq \ii_1(f_{t})$. In addition, like in the proof of Proposition~\ref{lem:upper_boundary}, we observe that $2\vb{f_{t}}{0}=	\sum_{k=1}^{n} b_k(1,\ldots,1)$.
\noindent We will soon prove
\begin{equation}\label{Eq:Aux-Boundary-2}
\forall k:b_k(1,\ldots,1)\leq O(2^{-k} \ii_1(f_{t}) / \mu(f_{t})).
\end{equation}
Combining~\eqref{Eq:Aux-Boundary-2} with the two former observations, we obtain
	\bem
	2\vb{f_{t}}{0}
	& = &
	\sum_{k=1}^{n} b_k(1,\ldots,1) \leq	\sum_{k=1}^{\infty} \min\lbc \ii_1(f_{t}), O(2^{-k} \ii_1(f_{t}) / \mu(f_{t})) \rbc
	\\
	& \leq &
	\sum_{k=1}^{\lg(1/\mu(f_{t}))} \ii_1(f_{t}) + \sum_{k'=1}^{\infty} O(\ii_1(f_{t})/2^{k'}) \leq O(\log(1/\mu(f_{t}))) \ii_1(f_{t})
	\\
	& \underbrace{\leq}_{\text{Prop.~\ref{lem:upper_boundary}}} &
	O(\log(1/\mu(f_{t}))) \vb{f_{t}}{1},	
	\enm
    completing the proof. Hence, it is only left to prove~\eqref{Eq:Aux-Boundary-2}. For this, consider the family of halfspaces
    \[
    \{f^{\lambda} \colon \{-1,1\}^{[n]\sm [k-1]} \rightarrow \{0,1\}\}_{\lambda \in \{-1,1\}^{[k-1]}},
    \]
    defined as $f^{\lambda}(x) \ddd f_{t}(\lambda,x)$. Note that for each $\lambda \in \{-1,1\}^{[k-1]}$, we have $b'_k(\lambda)=2^{1-k}\be[f^{\lambda}]$ and $b_k(\lambda)=2^{1-k}\ii_{\max}(f^{\lambda})$. Now, observe that
	\begin{equation}\label{eq:boundary_relative_inf}
		\forall \lambda \in \spm^{k-1}\colon \frac{b_k(1,\ldots,1)}{b'_k(1,\ldots,1)} \leq O \lbr \frac{b_k(\lambda)}{b'_k(\lambda)} \rbr.
	\end{equation}
    Indeed, an application of Lemma~\ref{lem:relative_inf_increase} to the family of halfspaces $\{f^{\lambda}\}$ implies that $\nicefrac{I_{\max}(f^{\lambda})}{\be[f^{\lambda}]}$ is (up to a constant) an increasing function of $t_{\lambda}\ddd t-\sum_{i <k}a_{i} \lambda_{i}$ in the range $t_{\lambda}\geq 0$; In the range $t_{\lambda}\leq 0$, $\be[f^{\lambda}]$ is clearly a decreasing function of $t_{\lambda}$, while from Corollary~\ref{cor:decay_of_influence} $I_{\max}(f^{\lambda})$ is (up to a constant) an increasing function of $t_{\lambda}$. This confirms Inequality~\eqref{eq:boundary_relative_inf}.
    Hence,
	\begin{align*}
	\frac{b_k(1,\ldots,1)}{b'_k(1,\ldots,1)}\mu(f_{t}) &\underbrace{=}_{\text{tot. prob.}} \frac{b_k(1,\ldots,1)}{b'_k(1,\ldots,1)} \sum_{\lambda\in \spm^{[k-1]}} b'_k(\lambda) \\
&\underbrace{\leq}_{\text{Eq.~\eqref{eq:boundary_relative_inf}}} \sum_{\lambda\in \spm^{[k-1]}} O \lbr b_k(\lambda) \rbr \leq O(\ii_k(f_{t})).
	\end{align*}
	Since $b'_k(1,\ldots,1)\leq 2^{1-k}$, we conclude $b_k(1,\ldots,1) \leq O(2^{-k} \ii_k(f_{t}) / \mu(f_{t}))$, as required by~\eqref{Eq:Aux-Boundary-2}. This completes the proof.
\end{proof}

\subsection{An Example Showing Discrepancy Between the Upper Boundary and the Lower Boundary, for General Boolean Functions}

We conclude this section with an example, suggested by Rani Hod, showing that for general Boolean functions, the difference between $\vb{f}{1}$ and $\vb{f}{0}$ can be very large (in contrast to Proposition~\ref{thm:lower_boundary}, which should be viewed as a property of halfspaces). The example is based on a random construction of Talagrand~\cite{Talagrand96}, originally proposed as an example of a monotone Boolean function $g$ with `maximal possible' vertex boundary $\mathrm{VB}(g)=\Omega(1)$ and `maximal possible' total influence $\ii(g)=\Omega(\sqrt{n})$.

\begin{example}\label{ex:vb_large_ratio}
	Let $b=\sqrt{n}$, $a = 2^{b}/b$, and define $\lfunc{h}$ by
	\[
		h(x) = \Orr_{i \in [a]} \Andd_{j \in S_i} x_{j},
	\]
	where, for every $i$, $S_{i}$ is a random subset of $[n]$ of size $b$. Also, let $f(x)=h(x)\orr \maj_n(x)$.
	We claim the following (proofs will be given below).
	\begin{enumerate}
		\item Always, $1/2 \leq \mu(f) \leq 1/2+1/\sqrt{n}$.
		\item Always, $\vb{f}{1} = O(1/\sqrt{n})$.
		\item With a probability that is bounded away from zero, $\vb{f}{0} = \Omega(1)$.
	\end{enumerate}
	Hence, there exists an almost unbiased Boolean function $f$ having a multiplicative gap of $\sqrt{n}$ between $\vb{f}{0}$ and $\vb{f}{1}$. (For comparison, Proposition~\ref{thm:lower_boundary} implies that for almost unbiased halfspaces we have $\vb{f}{0} = \Theta(\vb{f}{1})$.) Furthermore, the example implies that apparently, there is no analog to Proposition~\ref{lem:upper_boundary} for general functions, as $f$ and its dual function $1-f(-x)$ are similar in terms of Fourier expansion (and in particular, have the same influences), but are very different with respect to the $\vb{\cdot}{1}$ measure.

\medskip \noindent Let us verify the above claims.
	\begin{enumerate}
		\item It is clear that $\mu(f) \geq \mu(\maj_n)=1/2$. On the other hand, since $\mu(\Andd_{j \in S_i} x_{j}) = 2^{-b}$, from a union bound we have $\mu(h) \leq \frac{1}{b}$ and consequently,
\[
\mu(f)\leq \mu(\maj_n)+\mu(h) \leq \frac{1}{2} + \frac{1}{b} = \frac{1}{2}+\frac{1}{\sqrt{n}}.
\]
		\item For an $x \in \spm^{n}$ to be in the upper-boundary of $f$, either it is in the upper-boundary of $h$ or in that of $\maj_{n}$. We have $\vb{\maj_{n}}{1}=\Theta(1/\sqrt{n})$, and $\vb{h}{1}\leq \mu(h) \leq 1/b \leq 1/\sqrt{n}$ as above. Hence, $\vb{f}{1} = O(1/\sqrt{n})$.
		\item For an $x \in \spm^{n}$ to be in the lower-boundary of $f$, it is sufficient that $x$ is in the lower-boundary of $h$ and $\maj_{n}(x)=0$. Let $x \in \spm^{n}$ be chosen uniformly at random among the vectors that satisfy $\sum x_i = -2c\sqrt{n}$, for a fixed $c \in (0,10)$. We want to show that with some positive probability (`continuously') depending on $c$, $x$ lies in the lower-boundary of $h$. As $\mu(\{x \mid -20\sqrt{n} \leq \sum x_i <0\})=\Omega(1)$, this will imply $\vb{f}{0} = \Omega(1)$.

Since $\mu(h) =o(1)$ as we showed above, it is sufficient to show that
		\begin{equation}\label{Eq:Aux-Boundary3}
\pr_x \lbs \exists i \in [a]\colon \lba S_{i} \sm \mrm{supp}(x) \rba \leq 1\rbs = \Omega(1).
        \end{equation}
		Consider a specific $i \in [a]$. The probability $P_{i}(x)$ that $\lba S_{i} \sm \mrm{supp}(x) \rba \leq 1$ given $X = \lba \mrm{supp}(x)\rba$, is at least $(n-X)\binom{X}{b-1} / \binom{n}{b}$. Recall $X = n/2 - c\sqrt{n}$, so a computation gives $P_{i}(x) \gtrsim b 2^{-b-1} \exp(-cb / \sqrt{n})$ for $c=O(1)$. Since there are $a=2^{b}/b$ independent relevant $i$'s, the number of those $i$ with $\lba S_{i}\sm \mrm{supp}(x) \rba \leq 1$ is approximately distributed $\mrm{Poi}(\exp(-O(c)))$, and is nonzero with a constant probability. Thus,~\eqref{Eq:Aux-Boundary3} holds, as asserted.
	\end{enumerate}
\end{example}

%%%%%%%%%%%%%%%%%%%%%%%%%%%%%%%%%%%%%%%%%%%%%%%%%%%%%%%
%%%%%%%%%%%%%%%%%%%%%%%%%%%%%%%%%%%%%%%%%%%%%%%%%%%%%%%
%%%%%%%%%%%%%%%%%%%%%%%%%%%%%%%%%%%%%%%%%%%%%%%%%%%%%%%
%%%%%%%%%%%%%%%%%%%%%%%%%%%%%%%%%%%%%%%%%%%%%%%%%%%%%%%
%%%%%%%%%%%%%%%%%%%% Wk BIASED LTFS %%%%%%%%%%%%%%%%%%%
%%%%%%%%%%%%%%%%%%%%%%%%%%%%%%%%%%%%%%%%%%%%%%%%%%%%%%%
%%%%%%%%%%%%%%%%%%%%%%%%%%%%%%%%%%%%%%%%%%%%%%%%%%%%%%%
%%%%%%%%%%%%%%%%%%%%%%%%%%%%%%%%%%%%%%%%%%%%%%%%%%%%%%%
%%%%%%%%%%%%%%%%%%%%%%%%%%%%%%%%%%%%%%%%%%%%%%%%%%%%%%%

\section{\texorpdfstring{$k$}{}th Degree Fourier Weight of Halfspaces}
\label{sec:W^k}

The classical \emph{level-$k$ inequality}~\cite[Section 9.5]{O'D14} asserts the following.
\begin{theorem}[Level-k inequality]\label{Thm:Level-k}
For any $k \in \mathbb{N}$ and for any halfspace $f_{t}=\one\{a\cdot x>t\}$ with $\eps=\mu(f_{t})<e^{-k/2}$, we have
	\begin{equation}\label{Level-k}
	\wk{\leq k}(f_{t})\leq \left(\frac{2e}{k}\right)^k \eps^{2}\log(1/\eps)^{k}.
	\end{equation}
\end{theorem}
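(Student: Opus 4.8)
The plan is to prove this exactly as one proves the general level-$k$ inequality, by hypercontractivity; the fact that $f$ is a halfspace plays no role, so I would state and prove it for an arbitrary $f\colon\spm^n\to\zo$ with $\be[f]=\eps$ and then specialize (alternatively, one may simply cite \cite[Chapter~9]{O'D14}).

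First I would set $g=f^{\le k}\ddd\sum_{|S|\le k}\hf(S)\,x^S$, the projection of $f$ onto the Fourier levels of degree at most $k$. Since $g$ is the orthogonal projection of $f$, we have $\wk{\le k}(f)=\normt g=\langle f,g\rangle=\be[f\cdot g]$. Fixing any $q\ge 2$ with conjugate exponent $q'=q/(q-1)$, H\"older's inequality yields $\be[fg]\le\lbn fg\rbn_1\le\|f\|_{q'}\,\|g\|_q$, and since $f$ is $\zo$-valued we have $\|f\|_{q'}=\be[f]^{1/q'}=\eps^{1-1/q}$. I would then invoke the low-degree hypercontractive inequality: if $\deg g\le k$ and $q\ge 2$ then $\|g\|_q\le(q-1)^{k/2}\,\normts g$ (this follows from the $(2\to q)$ hypercontractive inequality $\|T_\rho h\|_q\le\|h\|_2$, valid for $\rho=(q-1)^{-1/2}\le 1$, applied to $h=\sum_j\rho^{-j}g^{=j}$: then $g=T_\rho h$ while $\normt h=\sum_j\rho^{-2j}\normt{g^{=j}}\le\rho^{-2k}\normt g$). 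Combining the three bounds, $\normt g=\be[fg]\le\eps^{1-1/q}(q-1)^{k/2}\normts g$, and hence $\wk{\le k}(f)=\normt g\le\eps^{2-2/q}(q-1)^k$ for every $q\ge 2$.

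It remains to optimize over $q$: I would set $q=1+\frac{2\log(1/\eps)}{k}$. The hypothesis $\eps<e^{-k/2}$ is precisely the condition making $q>2$, so that hypercontractivity applies. Writing $L=\log(1/\eps)$ (so $\eps=e^{-L}$), we have $q-1=2L/k$ and $2-2/q=2\cdot\frac{q-1}{q}=\frac{4L}{k+2L}$, so the bound becomes $\wk{\le k}(f)\le(2L/k)^k\,e^{-4L^2/(k+2L)}$. The one elementary estimate needed is $\frac{4L^2}{k+2L}\ge 2L-k$, i.e.\ $(2L-k)(2L+k)=4L^2-k^2\le 4L^2$; it gives $e^{-4L^2/(k+2L)}\le e^{k-2L}=e^k\eps^2$, so $\wk{\le k}(f)\le(2L/k)^k e^k\eps^2=\lbr\tfrac{2eL}{k}\rbr^k\eps^2=\lbr\tfrac{2e}{k}\rbr^k\eps^2\,\log(1/\eps)^k$, as claimed.

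I do not anticipate any real obstacle --- this is a textbook argument. The only points that need a little care are recalling the precise form of the low-degree hypercontractive inequality and its one-line derivation, and tracking the role of the hypothesis $\eps<e^{-k/2}$: it is exactly the condition guaranteeing that the optimal choice $q=1+2\log(1/\eps)/k$ lies in the range $q\ge 2$ in which hypercontractivity is available (and without it the displayed closed form genuinely fails, e.g., already for $\eps=\tfrac12$ and large $k$, since dictators have $\wk{\le k}=\tfrac12$).
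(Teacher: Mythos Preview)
Your proof is correct and is precisely the standard hypercontractivity argument. In the paper this theorem is not proved at all: it is quoted as the ``classical level-$k$ inequality'' with a citation to \cite[Section~9.5]{O'D14}, and your argument is exactly the textbook proof found there. You are also right that the halfspace hypothesis is irrelevant --- the paper states it only for halfspaces because that is the context in which it is applied, but the inequality holds for arbitrary $f\colon\spm^n\to\zo$.
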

In this section we prove Theorem~\ref{Thm:Main-W^k} which asserts that the level-$k$ inequality is tight for strongly biased halfspaces, up to a multiplicative factor depending only on $k$. Specifically, we prove the following result which clearly includes Theorem~\ref{Thm:Main-W^k}.

%The main theorem of this section is a generalization of Theorem~\ref{Thm:Main-W^1} to higher frequencies.

\begin{theorem}\label{thm:large_wk}
	There exist universal constants $c_1,c_2$ such that for any $k \in \mathbb{N}$ and for any halfspace $f_{t}=\one\{a\cdot x>t\}$ with $\eps=\mu(f_{t})<2^{-999k}$ and $\ii_{1}(f_{t})\leq c_1 \mu(f_{t})/k$, we have
	\begin{equation}\label{eq:wk_is_large}
	\wk{k}(f_{t})\geq \frac{(c_{2}\log(2k)) ^ {-k}}{k!} \eps^{2}\log(1/\eps)^{k}.
	\end{equation}
\end{theorem}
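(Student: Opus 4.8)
The plan is to generalise the proof of Theorem~\ref{Thm:Main-W^1}, which is essentially the case $k=1$: I would bound $\wk{k}(f_t)=\sum_{|S|=k}\widehat{f_t}(S)^2$ from below by pairing $f_t$, through the Cauchy--Schwarz inequality, with a single cleverly chosen homogeneous multilinear polynomial of degree $k$. The natural choice — the degree-$k$ analogue of the weight vector $(a_i)$ that does the job at level $1$ — is the elementary symmetric polynomial $p(x):=e_k(a_1x_1,\dots,a_nx_n)=\sum_{|S|=k}a^Sx^S$, where $a^S:=\prod_{i\in S}a_i$. Since $\sum_i a_i^2=1$, we have $\|p\|_2^2=\sum_{|S|=k}(a^S)^2=e_k(a_1^2,\dots,a_n^2)\le\frac1{k!}\bigl(\sum_i a_i^2\bigr)^k=\frac1{k!}$, so Cauchy--Schwarz gives
\[
\wk{k}(f_t)\ \ge\ \frac{\langle f_t,\,p\rangle^2}{\|p\|_2^2}\ \ge\ k!\cdot\be_x\bigl[f_t(x)\,e_k(a_1x_1,\dots,a_nx_n)\bigr]^2,
\]
and the whole problem reduces to a lower bound on the single scalar $M:=\be_x[f_t\cdot e_k(a_1x_1,\dots,a_nx_n)]$. (For the extremal examples — biased Hamming balls and the like — $\widehat{f_t}(S)$ is essentially proportional to $a^S$, so this Cauchy--Schwarz step loses almost nothing.)

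Next I would unpack the two hypotheses, writing $\eps=\mu(f_t)$. From $\ii_1(f_t)\le c_1\eps/k$ and the lower bound of Theorem~\ref{Thm:Main-influence} (Proposition~\ref{prop:large_i1-lower}) one gets that no coordinate is ``heavy'', with the quantitative bound $a_1:=\max_i a_i\le O\bigl(c_1/(k\sqrt{\log(1/\eps)})\bigr)$; in particular every power sum $p_j:=\sum_i a_i^jx_i^j$ with $j\ge3$ satisfies $|p_j|\le\sum_i a_i^j\le a_1^{\,j-2}\le a_1$, i.e.\ is tiny. From $\eps<2^{-999k}$ and Hoeffding's inequality one gets $t^2\le 2\log(1/\eps)$ and $t>37\sqrt k$, so the Hermite polynomials $He_k,He_{k-1}$ (probabilists' normalisation) are at the point $t$ dominated by their leading monomials: $He_k(t)\ge C^{-k}t^k$ for an absolute constant $C$ only marginally above $1$, and $He_{k-1}>0$ on $[t,\infty)$. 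Finally, because the $a_i$ are this small, $W:=a\cdot x$ is close to a standard Gaussian throughout the moderate-deviation range $|s|\lesssim t$ (note $a_1t\ll1$), which yields the matching lower bound $t^2\ge c''\log(1/\eps)$ for an absolute $c''>0$.

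The core of the argument is the estimate of $M$. By Newton's identities $k!\,e_k(a_1x_1,\dots,a_nx_n)=He_k(W)+E$, where — this being the exact discrete counterpart of the fact that the Hermite polynomials are the orthogonal system of the Gaussian — the error $E:=k!e_k-He_k(W)$ is a polynomial in $W$ and the tiny power sums $p_3,p_5,\dots$ in which every monomial carries at least one factor $p_j$ with $j\ge3$, hence a factor of size $O(a_1)$. Thus $k!\,M=\be_x[f_t\,He_k(W)]+\be_x[f_t\,E]$, and the main term is bounded below by integrating by parts against the (actual, discrete) tail of $W$ and using $He_k'=kHe_{k-1}>0$ on $[t,\infty)$:
\[
\be_x[f_t\cdot He_k(W)]\ =\ \eps\,He_k(t)+k\int_t^\infty He_{k-1}(s)\,\pr[W>s]\,ds\ \ge\ \eps\,He_k(t)\ \ge\ C^{-k}\eps\,t^k.
\]
The error $\be_x[f_t\,E]$ is estimated by combining $|E|\le O(a_1)\cdot(\text{polynomial in }W)$ with the moment bound $\be_x[f_t\,|W|^m]\le O\bigl(\eps\,(\log(1/\eps))^{m/2}\bigr)$ for $m<k$ — which follows from Hoeffding, or sharper from the local Chernoff inequality (Theorem~\ref{thm:intro-strong_chernoff}) controlling $\pr[W>s]$ for $s>t$ — and by organising the sum over partitions $\lambda\vdash k$ possessing a part $\ge3$ via the exponential formula, which collapses it to $O\bigl(a_1\,k^{O(1)}\,\be[(\sqrt{2\log(1/\eps)}+Z)^{k-3}]\bigr)$ with $Z\sim N(0,1)$. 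The bound $a_1\le O(c_1/(k\sqrt{\log(1/\eps)}))$ then makes $\be_x[f_t\,E]$ at most half the main term, so $M\ge\frac1{2k!}C^{-k}\eps t^k$; substituting into the Cauchy--Schwarz inequality and using $t^2\ge c''\log(1/\eps)$ yields $\wk{k}(f_t)\ge\frac{(c_2\log(2k))^{-k}}{k!}\eps^2\log(1/\eps)^k$, the constant $c_2$ absorbing the several $C^{k}$-type losses incurred above (from $He_k(t)$ versus $t^k$, from $t^2$ versus $2\log(1/\eps)$, and from the constant in the error estimate).

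I expect the genuine obstacle to be the quantitative control of the error $\be_x[f_t\,E]$: the combinatorial sum over partitions must be organised so as not to leak a $k^{\Theta(k)}$ factor — this is where the exponential-formula bookkeeping and the precise ``$1/k$'' in the influence hypothesis are indispensable — together with pinning down the interplay between the threshold $t$, the quantity $\sqrt{2\log(1/\eps)}$, and $k$, i.e.\ exactly where the near-Gaussian behaviour of $W$ in the moderate-deviation regime is invoked and where the strong assumption $\eps<2^{-999k}$ (rather than a weaker one) earns its keep.
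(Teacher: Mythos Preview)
Your route is genuinely different from the paper's. Both arguments pair $f_t$ with $p(x)=e_k(a_1x_1,\dots,a_nx_n)$ via Cauchy--Schwarz, but diverge on how to lower-bound $M=\langle f_t,p\rangle$: the paper \emph{smooths} over thresholds, replacing $\widehat{f_t}(S)$ by $e_S^\delta=\be_{s\sim\delta\cdot T_k}[\widehat{f_{t+s}}(S)]$ with $T_k$ the Irwin--Hall distribution (Proposition~\ref{prop:e_s_lower_bound}), shows the smoothed quantity is a lower bound via a monotonicity claim (Claim~\ref{claim:wk_coefs_decrease}), and controls the smoothing scale $\delta$ by the local Chernoff inequality; you instead use the exact identity $k!\,e_k(a_ix_i)=He_k(W)+E$ (valid since $p_2=\sum a_i^2=1$) and integration by parts. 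If your error analysis went through, your approach would actually \emph{remove} the $\log(2k)^k$ loss that the paper incurs from iterating local Chernoff $\log(6k)$ times, and would also sidestep the paper's separate lower bound $\sum_{|S|=k}(a^S)^2\ge 2^{1-k}/k!$ (Claim~\ref{claim:wk_large_em}).

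There is, however, a genuine gap in your error estimate. At the boundary $\eps\approx 2^{-999k}$ one only has $t=\Theta(\sqrt{k})$ (the paper's Claim~\ref{claim:wk_big_t} gives $t\ge 4\sqrt{k}$, and nothing better is available --- your Berry--Esseen justification for $t^2\ge c''\log(1/\eps)$ fails in this exponentially-small-$\eps$ regime; one needs the Paley--Zygmund/Montgomery--Smith argument as in Claim~\ref{claim:wk_big_t}, which only yields $c''$ a small absolute constant). In this regime $He_k(t)/t^k\le(1-2\sqrt{k}/t)^k$ is exponentially small in $k$, say $\le 2^{-k}$. Your error route --- bounding $|E|\le O(a_1)\cdot(\text{polynomial in }W)$ and then $\be[f_t|W|^m]\le O(\eps\,t^m)$ --- gives for the leading ($\ell=3$) term an error of order $\eps\cdot k^3 a_1\cdot t^{k-3}$, to be compared with the main term $\eps\,He_k(t)\le\eps\,2^{-k}t^k$. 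The ratio is $\gtrsim k^3 a_1\,2^{k}/t^3$; with $a_1\le c_1/(k\sqrt{\log(1/\eps)})$ and $t^3\le(2\log(1/\eps))^{3/2}$ this is $\gtrsim 2^{k}/(\log(1/\eps))^2\gtrsim 2^k/k^2$, which blows up. The influence hypothesis and the bound $\eps<2^{-999k}$ supply only polynomial-in-$k$ savings, not exponential, so they cannot absorb this $2^k$.

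The approach is likely salvageable, but not along the lines you sketch: rather than passing through $W^{k-\ell}$, one should compare $He_{k-\ell}(W)$ to $He_k(W)$ \emph{pointwise} on $\{W>t\}$ using the three-term recursion $He_m=tHe_{m-1}-(m-1)He_{m-2}$, which for $t>4\sqrt{k}$ gives $He_{k-1}(W)/He_k(W)\le C/t$ uniformly, hence $He_{k-\ell}(W)/He_k(W)\le (C/t)^\ell\le(C'/\sqrt{k})^\ell$. This exactly cancels the $k^\ell$ coming from $k!/(k-\ell)!$, and then the remaining factor $|q_\ell|\le a_1^{\lceil\ell/3\rceil}$ (from at least one $p_j$, $j\ge3$, per part) makes the sum over $\ell$ converge. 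That argument, however, is different from the moment bound you propose.
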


\begin{remark}
Let us compare Theorem~\ref{thm:large_wk} with Theorem~\ref{Thm:Main-W^1}. While Theorem~\ref{Thm:Main-W^1} expresses the tightness of the level-1 inequality for halfspaces (up to a multiplicative constant factor), Theorem~\ref{thm:large_wk} states that even the level-$k$ inequalities are essentially tight for halfspaces. On the other hand, Theorem~\ref{thm:large_wk} has three disadvantages. The first is the requirement that all the influences $\ii_{i}(f_{t})$ are somewhat small. (Note that the maximal possible value of an influence is $2\mu(f_{t})$, and so, we are `missing' a factor of $O(k)$.)
%\footnote{Though they are not required to be so small that the random variable $a\cdot x$ becomes too similar to a gaussian variable, by e.g. the Berry-Esseen theorem. cf.~\cite[Theorem 48]{MORS10}.}
The second is that Equation~\eqref{eq:wk_is_large} is not the exact converse of the level-$k$ inequality, as there is a factor $\log(2k)^{-k}$ off. The third is that we require $\mu(f_t)$ to be at most $2^{-999k}$, instead of $e^{-k/2}$ in the level-$k$ inequality, that is, a factor $c^k$ off. We note however that for a constant $k$ (which is the case highlighted in Theorem~\ref{Thm:Main-W^k}),~\eqref{eq:wk_is_large} is indeed tight up to a constant multiplicative factor.

We believe the deficiencies of Theorem~\ref{thm:large_wk} are actually not inherent, and are side effects of our proof. Specifically, it is plausible one can omit the assumption that $f_{t}$'s influences are small, replace the multiplicative $(c_{2}\log(2k)) ^ {-k}$ in Equation~\eqref{eq:wk_is_large} by $c^{k}$ for some universal constant $c>0$, and claim that the assertion holds whenever $\mu(f_t) \leq 2^{-k}$.
\end{remark}

\medskip This section is organized as follows. In Section~\ref{sec:sub:perturbation} we present a study a certain `$k$-degree perturbation' of influences, that generalizes the perturbation we used in the proof of Theorem~\ref{Thm:Main-W^1}. In Section~\ref{sec:sub:proof-W^k} we use these $k$-degree perturbations to prove Theorem~\ref{thm:large_wk}, modulo two auxiliary claims. These claims are proved in Section~\ref{sec:sub:auxiliary}.

\subsection{A k-Degree Smoothing of Influences}
\label{sec:sub:perturbation}

%Before we present the proof, some preparations are needed.
Let $f_t=\one\{a\cdot x>t\}$ be a halfspace. Recall that in the proof of Theorem~\ref{Thm:Main-W^1}, a central role is played by the quantity $\sum a_i e_{i}^{\delta}$, where
\[
e_{i}^{\delta} \ddd \be_{s\sim U(0,\delta)} \lbs \ii_i(f_{t+s})\rbs = \be_{s\sim \delta \cdot U(0,1)} [\widehat{f_{t+s}}(\{i\})].
\]
%and $f_{t+s}=\one\{a\cdot x>t+s\}$.
In this subsection we consider the following degree-$k$ generalization of this notion.

For a set $S$, we denote $a^S = \prod_{i \in S} a_i$. We let $T=T_k$ be a random variable, distributed as the sum of $k$ independent $U(0,1)$-distributed variables (also called `the Irwin-Hall distribution'). Then, for any $S \subset [n]$ with $|S|=k$ we set
\begin{equation}\label{Eq:Aux-W^k-1}
e_{S}^{\delta} \ddd \be_{s\sim \delta \cdot T} [\widehat{f_{t+s}}(S)],
\end{equation}
and consider the quantity
$$
M = \sum_{|S|=k} a^S e_{S}^{\delta}.
$$
The following propositions will help us to study this quantity.

\begin{claim}\label{prop:irwin_hall_distribution}
	Let $X=\sum_{i=1}^{m} X_{i}$ be the sum of $m$ independent random variables distributed $X_i\sim U(0,1)$. Let $G_{m}=G_{X}$ be the cumulative distribution function of $X$.
	Then the $m$th derivative of $G_{m}$ satisfies $G_{m}^{(m)}(x)=(-1)^{\lfloor x \rfloor}\binom{m-1}{\lfloor x \rfloor}$, except for $x\in \{0,\ldots,m\}$.
\end{claim}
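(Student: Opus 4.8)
The plan is to work with the density rather than with the CDF itself. Write $g_m \ddd G_m'$ for the Irwin--Hall density, so that $g_1 = \one_{[0,1)}$ and $g_m = g_{m-1} * g_1$ (convolution), and note that $G_m^{(m)} = g_m^{(m-1)}$. Thus the statement to be proved is equivalent to
\[
g_m^{(m-1)}(x) = (-1)^{\lfloor x\rfloor}\binom{m-1}{\lfloor x\rfloor}\qquad\text{for every }x\notin\integers,
\]
together with the trivial observation that for an integer $x\notin\{0,\dots,m\}$ both sides vanish (as $g_m\equiv 0$ in a neighbourhood of such $x$, and $\binom{m-1}{\lfloor x\rfloor}=0$). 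I would prove the displayed identity by induction on $m$, using throughout the standard fact that on each interval $[j,j+1]$ the function $g_m$ is a polynomial of degree at most $m-1$, so that all the pointwise derivatives below are genuine classical derivatives away from $\integers$, and adopting the usual convention that $\binom{a}{k}=0$ when $k<0$ or $k>a$.

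For $m=1$ one simply reads off $g_1^{(0)}=g_1$, which on $\reals\setminus\integers$ equals $1$ on $(0,1)$ and $0$ elsewhere, matching $(-1)^{\lfloor x\rfloor}\binom{0}{\lfloor x\rfloor}$. For the inductive step, from $g_m(x)=\int_0^1 g_{m-1}(x-u)\,du=\int_{x-1}^{x} g_{m-1}(t)\,dt$ and the continuity of $g_{m-1}$ at the non-integer points $x$ and $x-1$, the fundamental theorem of calculus gives $g_m'(x)=g_{m-1}(x)-g_{m-1}(x-1)$ for $x\notin\integers$. Differentiating this $m-2$ more times (legitimate on each open unit interval by the polynomiality fact) yields $g_m^{(m-1)}(x)=g_{m-1}^{(m-2)}(x)-g_{m-1}^{(m-2)}(x-1)$, and substituting the induction hypothesis gives
\[
g_m^{(m-1)}(x)=(-1)^{\lfloor x\rfloor}\binom{m-2}{\lfloor x\rfloor}-(-1)^{\lfloor x\rfloor-1}\binom{m-2}{\lfloor x\rfloor-1}=(-1)^{\lfloor x\rfloor}\binom{m-1}{\lfloor x\rfloor},
\]
using $\lfloor x-1\rfloor=\lfloor x\rfloor-1$ in the first step and Pascal's rule in the second (the conventions above also make the cases near the ends of the support come out correctly). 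This closes the induction.

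There is no real obstacle; the only points requiring care are that the formula is asserted only off $\integers$ (so all derivatives are classical derivatives of a piecewise polynomial) and the bookkeeping of binomial-coefficient conventions at the edges of the support. As a sanity check and alternative route, one may argue distributionally: $g_1'=\delta_0-\delta_1$, hence $g_m^{(m-1)}=(g_1')^{*(m-1)}*g_1=(\delta_0-\delta_1)^{*(m-1)}*\one_{[0,1)}=\sum_{j=0}^{m-1}(-1)^j\binom{m-1}{j}\,\one_{[j,j+1)}$, and evaluating the right-hand side at a non-integer $x$ leaves only the term $j=\lfloor x\rfloor$, which is exactly $(-1)^{\lfloor x\rfloor}\binom{m-1}{\lfloor x\rfloor}$.
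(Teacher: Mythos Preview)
Your proof is correct and takes essentially the same approach as the paper: the paper uses the recursion $G_m(x)=\int_{x-1}^{x} G_{m-1}(t)\,dt$ to obtain $G_m^{(m)}(x)=G_{m-1}^{(m-1)}(x)-G_{m-1}^{(m-1)}(x-1)$ and concludes by induction via Pascal's rule, while you do the identical computation one derivative down at the level of the density $g_m=G_m'$. The distributional argument you add at the end is a pleasant alternative, but your main argument and the paper's are the same.
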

\begin{proof}
	We have $G_{m}(x)=\int_{x-1}^{x} G_{m-1}(t) dt$, and so $G_{m}^{(m)}(x)=G_{m-1}^{(m-1)}(x)-G_{m-1}^{(m-1)}(x-1)$. The assertion follows by induction.
\end{proof}

\begin{claim}\label{prop:fourier_is_derivative}
	Let $g\colon\reals\to\reals$ be a function that is differentiable $m$ times. Let $a \in \preals^{m}$ and denote $S=\sum_{i\in[m]} a_i$. For any $s\in \reals$, we have
	\begin{equation}\label{eq:fourier_is_derivative}
	\prod_{i=1}^{m} a_i \inf_{t\in (s-S, s+S)} g^{(m)}(t) \leq \be_{x\sim \spm^{m}} \lbs x^{[m]}g(s+a\cdot x) \rbs \leq \prod_{i=1}^{m} a_i \sup_{t\in (s-S, s+S)} g^{(m)}(t),
	\end{equation}
where $x^{[m]} = \prod_{i=1}^m x_i$.
\end{claim}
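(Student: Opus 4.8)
The plan is to prove Claim~\ref{prop:fourier_is_derivative} by induction on $m$, peeling off one coordinate at a time and invoking the mean value theorem at each step. First I would dispose of the degenerate case: if $a_i=0$ for some $i$, then $\prod_{j=1}^m a_j=0$, and also $\be_{x\sim\spm^m}\lbs x^{[m]}g(s+a\cdot x)\rbs=0$, because the factor $x_i$ multiplies a quantity not depending on $x_i$ and $\be[x_i]=0$; so~\eqref{eq:fourier_is_derivative} holds trivially. Hence I may assume $a_i>0$ for all $i$, and in particular $S>0$ so that the open interval $(s-S,s+S)$ is nonempty.

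For the base case $m=1$ I would note that $\be_{x_1\sim\spm}\lbs x_1 g(s+a_1 x_1)\rbs=\tfrac12\lbr g(s+a_1)-g(s-a_1)\rbr$, which by the mean value theorem equals $a_1 g'(\xi)$ for some $\xi\in(s-a_1,s+a_1)=(s-S,s+S)$; since $g'(\xi)$ lies between $\inf_{(s-S,s+S)}g'$ and $\sup_{(s-S,s+S)}g'$, the claim follows. For the inductive step, assuming the statement for $m-1$ (for any $(m-1)$-times differentiable function, any real $s$, and any positive weights), I would integrate over $x_m$ first: writing $u=s+\sum_{i<m}a_i x_i$, one has
\[
\be_{x\sim\spm^m}\lbs x^{[m]}g(s+a\cdot x)\rbs=a_m\,\be_{(x_1,\dots,x_{m-1})\sim\spm^{m-1}}\lbs x^{[m-1]}g_1(u)\rbs,\qquad g_1(v)\ddd\tfrac{1}{2a_m}\lbr g(v+a_m)-g(v-a_m)\rbr.
\]
The function $g_1$ is $(m-1)$-times differentiable with $g_1^{(m-1)}(v)=\tfrac{1}{2a_m}\lbr g^{(m-1)}(v+a_m)-g^{(m-1)}(v-a_m)\rbr$, so the mean value theorem applied to $g^{(m-1)}$ gives $g_1^{(m-1)}(v)=g^{(m)}(\eta_v)$ for some $\eta_v\in(v-a_m,v+a_m)$. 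Applying the inductive hypothesis to $g_1$ with weights $a_1,\dots,a_{m-1}$ and $S'\ddd\sum_{i<m}a_i$ bounds $\be_{(x_1,\dots,x_{m-1})}\lbs x^{[m-1]}g_1(u)\rbs$ between $\prod_{i<m}a_i\cdot\inf_{v\in(s-S',s+S')}g_1^{(m-1)}(v)$ and $\prod_{i<m}a_i\cdot\sup_{v\in(s-S',s+S')}g_1^{(m-1)}(v)$. Since $v\in(s-S',s+S')$ forces $\eta_v\in(s-S'-a_m,s+S'+a_m)=(s-S,s+S)$, each value $g_1^{(m-1)}(v)=g^{(m)}(\eta_v)$ lies between $\inf_{(s-S,s+S)}g^{(m)}$ and $\sup_{(s-S,s+S)}g^{(m)}$; multiplying through by $a_m$ then yields exactly~\eqref{eq:fourier_is_derivative}.

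I do not expect a genuine obstacle here: the argument is essentially a careful iterated finite-difference computation, and the ``hard part'' is only bookkeeping. The points demanding mild attention are the degenerate weight case handled above, keeping track of the differentiability order along the peeling (so that the mean value theorem is legitimately applied to $g^{(m-1)}$ at each step), and verifying that the interval produced by the inductive hypothesis, $(s-S',s+S')$, grows by precisely $a_m$ on each side to become the interval $(s-S,s+S)$ appearing in the statement for $g^{(m)}$.
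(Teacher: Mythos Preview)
Your proof is correct and follows the same inductive scheme as the paper (peel off one coordinate at a time). The paper's version differs only in that it invokes the fundamental theorem of calculus to obtain the exact identity
\[
\sum_{x\in\spm^{m}}x^{[m]}g(s+a\cdot x)=\prod_{i=1}^{m} a_i\int_{-1}^{1}\cdots\int_{-1}^{1} g^{(m)}\Big(s+\sum_{i=1}^{m} a_i t_i\Big)\,dt_m\cdots dt_1
\]
and then bounds the integrand pointwise, whereas you use the mean value theorem at each step to pass directly to the bounds; for the purposes of~\eqref{eq:fourier_is_derivative} the two arguments are equivalent.
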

\begin{proof}
	Integrating each time the inner-most integral, one can find by induction that
	\[
	\sum_{x\in \spm^{m}} x^{[m]}g(s+a\cdot x) = \prod_{i=1}^m a_i \int_{-1}^{1} \cdots \int_{-1}^{1} g^{(m)}\lbr s+\sum_{i=1}^{m} a_i t_i \rbr dt_m \cdots dt_1,
	\]
	and so, Equation~\eqref{eq:fourier_is_derivative} follows.
\end{proof}

\begin{proposition}\label{prop:e_s_lower_bound}
	Let $f_t(x)=\one\{a\cdot x > t\}$ be a halfspace with $\forall i\colon a_i \geq 0$, let $S \subset [n]$ be a set of size $k$, let $\delta>0$ and let $e_{S}^{\delta}$ be as defined in~\eqref{Eq:Aux-W^k-1}. If $A\ddd \sum_{i\in S} a_i \leq \delta/2$, then
	\begin{equation}\label{eq:e_s_lower_bound}
	e_{S}^{\delta}\geq\frac{\prod_{i\in S}a_{i}}{\delta^{k}}\lbr \Pr_{z\in\spm^{[n]}}\lbs a\cdot z > t+2A \rbs- \sum_{l=1}^{k} \binom{k}{l}\Pr_{z\in\spm^{[n]}}\lbs a\cdot z>t+l\delta-2A\rbs \rbr.
	\end{equation}
\end{proposition}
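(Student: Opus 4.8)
The plan is to strip off the $k$ coordinates indexed by $S$ and reduce the statement to a single one-dimensional inequality about the cumulative distribution function $G_k$ of the Irwin--Hall variable $T=T_k$. \textbf{Step 1 (rewriting $e_S^\delta$).} As $\widehat{f_{t+s}}(S)=\be_{x\sim\spm^n}[\one\{a\cdot x>t+s\}\,x^S]$, Fubini gives
\[
e_S^\delta=\be_{s\sim\delta T}\be_x\big[x^S\one\{a\cdot x>t+s\}\big]
=\be_x\Big[x^S\,\pr_{s\sim\delta T}[\,s<a\cdot x-t\,]\Big]
=\be_x\Big[x^S\,G_k\Big(\tfrac{a\cdot x-t}{\delta}\Big)\Big].
\]
Fix the coordinates $x_{[n]\sm S}$ and set $b=\sum_{i\notin S}a_ix_i$. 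Applying Claim~\ref{prop:fourier_is_derivative} to the $k$ variables indexed by $S$, with $g=g_b$ defined by $g_b(u)=G_k\big(\tfrac{u+b-t}{\delta}\big)$ and shift $0$ (so that $A=\sum_{i\in S}a_i$ is the ``$S$'' of that claim), and using Claim~\ref{prop:irwin_hall_distribution}, which gives $g_b^{(k)}(u)=\delta^{-k}(-1)^{\lfloor v\rfloor}\binom{k-1}{\lfloor v\rfloor}$ for $v=\tfrac{u+b-t}{\delta}\notin\integers$, one obtains
\[
e_S^\delta\ \ge\ \frac{\prod_{i\in S}a_i}{\delta^k}\ \be_{x_{[n]\sm S}}\Big[\min_{v\in I_b}G_k^{(k)}(v)\Big],\qquad
I_b:=\Big(\tfrac{b-t-A}{\delta},\ \tfrac{b-t+A}{\delta}\Big).
\]
The hypothesis $A\le\delta/2$ enters precisely here: it makes $|I_b|=2A/\delta\le1$, so that $I_b$ meets at most two consecutive unit intervals, on the $l$-th of which $G_k^{(k)}$ is the constant $(-1)^l\binom{k-1}{l}$.

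\medskip\noindent\textbf{Step 2 (reduction to a pointwise inequality).}
Write $\pr_z[a\cdot z>r]=\be_{z_{[n]\sm S}}[h(r-b)]$, where $h(c):=\pr_{z_S}[\sum_{i\in S}a_iz_i>c]$ is non-increasing, $\equiv1$ on $(-\infty,-A)$ and $\equiv0$ on $[A,\infty)$. Since $x_{[n]\sm S}$ and $z_{[n]\sm S}$ have the same law, Step 1 reduces the proposition to proving, for every real $w$ (thought of as $b-t$, so that $I_b=\big(\tfrac{w-A}{\delta},\tfrac{w+A}{\delta}\big)$),
\[
\min_{v\in I_b}G_k^{(k)}(v)\ \ge\ h(2A-w)-\sum_{l=1}^k\binom kl\,h(l\delta-2A-w).
\]

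\medskip\noindent\textbf{Step 3 (the pointwise inequality).}
Let $j\in\integers$ be the integer with $\tfrac{w+A}{\delta}\in(j,j+1)$ (an integer value of this endpoint occurs on a $w$-null set, discarded after an infinitesimal perturbation of $a$, cf.\ Section~\ref{sec:conventions}). As $|I_b|\le1$ and the right endpoint of $I_b$ lies in $(j,j+1)$, we have $I_b\subseteq(j-1,j+1)$, so the formula for $G_k^{(k)}$ shows that the left-hand side is $\ge-\binom{k-1}{j}$ if $j$ is odd, $\ge-\binom{k-1}{j-1}$ if $j$ is even, $\ge0$ if $j\le0$, and $\ge-1$ if $j\ge k$ (indeed $\ge0$ there when $k$ is odd). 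On the right-hand side, $\tfrac{w+A}{\delta}>j\ge l$ forces $l\delta-2A-w<-A$, hence $h(l\delta-2A-w)=1$, for every $1\le l\le\min(j,k)$; together with $h(2A-w)\le1$ this bounds the right-hand side by $1-\sum_{l=1}^{\min(j,k)}\binom kl$. For $1\le j\le k-1$ the desired inequality follows from the identity $\sum_{l=0}^m(-1)^l\binom kl=(-1)^m\binom{k-1}{m}$, which for odd $m$ gives $\sum_{l=1}^m\binom kl\ge\sum_{l=1}^m(-1)^{l+1}\binom kl=1+\binom{k-1}{m}$, the even case following by monotonicity of $m\mapsto\sum_{l=1}^m\binom kl$. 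For $j\ge k$ the right-hand side is $\le h(2A-w)-\sum_{l=1}^k\binom kl\le2-2^k$, which is $\le-1$ if $k\ge2$ and $\le0$ if $k=1$. For $j\le0$: if $w\le A$ then $2A-w\ge A$, so $h(2A-w)=0$ and the right-hand side is $\le0$, while if $A<w<\delta-A$ then $I_b\subseteq(0,1)$ and the left-hand side equals $1$; in either subcase it dominates the right-hand side. This establishes the pointwise inequality, and hence the proposition.

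\medskip\noindent\textbf{Where the difficulty lies.}
The substantive part is the elementary-looking inequality of Step 3: one must track, as $w$ varies, exactly which one or two unit intervals $I_b$ intersects, carry along the parity of $j$, and line up the resulting binomial coefficient with $1-\sum_{l=1}^j\binom kl$ through the alternating-sum identity. A minor technicality is that $G_k$ is only $C^{k-1}$, not $C^k$, so Claim~\ref{prop:fourier_is_derivative} is really applied with $g_b^{(k)}$ understood as the piecewise-constant a.e.\ derivative of the (Lipschitz) function $g_b^{(k-1)}$; this is immaterial since only the infimum of $g_b^{(k)}$ over an open interval occurs.
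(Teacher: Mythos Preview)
Your proof is correct and follows essentially the same strategy as the paper: both rewrite $e_S^\delta$ via the Irwin--Hall CDF $G_k$, apply Claim~\ref{prop:fourier_is_derivative} to replace the inner expectation over $x_S$ by $\delta^{-k}\prod_{i\in S}a_i$ times an infimum of $G_k^{(k)}$, and then bound this infimum using Claim~\ref{prop:irwin_hall_distribution}. The only organizational difference is in the last step. The paper partitions the outer variable $y$ according to which unit interval contains $(a\cdot y-t)/\delta$, keeps the ``main'' block $[A,\delta-A]$ with value $+1$, crudely bounds the remaining blocks by $-\binom{k-1}{l}$, and afterwards passes from $y$-probabilities to $z$-probabilities (losing an extra $A$) and from $\binom{k-1}{l}$ to $\binom{k}{l}$. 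You instead absorb the $y\to z$ passage into the function $h$ up front and reduce everything to a single pointwise inequality in $w=b-t$, which you then verify by a case analysis on the integer $j$ with $(w+A)/\delta\in(j,j+1)$, using the alternating-sum identity $\sum_{l=0}^m(-1)^l\binom{k}{l}=(-1)^m\binom{k-1}{m}$. This is a tidy repackaging of the same computation; it avoids the separate $y\to z$ conversion and the ad hoc $\binom{k-1}{1}+1=\binom{k}{1}$ fix, at the cost of a slightly longer case analysis.
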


\begin{proof}
	Notice we have, by definition,
	\begin{equation}\label{Eq:Aux-W^k-1.5}
	\widehat{f_{t+s}}(S)=\be_{y\in\spm^{[n]\backslash S}}\be_{x\in\spm^{S}}\lbs x^{S}\cdot\one\lbc a\cdot y+ a\cdot x-t>s\rbc\rbs,
	\end{equation}
	with the somewhat abusive notation $a\cdot x=\sum_{i\in S}a_{i}x_{i}$ and $a\cdot y=\sum_{i\notin S}a_{i}y_{i}$.
	Recall that $T$ is defined as the sum of $k$ independent $U(0,1)$-distributed variables, and let $G_{k}$ be the cumulative distribution function of $T$. We have
\[
\be_{T} \lbs \one\lbc a\cdot y+ a\cdot x-t> \delta T\rbc \rbs = \Pr_T \lbs T<\frac{a \cdot y+a \cdot x -t}{\delta} \rbs = G_{k} \left( \frac{a \cdot y+a \cdot x -t}{\delta} \right).
\]
Hence, substituting into~\eqref{Eq:Aux-W^k-1.5} and using Fubini's theorem, we obtain
	\begin{align}\label{Eq:Aux-W^k-2}
	e_{S}^{\delta}=	\be_{T}\lbs \widehat{f_{t+\delta \cdot T}}(S)\rbs=
	\be_{y\in\spm^{[n]\sm S}}\be_{x\in\spm^{S}}\lbs x^{S}G_{k}\lbr \frac{a\cdot x+a\cdot y-t}{\delta} \rbr \rbs.
    \end{align}
In the right hand side, for each fixed $y$, the expectation over $x$ has the form $\be_{x\sim \spm^{m}} [ x^{[m]}g(s+a\cdot x) ]$ discussed in Claim~\ref{prop:fourier_is_derivative}, with $(k,S,G_{k},(a\cdot y-t)/\delta,a/\delta)$ in place of $(m,[m],g,s,a)$, respectively. Thus, by Claim~\ref{prop:fourier_is_derivative}, each such expectation can be bounded from below by
\[
\prod_{i \in S} \lbr \frac{a_i}{\delta} \rbr \cdot \inf_{t\in ((a\cdot y-t-A)/\delta, (a\cdot y-t+A)/\delta)} G_{k}^{(k)}(t).
\]

By Claim~\ref{prop:irwin_hall_distribution}, the $k$th derivative $G_{k}^{(k)}$ is piece-wise constant, and specifically, satisfies $G_{k}^{(k)}(x)=(-1)^{\lfloor x \rfloor}\binom{k-1}{\lfloor x \rfloor}$. Hence, we can partition the $y$'s into subsets, such that inside each subset, $G_{k}^{(k)}$ is `almost' constant in the range $((a\cdot y-t-A)/\delta, (a\cdot y-t+A)/\delta)$.

The `main' subset we consider is $\{y \in \spm^{n} \mid a\cdot y-t\in\lbs A,\delta-A\rbs\}$, for which we have $((a\cdot y-t-A)/\delta, (a\cdot y-t+A)/\delta) \subset (0,1)$, and thus, $G_{k}^{(k)}(r)= {\binom{k-1}{0}} = 1$ for all $r \in ((a\cdot y-t-A)/\delta, (a\cdot y-t+A)/\delta)$. Therefore, for all $y$'s of this subset we have $\be_{x\in\spm^{S}}\lbs x^{S}G_{k}\lbr \frac{a\cdot x+a\cdot y-t}{\delta} \rbr \rbs \geq \frac{\prod_{i \in S} a_i}{\delta^k}$. The other subsets correspond to $y$'s for which $(a\cdot y-t+A)/\delta \in [j,j+1)$; as we are interested only in a lower bound, we may take the contributions of all these subsets with a `$-$' sign, and enlarge each such set of $y$'s for the sake of simplicity. Doing so and substituting into~\eqref{Eq:Aux-W^k-2}, we get
    \begin{align}\label{Eq:Aux-W^k-2.5}
    \begin{split}
    e_{S}^{\delta} & \geq 			
    \frac{\prod_{i\in S}a_{i}}{\delta^{k}}\cdot
	\Big( \Pr_{y\in\spm^{[n]\backslash S}}\lbs a\cdot y-t\in\lbs A,\delta-A\rbs \rbs -
	\\
	& \qquad \qquad \qquad
	- \sum_{l=1}^{k-1} \binom{k-1}{l}
	\Pr_{y\in\spm^{[n]\backslash S}}\lbs a\cdot y-t>l\delta-A\rbs \Big).
	\end{split}
    \end{align}	
One can easily obtain the following two (crude) inequalities:
	\begin{eqnarray*}
	\Pr_{y\in\spm^{[n]\sm S}}\lbs a\cdot y\in\lbs t+A,t+\delta-A\rbs \rbs
	& \geq &
	\Pr_{z\in\spm^{[n]}}\lbs a\cdot z\in\lbs t+2A,t+\delta-2A\rbs \rbs;
	\\	
	\Pr_{y\in\spm^{[n]\sm S}}\lbs a\cdot y>t+l\delta-A\rbs
	& \leq &
	\Pr_{z\in\spm^{[n]}}\lbs a\cdot z>t+l\delta-2A\rbs.
	\end{eqnarray*}
Substituting into~\eqref{Eq:Aux-W^k-2.5}, we obtain:
	\begin{align*}
	e_{S}^{\delta}&\geq\frac{\prod_{i\in S}a_{i}}{\delta^{k}} \Big( \Pr_{z\in\spm^{[n]}}\lbs a\cdot z\in\lbs t+2A,t+\delta-2A\rbs \rbs - \\
    & \qquad \qquad \qquad
	-\sum_{l=1}^{k-1} \binom{k-1}{l} \Pr_{z\in\spm^{[n]}}\lbs a\cdot z>t+l\delta-2A\rbs \Big),
	\end{align*}
	which implies the assertion of the proposition (notice that for $l=1$, we have an extra additive term of $1$; this is handled by replacing $\binom{k-1}{l}$ we obtained here with $\binom{k}{l}$ in the assertion of the proposition, as $\binom{k-1}{1}+1=\binom{k}{1}$).
\end{proof}

\subsection{Proof of Theorem~\ref{thm:large_wk}}
\label{sec:sub:proof-W^k}

Now we are ready to present the proof of Theorem~\ref{thm:large_wk}. Let us recall the statement of the theorem.

\medskip \noindent \textbf{Theorem~\ref{thm:large_wk}.}	There exist universal constants $c_1,c_2$ such that for any $k \in \mathbb{N}$ and for any halfspace $f_{t}=\one\{a\cdot x>t\}$ with $\eps=\mu(f_{t})<2^{-999k}$ and $\ii_{1}(f_{t})\leq c_1 \mu(f_{t})/k$, we have
	\[
	\wk{k}(f_{t})\geq \frac{(c_{2}\log(2k)) ^ {-k}}{k!} \eps^{2}\log(1/\eps)^{k}.
	\]

\medskip \noindent {\it Proof of Theorem~\ref{thm:large_wk}.}~ Let $f_t(x)$ be a halfspace that satisfies the assumptions of the theorem and denote $\eps=\mu(f)$.
%Define, as usual, $F(r)\ddd\Pr_{x}\lbs a\cdot x>r\rbs $ for any $r \in \mathbb{R}$.
Let $\beta$ be minimal such that $F(t+\beta)\leq\eps/3$ and let $\gamma$ be
minimal such that $\forall l\in \pintegers\colon F(t+l\gamma)\leq \eps/(6k)^{l}$. Denote	$\delta\ddd\beta+\gamma$. Note that by Theorem~\ref{thm:intro-strong_chernoff}, we have
    \begin{equation}\label{Eq:Aux-W^k-3}
    \delta\leq O \lbr \frac{\log (6k)}{\sqrt{\log(1/\eps)}} \rbr.
	\end{equation}
Let $A$ be the sum of the $k$ largest weights $a_{i}$, and let
$M\ddd\sum_{\lba S\rba =k}a^{S}e_{S}^{\delta}$, where $a^{S}=\prod_{i \in S}a_i$ and $e_{S}^{\delta}$ is as in Proposition~\ref{prop:e_s_lower_bound}. Similarly to the proof of Theorem~\ref{Thm:Main-W^1}, we are going to prove~\eqref{eq:wk_is_large} by combining upper and lower bounds on $M$.
We shall need two technical claims whose proofs will be presented in Section~\ref{sec:sub:auxiliary}.	
	\begin{claim}\label{claim:wk_small_A}
	For any $\eta>0$, there exists a constant $c=c(\eta)$ such that for any $k$ and for any halfspace $f_{t}$ with $\mu(f_{t})<2^{-999k}$ and $\ii_{1}(f_{t})\leq c \mu(f_{t})/k$, we have:
		\begin{itemize}
			\item $a_1 \leq \eta / \sqrt{k}$, and
			\item $2k a_1 < \beta$,
		\end{itemize}
where $a_1=\max_i a_i$ and $\beta$ is as defined above.
	\end{claim}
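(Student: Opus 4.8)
\textbf{Proof plan for Claim~\ref{claim:wk_small_A}.}~ The plan is to extract both bullets from the hypothesis $\ii_{1}(f_{t}) \le c\,\mu(f_{t})/k$ by playing it against the \emph{lower} bounds on the largest influence established in Sections~\ref{sec:concentration} and~\ref{sec:lower_bound_I1}. Write $\eps = \mu(f_{t})$. Since $\eps < 2^{-999k} < \tfrac12$, the threshold satisfies $t \ge 0$ (if $t<0$ then $\pr_{x}[a\cdot x>t]\ge\pr_{x}[a\cdot x\ge0]\ge\tfrac12$), so Proposition~\ref{prop:large_i1-lower} and Corollary~\ref{cor:decay_of_influence} apply; also note $\log(1/\eps)>999k\log 2$.

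For the first bullet, Proposition~\ref{prop:large_i1-lower} supplies an absolute constant $c_{0}>0$ with $\ii_{1}(f_{t}) \ge c_{0}\,\eps\,\min\{1,\,a_{1}\sqrt{\log(1/\eps)}\}$. Comparing with the hypothesis gives $\min\{1,\,a_{1}\sqrt{\log(1/\eps)}\} \le c/(c_{0}k) \le c/c_{0}$, so once $c<c_{0}$ the minimum equals its second entry and $a_{1} \le c/(c_{0}k\sqrt{\log(1/\eps)})$. Using $\log(1/\eps) > 999k\log 2$ and $k\ge1$, the right-hand side is $O(c/\sqrt{k})$, which is at most $\eta/\sqrt{k}$ as soon as $c=c(\eta)$ is a small enough multiple of $\min\{\eta,1\}$. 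Hence $a_{1}\le\eta/\sqrt{k}$.

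For the second bullet, I would argue by contradiction: suppose $\beta \le 2ka_{1}$. By the definition of $\beta$, $\pr_{x}[a\cdot x\in(t,t+\beta]] = F(t)-F(t+\beta) \ge \eps-\eps/3 = \tfrac23\eps$. Cover $(t,t+\beta]\subseteq(t,t+2ka_{1}]$ by the $k$ intervals $I_{j}=(s_{j},\,s_{j}+2a_{1}]$ with $s_{j}=t+2(j-1)a_{1}$, $j=1,\dots,k$. Since $a\cdot x - a_{1}x_{1}$ does not depend on $x_{1}$, for each $j$ one has $\pr_{x}[a\cdot x\in I_{j}] \le \pr_{x}[a\cdot x-a_{1}x_{1}\in(s_{j}-a_{1},\,s_{j}+3a_{1}]] = \ii_{1}(f_{s_{j}})+\ii_{1}(f_{s_{j}+2a_{1}})$, and Corollary~\ref{cor:decay_of_influence} bounds each summand by $5\,\ii_{1}(f_{t})$, because $s_{j}$ and $s_{j}+2a_{1}$ are both $\ge t\ge0$. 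Summing over $j$ and using the hypothesis, $\tfrac23\eps \le 10k\,\ii_{1}(f_{t}) \le 10c\eps$, which is impossible once $c<\tfrac{1}{15}$. Hence $\beta>2ka_{1}$.

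Altogether the claim holds with $c(\eta)$ a sufficiently small absolute multiple of $\min\{\eta,1\}$ (satisfying in addition $c(\eta)<c_{0}$ and $c(\eta)<1/15$). I expect the only delicate step to be the interval bookkeeping in the second bullet: rewriting $\pr_{x}[a\cdot x\in I_{j}]$ as a sum of two values of $\ii_{1}$ via the coordinate-$1$ decomposition, and checking that every shifted threshold that appears is nonnegative so that Corollary~\ref{cor:decay_of_influence} applies. Nothing beyond the tools already developed in the paper appears to be needed.
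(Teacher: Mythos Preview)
Your argument is correct. For the first bullet you do exactly what the paper does: play the hypothesis $\ii_{1}(f_{t})\le c\eps/k$ against the lower bound of Proposition~\ref{prop:large_i1-lower} (equivalently Theorem~\ref{Thm:Main-influence}) and use $\log(1/\eps)\ge 999k\log 2$ to absorb the factor of $k$.

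For the second bullet you take a genuinely different and more direct route than the paper. The paper argues via the smoothed influence $e_{1}^{\beta}=\be_{s\sim U(0,\beta)}[\ii_{1}(f_{t+s})]$: it combines the lower bound~\eqref{eq:e_i_trivial_lower_bound} from Lemma~\ref{lem:e_i_lower_bound} with the upper bound $e_{1}^{\beta}\le 5\ii_{1}(f_{t})$ from Corollary~\ref{cor:decay_of_influence}, and then splits into two cases according to whether $\Pr[a\cdot x-a_{1}x_{1}\in[t,t+\beta]]$ is small (deriving a contradiction) or large (reading off $a_{1}/\beta\le O(c/k)$ directly). Your covering argument bypasses the smoothing lemma entirely: assuming $\beta\le 2ka_{1}$, you tile $(t,t+\beta]$ by $k$ intervals of length $2a_{1}$, bound the mass of each by $\ii_{1}(f_{s_{j}})+\ii_{1}(f_{s_{j}+2a_{1}})\le 10\,\ii_{1}(f_{t})$ via Corollary~\ref{cor:decay_of_influence}, and reach $\tfrac{2}{3}\eps\le 10k\,\ii_{1}(f_{t})$. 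This is shorter and uses only Corollary~\ref{cor:decay_of_influence}; the paper's route has the virtue of reusing the $e_{i}^{\delta}$ machinery that drives the rest of Section~\ref{sec:W^k}. The interval bookkeeping you flag as delicate is fine: your identity $\Pr[a\cdot x-a_{1}x_{1}\in(s_{j}-a_{1},s_{j}+3a_{1}]]=\ii_{1}(f_{s_{j}})+\ii_{1}(f_{s_{j}+2a_{1}})$ is an exact partition into half-open pieces, and the thresholds $s_{j},\,s_{j}+2a_{1}\ge t\ge 0$ are exactly what Corollary~\ref{cor:decay_of_influence} needs.
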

	
	\begin{claim}\label{claim:wk_coefs_decrease}
	Let $f_{t}$ be a halfspace that satisfies the assumptions of Theorem~\ref{thm:large_wk}. For any $x\in \spm^{n}$ with $f_{t}(x)=1$, we have $\sum_{\lba S\rba =k}a^{S}x^{S}\geq0$.
	\end{claim}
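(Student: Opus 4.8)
\textbf{Proof proposal for Claim~\ref{claim:wk_coefs_decrease}.}
Assume $k\ge 2$ (for $k=1$ the statement is immediate: $f_t(x)=1$ forces $\sum_i a_ix_i=a\cdot x>t\ge 0$). Set $P:=a\cdot x$, so $P>t$, and write $p_j:=\sum_i (a_ix_i)^j=\sum_i a_i^jx_i^j$; then $p_1=P$, $p_2=\sum_i a_i^2=1$, and $|p_j|\le a_1^{j-2}$ for $j\ge 2$. From $\prod_i(1+a_ix_iz)=\exp\bigl(\sum_{j\ge 1}\tfrac{(-1)^{j-1}}{j}p_jz^j\bigr)$ we get, as formal power series in $z$,
\[
\sum_{m\ge 0} e_m(a_1x_1,\dots,a_nx_n)\,z^m \;=\; e^{Pz-z^2/2}\cdot e^{Q(z)},\qquad Q(z):=\sum_{j\ge 3}\tfrac{(-1)^{j-1}}{j}p_jz^j .
\]
Since $e^{Pz-z^2/2}=\sum_m \tfrac{\mrm{He}_m(P)}{m!}z^m$ (probabilists' Hermite polynomials) and $e^{Q(z)}=\sum_m q_m z^m$ with $q_0=1$, $q_1=q_2=0$, comparing the coefficient of $z^k$ and writing $h_m:=\mrm{He}_m(P)/m!$ gives $\sum_{|S|=k}a^Sx^S = e_k(a_1x_1,\dots,a_nx_n)= h_k+\sum_{m=3}^{k}h_{k-m}q_m$. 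Hence it suffices to show $h_k>0$ and $\sum_{m=3}^{k}h_{k-m}|q_m|<h_k$.

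I would use three facts. (i) \emph{The weights are small:} combining $\ii_1(f_t)\le c_1\mu(f_t)/k$ via Claim~\ref{claim:wk_small_A} (which yields $2ka_1<\beta$) with Theorem~\ref{thm:intro-strong_chernoff} (which yields $\beta=O(1/\sqrt{\log(1/\eps)})$) and with $\eps=\mu(f_t)<2^{-999k}$ (so $\log(1/\eps)>692k$), one obtains $a_1\le c_0/(k\sqrt{\log(1/\eps)})$ with $c_0$ as small as desired upon taking $c_1$ small. (ii) \emph{$P$ is large:} since $a_1$ is small, a Cram\'er-type lower bound on $F(t)=\Pr[a\cdot x>t]$ — obtained by exponential tilting together with $\cosh u\ge\exp(u^2/2-u^4/12)$ and $\sum_i a_i^4\le a_1^2\sum_i a_i^2=a_1^2$ — gives $\log(1/\eps)\le \tfrac12 t^2+O(t^4a_1^2)+O(\log t)$; combined with $t\le\sqrt{2\log(1/\eps)}$ (Hoeffding) and $t^2a_1^2=O(c_0^2/k^2)$, this forces $t^2\ge \tfrac12\log(1/\eps)\ge 346k$, so $P>t>18\sqrt k$. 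In particular $P$ exceeds the largest zero of $\mrm{He}_k$ (which is below $\sqrt{4k+2}$), and a simultaneous downward induction on $j$ using $\mrm{He}_j(P)=P\,\mrm{He}_{j-1}(P)-(j-1)\mrm{He}_{j-2}(P)$ shows $\mrm{He}_j(P)>0$ and $\mrm{He}_{j-1}(P)/\mrm{He}_j(P)\le 2/P$ for every $j\le k$ (the correction term is absorbed using $P>2\sqrt k$). Thus $h_k>0$ and $h_{k-m}/h_k=\tfrac{k!}{(k-m)!}\cdot\tfrac{\mrm{He}_{k-m}(P)}{\mrm{He}_k(P)}\le k^m(2/P)^m=(2k/P)^m$. (iii) \emph{The $q_m$ are tiny:} each monomial of $Q(z)^\ell$ contributing to $z^m$ is a product of $\ell\le m/3$ coefficients $\pm p_{j_i}/j_i$ with $\sum_i(j_i-2)=m-2\ell\ge m/3$, hence of absolute value $\le a_1^{m-2\ell}\le a_1^{m/3}$, and there are at most $2^m$ of them, so $|q_m|\le (Ca_1)^{m/3}$ for $m\ge 3$, $C$ a universal constant.

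Combining (i)--(iii),
\[
\sum_{m=3}^{k}\frac{h_{k-m}|q_m|}{h_k}\;\le\;\sum_{m\ge 3}\Bigl(\tfrac{2k}{P}\,(Ca_1)^{1/3}\Bigr)^{m},
\]
and $\tfrac{2k}{P}(Ca_1)^{1/3}\le \tfrac{2k}{18\sqrt k}\bigl(\tfrac{Cc_0}{k\sqrt{\log(1/\eps)}}\bigr)^{1/3}=O(c_0^{1/3})\cdot\tfrac{k^{1/6}}{(\log(1/\eps))^{1/6}}=O(c_0^{1/3})$, using $\log(1/\eps)\ge 692k$. Taking $c_1$ — hence $c_0$ — small enough makes this base at most $1/2$, so the geometric series is at most $1/4<1$; therefore $\sum_{|S|=k}a^Sx^S=h_k+\sum_{m\ge 3}h_{k-m}q_m\ge h_k-\tfrac14 h_k>0$, proving the claim.

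I expect the main obstacle to be fact (ii). The tail estimates in the paper are one-sided (upper bounds on $F$), so to show that the hypotheses force $t$ to have the full Gaussian order $\Theta(\sqrt{\log(1/\eps)})$ — which is exactly what makes $P>t$ clear the largest root of $\mrm{He}_k$ with room to spare for the error terms — one must supply a reverse-Hoeffding/Cram\'er-type lower bound on $F(t)$ exploiting the smallness of $a_1$ (alternatively, such a lower bound on $t$ may already emerge from the proof of Claim~\ref{claim:wk_small_A}). Everything else is routine, if constant-heavy, bookkeeping.
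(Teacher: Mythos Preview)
Your approach is correct but genuinely different from the paper's. The paper normalizes by $P=a\cdot x$, sets $b_i=a_ix_i/P$ so that $\sum_i b_i=1$ and $|b_i|\le a_i/t$, and then proves by induction via the Newton--Girard identities that $e_{m-1}\le 2me_m$ for all $m\le k$ (where $e_m=\sum_{|S|=m}b^S$). The only inputs needed are $s_2=\sum_i b_i^2\le 1/t^2$ and the decay $|s_{r+2}|\le s_2/(64k)^r$, both of which follow from $a_1\le 1/(16\sqrt k)$ (Claim~\ref{claim:wk_small_A}) together with $t\ge 4\sqrt k$. Your route instead isolates the Gaussian main term via $\prod_i(1+a_ix_iz)=e^{Pz-z^2/2}e^{Q(z)}$, recognizes the coefficients of the first factor as $\mrm{He}_m(P)/m!$, and bounds the perturbation coming from $Q$. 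Both arguments use exactly the same two ingredients (smallness of $a_1$, largeness of $t$); the paper's is more elementary and yields the quantitative bound $e_k\ge 2^{1-k}/k!$, while yours is more structural in that it identifies the leading term as a Hermite value.

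Regarding your ``main obstacle'' (ii): the reverse-Hoeffding bound you propose is unnecessary. The paper supplies exactly this lower bound on $t$ as a separate claim (Claim~\ref{claim:wk_big_t}), proved by a Paley--Zygmund/blocking argument: partitioning $[n]$ into $\Theta(k)$ blocks of weight $\Theta(1/k)$ gives $\Pr[\,a\cdot x>4\sqrt k\,]>2^{-999k}>\eps$, hence $t\ge 4\sqrt k$. Moreover, $4\sqrt k$ already suffices for your argument: it exceeds the largest root of $\mrm{He}_k$, and your ratio induction only needs $P^2\ge 4(k-1)$, after which the base $(2k/P)(Ca_1)^{1/3}$ is $O(c_0^{1/3})$ exactly as you computed. (A minor slip: the induction on the Hermite ratios is upward in $j$, not downward.)
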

	
	As for all $s \geq 0$ and for any $x \in \{-1,1\}^n$ we have $f_{t}(x)\geq f_{t+s}(x) \geq 0$, Claim~\ref{claim:wk_coefs_decrease} implies that
\[
\forall s \geq 0, x \in \{-1,1\}^n\colon \sum_{\lba S\rba =k}a^{S}x^{S} f_{t}(x) \geq \sum_{\lba S\rba =k}a^{S}x^{S} f_{t+s}(x).
\]
Taking expectation over $x$ and using Fubini's theorem, we get that for any $s \geq 0$,
\[
\sum_{\lba S\rba =k} a^{S} \wh{f_{t}}(S) = \sum_{\lba S\rba =k} a^{S} \be_x \lbs x^{S} f_{t}(x) \rbs \geq \sum_{\lba S\rba =k}a^{S}
\be_x \lbs x^{S} f_{t+s}(x) \rbs = \sum_{\lba S\rba =k} a^{S} \widehat{f_{t+s}}(S).
\]
In particular, as each $e^{\delta}_S$ is a convex combination of expressions of the form $\widehat{f_{t+s}}(S)$, it follows that
\[
M = \sum_{\lba S\rba =k}a^{S}e_{S}^{\delta} \leq \sum_{\lba S\rba =k} a^{S} \wh{f_{t}}(S).
\]
By the Cauchy-Schwarz inequality, this implies
\begin{equation}\label{Eq:Aux-W^k-4}
M\leq\sqrt{\wk{k}(f_{t})}\sqrt{\sum_{\lba S\rba =k}\lbr a^{S}\rbr ^{2}}.
\end{equation}
On the other hand, by Proposition~\ref{prop:e_s_lower_bound} we have
\begin{equation}\label{Eq:Aux-W^k-5}
M \geq \frac{\sum_{\lba S\rba =k}\lbr a^{S}\rbr ^{2}}{\delta^{k}} \lbr \pr_z \lbs a\cdot z > t+2A \rbs - \sum_{l=1}^{\infty} k^{l}\pr_z \lbs a\cdot z > t+l\delta-2A \rbs \rbr.
\end{equation}
As $2A \leq 2ka_1 <\beta$ by Claim~\ref{claim:wk_small_A}, it follows from the definition of $\beta$ that $\pr_z \lbs a\cdot z > t+2A \rbs \geq \eps/3$. Using again the inequality $2A <\beta$, we have $t+l\delta-2A >t+l\delta-\beta \geq t+l\gamma$. Thus, by the definition of $\gamma$ we have $\pr_z \lbs a\cdot z > t+l\delta-2A \rbs \leq \eps/(6k)^l$. Substituting into~\eqref{Eq:Aux-W^k-5}, this yields
\begin{equation}\label{Eq:Aux-W^k-6}
M \geq \frac{\sum_{\lba S\rba =k}\lbr a^{S}\rbr ^{2}}{\delta^{k}} \lbr \frac{\eps}{3} - \sum_{l=1}^{\infty} k^l \frac{\eps}{(6k)^l} \rbr
\geq \frac{\eps}{9\delta^k} \sum_{\lba S\rba =k}\lbr a^{S}\rbr ^{2}.
\end{equation}
Combining Equations~\eqref{Eq:Aux-W^k-3},~\eqref{Eq:Aux-W^k-4} and~\eqref{Eq:Aux-W^k-6}, we obtain
	\begin{equation}\label{eq:wk_intermediate_lower_bound}
	\wk{k}(f_{t})\geq \frac{\eps^{2}}{81\delta^{2k}} \sum_{\lba S\rba =k}\lbr a^{S}\rbr ^{2} \geq \eps^{2} \frac{\log(1/\eps)^{k}}{O(\log (2k))^{k}} \sum_{\lba S\rba =k}\lbr a^{S}\rbr ^{2}.
	\end{equation}
Therefore, the assertion of the theorem will follow once we prove the following bound.	
	\begin{claim}\label{claim:wk_large_em}
	There exists $c>0$ such that for any $1 \leq m \leq k \in \mathbb{N}$ and for any halfspace $f_{t}(x)=\one\{a\cdot x > t\}$ with $\mu(f_{t})\leq 2^{-999k}$ and $\ii_{\max}(f_{t})\leq c \mu(f_{t})$, we have $\sum_{\lba S\rba =m}\lbr a^{S}\rbr ^{2}\geq 2^{1-m}/m!$.
	\end{claim}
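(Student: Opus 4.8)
The plan is to combine a lower bound on the elementary symmetric polynomial $\sum_{|S|=m}(a^S)^2 = \sum_{|S|=m}\prod_{i\in S}a_i^2$ with the observation that the hypotheses force every weight to be very small. Concretely, I will first show that under $\mu(f_t)\le 2^{-999k}$ and $\ii_{\max}(f_t)\le c\,\mu(f_t)$ (for a suitable universal $c$, say $c=\frac12$) one has $a_i^2\le \frac{1}{2m}$ for every $i$. Then I will prove the elementary inequality: if $p_1,\dots,p_n\ge 0$ satisfy $\sum_i p_i=1$ and $\max_i p_i\le\theta$, then $\sum_{|S|=m}\prod_{i\in S}p_i \ge \frac{1}{m!}\prod_{j=0}^{m-1}(1-j\theta)$. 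Applying this with $p_i=a_i^2$ and $\theta=\frac{1}{2m}$, and using $1-x\ge e^{-2x}$ for $0\le x\le\frac12$ (legitimate since $\frac{j}{2m}\le\frac{m-1}{2m}<\frac12$), the product is at least $\exp\!\big(-\frac{1}{m}\binom{m}{2}\big)=e^{-(m-1)/2}\ge 2^{1-m}$, which yields $\sum_{|S|=m}(a^S)^2\ge 2^{1-m}/m!$, as required. (The case $m=1$ is the trivial equality $\sum_i a_i^2=1$.)

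For the smallness of the weights: since $\ii_{\max}(f_t)\le\frac12\mu(f_t)<\frac23\mu(f_t)$, Lemma~\ref{lem:e_i_big_for_big_coordinates} forces $a_i\le\beta/2$ for all $i$, where $\beta$ is minimal with $F(t+\beta)\le\mu(f_t)/3$. Two successive applications of Theorem~\ref{thm:intro-strong_chernoff} (halving $\mu(f_t)$ twice, the second halving landing below $\mu(f_t)/3$) give $\beta\le 2c_0/\sqrt{\log(1/\mu(f_t))}$, where $c_0$ is the universal constant there; hence $a_i^2\le c_0^2/\log(1/\mu(f_t))\le c_0^2/(999\,k\ln 2)$, using $\mu(f_t)\le 2^{-999k}$. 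Since $m\le k$ and the exponent $999$ is chosen large enough (any value exceeding $2c_0^2/\ln 2$ suffices), this is at most $\frac{1}{2k}\le\frac{1}{2m}$.

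For the elementary inequality I will write $m!\sum_{|S|=m}\prod_{i\in S}p_i=\sum_{i_1,\dots,i_m\text{ distinct}}p_{i_1}\cdots p_{i_m}$ and peel off one index at a time: conditioned on distinct $i_1,\dots,i_{m-1}$, the remaining mass is $\sum_{i\notin\{i_1,\dots,i_{m-1}\}}p_i=1-\sum_{l<m}p_{i_l}\ge 1-(m-1)\theta\ge 0$, so a one-line induction on $m$ gives $\sum_{i_1,\dots,i_m\text{ distinct}}p_{i_1}\cdots p_{i_m}\ge\prod_{j=0}^{m-1}(1-j\theta)$. Note that $\theta\le\frac{1}{2m}$ automatically forces at least $2m$ nonzero $p_i$'s (since $1=\sum_i p_i\le\theta\cdot\#\{i:p_i>0\}$), so no degenerate case with too few coordinates arises. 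The only delicate point is the constant bookkeeping in the middle paragraph — one must make sure the exponent $999$ in $\mu(f_t)\le 2^{-999k}$ genuinely dominates the Devroye--Lugosi constant $c_0$ of Theorem~\ref{thm:intro-strong_chernoff} — but there is ample slack: any bound of the form $a_i^2\le \mathrm{const}/m$ suffices after adjusting the base of the exponential, and apart from this both parts are routine.
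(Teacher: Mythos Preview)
Your proof is correct. Both you and the paper reduce to the key fact that $\max_i a_i^2 \le O(1/k)$ under the hypotheses; you derive it via Lemma~\ref{lem:e_i_big_for_big_coordinates} plus the local Chernoff inequality (Theorem~\ref{thm:intro-strong_chernoff}), while the paper packages the same argument through Claim~\ref{claim:wk_small_A} (whose first bullet uses only the weaker hypothesis $\ii_1(f_t)\le c\mu(f_t)$, as noted in its proof). The combinatorial step, however, is genuinely different: the paper sets $b_i=a_i^2$, $e_m=\sum_{|S|=m}b^S$, $s_m=\sum_i b_i^m$, invokes the Newton--Girard identities $me_m=\sum_i(-1)^{i-1}s_ie_{m-i}$, and inductively shows $e_{m-1}\le 2me_m$ by controlling the power sums via $s_{i+1}\le s_i\max_j b_j\le s_i/(4k)$. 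Your ``peeling'' argument bypasses Newton--Girard entirely: writing $m!\,e_m$ as a sum over ordered distinct tuples and bounding the last factor by $1-(m-1)\theta$ gives $m!\,e_m\ge\prod_{j=0}^{m-1}(1-j\theta)$ directly. This is more elementary and more transparent; the paper's approach, on the other hand, is the one that generalizes (they reuse exactly the same Newton--Girard scheme in Claim~\ref{claim:wk_coefs_decrease}, where the $b_i$ can be negative and your positivity-based peeling would not apply). One small remark on constants: your route via Lemma~\ref{lem:e_i_big_for_big_coordinates} gives $a_i\le\beta/2$ independently of how small $c$ is, so you genuinely rely on $999$ dominating the Devroye--Lugosi constant; the paper's route through Theorem~\ref{Thm:Main-influence} gives $a_1\le (c/c_1)/\sqrt{\log(1/\eps)}$, so shrinking $c$ would also work. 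Either way the slack is ample, as you note.
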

	\begin{proof}[Proof of Claim~\ref{claim:wk_large_em}]
	Let $f_{t}$ be a function that satisfies the assumptions of the claim. Denote $b_{i}=a_{i}^{2}$, and notice $\sum b_{i}=1$. For each $m$, let $e_{m}=\sum_{\lba S\rba =m}b^{S}$, and $s_{m}=\sum_{i}b_{i}^{m}$. By the classical Newton-Girard formulas which relate elementary symmetric functions to sums-of-powers, we have
	\[
		me_{m}=\sum_{i=1}^{m}(-1)^{i-1}s_{i}e_{m-i}.
	\]
	Notice $e_{1}=s_{1}=1$. Furthermore, $s_{i}$ is a decreasing sequence satisfying
	\begin{equation}\label{Eq:Aux-W^k-7}
	s_{i+1}\leq s_i \max_j {b_j} \underbrace{\leq}_{\text{Claim }\ref{claim:wk_small_A}} \frac{s_i}{4k}.
	\end{equation}
		Now, we prove that $e_{m-1} \leq 2me_m$ for any $m\leq k$. Indeed,
		\bem
		me_{m}
		& \geq &
		\sum_{i=1}^{\lfloor m/2 \rfloor} \lbr s_{2i-1} e_{m+1-2i} - s_{2i} e_{m-2i}\rbr
		\\ & \underbrace{\geq}_{\text{\eqref{Eq:Aux-W^k-7}}} &
		\sum_{i=1}^{\lfloor m/2 \rfloor} \lbr s_{2i-1} e_{m+1-2i} - s_{2i-1} \frac{e_{m-2i}}{4k}\rbr
		\\ & \underbrace{\geq}_{\text{Induction}} &
		\sum_{i=1}^{\lfloor m/2 \rfloor} \lbr s_{2i-1} e_{m+1-2i} - \frac{2(m+1-2i)}{4k} s_{2i-1} e_{m+1-2i}\rbr
		\\ & \underbrace{\geq}_{m\leq k} &
		\frac{s_1}{2} e_{m-1} = \frac{e_{m-1}}{2}.
		\enm
    As $e_1=1$, the assertion follows by induction.
	\end{proof}
	
\noindent Equation~\eqref{eq:wk_intermediate_lower_bound} together with Claim~\ref{claim:wk_large_em} implies the desired inequality,
	\[
	\wk{k}(f_{t}) \geq \frac{O(\log(2k))^{-k}}{k!} \eps^2 \log(1/\eps)^k.
	\]
This completes the proof of Theorem~\ref{thm:large_wk}, modulo the proofs of Claims~\ref{claim:wk_small_A} and~\ref{claim:wk_coefs_decrease} that will be presented below.

\subsection{Proof of the Auxiliary Claims}
\label{sec:sub:auxiliary}

In this subsection we prove Claims~\ref{claim:wk_small_A} and~\ref{claim:wk_coefs_decrease}, thus accomplishing the proof of Theorem~\ref{thm:large_wk}.
\bigskip

\noindent \textbf{Claim~\ref{claim:wk_small_A}.} For any $\eta>0$, there exists a constant $c=c(\eta)$ such that for any $k$ and for any halfspace $f_{t}$ with $\mu(f_{t})<2^{-999k}$ and $\ii_{1}(f_{t})\leq c \mu(f_{t})/k$, we have:
		\begin{itemize}
			\item $a_1 \leq \eta / \sqrt{k}$, and
			\item $2k a_1 < \beta$,
		\end{itemize}
where $a_1=\max_i a_i$ and $\beta$ is as defined above.

\medskip

\begin{proof}
Let $f_{t}$ be a halfspace that satisfies $\mu(f_{t})<2^{-999k}$ and $\ii_{1}(f_{t})\leq c \mu(f_{t})$, with a sufficiently small $c$ to be determined below. (Note that the assumption on the influences of $f_{t}$ is weaker than the assumption of Claim~\ref{claim:wk_small_A}.) Denote $\eps=\mu(f_{t})$. By Theorem~\ref{Thm:Main-influence}, we have $c \eps \geq \ii_1(f_{t})\geq c' a_1 \eps \sqrt{\log(1/\eps)}$, and thus, $a_1 \leq c/c' \sqrt{\log(1/\eps)}$. As by assumption, $\eps \leq 2^{-999k}$, it follows that for a sufficiently small $c=c(\eta)$, we have $a_1 \leq \eta/\sqrt{k}$, as desired.
			
\medskip	

Now, we wish to show $2ka_{1}\leq \beta$, and for this we use the assumption: $\ii_1(f_{t})\leq c_1 \eps / k$, for a sufficiently small $c_1$. Consider $e_{1}^{\beta}$ defined in Lemma~\ref{lem:e_i_lower_bound}; explicitly, $e_{1}^{\beta}\ddd\be_{s\sim U(0,\beta)}\lbs \ii_{1}(f_{t+s})\rbs $.
Equation~\eqref{eq:e_i_trivial_lower_bound} of that lemma states that $e_{1}^{\beta}\geq\frac{a_{1}}{\beta}\Pr\lbs a\cdot x -a_{1} x_{1}\in\lbs t,t+\beta\rbs \rbs$.
From Corollary~\ref{cor:decay_of_influence} we deduce $e_{1}^{\beta}\leq5\ii_{1}(f_{t})$.
Hence, either
\begin{itemize}
\item $\Pr\lbs a\cdot x -a_{1} x_{1}\in\lbs t,t+\beta\rbs \rbs \leq \eps/3$, or

\item $\frac{a_1\eps}{3\beta} \leq e_1^{\beta} \leq 5\ii_1(f_{t})$.
\end{itemize}
    If $\frac{a_1\eps}{3\beta} \leq e_1^{\beta} \leq 5\ii_1(f_{t})$, then the assumption $\ii_1(f_{t})\leq c_1 \eps / k$ implies $\frac{a_1}{15\beta} \leq \frac{c_1}{k}$, and thus, $2ka_1 \leq \beta$, provided $c_1$ is sufficiently small.
    Thus, we may assume
    \begin{equation}\label{Eq:Aux-W^k-8}
    \Pr\lbs a\cdot x -a_{1} x_{1}\in\lbs t,t+\beta\rbs \rbs \leq \eps/3.
    \end{equation}
    Since for any $r,s$ we have
%		\[
%		\pr \lbs a\cdot x \in (r,s] \rbs \leq \pr \lbs a\cdot x-a_1 x_1 \in (r-a_1,s-a_1] \rbs + \pr \lbs a \cdot x-a_1 x_1 \in (r+a_1,s+a_1] \rbs,
%		\]
		\[
		\pr \lbs a\cdot x \in (r,s] \rbs \leq \pr \lbs a\cdot x-a_1 x_1 \in (r-a_1,s+a_1] \rbs,
		\]
	and as $\ii_{1}(f_{t}) = \Pr\lbs a\cdot x -a_{1} x_{1}\in (t-a_1,t+a_1] \rbs$ and similarly for $f_{t+\beta}$, it follows that
		\[
		\Pr\lbs a\cdot x -a_{1} x_{1}\in\lbs t,t+\beta\rbs \rbs +\ii_{1}(f_t)+\ii_{1}(f_{t+\beta}) \geq\Pr\lbs a\cdot x\in\lbr t,t+\beta\rbs \rbs \underbrace{\geq}_{\beta\text{-def.}} \frac{2\eps}{3}.
		\]
By~\eqref{Eq:Aux-W^k-8}, this implies
\begin{equation}\label{Eq:Aux-W^k-9}
\ii_{1}(f_{t})+\ii_{1}(f_{t+\beta}) \geq \frac{2\eps}{3}-\frac{\eps}{3}=\frac{\eps}{3}.
\end{equation}
However, as by Corollary~\ref{cor:decay_of_influence}, $\ii_{1}(f_{t+\beta})\leq 5\ii_{1}(f_{t})$,~\eqref{Eq:Aux-W^k-9} implies
\[
6\ii_{1}(f_{t})\geq \ii_{1}(f_{t})+\ii_{1}(f_{t+\beta}) \geq \frac{\eps}{3},
\]
which contradicts the assumption $\ii_1(f_{t}) \leq c_{1}\eps/k$ for a sufficiently small $c_1$. This completes the proof.
\end{proof}
	
\medskip \noindent In order to prove Claim~\ref{claim:wk_coefs_decrease}, we need another auxiliary claim.
	\begin{claim}\label{claim:wk_big_t}
		For any halfspace $f_{t}$ that satisfies the assumptions of Claim~\ref{claim:wk_small_A} with $\eta=1/16$, we have $t\geq 4\sqrt{k}$.
	\end{claim}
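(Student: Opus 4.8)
The plan is to argue by contradiction: assume $f_t=\one\{a\cdot x>t\}$ with $\sum_i a_i^2=1$, $t\geq 0$, and $\eps:=\mu(f_t)=F(t)<2^{-999k}$, and suppose moreover $t<4\sqrt k$; I will then produce a lower bound $F(t)\geq \tfrac12 e^{-Ct^2}$ with an absolute constant $C<43$, so that $F(t)>\tfrac12 e^{-16Ck}>2^{-999k}$, a contradiction. The first step is to note that the hypotheses force the largest weight $a_1$ to be tiny. Since $\eps<2^{-999k}$ we have $\log(1/\eps)>999k\log 2$; applying Theorem~\ref{thm:intro-strong_chernoff} twice (to pass from $F=\eps$ down to $F=\eps/2$ and then to $F=\eps/4\leq\eps/3$) shows that the quantity $\beta$ of Claim~\ref{claim:wk_small_A} satisfies $\beta=O\big(1/\sqrt{\log(1/\eps)}\big)=O(k^{-1/2})$, and combining this with the bound $2ka_1<\beta$ from Claim~\ref{claim:wk_small_A} gives $a_1=O(k^{-3/2})$. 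In particular $ta_1=O(k^{-1})$ is bounded by a small absolute constant (the numerical value $999$ in the hypothesis leaves ample room for this). A cheap preliminary reduction handles the range of very small $t$: for $t$ below a fixed constant, a second-moment / hypercontractivity argument (using $\be[(a\cdot x)^4]\leq 3$, via Paley--Zygmund applied to $\max(a\cdot x,0)^2$) yields $F(t)\geq c_0>0$, already contradicting $\eps<2^{-999k}$. So we may assume $t$ exceeds that constant.

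The heart of the matter is the following reverse-Chernoff estimate, valid once $ta_1$ is small: $F(t)=\pr[\sum_i a_ix_i>t]\geq \tfrac12 e^{-Ct^2}$. I would prove it by a block decomposition. Set $M:=\lfloor 1/(5t^2a_1^2)\rfloor$ (large, since $ta_1$ is small) and partition $[n]$ greedily into blocks $B_1,\dots,B_p$, closing a block as soon as $\sum_{i\in B_j}a_i^2\geq Ma_1^2$. Since each $a_i^2\leq a_1^2$, every full block has $\sum_{i\in B_j}a_i^2\in[Ma_1^2,(M+1)a_1^2)$, hence standard deviation $\sigma_j\in[a_1\sqrt M,a_1\sqrt{M+1})$, and its largest weight is at most $a_1\leq\sigma_j/\sqrt M$. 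The number of full blocks satisfies $p\geq 1/((M+1)a_1^2)-1\geq 4t^2-1$. Because the weights inside each block are small relative to $\sigma_j$, the Berry--Esseen estimate~\eqref{Eq:Intro-tail2} (applied to the normalized block) gives $\pr[\sum_{i\in B_j}a_ix_i>\sigma_j]\geq \pr[Z>1]-c'\,a_1/\sigma_j\geq \pr[Z>1]-c'/\sqrt M\geq\rho$ for an absolute $\rho>0$, $Z\sim N(0,1)$. Now put $q:=\lceil t/(a_1\sqrt M)\rceil$; a short computation gives $q\leq 3t^2\leq p$. Consider the event that $\sum_{i\in B_j}a_ix_i>\sigma_j$ for every $j\leq q$ and $\sum_{j>q}\sum_{i\in B_j}a_ix_i\geq 0$; on it, $\sum_i a_ix_i>\sum_{j\leq q}\sigma_j\geq q\,a_1\sqrt M\geq t$. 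By independence of the blocks and symmetry of the remaining sum, this event has probability at least $\tfrac12\rho^q\geq\tfrac12 e^{-Ct^2}$ for a suitable absolute $C$.

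Putting the pieces together, $2^{-999k}>\eps=F(t)\geq\tfrac12 e^{-Ct^2}>\tfrac12 e^{-16Ck}>2^{-999k}$ (using $t^2<16k$ and $16C<999\log 2$), which is the desired contradiction, proving $t\geq 4\sqrt k$. Two things deserve care. First, the smallness $a_1=O(k^{-3/2})$ is used in an essential way: it is what makes $M=\lfloor 1/(5t^2a_1^2)\rfloor$ large, which is simultaneously needed for the per-block Berry--Esseen bound to leave a positive residual $\rho$ and for the inequality $q\leq p$; without the extra input $2ka_1<\beta$ from Claim~\ref{claim:wk_small_A}, the bound $a_1\leq\frac{1}{16\sqrt k}$ alone only gives $ta_1\leq\frac14$, which is not small enough for the block scheme to close. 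Second — and this is the main technical obstacle — one must verify that all the constants fit together: the block size $M$ must be large enough for the error $c'/\sqrt M$ to be comfortably below $\pr[Z>1]$ (if not, one uses a threshold $u\sigma_j$ with a slightly smaller $u<1$, which only costs a larger $\rho^{-1}$), while being capped by $q\leq p$, and the resulting exponent $C$ in $\tfrac12\rho^q\geq\tfrac12 e^{-Ct^2}$ must stay below $999\log 2/16\approx 43$. All of this goes through because $999$ is a large constant and $ta_1=O(1/k)$; the finitely many small values of $k$, where $ta_1$ is only a small constant rather than $o(1)$, can be treated by the same computation (the hypothesis $\eps<2^{-999k}$ being extremely restrictive there, the slack is still enormous).
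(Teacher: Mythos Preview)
Your approach shares the same skeleton as the paper's proof: partition the coordinates into blocks of controlled variance, use an anti-concentration bound per block, and multiply up by independence to get a lower bound on $F(4\sqrt{k})$ that beats $2^{-999k}$. The key difference is in the per-block step. The paper uses the Paley--Zygmund inequality applied to $X_s=(\sum_{i\in G_s}a_ix_i)^2$, which gives $\pr[\sum_{i\in G_s}a_ix_i>\tfrac{1}{2}\sigma_s]>1/10$ for \emph{any} block, regardless of how large the individual weights are relative to $\sigma_s$. Consequently the paper needs only the first conclusion of Claim~\ref{claim:wk_small_A}, namely $a_1\leq 1/(16\sqrt{k})$, just to form blocks of variance in $[1/(256k),1/(128k)]$; with $128k$ to $256k$ such blocks the numbers close immediately as $(1/10)^{256k}>2^{-999k}$.

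Your route via Berry--Esseen, by contrast, requires the within-block weights to be small relative to $\sigma_j$, which is what forces you to invoke the second conclusion $2ka_1<\beta$ and derive $a_1=O(k^{-3/2})$. Your remark that ``the bound $a_1\leq\frac{1}{16\sqrt k}$ alone\ldots is not small enough for the block scheme to close'' is therefore a feature of your implementation, not of the problem: the paper shows that this bound \emph{is} enough once Paley--Zygmund replaces Berry--Esseen. More concretely, for small $k$ your scheme is fragile: with $k=1$, $t<4$, $a_1\leq 1/16$ one gets $M=\lfloor 1/(5t^2a_1^2)\rfloor\leq 3$, so the normalized third-moment error is at least $1/\sqrt{3}\approx 0.577$, and with the paper's stated Berry--Esseen constant $c'=1$ this already exceeds $\pr[Z>0]=1/2$, leaving no positive $\rho$ for any threshold $u>0$. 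Your suggested fix of shrinking $u$ does not rescue this; the constants genuinely do not close at that end unless you import a sharper Berry--Esseen constant or, more simply, swap in Paley--Zygmund --- which you already used for the small-$t$ reduction, and which is exactly the paper's choice.
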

	The following proof method appears in~\cite{MON90}. For completeness we repeat it here.
	\begin{proof}[Proof of Claim~\ref{claim:wk_big_t}]
		We will show $\pr\lbs\sum a_i x_i > 4\sqrt{k}\rbs>2^{-999k}$. Since by assumption, $2^{-999k} \geq \mu(f_{t})=\pr[\sum a_i x_i > t]$, this will imply $t>4\sqrt{k}$.
		
Partition the $a_{i}$'s into sets $\{G_{s}\}_{s}$, each having sum-of-squares in $[\frac{1}{256k},\frac{1}{128k}]$; this is possible since $a_{i} \leq 1/(16\sqrt{k})$. (To be precise, at most one of the sets $G_s$ may have sum-of-squares less than $\frac{1}{256k}$. As will be apparent below, this does not affect the proof, so we neglect that set.) For each $s$, consider the random variable $X_s = (\sum_{i \in G_{s}} a_i x_i)^{2}$. It is easy to see that $\be[X_s] = \sum_{i \in G_s} a_i^2$, and that
\[
\be[X_s^2] = \be \lbs \lbr \sum_{i \in G_s} a_i x_i \rbr^4 \rbs = \sum_{i \in G_s} a_i^4 + 3 \sum_{i \neq j} a_i^2 a_j^2 \leq 3 \lbr \sum_{i \in G_s} a_i^2 \rbr^2 = 3 (\be[X_s])^2.
\]
Recall that the classical Paley-Zygmund inequality asserts that for any nonnegative random variable $Z$ with a finite second moment and for any $\alpha \in [0,1]$, we have
\[
\Pr \lbs Z> \alpha \be \lbs Z \rbs \rbs \geq \frac{(1-\alpha)^2 \be[Z]^2}{\mathrm{Var}[Z] + (1-\alpha)^2 \be[Z]^2}.
\]
Applying this inequality to the random variable $X_s = (\sum_{i \in G_{s}} a_i x_i)^{2}$, we get
	\[
		\pr \lbs \sum_{i \in G_{s}} a_i x_i > \lambda \sqrt{\sum_{i \in G_{s}} a_i^{2}} \rbs = \frac{1}{2} \pr \lbs X_s > \lambda^2 \be[X_s] \rbs \geq \frac{(1-\lambda^2)^2}{4+2(1-\lambda^2)^2}.
	\]
		As $1/(16\sqrt{k})\leq \sqrt{\sum_{i \in G_{s}} a_i^2}$ by the construction of $G_s$, we infer $\pr \lbs \sum_{i \in G_{s}} a_i x_i > 1/(32\sqrt{k}) \rbs > 1/10$. Since the number of sets $\{G_{s}\}_{s}$ is between $128k$ and $256k$, we obtain
\[
\pr\lbs\sum a_i x_i > 4\sqrt{k}\rbs \geq \pr \lbs \forall s\colon \sum_{i \in G_{s}} a_i x_i > 1/(32\sqrt{k}) \rbs > 10^{-256k} > 2^{-999k},
\]
as required.
\end{proof}
	
\noindent Now we are ready to prove Claim~\ref{claim:wk_coefs_decrease}.

\medskip \noindent \textbf{Claim~\ref{claim:wk_coefs_decrease}.} Let $f_{t}$ be a halfspace that satisfies the assumptions of Theorem~\ref{thm:large_wk}. For any $x\in \spm^{n}$ with $f_{t}(x)=1$, we have $\sum_{\lba S\rba =k}a^{S}x^{S}\geq0$.

\begin{proof}
		Let $f_{t}$ satisfy the assumptions of the claim, and let $x \in \spm ^ n$ be such that $f_{t}(x)=1$. Denote $b_{i}=b_i(x)=a_{i}x_{i}/\sum_{j\in [n]} a_{j} x_{j}$, so that $|b_{i}|\leq a_{i}/t$ for each $i$ and $\sum_{i} b_i = 1$. Define as before $e_{m}=\sum_{\lba S\rba =m}b^{S}$
		and $s_{m}=\sum_{i}b_{i}^{m}$. It is clear that for proving the claim, it is sufficient to prove $e_{k}\geq0$.
		
		Clearly, $\forall r\in\pintegers\colon \lba s_{2+r}\rba \leq s_{2}(\max b_i)^r$. As $\forall i\colon |b_{i}|\leq a_{i}/t$, this implies
        \begin{equation}\label{Eq:Aux-W^k-10}
        \forall r\in\pintegers\colon \lba s_{2+r}\rba \leq s_{2} \max\left\{ (a_{i}/t)^{r}\right\}.
        \end{equation}
        By Claims~\ref{claim:wk_small_A} and~\ref{claim:wk_big_t}, we have $\forall i\colon a_i \leq 1/16\sqrt{k}$ and $t \geq 4\sqrt{k}$ provided $c_1$ is sufficiently small, and hence,~\eqref{Eq:Aux-W^k-10} implies
        \[
        \forall r\in\pintegers\colon \lba s_{2+r}\rba \leq s_{2}\max\left\{ (a_{i}/t)^{r}\right\}\leq s_{2}/ (64k)^{r}.
        \]
		Similarly to Claim~\ref{claim:wk_large_em}, we shall prove by induction that $e_{m-1}\leq 2me_{m}$ for each $m \leq k$, and so in particular, $e_{k} \geq 2^{1-k}/k! > 0$, as required.

\mn From the Newton-Girard formulas, we have
		\bem
		\forall m \leq k\colon \qquad m e_{m}
		& = &
		s_{1} e_{m-1} + \sum_{i=2}^{m} (-1)^{i-1} s_{i} e_{m-i}
		\\
		& \geq &
		s_{1} e_{m-1} - \sum_{i=2}^{m} \frac{s_2}{(64k)^{i-2}} e_{m-i}
		\\
		& \underbrace{\geq}_{\text{Induction}} &
		s_{1} e_{m-1} - \sum_{i=2}^{m} \frac{2^{i-1}(m-1)^{i-1}}{(64k)^{i-2}} s_2 e_{m-1}
		\\
		& \geq &
		\lbr s_{1} - 3m s_{2} \rbr e_{m-1}
		\\
		& \underbrace{\geq}_{s_2 \leq 1/t^2} &
		(1-3k/t^{2}) e_{m-1}
		\\
		& \underbrace{\geq}_{\mrm{Claim}~\ref{claim:wk_big_t}}{} &
		e_{m-1}/2.
		\enm
This completes the proof.
	\end{proof}
%\end{proof}

%%%%%%%%%%%%%%%%%%%%%%%%%%%%%%%%%%%%%%%%%%%%%%%%%%%%%%%
%%%%%%%%%%%%%%%%%%%%%%%%%%%%%%%%%%%%%%%%%%%%%%%%%%%%%%%
%%%%%%%%%%%%%%%%%%%%%%%%%%%%%%%%%%%%%%%%%%%%%%%%%%%%%%%
%%%%%%%%%%%%%%%%%%%%%%%%%%%%%%%%%%%%%%%%%%%%%%%%%%%%%%%
%%%%%%%%%%%%%%%% NOISE SENSITIVITY %%%%%%%%%%%%%%%%%%%%
%%%%%%%%%%%%%%%%%%%%%%%%%%%%%%%%%%%%%%%%%%%%%%%%%%%%%%%
%%%%%%%%%%%%%%%%%%%%%%%%%%%%%%%%%%%%%%%%%%%%%%%%%%%%%%%
%%%%%%%%%%%%%%%%%%%%%%%%%%%%%%%%%%%%%%%%%%%%%%%%%%%%%%%
%%%%%%%%%%%%%%%%%%%%%%%%%%%%%%%%%%%%%%%%%%%%%%%%%%%%%%%

\section{Noise Resistance and Correlation with a Halfspace}
\label{sec:noise}

Recall that a Boolean function $f$ is called \emph{Fourier noise resistant} if its first degree Fourier weight is within a constant factor of the maximal possible value, i.e., if $\wo(f)\geq c_0\mu(f)^2 \log(1/\mu(f))$ for a fixed constant $c_0$. In this section we prove Theorem~\ref{Thm:Main-Cor} which asserts that for any Boolean function $f$, there exists a halfspace $g$ such that $\cov(f,g)\geq\Omega\lbr \sqrt{\frac{\wo(f)}{\log(e/\wo(f))}}\rbr$. This implies that if $f$ is Fourier noise resistant then it is strongly correlated with some halfspace $g$.

In addition, we show that in the special case where $f$ is Fourier noise resistant, one can take the correlating halfspace $g$ to be unbiased, and also there exists a strongly biased halfspace $g''$ whose correlation with $f$ is `surprisingly large'. Finally, we prove Proposition~\ref{Prop:Prob-noise} which provides a `probabilistic' notion of noise sensitivity for biased functions that implies strong correlation with a halfspace.

\subsection{Proof of Theorem~\ref{Thm:Main-Cor} and a Tightness Example}

Let us recall the statement of the theorem.

\medskip \noindent \textbf{Theorem~\ref{Thm:Main-Cor}.} For any Boolean function $f$, there exists a halfspace $g$ such that
	\[
	\mathrm{Cov}(f,g) \geq c\sqrt{\wo(f) / \log(e/\wo(f))},
	\]
	where $c$ is an absolute constant.
In particular, if $f$ is Fourier noise resistant and $\mathbb{E}[f] \leq 1/2$ then there exists a halfspace $g$ such that $\mathrm{Cov}(f,g) = \Omega(\mathbb{E}[f])$.

\begin{proof}
	Let $f$ be a Boolean function, let $l(x) = \sum \hf(\{i\})x_i$ be the first Fourier level of $f$, and denote $a \ddd \lbn l\rbn_2=\sqrt{\wo(f)}$. Consider the family of biased halfpaces $\{g_t(x)=\one{\{l(x)>t\}}\}_{t \in \reals}$.
	The proof goes as follows: First, we show that the \emph{average} correlation of $f$ with a `random' $g_t$ is `not very small'. Then we use the Hoeffding inequality to assert that $\cov(f,g_t)$ is very small for a large $|t|$, and deduce that there exists $t$ such that $\cov(f,g_t)$ is `large', as asserted.
	
Define $h(t) \ddd \cov(f,g_t)$. We have
	\bem
		a^2 &=&
		\be_x\lbs f(x)l(x)\rbs \\
		&\underbrace{=}_{\text{Fubini}}&
		\int_{0}^{\infty}\be_{x}\lbs f(x)\cdot\one_{l(x)>t}\rbs \mrm{dt}-\int_{-\infty}^{0}\be_{x}\lbs f(x)\cdot\one_{l(x)<t}\rbs \mrm{dt}\\
		&=&
		\int_{0}^{\infty}\lbr \be_{x}\lbs f(x)\cdot\one_{l(x)>t}\rbs +\be_{x}\lbs f(x)\cdot\one_{l(x)\geq-t}\rbs -\be\lbs f\rbs \rbr \mrm{dt}\\
		& = &%\underbrace{=}_{\be\lbs \one_{l(x)>t}\rbs +\be\lbs \one_{l(x)\geq-t}\rbs =1} &
		\int_{0}^{\infty}\lbr \be_{x}\lbs f(x)\cdot\one_{l(x)>t}\rbs - \be_{x} \lbs f(x) \rbs \be_{x} \lbs \one_{l(x)>t}\rbs \rbr + \\
		& & \qquad \qquad + \lbr \be_{x}\lbs f(x)\cdot\one_{l(x)\geq-t}\rbs - \be_{x} \lbs f(x) \rbs \be_{x} \lbs \one_{l(x) \geq -t}\rbs \rbr \mrm{dt}\\
        &=&
		\int_{0}^{\infty}h(t) \mrm{dt} + \int_{-\infty}^{0}h(t) \mrm{dt} = \int_{-\infty}^{\infty}h(t) \mrm{dt}.
	\enm
	By Hoeffding's inequality, for any $t>0$ we have $\mu\lbr g_{t}\rbr \leq \pr[l(x)>|t|]\leq\exp\lbr -t^{2}/2a^{2}\rbr $, and therefore, $\lba h(t)\rba \leq\exp\lbr -t^{2}/2a^{2}\rbr $ as well. Notice this also justifies the convergence of the above integrals. Let $r=\sqrt{6\log(2/a)}$. Then
	\[
	r\int_{ra}^{\infty}h(t)dt\leq\int_{ra}^{\infty}\frac{t}{a}h(t)\mrm{dt}\leq\int_{ra}^{\infty}\frac{t}{a}\exp\lbr -\frac{t^{2}}{2a^{2}}\rbr \mrm{dt}=a\cdot\exp\lbr -\frac{r^{2}}{2}\rbr \leq \frac{a^4}{8}.
	\]
	A symmetric argument implies $r\int_{-\infty}^{-ra}h(t)dt \leq \frac{a^4}{8}$, and hence,      $a^{2}-2\frac{a^4}{8r}\leq\int_{-ra}^{ra}h(t)\mrm{dt}$.
	Thus, there exists $t\in\lbr -ra,ra\rbr $
	with $h(t)\geq\frac{a^{2}-a^4/4r}{2ra}=\Omega\lbr \frac{a}{\sqrt{\log(2/a)}}\rbr $,
	as desired.
\end{proof}

Theorem~\ref{Thm:Main-Cor} is clearly tight (up to a constant factor) for any Fourier noise resistant function, as $\cov(f,g)$ cannot exceed $\mu(f)$. The following tightness example is of a different nature, being unbiased, monotone, and noise sensitive.

\begin{example}
Let $t(x)$ be the classical \emph{tribes} function defined by Ben-Or and Linial~\cite{BL90}. That is, we divide $[n]$ into tribes $T_1,T_2,\ldots,T_{n/r}$, each of size $r$, and let $t(x)=1 \Leftrightarrow \exists j\colon (x_i=1, \forall i \in T_j)$. The tribe size $r$ is chosen such that $\be[t(x)] \approx 1/2$. (The size is $r \approx \lg n - \lg \log n$.) One can easily show that for any halfspace $g(x)$, we have $\cov(t,g)=o_{n}(1)$.

Denote by $\eta$ the maximal correlation of $t(x)$ with a halfspace, so that $\eta=o(1)$. Let $h_{r}(x)=\one\{a\cdot x>r\}$ be a halfspace of measure $\eta$, and let $f(x)= t(x) \orr h_{r}(x)$. The function $f$ is monotone, we clearly have $\mu(f)\approx \frac{1}{2}$, and by Theorem~\ref{Thm:Main-W^1} we have
\[
\wo(f) \geq \Omega(\wo(h_{r})) = \Omega(\eta^2 \log(1/\eta)).
\]
On the other hand, as $f=t+h_{r}-t \cdot h_{r}$, for any halfspace $g$ we have
\[
\cov(f,g) = \cov(t,g)+\cov(h_{r},g) - \cov(t \cdot h_{r},g) \leq \cov(t,g) + \mu(h_{r}) + \mu(t \cdot h_{r}) \leq 3\eta,
\]
where the last inequality holds since $\mu(t \cdot h_{r}) \leq \mu(h_{r})=\eta$, and $\cov(t,g) \leq \eta$ by the definition of $\eta$. Therefore, the correlation of $f$ with any halfspace is at most $3\eta = O(\sqrt{\wo(f)/\log(e/\wo(f))})$, which means that Theorem~\ref{Thm:Main-Cor} is sharp for $f$.
\end{example}

\begin{remark}
A central feature of Theorem~\ref{Thm:Main-Cor} is that it holds also for non-monotone functions. In the monotone case, Theorem~\ref{Thm:Main-Cor} (together with the classical KKL theorem~\cite{KKL}) implies that for any unbiased monotone function $f$, there exists a halfspace $g$ such that $\mathrm{Cov}(f,g) = \Omega(\sqrt{\log n/n})$. A stronger (and optimal) result of $\Omega(\log n/\sqrt{n})$ was obtained by O'Donnell and Wimmer~\cite{OW13} who used their result to obtain a provably optimal weak learning algorithm for the class of monotone functions.

The result of O'Donnell and Wimmer also shows that Theorem~\ref{Thm:Main-Cor} is not tight for monotone unbiased functions with a `very small' $W^1$. Indeed, while the minimal possible value of $W^1$ is $\nu \sim (\log n)^2/n$ (attained by the tribes function), the result of~\cite{OW13} shows that maximal correlation with a halfspace for a monotone biased function is always at least $\sqrt{\nu}= \log(n)/\sqrt{n}$, and not $\sqrt{\nu}/{\sqrt{\log(e/\nu)}}$, as we would have obtained if Theorem~\ref{Thm:Main-Cor} was tight in that range.
\end{remark}

\subsection{A Stronger Correlation Theorem for Noise Resistant Functions}

Unlike the classical result of Benjamini et al.~\cite{BKS99} which states that any noise resistant function has a strong correlation with an \emph{unbiased} halfspace, Theorem~\ref{Thm:Main-Cor} does not guarantee that the correlating halfspace is unbiased. In the following proposition we show that in the special case where $f$ is noise resistant, one may require the correlating halfspace to be unbiased, like in~\cite{BKS99}.

\begin{proposition}\label{prop:correlation_with_unbiased_ltf}
	For any Fourier noise resistant Boolean function $\lfunc{f}$, there exists an unbiased
	halfspace $g_{0}$ with $\cov(f,g_{0}) \geq\Omega(\mu(f))$.
\end{proposition}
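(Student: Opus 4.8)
The plan is to start from the (possibly biased) halfspace produced in the proof of Theorem~\ref{Thm:Main-Cor} and to \emph{symmetrize} it by appending fresh coordinates carrying a nearly-uniform perturbation; this turns it into an unbiased halfspace without destroying its correlation with $f$. Throughout I assume $\mu(f)\le\frac12$ (this is the meaningful range: e.g. the constant function $1$ is Fourier noise resistant yet correlates with no unbiased halfspace).

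First I would recall the objects built in the proof of Theorem~\ref{Thm:Main-Cor}: the first Fourier level $l(x)=\sum_i\hf(\{i\})x_i$, the quantity $a\ddd\lbn l\rbn_{2}=\sqrt{\wo(f)}$, and the function $h(t)\ddd\cov\lbr f,\one\{l(x)>t\}\rbr$. There it is shown that $\int_{-\infty}^{\infty}h(t)\,dt=a^{2}$ and $\lba h(t)\rba\le e^{-t^{2}/2a^{2}}$ for all $t$, while Fourier noise resistance gives $a^{2}=\wo(f)\ge c_{0}\,\mu(f)^{2}\log\lbr1/\mu(f)\rbr$. I would then fix a width $\lambda\ddd a\sqrt{6\log(2/a)}$; repeating verbatim the Gaussian-tail computation from the proof of Theorem~\ref{Thm:Main-Cor} (which gives $\int_{|t|>\lambda}h(t)\,dt\le a^{4}/4$) yields $\int_{-\lambda}^{\lambda}h(t)\,dt\ge a^{2}/2$.

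For the construction, given a large integer $N$ I introduce fresh variables $y_{1},\dots,y_{N}$ and consider, on $\spm^{n}\times\spm^{N}$, the halfspace
\[
g_{N}(x,y)\ddd\one\lbc\,l(x)+\lambda\sum_{j=1}^{N}2^{-j}y_{j}>0\,\rbc .
\]
(If necessary I enlarge $\lambda$ a little, keeping it in $[\,a\sqrt{6\log(2/a)},\,2a\sqrt{6\log(2/a)}\,]$, so that $\lambda$ avoids the countable set of ``bad'' values for which $l(x)+\lambda\sum_{j}2^{-j}y_{j}$ has an atom at $0$ for some $N$; this can only help the estimate above.) Two facts drive the proof: (i) the linear form $l(x)+\lambda\sum_{j}2^{-j}y_{j}$ is symmetric about $0$ and, by the choice of $\lambda$, atomless there, so $g_{N}$ is an \emph{unbiased} halfspace; and (ii) since $f$ ignores the $y$-coordinates, conditioning on $y$ gives $\cov(f,g_{N})=\be_{y}\lbs h\lbr-\lambda\sum_{j=1}^{N}2^{-j}y_{j}\rbr\rbs$. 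I would then let $N\to\infty$: the Rademacher sum $\sum_{j\ge1}2^{-j}y_{j}$ is uniformly distributed on $[-1,1]$ (it equals $2U-1$ for the $[0,1]$-uniform variable $U$ given by a random binary expansion), so $\lambda\sum_{j\le N}2^{-j}y_{j}$ converges a.s. to a $\mathrm{Unif}[-\lambda,\lambda]$ variable; as $h$ is bounded and has only finitely many discontinuities (the values of $l$), which are atomless for that uniform law, dominated convergence yields
\[
\cov(f,g_{N})\ \longrightarrow\ \frac{1}{2\lambda}\int_{-\lambda}^{\lambda}h(t)\,dt\ \ge\ \frac{a^{2}}{4\lambda}\ =\ \frac{a}{4\sqrt{6\log(2/a)}} .
\]
Using $a\ge\sqrt{c_{0}}\,\mu(f)\sqrt{\log(1/\mu(f))}$ and $\mu(f)\le\frac12$ (which makes $\log(2/a)=O(\log(1/\mu(f)))$), the right-hand side is $\Omega(\mu(f))$; hence for $N$ large enough $g_{N}$ is the required unbiased halfspace.

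The hard part is choosing the smearing distribution, and seeing why the obvious choices fail. The unbiased halfspace $\one\{l(x)>0\}$ realizes $h(0)=\tfrac12\be[f\cdot\sgn l]$, which may well be $o(\mu(f))$; and smearing $l$ by an independent copy of itself (which would also keep the halfspace unbiased) averages $h$ against a Gaussian-type kernel of width $\approx a$, i.e. of the same scale as $h$ itself, and since $h$ changes sign that average need not even be positive. One is forced to average $h$ against a \emph{flat} kernel of width $\approx a\sqrt{\log(1/\mu(f))}$ — wide enough to ``see'' that $\int h=a^{2}>0$ while still lying within the effective support of $h$ — and the sum $\lambda\sum_{j}2^{-j}y_{j}$ is precisely the device that embeds a (nearly) uniform perturbation of this width inside a Boolean halfspace. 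Once this is in place, the only remaining work is the routine bookkeeping of the atom-avoidance perturbation and of the constants.
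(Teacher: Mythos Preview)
Your argument is correct and is a genuinely different route from the paper's. The paper does not symmetrize a biased halfspace at all: it takes $g_{0}=\sgn\big(f^{=1}\big)$, applies the noise operator $T_{\rho}$ with $\rho=\Theta\big(1/\sqrt{\log(1/\mu(f))}\big)$, and lower-bounds $\cov(f,T_{\rho}g_{0})=\sum_{S}\rho^{|S|}\hf(S)\wh{g_{0}}(S)$ by keeping the level-$1$ term (controlled via Khintchine--Kahane, since $\sum_{i}\hf(\{i\})\wh{g_{0}}(\{i\})=\lbn l\rbn_{1}\geq\lbn l\rbn_{2}/\sqrt{2}$) and killing the higher levels with the level-$k$ inequality; finally it observes that $T_{\rho}g_{0}$ is a convex combination of unbiased halfspaces on $\spm^{n}$ of the form $\sgn\big(\sum_{i}(-1)^{\alpha_{i}}\wh{g_{0}}(\{i\})x_{i}\big)$, so one of these works. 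Your approach instead recycles the integral identity $\int h=a^{2}$ from the proof of Theorem~\ref{Thm:Main-Cor} and realises the required \emph{flat} averaging of $h$ over $[-\lambda,\lambda]$ by appending the binary-expansion variables $\lambda\sum_{j}2^{-j}y_{j}$. This is more elementary (no Khintchine--Kahane, no level-$k$ inequality) and makes transparent why the averaging scale must be $\Theta\big(a\sqrt{\log(1/a)}\big)$; the price is that your unbiased halfspace lives on $\spm^{n+N}$ rather than $\spm^{n}$, whereas the paper's halfspace stays on the original cube. That distinction is harmless for the stated proposition but worth flagging.
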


In the proof of the proposition we use the classical \emph{noise operator} $T_{\rho}$, which lies behind the notion of noise sensitivity. The noise operator is defined as $T_{\rho}f(x) \ddd  \be[f(y)]$, where $y$ is obtained from $x$ by independently keeping each coordinate of $x$ unchanged with probability $\rho$, and replacing it by a random value with probability $1-\rho$. It has a convenient representation in terms of the Fourier expansion of $f$: we have $T_{\rho}(f) = \sum_S \rho^{|S|} \wh{f}(S)$, and thus, by the Parseval identity, $\be [f \cdot T_{\rho} g] = \sum_S \rho^{|S|} \wh{f}(S) \wh{g}(S)$, for any $f,g$. (Note that the noise stability $\mathbb{S}_{\rho}(f)$ is simply $\be [f \cdot T_{\rho} f]$.) The method we use in the proof was introduced in~\cite{KKM16}.

\begin{proof}
    Let $f\colon\{-1,1\}^n \rightarrow \{0,1\}$ be a Fourier noise resistant function. Denote $\mu(f)=\eps$, so that $\wo(f)\geq c_0\eps^{2}\log\left(1/\eps\right)$. Let $l(x)=f^{=1}(x) \ddd \sum_{i=1}^{n} \wh{f}(\{i\})x_i$, and denote  $g_{0}(x)=\sgn(l(x))$. We show that for an appropriate choice of $\rho$, the function $f$ has a strong correlation with the `noisy version' $T_{\rho}g_{0}$. As $\tr g_{0}$ is a convex combination of unbiased halfspaces, this will imply that there exists an unbiased halfspace $g'_{0}$ that strongly correlates with $f$.

    Let $\rho$ be a parameter to be chosen below. Since $\be[T_{\rho}g_{0}]=\be[g_{0}]=0$, we have
	\begin{equation}\label{Eq:Aux-Noise1}
	\cov(f,T_{\rho}g_{0}) = \be\left[f\cdot T_{\rho}g_{0}\right]=\sum_{S}\rho^{\left|S\right|}\hf(S)\widehat{g_{0}}(S)
	\geq \rho \sum_{i=1}^n \wh{f}(\{i\}) \wh{g_{0}}(\{i\})  -\sum_{k\geq2}\rho^{k}\sqrt{W^{k}(f)},
	\end{equation}
	where the last inequality uses Cauchy-Schwarz. As $l(x)=f^{=1}(x)$, we have
    \begin{equation}\label{Eq:Aux-Noise2}
    \sum_{i=1}^n \wh{f}(\{i\}) \wh{g_{0}}(\{i\}) = \be[l(x) g_{0}(x)]
    = \be[l(x) \sgn(l(x))]
    = \be[|l(x)|] = \lbn l \rbn_1 \geq \normts{l}/\sqrt{2},
    \end{equation}
    where the first equality uses Parseval's identity and the last inequality employs the Khintchine-Kahane inequality. As $f$ is Fourier noise resistant, we have $\lbn l \rbn_2 =\sqrt{\wo(f)} \geq \sqrt{c_0\eps^{2}\log(1/\eps)}$, and so combining~\eqref{Eq:Aux-Noise1} with~\eqref{Eq:Aux-Noise2} we get
    \begin{equation}
    \cov(f,T_{\rho}g_{0}) \geq  \rho\cdot\frac{\sqrt{c_0\eps^{2}\log(1/\eps)}}{\sqrt{2}}-\sum_{k\geq 2}\rho^{k}\sqrt{W^{k}(f)}.
    \end{equation}
	Using the level-$k$ Inequality (Theorem~\ref{Thm:Level-k} above) which asserts that
	\begin{equation}\label{Eq:Aux-noise2.7}
	\forall k\leq 2\log(1/\mu(f)):\qquad W^{k}(f)\leq\left(\frac{2e}{k}\log(1/\eps)\right)^{k}\eps^{2},
	\end{equation}
	we obtain
	\[
	\be\left[f\cdot T_{\rho}g_0\right]\geq\rho\eps\cdot\frac{\sqrt{c_0\log(1/\eps)}}{\sqrt{2}}-\sum_{k=2}^{2\log(1/\eps)}\rho^{k}\eps\cdot\left(\frac{2e}{k}\log(1/\eps)\right)^{k/2}-\sum_{k > 2\log(1/\eps)} \rho^{k}.
	\]
	Taking $\rho= (1/2e)^{2}\sqrt{c_0/\log(1/\eps)}$, and noting that w.l.o.g. we may assume $c_0\leq1$, results in
	\[
	\be\left[f\cdot T_{\rho}g_{0}\right]\geq\frac{c_0\eps}{\sqrt{32}e^{2}}-c_{0}\eps\sum_{k \geq 2}\left(2e\right)^{-3k/2} - c_{0}\eps\sum_{k>2\log(1/\eps)}(2e)^{-3k/2} \geq\Omega(c_0\eps).
	\]
	Finally, note that by the definition of the noise operator, the function $T_{\rho} g_{0}$ is a convex combination of unbiased halfspaces of the form $\sgn\lbr\sum_{i=1}^n (-1)^{\alpha_i} \wh{g_{0}}(\{i\}) x_i \rbr$, where $\forall i\colon \alpha_i \in \{0,1\}$. Hence, there exists an unbiased halfspace $g'_{0}$ such that $\cov(f,g'_{0}) =\Omega(c_0\epsilon)$, as asserted.
\end{proof}

Interestingly, one cannot guarantee that the linear form associated with the correlating halfspace is simply $l=f^{=1}$ like in the unbiased case studied in~\cite{BKS99}, as can be seen in the following example.
\begin{example}
Let $f\colon\{-1,1\}^5 \rightarrow \{0,1\}$ be defined as $f(x)=1 \Leftrightarrow \sum_{i=1}^5 x_i \in \{-1,3,5\}$. We have $\wh{f}(\{i\})=1/16$ for all $1 \leq i \leq 5$, and so, the linear form $f^{=1}$ defines the majority function $g_{0}(x)=\one\{\sum_{i=1}^{5} x_i > 0\}$. However, a direct computation shows that $\cov(f,g_{0})=-1/16$ is not even positive! On the other hand, if we reverse the sign of one variable, i.e., consider $g'_{0}=\one\{x_1+x_2+x_3+x_4-x_5>0\}$, we obtain $\cov(f,g'_{0})=1/8$.
\end{example}
\noindent Similar examples can be constructed for large values of $n$ as well.

\subsection{Any Fourier Noise Resistant Function Correlates Well with a Biased Halfspace}

We now present another proposition which shows that any Fourier noise resistant function correlates well with a strongly biased halfspace. This result is somewhat surprising, as biased functions correlate badly in general.
\begin{proposition}
	For any function $\lfunc{f}$ that satisfies $\eps = \mu(f)$ and $\wo(f)\geq \alpha\eps^{2}\log\left(\frac{1}{\eps}\right)$ for some $\alpha = \omega\left(\eps^{2}\right)$, there exists a halfspace $g_{s}:\spm^{n}\to\zo$ such that
	$\be\left[fg\right]\geq\Omega(\sqrt{\alpha}\eps)$ and $\be\left[g_{s}\right]\le\eps^{\alpha/8}$.
	In particular, if $\alpha = \Omega(1)$ then $\cov(f,g_{s})\geq \Omega(\sqrt{\alpha}\eps)-O(\eps^{1+\alpha/8})=\Omega(\eps)$.
\end{proposition}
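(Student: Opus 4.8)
The plan is to take the correlating halfspace to be $g=g_s\ddd\one\{f^{=1}(x)>s\}$, where $f^{=1}(x)=\sum_i\hf(\{i\})x_i$ is the degree-$1$ part of $f$ and $s>0$ is a suitably chosen threshold. Write $a\ddd\normts{f^{=1}}=\sqrt{\wo(f)}$; by the hypothesis and the level-$1$ inequality, $\sqrt{\alpha}\,\eps\sqrt{\log(1/\eps)}\le a\le\sqrt{2}\,\eps\sqrt{\log(1/\eps)}$, and in particular $\alpha\le2$. First I would fix $s=\tfrac{a}{2}\sqrt{\alpha\log(1/\eps)}$; then Hoeffding's inequality (applied to $f^{=1}$, after rescaling) gives $\be[g_s]=\pr[f^{=1}(x)>s]\le\exp(-s^2/2a^2)=\eps^{\alpha/8}$, which is exactly the required bias bound (and the same bound holds for every threshold $\ge s$, so we may perturb $s$ slightly to avoid atoms of $f^{=1}$). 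It remains to prove $\be[fg_s]=\Omega(\sqrt{\alpha}\,\eps)$.

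For the correlation bound I would start from the identity $a^2=\be[f\cdot f^{=1}]=\be[fl\one_{\{l>s\}}]+\be[fl\one_{\{l\le s\}}]$, writing $l=f^{=1}$. Splitting the last term by the sign of $l$, the part on $\{l\le0\}$ is non-positive and the part on $\{0<l\le s\}$ is at most $s\,\be[f]=s\eps$; hence $\be[fl\one_{\{l>s\}}]\ge a^2-s\eps\ge a^2/2$, where the final step is the elementary bound $s\eps\le a^2/2$, immediate from $s=\tfrac{a}{2}\sqrt{\alpha\log(1/\eps)}$ and $a\ge\sqrt{\alpha}\,\eps\sqrt{\log(1/\eps)}$. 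On the other hand, setting $q\ddd\be[fg_s]=\be[f\one_{\{l>s\}}]$ and using the layer-cake identity $l\one_{\{l>s\}}=s\one_{\{l>s\}}+\int_s^\infty\one_{\{l>v\}}\,dv$, one gets $\be[fl\one_{\{l>s\}}]=sq+\int_s^\infty\be[f\one_{\{l>v\}}]\,dv$. Since $\be[f\one_{\{l>v\}}]$ is non-increasing in $v$ and bounded both by $q$ and (Hoeffding again) by $\exp(-v^2/2a^2)$, a routine estimate of the Gaussian-type tail integral shows $\int_s^\infty\be[f\one_{\{l>v\}}]\,dv\le 2qa\sqrt{2\log(1/q)}$. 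Combining the two bounds yields
\[
q\Bigl(s+2a\sqrt{2\log(1/q)}\Bigr)\ge\frac{a^2}{2}.
\]
Because $q\le\be[f]=\eps$ and $s=O\bigl(a\sqrt{\log(1/\eps)}\bigr)$, a short bootstrap (first deduce $\log(1/q)=O(\log(1/\eps))$ from the displayed inequality, using $\alpha>\eps^2$) gives $q=\Omega\bigl(a/\sqrt{\log(1/\eps)}\bigr)=\Omega(\sqrt{\alpha}\,\eps)$, as desired. Finally, for the ``in particular'' clause, $\cov(f,g_s)=\be[fg_s]-\eps\,\be[g_s]\ge\Omega(\sqrt{\alpha}\,\eps)-\eps^{1+\alpha/8}$, which is $\Omega(\eps)$ when $\alpha=\Omega(1)$, since then $\sqrt{\alpha}=\Omega(1)$ and $\eps^{1+\alpha/8}=o(\eps)$.

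The single point that needs real thought — and where the obvious approach breaks — is the correlation estimate. The tempting route is to begin from the integral identity $a^2=\int_{-\infty}^{\infty}\cov(f,g_t)\,dt$ used in the proof of Theorem~\ref{Thm:Main-Cor} and restrict it to $t\ge s$; but then one must discard the contributions of $(-\infty,0]$ and of $[0,s]$, and the crude bound $\cov(f,g_t)\le\eps$ on $[0,s]$ costs $\eps s$, which can overwhelm $a^2=\Theta(\alpha\eps^2\log(1/\eps))$ precisely when $\alpha$ is small. Working instead with $\be[fl\one_{\{l>s\}}]$ directly, as above, never introduces any left-tail contribution and loses only the genuinely small amount $s\eps\le a^2/2$; this is what makes the argument go through for the full range $\alpha=\omega(\eps^2)$. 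The remaining ingredients — the Hoeffding bias bound, the layer-cake rearrangement, the Gaussian-tail estimate, and the bootstrap that resolves the self-referential appearance of $q$ — are all routine.
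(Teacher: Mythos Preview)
Your proof is correct and follows essentially the same approach as the paper: the same halfspace $g_s=\one\{f^{=1}>s\}$ with the same threshold (after accounting for your use of the unnormalized $l=f^{=1}$ versus the paper's normalized $l$), the same splitting of $\langle f,l\rangle$ at the level $s$ to get $\be[fl\one_{\{l>s\}}]\ge\tfrac12\langle f,l\rangle$, and the same Hoeffding bound for the bias. The only real difference is in the endgame: you bound the layer-cake integral by $O(qa\sqrt{\log(1/q)})$ and then bootstrap to resolve the self-reference, whereas the paper simply fixes a second cutoff $u=2\sqrt{\log(1/\eps)}$, bounds $\be[g_s lf]\le u\,\be[fg_s]+\int_u^\infty\Pr[l>t]\,dt\le u\,\be[fg_s]+\tfrac{1}{u}\eps^2$, and reads off $\be[fg_s]\ge\Omega(\sqrt{\alpha}\,\eps)$ directly (the $\tfrac{1}{u}\eps^2$ term being negligible exactly because $\alpha=\omega(\eps^2)$). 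The paper's shortcut is a bit cleaner and avoids the bootstrap, but your version is entirely sound.
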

\begin{proof}
	Let $s=\frac{1}{2}\sqrt{\alpha\log(1/\eps)}$, and consider the halfspace $g_{s}(x)=\one{\{l(x)>s\}}$ (with $\normts{l}=1$). By Hoeffding's
	inequality, we have $\be[g_{s}]\leq\eps^{\alpha/8}$. Hence, in order to prove the proposition we have to show that
\begin{equation}\label{Eq:Aux-Noise3.5}
\be\left[f g_{s}\right]\geq \Omega(\sqrt{\alpha}\eps).
\end{equation}
    Denote $l=\frac{1}{\left\Vert f^{=1}\right\Vert _{2}}f^{=1}$. We clearly have
	\begin{equation}\label{Eq:Aux-Noise4}
	\sqrt{\wo(f)}=\left\langle f,l\right\rangle =\be\left[\one\{l(x)\leq s\}f(x)l(x)\right]+\be\left[\one\{l(x)>s\}f(x)l(x)\right].
	\end{equation}
	Since
	\[
	\be\left[\one\{l(x)\leq s\}f(x)l(x)\right]\leq s\eps\leq\frac{1}{2}\sqrt{\wo(f)},
	\]
	it follows from~\eqref{Eq:Aux-Noise4} that
	\[
	\be\left[\one\{l(x)>s\}l(x)f(x)\right]\geq\frac{1}{2}\sqrt{\wo(f)}.
	\]
	Note that for any $u\geq s$,
	\[
	\be\left[g_{s}(x)l(x)f(x)\right]\leq u\be\left[g_{s}(x) f(x)\right]+\int_{u}^{\infty}\Pr\left[l(x)>t\right]\mbox{dt} =
    u\be[f g_{s}]+ \int_{u}^{\infty}\Pr\left[l(x)>t\right]\mbox{dt}.
	\]
	Since $\left\Vert l\right\Vert _{2}=1$, Hoeffding's inequality yields
	$\Pr\left[l(x)>t\right]\leq\exp\left(-t^{2}/2\right)$. Thus,
	\[
	\int_{u}^{\infty}\Pr\left[l(x)>t\right]\mbox{dt}\leq\int_{u}^{\infty}\frac{t}{u}\Pr\left[l(x)>t\right]\mbox{dt}\leq\int_{u}^{\infty}
    \frac{t}{u}\exp\left(-t^{2}/2\right)\mbox{dt}=\frac{1}{u}\exp\left(-u^{2}/2\right).
	\]
	Taking $u=2\sqrt{\log(1/\eps)}$ and combining the four previous inequalities, we obtain
	\begin{equation}\label{Eq:Aux-Noise5}
	s\eps\leq\frac{1}{2}\sqrt{\wo(f)}\leq u\be [f g_{s}]+\frac{1}{u}\eps^{2}.
	\end{equation}
	Finally, note that we have $\frac{1}{u}\eps^{2}\leq\frac{1}{2}s\eps$, as otherwise we have $\sqrt{\alpha}\log(1/\eps)=u s\leq2\eps$, and hence, $\alpha\leq4\eps^{2}$, which contradicts the assumption. Therefore,~\eqref{Eq:Aux-Noise5} gives
	\[
	\frac{\sqrt{\alpha}}{8}\eps\leq\frac{1}{2}\frac{s}{u}\eps\leq\be [f g_{s}].
	\]
    This proves~\eqref{Eq:Aux-Noise3.5}, thus completing the proof of the proposition.
\end{proof}

\subsection{A Probabilistic Notion of Noise Resistance}

We conclude this section with a `probabilistic' notion of noise resistance that implies strong correlation with a halfspace. Recall that by the definition of~\cite{BKS99}, a function $f$ is called noise resistant if $\mathbb{S}_{\rho}(f)=\Omega(1)$ for any constant $\rho$. If we want to generalize this definition to biased functions, the rate of the noise we consider must depend on the expectation of the function, as will be shown below.
%(e.g., it is clear that if the rate of noise is larger than $\min \{\mu(f),1-\mu(f)\}$, then the noise destroys the function almost completely).
In order to find a natural rate of noise for biased functions, we use (once again) the relation of noise sensitivity to the Fourier expansion of the function.

\paragraph{Determining the `right' rate of noise.} Using the expansion
\[
\mathbb{S}_{\rho}(f)=\be [f \cdot T_{\rho} f] = \sum_S \rho^{|S|} \wh{f}(S)^2
\]
and the level-$k$ inequality (i.e., Theorem~\ref{Thm:Level-k} above), we get
\[
\mathbb{S}_{\rho}(f) = \sum_k \rho^k W^{k}(f)\leq \sum_k \left(\frac{2e \rho}{k}\log(1/\eps)\right)^{k}\eps^{2}+\sum_{k>2\log(1/\eps)} \rho^{k},
\]
where $\eps=\mu(f)$. By Stirling's approximation, $n! \approx \sqrt{2\pi n}(n/e)^n$,
this implies
\[
\mathbb{S}_{\rho}(f)-\frac{\rho^{2\log(1/\eps)}}{1-\rho} \leq
\eps^2 \sum_k \frac{(c' \rho \log(1/\eps))^k}{k!} \leq \eps^2 \exp(c' \rho \log(1/\eps)),
\]
where $c'$ is a universal constant. It follows that if $\rho=o(1/\log(1/\eps))$ then $\mathbb{S}_{\rho}(f)$ is very small for any function $f$. Hence, we consider noise rate of $\rho=\Theta(1/\log(1/\eps))$, for which $\mathbb{S}_{\rho}(f)$ can be as large as $\mu(f)^2$, and say that $f$ is \emph{noise resistant} if $\mathbb{S}_{\rho}(f)=\Omega(\mu(f)^2)$.

We we prove Proposition~\ref{Prop:Prob-noise} which asserts that this notion of noise resistance implies strong correlation with a halfspace. %Specifically, we prove Proposition~\ref{Prop:Prob-noise} which asserts that if a monotone Boolean function $f$ satisfies $\mathbb{S}_{\rho}(f)=\Omega(\mu(f)^2)$, where $\rho=c/\log(1/\mu(f))$ for a sufficiently small universal constant $c$, then $W^1(f)=\Omega(\mu(f)^2 \log(1/\mu(f)))$, and hence, there exists an unbiased halfspace $g_{0}$ such that $\cov(f,g_{0})=\Omega(\mu(f))$.

\medskip \noindent \textbf{Proposition~\ref{Prop:Prob-noise}.} There exists a universal constant $c > 0$ such that for any monotone function $f\colon \spm^n \rightarrow \zo$ with $\mathbb{E}[f] \leq \frac{1}{2}$, if $\mathbb{S}_{c/\log(1/\mathbb{E}[f])}(f) = \Omega(\mathbb{E}[f]^2)$, then $W^1(f) =\Omega(\mathbb{E}[f]^2 \log(1/\mathbb{E}[f]))$, and consequently, there exists a halfspace $g$ such that $\mathrm{Cov}(f,g) = \Omega(\mathbb{E}[f])$.

\medskip

\begin{proof}
	Recall that the quantitative version of the BKS noise sensitivity theorem~\cite{KK10} asserts that for any monotone $f$ and for any $k$ which satisfies $\wo(f)\leq \exp(-2(k-1))$, we have
	\begin{equation}\label{Eq:Aux-Noise6}
	\wk{k}(f)=\sum_{|S|=k} \hf(S)^2 \leq \frac{5e}{k} \wo(f) \lbr \frac{2e\log(k/\wo(f))}{k-1} \rbr ^ {k-1}.
	\end{equation}
	Let $f$ be a function that satisfies the assumptions of the proposition. Denote $\eps = \mu(f)$ and let $\alpha$ satisfy $\wo(f)=\eps^{\alpha}$. We shall compute an upper bound for $\sss(f)$ when $\rho=c/\log(1/\eps)$, for a constant $c$ to be specified below. Set $T=\log(1/\eps)/2$, and note that since $\wo(f) \leq \epsilon$ by the Poincar\'{e} inequality, we may apply~\eqref{Eq:Aux-Noise6} for all $k \leq T$. Hence, we have
	\begin{align}\label{Eq:Aux-Noise7}
    \begin{split}
	\sss(f) = \sum_{k=1}^{n} \rho^k \wk{k}(f)
	& \leq
	5e\sum_{k=1}^{T} \wo(f)\lbr 6\log(k/\wo(f)) \rbr ^ {k-1} \rho ^ k + \sum_{k=T}^{\infty} \rho^k \\
    & \leq
	5e\wo(f)\rho \sum_{k=1}^{T} \lbr 6c\frac{(\alpha+1)\log(1/\eps)}{\log(1/\eps)} \rbr ^ {k-1} + \sum_{k=T}^{\infty} c^k \\
	& \leq
	O \left( \frac{\wo(f)}{\log(1/\eps)} \lbr 1 + \lbr \frac{\alpha+1}{100} \rbr ^ {\log(1/\eps)}\rbr \right) + O(\eps^{3}),
    \end{split}
    \end{align}	
	where the last inequality follows by taking $c$ to be a sufficiently small constant and the `$100$' in the denominator can be taken to be
    any constant (determined by $c$).

    For $\alpha \leq 99$,~\eqref{Eq:Aux-Noise7} implies $\sss(f)=O\lbr \frac{\wo(f)}{\log(1/\eps)}+\eps^3\rbr$. As we have $\sss(f)=\Omega(\eps^2)$ by assumption, this implies $\wo(f)\geq \Omega(\eps^2\log(1/\eps))$, as asserted.

    For $\alpha>99$,~\eqref{Eq:Aux-Noise7} yields $\eps^{2-\alpha} \leq O \lbr \lbr \frac{\alpha+1}{100}\rbr^{\log(1/\eps)}\rbr= O\lbr \eps^{-\log((\alpha+1)/100)}\rbr$, which can not happen for $\eps$ small enough (note that we may assume $\eps$ is small, as we control $c$).
    This completes the proof.
\end{proof}

\begin{remark} In~\cite[Theorem 3.10.4]{OD13_phd}, O'Donnell presented another notion of noise resistance that is satisfied by biased halfspaces. He showed that for any halfspace $g_{t}$ with $\eps = \mu(g_{t})$,
	\begin{equation}\label{eq:biased_peres_theorem}
	\mrm{NS}_{\delta/\log(1/\eps)}(g_{t})=O(\eps\sqrt{\delta}),
	\end{equation}
    where $\mrm{NS}_{\eta}(g_{t})=\frac{1}{2}-\frac{1}{2}\mathbb{S}_{1-2\eta}(g_{t})$. (This provides a biased version of the noise stability theorem of Peres for halfspaces~\cite{Peres04}).	In light of the previous results, one might wonder whether every monotone function $f$ that satisfies~\eqref{eq:biased_peres_theorem} is well-correlated with some halfspace. This indeed holds for unbiased functions, as in this case, ~\eqref{eq:biased_peres_theorem} implies that $f$ is noise stable (according to the notation of~\cite{BKS99}), and consequently, satisfies $W^1(f)=\Omega(1)$, which in turn implies that $f$ correlates well with a halfspace.

    However, this does not generalize to the biased setting, as can be seen in the following example. Let
	\[
	f(x)=\Orr_{i\in [a]} \Andd_{j\in [b]} x_{i,j}
	\]
	be a variant of the tribes function, with $a=1/\eps$ and $b=2\lg(1/\eps)$. Clearly, $\mu(f)\approx \eps$. It can be shown that on the one hand, $f$ satisfies~\eqref{eq:biased_peres_theorem}, and on the other hand, $f$ does not correlate well with any halfspace (i.e., $\cov(f,g_{t})=o(\eps)$ for any halfspace $g_{t}$).
\end{remark}

%\newpage %% AUTHOR: please comment out this line.  It serves only
%%   to demonstrate both types of header line in daj-template.pdf

%%%% AUTHOR: optional appendix here
%\appendix %% you may comment this out if no Appendix
%\section*{Appendix}
%\section{Improving the constants}
%Material is placed here as needed.

%%% AUTHOR: optional acknowledgments here
\section*{Acknowledgments} %%  you may comment this out if no Ackno

We are grateful to Gil Kalai and to Ryan O'Donnell for numerous useful suggestions,
to G\'{a}bor Lugosi for communicating to us the paper~\cite{DL08},
and to Rani Hod for suggesting Example~\ref{ex:vb_large_ratio}.

%%% AUTHOR:
%%% Bibliography goes here. Note that the arXiv cannot process bibtex
%%% or biber bibliographies.  Example of acceptable bibliograpy format:

\bibliographystyle{amsplain}

%% AUTHOR: You can generate such a bibliography from a .bib file by
%% running pdflatex/bibtex/pdflatex/pdflatex and then pasting the .bbl file
%% between \begin{thebibliography} and \end{bibliography}

%%% AUTHOR: Include a short description of each author following the
%%% structure below. Use the same short tags used previously.
%%% Use \imageat{} and \imagedot{} instead of "@" and "." in
%%% email addresses-this replaces the symbols with graphics to avoid
%%% e-mail address harvesting from the .pdf file
\begin{dajauthors}
\begin{authorinfo}[nk]
  Nathan Keller\\
  Bar Ilan University\\
  Ramat Gan, Israel\\
  nathan.keller27\imageat{}gmail\imagedot{}com \\
  \url{http://u.math.biu.ac.il/~nkeller/}
\end{authorinfo}
\begin{authorinfo}[johan]
  Ohad Klein\\
  Bar Ilan University\\
  Ramat Gan, Israel\\
  ohadkel\imageat{}gmail\imagedot{}com \\
  %\url{http://www.csc.kth.se/~johanh}
\end{authorinfo}
%\begin{authorinfo}[laci]
%  L\'aszl\'o Lov\'asz\\
%  Professor\\
%  E\"otv\"os Lor\'and University\\
%  Budapest, Hungary\\
%  laci\imageat{}comb\imagedot{}elte\imagedot{}hu\\
%  \url{http://www.cs.elte.hu/~lovasz}
%\end{authorinfo}
\end{dajauthors}

\end{document}